\newtheorem{theorem}{Theorem}[section]
\newtheorem{prop}[theorem]{Proposition}
\newtheorem{defi}[theorem]{Definition}
\newtheorem{lemma}[theorem]{Lemma}
\newtheorem{cor}[theorem]{Corollary}
\newtheorem{rem}[theorem]{Remark}
\newenvironment{proof}{\par\noindent\textbf{Proof}\hspace{1em}}{\qed\bigskip}
\def\<{\langle}
\def\>{\rangle}
\newcommand{\cC}{\mathcal{C}}
\newcommand{\PG}{\mathsf{PG}}
\newcommand{\GL}{\mathsf{GL}}
\newcommand{\K}{\mathbb{K}}
\newcommand{\cS}{\mathcal{S}}
\newcommand{\cW}{\mathcal{W}}
\newcommand{\cG}{\mathcal{G}}
\newcommand{\cL}{\mathcal{L}}
\newcommand{\ad}{\mathrm{adj}}
\def\qed{{\hfill\hphantom{.}\nobreak\hfill$\Box$}}
\begin{document}

\author{J.~Schillewaert \and H.~Van Maldeghem}
\title{Projective planes over quadratic 2-dimensional algebras}
\date{}
\maketitle


\begin{abstract}
The split version of the Freudenthal-Tits magic square stems from Lie theory and constructs a Lie algebra starting from two split composition algebras \cite{Freudenthal,Tits1,Tits2}. The geometries appearing in the second row are Severi-Brauer varieties \cite{Zak}.  We provide an easy uniform axiomatization of these geometries and related ones, over an arbitrary field. In particular we investigate the entry $A_{2}\times A_{2}$ in the magic square, characterizing Hermitian Veronese varieties, Segre varieties and embeddings of Hjelmslev planes of level 2 over the dual numbers.  In fact this amounts to a 
common characterization of ``projective planes over quadratic 2-dimensional algebras'', in casu the 
split and non-split Galois extensions and the dual numbers. 
\end{abstract}

\section{Introduction, Notation and Main Results}
\label{sec:Introduction}
\subsection{Mazzocca-Melone axioms and $\cC$-Veronesean sets}

In this paper we present a far-going generalization of the Mazzocca-Melone approach to quadric Veronese varieties. In this introduction, we describe the formal situation, and mention some history. The compelling motivation for our approach (why exactly generalizing in the way we do) is explained in the final section of the paper in order not to interfere with the mathematical flow of the paper. The reader might want to read the final section first. It puts our result in the broader perspective of the Freudenthal-Tits Magic Square \cite{Freudenthal,Tits1,Tits2}, certain alternative algebras, and representations of a class of spherical buildings (contaning those having exceptional type $\mathsf{E}_6$) in projective space. Let us just mention here that our intension is not just ``generalizing'', but rather a new geometric approach to the aforementioned magic square. 

The Mazzocca-Melone axioms for Veronese varieties have proved to be of fundamental 
importance for the theory of Veronese varieties. Mazzocca and Melone \cite{Maz-Mel:84} were 
the first ones to prove such a characterization (for quadric Veronese varieties of finite projective 
planes), and the same axioms, with only some minor changes depending on the context, were 
used by others to characterize finite quadric Veronese varieties of projective spaces \cite{Tha-Mal:04b}, 
quadric Veronese varieties in general \cite{Sch-Mal:**}, finite Hermitian Veronese varieties 
\cite{Tha-Mal:05}, and Hermitian Veronese varieties in general \cite{Sch-Mal:11}. In this paper, we introduce a further 
minor change in these axioms to include more varieties over arbitrary fields. 

Let us first introduce the (flavor of the) Mazzocca-Melone axioms. Let $n\geq 1$ be a natural 
number. Let us call a point set $S$ of $\PG(n,\K)$, $\K$ any skew field, a \emph{hypersurface} if for 
any point $x\in S$ the union $T_x(S)$ of the set of lines through $x$ that either are contained in 
$S$ or intersect $S$ precisely in $\{x\}$ either forms a hyperplane of $\PG(n,\K)$ (and then $x$ is 
called a \emph{regular point} of $S$), or is the whole point set of $\PG(n,\K)$ (and then $x$ is 
called a \emph{singular point} of $S$).   Let $\mathcal{C}$ be a class of hypersurfaces of 
$\PG(n,\K)$. Let there be given a set $X$ of points spanning some projective space 
$\mathrm{PG}(N,\mathbb{K})$, with $N>n$ and $\mathbb{K}$ still any skew field, and let $\Xi$ be a collection 
of subspaces of $\mathrm{PG}(N,\mathbb{K})$ of the same dimension $n$, such that, for any 
$\xi\in\Xi$, the intersection $\xi\cap X$ is the set $X(\xi)$ of regular points of some hypersurface 
$\overline{X(\xi)}$ of $\xi$ belonging to the class $\mathcal{C}$ (and then, for $x\in X(\xi)$, we sometimes denote 
$T_x(X(\xi))$ simply by $T_x(\xi)$). Put $\overline{X}$ equal to the union of all $\overline{X(\xi)}$, $\xi\in\Xi$. 
We call $X$ a \emph{$\mathcal{C}$-Veronesean set} if the following properties hold :
\begin{itemize}
\item[(V1)] Any two points $x,y\in X$ lie in at least one element of $\Xi$, denoted by $[x,y]$, if unique.

\item[(V2)] If $\xi_1,\xi_2\in \Xi$, with $\xi_1\neq \xi_2$, then $\xi_1\cap\xi_2\subset \overline{X(\xi_1)}\cap\overline{X(\xi_2)}$, and $\xi_1\cap\xi_2\cap (\overline{X}\setminus X)$ is contained in some subspaces of $\xi_1\cap\xi_2$ of codimension $1$.

\item[(V3)] If $x\in X$ and $\xi\in\Xi$, with $x\notin \xi$ and such that for each $y\in\xi\cap X$, there is a unique element of $\Xi$ containing both $x$ and $y$, then each of the subspaces $T_x([x,y])$, 
$y\in \xi\cap X$, is contained in a fixed $(2n-2)$-dimensional subspace of 
$\mathrm{PG}(N,\mathbb{K})$, denoted by $T(x,\xi)$.
\end{itemize}
  
The classification of $\cC$-Veronesean sets is usually achieved by first defining an index (which is 
a natural number), then considering the case of index 2, and finally invoking an induction argument 
on the index. For the case of index 2, one can consider the following alternative axiom to (V3).

\begin{itemize}
\item[(V3*)] If $x\in X$, then each of the subspaces $T_x(\xi)$, with $x\in\xi\in \Xi$, is contained in a 
fixed $(2n-2)$-dimensional subspace of $\mathrm{PG}(N,\mathbb{K})$, denoted by $T(x,\pi)$.
\end{itemize}

The original Mazzocca-Melone axioms amount to the case $n=2$ and $\cC$ the class of 
irreducible finite plane conics. Mazzocca \& Melone characterized finite quadric Veronese varieties over fields of 
odd order \cite{Maz-Mel:84}. This was later generalized by Hirschfeld \& Thas to include all finite fields \cite{Hir-Tha:91}. Thas \& Van Maldeghem \cite{Tha-Mal:04} then considered the case where $\cC$ is the class of finite 
plane ovals (and proved each of these ovals must automatically be a conic), whereas 
Schillewaert \& Van Maldeghem \cite{Sch-Mal:**} classified $\cC$-Veronesean sets for $\cC$ the class of all ovals of 
any (finite or infinite) projective plane. 

Cooperstein, Thas \& Van Maldeghem \cite{Coo-Tha-Mal:04} characterized finite Hermitian Veronese 
varieties as the $\cC$-Veronesean sets, where $\cC$ is the class of elliptic quadrics of finite $3$-space. Thas \& Van Maldeghem 
\cite{Tha-Mal:05} generalized this to the class of ovoids of finite $3$-space, whereas Schillewaert \& Van Maldeghem 
\cite{Sch-Mal:11} proved the same characterization 
for Hermitian Veronese varieties over any field, as $\cC$-Veronesean sets, where $\cC$ is the class of all 
ovoids of $\PG(3,\K)$, with $\K$ any skew field. 

In the present paper, we want to consider $\cC$-Veronesean sets for $\cC$ either the class of ruled 
quadrics of $\PG(3,\K)$, with $\mathbb{K}$ an arbitrary field, or the class of oval cones in $\PG(3,\K)$, with 
$\K$ any skew field. In the former case, we also speak of a \emph{Segrean Veronesean set}, 
whereas in the latter case we speak of a \emph{Hjelmslevian Veronesean set}. 

Let $X$ be a Hermitian, Segrean or Hjelmslevian Veronesean set. We define the geometry $\cG(X)$ 
as follows. The points are the elements of $X$, the lines are the members of $\Xi$ and incidence is 
given by containment.  Note that, for $X$ a Hermitian Veronesean set, the geometry $\cG(X)$ is a 
projective plane over a quadratic Galois extension of $\K$, see \cite{Sch-Mal:11}. But for the Segrean and Hjelmslevian cases, 
$\cG(X)$ will be a ring geometry. The corresponding ring will be a $2$-dimensional algebra over a field. So let us have a look at such algebras in the next subsection. 

\subsection{Planes over some 2-dimensional algebras}\label{planes}

Let $\K$ be a (commutative) field and let $V$ be a $2$-dimensional commutative and associative 
algebra over $\K$ such that each vector $v$ of $V$ is the product $v=a\times b$ of two vectors 
$a,b$ of $V$. Let $V_0$ be the set of zero-divisors of $V$ and put $V^*=V\setminus V_0$. 

We define the following geometry $\cG(V)$. The points are the classes of triples $V^*(x,y,z)$, 
$x,y,z\in V$, such that, if $v\in V$ and $v\times(x,y,z)=(0,0,0)$, then $v=0$. The lines are the 
classes of triples $V^*[a,b,c]$, $a,b,c\in V$, such that, if $v\in V$ and $v\times[a,b,c]=[0,0,0]$, then 
$v=0$. A point $V^*(x,y,z)$ is incident with a line $V^*[a,b,c]$ if 
$a\times x + b\times y + c\times z=0$. We will usually omit the multiplication sign ``$\times$'' in the sequel.

A $3\times3$ matrix with entries in $V$ will be said to \emph{have rank $1$} if for every two rows $R_1,R_2$, 
there are elements $a,b\in V$ such that $aR_1+bR_2$ is the zero-row, and such that no nonzero element $c$ of $V$
exists such that $ac=bc=0$.  Below we will only use it for ``Hermitian matrices'', where it is clear that one can substitute ``row'' by ``column'' in this definition without changing the meaning.  

There are always three ways to represent $\cG(V)$ in an $8$-dimensional projective space over $\K$, and sometimes there is a fourth. 

First we note that $V$ contains a unique identity element $\mathbf{1}$ with 
respect to $\times$ and there exists a unique (linear) automorphism $\sigma$ of $V$ of order at 
most 2 fixing $\mathbf{1}$ and such that both $v+v^\sigma$ and $v\times v^\sigma$ belong to 
$\K\cdot\mathbf{1}$, for all $v\in V$ (see Section~\ref{algebras}). We will henceforth identify $\K\cdot\mathbf{1}$ with $\K$. 

\begin{enumerate}\item Let $V$ and $\sigma$ be as above. The set of points of $\cG(V)$ is in 
bijective correspondence with the set of rank 1 Hermitian $3\times 3$ matrices over $V$ (i.e., 
diagonal elements belong to $\K\cdot\mathbf{1}$ and corresponding symmetric elements are 
images under $\sigma$), up to a scalar multiple, by mapping a point $V^*(x,y,z)$ onto the matrix 
$(x~y~z)^t(x~y~z)^\sigma$, where $t$ denotes transposition. Viewing the set of all Hermitian 
$3\times 3$ matrices over $V$ as a $9$-dimensional vector space over $\K$, we see that this 
defines a point set $X$ in $\PG(8,\K)$. It is easy to see that the lines of the geometry correspond to 
images of rank 1 Hermitian $2\times 2$ matrices over $V$, and these in turn define quadrics in 
$\PG(3,\K)$. This hints to the fact that $X$ is indeed a $\cC$-Veronesean set, where $\cC$ is either 
the class of all hyperbolic quadrics, or the class of all quadratic cones, or an isomorphism class of 
elliptic quadrics. For ease of reference, we call the set $X$ here defined a \emph{$V$-set defined 
by matrices}.

\item
Consider a 3-dimensonal module $\cW$ over $V$, and reconsider $\cW$ as a 6-dimensional vector 
space over $\K$. The set of points of $\cG(V)$ defines a family of one-dimensional subspaces over 
$V$ of $\cW$, and this corresponds to a set of 2-dimensional subspaces over $\K$. This yields a 
set $\cL$ of lines in the projective 5-space over $\K$. If we take the line Grassmannian of $\cL$, 
then we end up in $\PG(14,\K)$. But actually, one can prove (this follows from Proposition~\ref{span8} below, combined with Propositions~\ref{theoV1} and~\ref{theoV2}) that the 
image $X$ of $\cL$ under the line Grassmannian spans a subspace $\PG(8,\K)$ of dimension $8$.  
We will refer to $X$ as a \emph{$V$-set defined by reduction}.

To see the lines, one must consider certain $2$-dimensional submodules of $\cW$ over $V$, 
which correspond to certain $4$-dimensional subspaces of $\cW$ over $\K$. Projectively, we have 
a line set in a projective $3$-space, and the Grassmannian then lies on the Klein quadric; in fact 
we obtain a quadric in projective $3$-space (again either an elliptic quadric, a ruled quadric, or a 
quadratic cone).   

\item
There is another construction leading to the same set of lines of the previous paragraph before 
taking Grassmannian. Indeed, we will show in Section~\ref{algebras} below that the algebra $V$ is a matrix algebra, a 
subalgebra of the full $2\times 2$ algebra over $\K$. Now the set of lines in the projective 5-space 
obtained above can also be obtained by juxtaposition of three arbitrary $2\times 2$ matrices 
(corresponding to triples satisfying the same restrictions as in the definition of points of $\cG(V)$) of 
the corresponding matrix algebra, and then taking the joins of the points represented by the rows of 
the $6\times 2$ matrices thus obtained. The set $X$ obtained by taking the image under the line 
Grassmannian will be referred to as a \emph{$V$-set defined by juxtaposition}. 

\item We provide a parameter representation of $\cG(X)$. Let $\zeta$ denote a fixed invertible element of $V$ not fixed under $\sigma$; if 
this cannot be found, then any invertible element not in $\K\times 1$ will do. Then we let the point $V^*(x,y,z)$ correspond to the point in 
$\PG(8,\K)$ having coordinates $$(x^\sigma x, y^\sigma y, z^\sigma z, x^\sigma y+y^\sigma x, y^\sigma z+z^\sigma y, z^\sigma x+x^\sigma z,
 \zeta x^\sigma y+\zeta^\sigma y^\sigma x, \zeta y^\sigma z+\zeta^\sigma z^\sigma y, \zeta z^\sigma x+\zeta^\sigma x^\sigma z),$$ and one 
checks easily that this is independent of the chosen representative. We call this correspondence the \emph{Veronese correspondence}. Then, by 
definition, the set of images of points of $\cG(V)$ under the Veronese correspondence is $X$. The lines of $\mathcal{G}(X)$ are the 
images under the Veronese correspondence of the lines of $\cG(V)$. Since each line involves two free parameters over $V$, hence four over 
$\K$, every line will be contained in a projective $3$-subspace, and hence here we also see that this hints to a $\mathcal{C}$-Veronesean 
set with $\mathcal{C}$ the a set of quadrics in projective $3$-space.  We call the set $X$ thus constructed a \emph{$V$-set defined by 
parametrization}. 
\end{enumerate}  

\subsection{Main Result}

Our main result connects the abstractly defined ring geometries with the notion of $\cC$-Veronesean set as follows.

\textbf{Main Result.} \emph{Let $\K$ be any field. Let $V$ be a  $2$-dimensional commutative and 
associative algebra over $\K$ such that each vector of $V$ is the product of two vectors of $V$. 
Then the $V$-set defined by matrices is isomorphic to the $V$-set defined by reduction, and also to 
the $V$-set defined by juxtaposition (which allows to briefly talk about $V$-sets). These $V$-sets 
are isomorphic to the $V$-set defined by parametrization as soon as $V$ does not contain an element 
whose square is $0$, and $V$ is not an inseparable quadratic extension of $\K$.  Also, such a $V$-set 
is either a Hermitian Veronesean, or a Segrean 
Veronesean, or a Hjelmslevian Veronesean set $X$, and $\cG(V)$ is isomorphic to $\cG(X)$. 
Conversely, every Hermitian, Segrean or Hjelmslevian Veronesean set in $\PG(8,\K)$ can be obtained 
from a $V$-set over $\K$ and hence is unique, up to projectivity. Also, a Segrean Hermitian set is 
projectively equivalent to a Segre variety 
of type $(2,2)$ and the geometry $\cG(X)$ for $X$ a Hjelmslevian Veronesean set is a projective Hjelmslev plane of level $2$ over the dual numbers over $\K$.  }

For the definitions of Segre variety and projective Hjelmslev plane of level 2, we refer to the beginning of Section~\ref{Segre}, and Proposition~\ref{propH2}, respectively. 

We are going to prove the Main Result in small pieces. The main work is done in the next two sections, where we first classify Segrean Hermitian sets, and then prove that  Hjelmslevian Veronesean sets are projectively unique. After that, we take a look at the different possibilities for $V$, given $\K$, and prove the equivalence of the various $V$-sets. We then pause and prove some properties of the ring geometries $\cG(V)$. At last, we show that each $V$-set  is an appropriate $\mathcal{C}$-Veronesean set, establishing the Main Result. 

In the final section we put our investigations in a broader perspective, thus motivating the results of the present paper.


\section{Segrean Veronesean Sets} \label{Segre}
In this section, our main goal is to characterize \emph{Segre varieties of type $(2,2)$} (we will prove a slightly more general result
which also includes Segre varieties of type $(1,2)$ and of type $(1,3)$). 
Such a variety is the image $\cS_{2,2}$ of the direct product $\PG(2,\K)\times\PG(2,\K)$ 
of the point sets of two isomorphic projective planes over a commutative field $\K$ under the mapping

$$\begin{array}{ll}\sigma: & \PG(2,\K)\times\PG(2,\K)\rightarrow \PG(8,\K):\\ & ((x,y,z),(x',y',z'))\mapsto (xx',xy',xz',yx',yy',yz',zx',zy',zz').\end{array}$$

(In general, for the definition of a Segre variety $\cS_{n,m}$ of type $(m,n)$, $m,n\geq 1$, one considers 
the direct product of the point sets of a projective $m$-space with a projective $n$-space and 
the obvious generalization of the mapping above.)

One observes that the image of the direct product of two lines inside these planes is a 
hyperbolic quadric in some $3$-dimensional subspace of $\PG(8,\K)$. Clearly, every pair of points in the 
image is contained in at least one such hyperbolic quadric. Note that the automorphism group of 
$\cS_{2,2}$ is transitive on the points, and on the hyperbolic quadrics (indeed, if a linear 
collineation acts on the first component with matrix $A$, and one on the second component with 
matrix $B$, then the tensor product $A\otimes B$ acts in a natural way on $\cS_{2,2}$). Thus we 
can choose the coordinates in an easy way to verify that (1) the intersection of two arbitrary 3-spaces each containing distinct such hyperbolic quadrics is a subset of $\cS_{2,2}$, and (2) all 
generators of all hyperbolic quadrics through  a fixed point are contained in a $4$-space. We now 
take these properties as axioms to characterize $\cS_{2,2}$. This gives rise to the Mazzocca-Melone axioms for this case, and we provide some more details now.

A \emph{hypo} $H$ in a $3$-dimensional projective space $\Sigma$ (over $\K$) is the set of points of 
$\Sigma$ on some hyperbolic quadric. For every point $x\in H$, there is a unique plane $\pi$ 
through $x$ intersecting $O$ in two intersecting lines both of which contain $x$. The plane $\pi$ 
contains all lines through $x$ that meet $H$ in only $x$ and is called the \emph{tangent plane} at 
$x$ to $H$ and denoted $T_x(H)$. 

Let $X$ be a spanning point set of $\mathrm{PG}(N,\mathbb{K})$, $N>3$, and let $\Xi$ be a set 
of 3-dimensional projective subspaces of $\mathrm{PG}(N,\mathbb{K})$, called the 
\emph{hyperbolic spaces} of $X$, such that, for any $\xi\in\Xi$, the intersection $\xi\cap X$ 
is a hypo $X(\xi)$ in $\xi$ (and then, for $x\in X(\xi)$, we sometimes denote $T_x(X(\xi))$ 
simply by $T_x(\xi)$). We call $X$ a \emph{Segrean Veronesean set (of index $2$)} if the following properties hold :
\begin{itemize}
\item[(S1)] Any two points $x$ and $y$ lie in at least one element of $\Xi$, which is denoted by $[x,y]$ if it is unique.

\item[(S2)] If $\xi_1,\xi_2\in \Pi$, with $\xi_1\neq \xi_2$, then $\xi_1\cap\xi_2\subset X$.

 


\item[(S3*)] For each $x\in X$, all planes $T_x([x,y])$, $y\in X\setminus\{x\}$, are contained in a fixed 4-dimensional subspace of $\mathrm{PG}(N,\mathbb{K})$, denoted by $T_x$.
 \end{itemize}

Note that (S1) immediately implies that the cardinality of $\Xi$ is at least 2.

The rest of this section is devoted to prove the following theorem.

\begin{theorem} \label{theo1} The point set $X$ of a Segrean Veronesean set of index $2$ is the point set of a Segre variety $\cS_{1,2}$ (and then $N=5$), $\cS_{1,3}$ (and then $N=7$) or $\cS_{2,2}$ (and then $N=8$).
\end{theorem}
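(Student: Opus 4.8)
The plan is to run the classical Mazzocca–Melone induction, adapted to hypos. The overall strategy is: fix a point $x \in X$, understand the local structure at $x$ via the tangent space $T_x$, use the axioms to show that the "directions" at $x$ organize themselves into the point set of a lower-dimensional configuration, and bootstrap. Concretely, I would proceed as follows.

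\textbf{Step 1: Basic incidence geometry and the geometry $\cG(X)$.}
First I would establish, using (S1) and (S2), that $\cG(X)$ (points $= X$, lines $= \Xi$, incidence $=$ containment, but with "lines" here being the rulings of the hypos, or the hypos themselves as "planes") has enough regularity to be analyzed. The key elementary facts to extract: (a) two distinct points of $X$ lie in a \emph{unique} member of $\Xi$ unless they lie on a common generator, and more precisely the generators of the various hypos through $x$ partition into a structure that will turn out to be two "pencils"; (b) through a point $x$, the hypo $X(\xi)$ with $x \in \xi$ contributes exactly two generators, and (S2) forces that two hypos meeting in more than a point meet exactly in a common generator. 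I would make precise the dichotomy: either a generator through $x$ is "reducible" (lies in $X$) accounting for the case $\xi_1 \cap \xi_2 \subset X$ being a line. The point is that $X$ carries \emph{two} families of lines through each point (the two rulings), unlike the oval/conic case (one tangent) or the Hermitian case.

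\textbf{Step 2: Tangent space dimension and the residue at a point.}
Using (S3*), $T_x$ is a fixed $4$-space containing all tangent planes $T_x([x,y])$. I would show that $T_x \cap X = \{x\}$ together with the two rulings through $x$ if they happen to lie in $X$; more importantly, the \emph{quotient space} $\PG(N,\K)/x$ induces on the set of $(x,y)$-directions, $y \in X$, a configuration. The heart of the matter: each tangent plane $T_x(\xi)$ is a plane through $x$, so it corresponds to a line in the quotient; these lines all lie in the $3$-space $T_x/x$. I expect to show that the residue of $X$ at $x$, i.e. the set of lines $T_x(\xi)/x$ in $T_x/x \cong \PG(3,\K)$, is itself a hypo (a hyperbolic quadric) or a degenerate version — a ruled quadric, a cone, or a pair of planes — and that the two families of $\xi$'s through $x$ correspond to the two rulings. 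This is where the recursion lives: a Segrean Veronesean set, cut at a point, yields a grid-like residue, which is the signature of a Segre variety.

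\textbf{Step 3: Identify the two "factor" projective spaces.}
Having the two rulings/pencils, I would fix a hypo $X(\xi_0) = \PG(1,\K) \times \PG(1,\K)$ embedded as a hyperbolic quadric, pick a generator $\ell$ of one ruling, and consider all hypos through $\ell$. Each such hypo shares $\ell$ and contributes a "transverse" generator. Running over all hypos through a fixed point of $\ell$, and using (S3*) to control how the tangent planes assemble, I would reconstruct a projective space $\PG(m,\K)$ ($m \in \{1,2\}$) as the "index set" of one ruling, and similarly $\PG(n,\K)$ ($n \in \{2\}$ in the full case, but $1$ allowed) for the other. The map $X \to \PG(m,\K) \times \PG(n,\K)$ is built by sending $p \in X$ to the pair (generator of first ruling through $p$, generator of second ruling through $p$); injectivity and surjectivity come from (S1)–(S2); that the composite with the Segre map is a projectivity onto $X$ comes from the fact that on each hypo it is the standard Segre embedding of a grid, and these are glued along generators consistently.

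\textbf{Step 4: Pin down $N$ and the three cases.}
Finally, dimension count: if one factor is a line, $\cS_{1,2}$ spans $\PG(5,\K)$ and $\cS_{1,3}$ spans $\PG(7,\K)$; if both are planes, $\cS_{2,2}$ spans $\PG(8,\K)$. The span of $X$ is forced by the spans of finitely many hypos through a common point plus the $4$-dimensionality in (S3*) which caps $N$: I would show $N \le 8$ directly from (S3*) by a spanning argument (the ambient space is spanned by $T_x$ together with the hypos not through $x$, suitably counted), and then the Segre-variety identification forces $N$ into $\{5,7,8\}$ according to the factor dimensions. The \textbf{main obstacle} I anticipate is Step 2–3: controlling the \emph{degenerate} residues. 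When a hypo's "transverse" generators all lie inside $X$, or when the residue at $x$ degenerates (e.g. to a cone or a pair of planes rather than a smooth hyperbolic quadric), one must argue that this forces one of the factors to have dimension $1$ — i.e., this is precisely how $\cS_{1,2}$ and $\cS_{1,3}$ arise rather than $\cS_{2,2}$ — and rule out genuinely pathological configurations (reducible "hypos", or $X$ not closing up into a full Segre variety) using (S2) to propagate linearity. Handling these degenerate cases uniformly, rather than as a growing list of exceptions, is the delicate bookkeeping that the proof must get right, likely via a careful analysis of when $\xi_1 \cap \xi_2$ is a point versus a line and a clean statement of what the residue can be.
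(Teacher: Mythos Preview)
Your residue/quotient strategy is a natural one and is genuinely different from the paper's route. The paper does not look at $T_x/x$ as a $\PG(3,\K)$ and try to recognize a hyperbolic quadric there. Instead it works entirely in the ambient space, via a long sequence of lemmas about how hypos and singular lines span and intersect.

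The gap in your plan is Step~2, specifically the sentence ``I expect to show that the residue of $X$ at $x$ \ldots\ is itself a hypo \ldots\ and that the two families of $\xi$'s through $x$ correspond to the two rulings.'' This expectation is exactly the content of the theorem: saying that the generators through $x$ fall into two pencils is the same as saying that there are two singular planes through $x$, which is precisely the Segre-variety structure. You give no mechanism for proving it, and the axioms do not hand it to you. In the paper this is the technical heart (roughly half the section): one first \emph{assumes no singular plane exists}, proves via a chain of dimension arguments (four singular lines through each point span a $4$-space; three hypos sharing a generator span a $7$-space; the tangent space at $x$ meets $X$ only in singular lines through $x$; hence $N=8$) and then, projecting from a fixed hypo $H$ into a complementary $4$-space, derives a contradiction by tracking how the affine-plane images of two hypos through $x_1,x_2\in H$ must meet. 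Only after this contradiction establishes that a singular plane exists does one get the two-family structure, and the cases $\cS_{1,2}$, $\cS_{1,3}$, $\cS_{2,2}$ fall out according to whether singular $3$-spaces occur and how many singular lines through a point lie outside a given singular plane.

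So your Steps~3--4 are fine once Step~2 is granted, but Step~2 as written is the theorem restated, not a proof. If you want to salvage the residue viewpoint, you would still need to supply an argument of the flavor above (or an alternative) to force the singular-line pencil structure at each point before the quotient picture becomes meaningful.
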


From now on, we let $X$ be a set in $\PG(N,\K)$, and $\Xi$ a family of 3-spaces of $\PG(N,\K)$ such that every member of $\Xi$ intersects $X$ in a hypo, and satisfying (S1) and (S2). For the time being, we do not put a restriction on $N$, but we shall always mention our assumption on $N$. Note that we want $X$ to contain at least two hypos, which implies that we can assume $N\geq 5$. Also, we try to prove as much as we can without Axiom (S3*), the reason being that this will help in the general situation when we invoke (S3) instead of (S3*). Hence, if not explicitly mentioned otherwise, we do not assume (S3*). Concerning notation, we will denote the (projective) span of a set $S$ of points of $\PG(N,\K)$ by $\<S\>$. 

Our first aim is to show that $X$ contains a plane. So for the time being, we assume that no plane is entirely contained in $X$. This has a few immediate consequences, the most important of which is the following, where we call a line of $\PG(N,\K)$ entirely contained in $X$ a \emph{singular} line. Similarly for \emph{singular plane}. We phrase the first lemma general enough so that we can still use it when we do have planes.

\begin{lemma} \label{lemma1}\begin{itemize}\item[$(1)$] Every pair of intersecting singular lines not contained in a singular plane is contained in a unique hypo. \item[$(2)$] Every quadrangle of singular lines such that no two consecutive sides is contained in a plane, is contained in a unique hypo, determined by any pair of intersecting lines of that quadrangle.\end{itemize}
\end{lemma}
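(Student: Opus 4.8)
The plan is to establish $(1)$ first — treating uniqueness and existence separately — and then to deduce $(2)$ from $(1)$ by a short ``opposite hypos must coincide'' argument. Throughout I would use freely the elementary facts about a hypo $X(\xi)=\xi\cap X$: every point of it lies on exactly one generator of each of its two reguli, two generators of it meet if and only if they come from different reguli, and the only subspaces of $\xi$ contained in the point set $X(\xi)$ are its points and its generators — in particular no plane of $\xi$ lies on $X(\xi)$.

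\emph{Uniqueness in $(1)$.} Suppose $\xi_1\ne\xi_2$ in $\Xi$ both contain the two given intersecting singular lines $\ell_1,\ell_2$. Then the plane $\pi:=\<\ell_1,\ell_2\>$ is contained in $\xi_1\cap\xi_2$, so by (S2) $\pi\subseteq X$, i.e.\ $\ell_1$ and $\ell_2$ lie in a singular plane, contrary to hypothesis. Hence a hypo through $\ell_1,\ell_2$ is unique once it exists; this is the only place the hypothesis of $(1)$ is used.

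\emph{Existence in $(1)$.} Put $x=\ell_1\cap\ell_2$ and choose $p\in\ell_1\setminus\{x\}$, $q\in\ell_2\setminus\{x\}$. By (S1) there is $\xi_1\in\Xi$ with $x,p\in\xi_1$; then $\ell_1=\<x,p\>\subseteq\xi_1$, and as $\ell_1\subseteq X$ it is a generator of $X(\xi_1)$; let $m_1$ be its partner generator through $x$. If $q\in\xi_1$ then $\ell_2=\<x,q\>\subseteq X(\xi_1)$, forcing $\ell_2=m_1$, so $\xi_1$ is the required hypo. Otherwise pick $\xi_2\in\Xi$ with $x,q\in\xi_2$, so that $\ell_2\subseteq X(\xi_2)$ with partner $m_2$ through $x$; if $\xi_1=\xi_2$ or $\ell_1\subseteq\xi_2$ we are again done, so assume not. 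Then $\xi_1\cap\xi_2$ is a subspace through $x$ lying in $X$ by (S2), hence in $X(\xi_1)\cap X(\xi_2)$; being a subspace of the hypo $X(\xi_1)$ through $x$ it equals $\{x\}$, $\ell_1$ or $m_1$, and being one of $X(\xi_2)$ through $x$ it equals $\{x\}$, $\ell_2$ or $m_2$. The coincidences $\ell_1=m_2$ or $\ell_2=m_1$ each exhibit a hypo through both lines. The remaining possibilities, $\xi_1\cap\xi_2=\{x\}$ and $m_1=m_2$, are the main obstacle: here I would apply (S1) once more to get $\xi_3\in\Xi$ through $\{p,q\}$, analyse $\xi_1\cap\xi_3$ and $\xi_2\cap\xi_3$ — which by (S2) and the argument just used are points or common generators through $p$, resp.\ $q$ — and then trace, regulus by regulus, how the generators so obtained meet $\ell_1$, $\ell_2$ and $\pi$, the aim being to show that any configuration failing to produce a hypo through $\ell_1$ and $\ell_2$ forces $\pi\subseteq X$, against the hypothesis. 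The bookkeeping of reguli in this last case is the only genuinely delicate point I anticipate.

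\emph{Part $(2)$.} Write the quadrangle as $\ell_1,\ell_2,\ell_3,\ell_4$ with pairwise distinct vertices $v_i=\ell_i\cap\ell_{i+1}$ (indices mod $4$), where no two consecutive sides span a singular plane. By $(1)$ there are unique hypos $H_1,H_2,H_3,H_4$ with $\ell_i,\ell_{i+1}\subseteq H_i$, and I would show $H_1=H_3$. The points $v_2\in\ell_2\cap\ell_3$ and $v_4\in\ell_4\cap\ell_1$ are distinct and both lie on $X(H_1)$ (via $\ell_2$ and $\ell_1$) and on $X(H_3)$ (via $\ell_3$ and $\ell_4$). If $H_1\ne H_3$ then by (S2) $H_1\cap H_3\subseteq X$ is a subspace of $X(H_1)$ containing $v_2$ and $v_4$, hence the whole line $\<v_2,v_4\>$; but this line lies in the tangent plane $T_{v_1}(X(H_1))=\<\ell_1,\ell_2\>$, whose intersection with $X(H_1)$ is $\ell_1\cup\ell_2$, so $\<v_2,v_4\>$ meets $X(H_1)$ only in $v_2$ and $v_4$ — a contradiction. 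Symmetrically $H_2=H_4$; since $H_1=H_3$ then contains all four sides, the uniqueness in $(1)$ gives $H_1=H_2=H_3=H_4$, and this common hypo is the one determined, by $(1)$, by any pair of intersecting sides of the quadrangle.
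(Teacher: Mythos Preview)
Your uniqueness argument in $(1)$ and your proof of $(2)$ are correct and essentially match the paper's reasoning. The gap is in the \emph{existence} part of $(1)$: you explicitly leave the main case---$\xi_1\cap\xi_2=\{x\}$ or $m_1=m_2$---unfinished, promising a regulus-by-regulus analysis of a third space $\xi_3$ through $p,q$ whose ``bookkeeping'' you anticipate as delicate. That bookkeeping is not carried out, and the approach is needlessly intricate.

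The paper's argument for existence is far simpler and avoids all case analysis at the vertex $x$. It proceeds by contradiction: assume no hypo contains $\ell_1\cup\ell_2$, and show that then \emph{every} point of $\pi=\<\ell_1,\ell_2\>$ lies in $X$, so $\pi$ is singular. For an arbitrary point $z\in\pi$, pick two distinct lines $M_1,M_2$ of $\pi$ through $z$, each meeting both $\ell_1$ and $\ell_2$ in distinct points of $X$. By (S1) each $M_i$ lies in some $\xi_i\in\Xi$. If $\xi_1=\xi_2$, that single hyperbolic space contains two points of $\ell_1$ and two points of $\ell_2$, hence contains $\ell_1\cup\ell_2$, contradicting the assumption. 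So $\xi_1\neq\xi_2$, and then $z\in\xi_1\cap\xi_2\subseteq X$ by (S2). Since $z$ was arbitrary, $\pi$ is singular---contradiction. The key idea you missed is to work at a \emph{generic} point of the plane with secant lines, rather than building outward from the intersection point $x$; this turns the existence question into an immediate dichotomy with no residual cases.
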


\begin{proof} Suppose by way of contradiction that the intersecting singular lines $L_1,L_2$ are not contained in a hypo. Let $x$ be any point of the plane $\pi$ spanned by $L_1$ and $L_2$. Then we can choose two lines $M_1,M_2$ through $x$ meeting both $L_1$ and $L_2$ in distinct points. By (S1), the line $M_i$, $i=1,2$, is contained in a hyperbolic space $\xi_i$. If $\xi_1=\xi_2$, then it would contain $L_1\cup L_2$, contradicting our hypothesis. Hence $\xi_1\neq\xi_2$ and so $x\in\xi_1\cap\xi_2$ belongs to $X$ by (S2). Consequently $\pi$ is a singular plane, a contradiction. 

Hence $L_1,L_2$ are contained in a hypo. This hypo is unique as otherwise the plane $\<L_1,L_2\>$ would be singular as it would belong to the intersection of two distinct hyperbolic spaces.  

The second assertion follows from considering the hypos through two opposite pairs of intersecting sides of the quadrangle, and applying the first part of the lemma, noting that opposite points of the quadrangle do not lie on a common singular line. 
\end{proof}

An easy consequence is that there is no triple of coplanar singular lines.

Next we want to show that the singular lines through a point $x$ span a 4-dimensional space (necessarily agreeing with $T_x$), assuming (S3*). We first show that $T_x$ contains at least four lines. We phrase the result independently of (S3*).

\begin{lemma} \label{lemma2}If $X$ does not contain singular planes, and $N\geq 5$, then through every point $x\in X$, there exist at least four singular lines. Consequently $N\geq 6$.
\end{lemma}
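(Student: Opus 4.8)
The plan is to start from an arbitrary point $x\in X$ and produce singular lines through it, using (S1) to pass through hypos and Lemma~\ref{lemma1} (together with its corollary that no three singular lines are coplanar) to control how these lines interact. First I would pick a hypo $\xi_1\in\Xi$ (one exists since $\Xi\neq\emptyset$); if $x\notin\xi_1$, use (S1) to join $x$ to several points of $X(\xi_1)$ and obtain new hypos through $x$, and if $x\in\xi_1$, note that $x$ already lies on the two generators of the hyperbolic quadric $X(\xi_1)$, so there are at least two singular lines through $x$ to begin with. The point of the argument is to bootstrap from two singular lines through $x$ to four.

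So suppose $L_1,L_2$ are singular lines through $x$; by Lemma~\ref{lemma1}(1) they lie in a unique hypo $\xi$, and since $X$ contains no singular plane, $L_1,L_2$ are the two generators through $x$ of the hyperbolic quadric $X(\xi)$. Now take a point $y\in X(\xi)\setminus(L_1\cup L_2)$: it lies on generators $M_1$ (in the first ruling, meeting $L_2$) and $M_2$ (in the second ruling, meeting $L_1$). Pick a point $z\in X$ that is \emph{not} in $\xi$ — such a point exists because $X$ spans $\PG(N,\K)$ with $N\geq 5>3=\dim\xi$. Using (S1), join $z$ to $x$ and to $y$, getting hypos $[x,z]$ and $[y,z]$. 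The key move is to analyze the configuration of singular lines produced inside these new hypos and show that at least one of them yields a singular line through $x$ distinct from $L_1$ and $L_2$; iterating, or applying the argument to a suitably chosen quadrangle of singular lines and invoking Lemma~\ref{lemma1}(2), should produce a third and then a fourth singular line through $x$. Once four singular lines through $x$ exist, no three are coplanar, so any three of them already span a $3$-space, and adding a point of the fourth line not in that $3$-space shows the four lines span at least a $4$-space; hence $N\geq 6$, giving the ``consequently'' clause. (Under (S3*) the span is exactly $T_x$, of dimension $4$, but the lemma is deliberately phrased without (S3*).)

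The main obstacle I expect is the combinatorial bookkeeping in the middle step: after joining the external point $z$ to points of the hypo $\xi$, one must rule out the degenerate possibilities that all the resulting hypos share a common singular line through $x$ other than $L_1,L_2$ in a way that fails to give genuinely new lines, or that the lines produced are forced to coincide with $L_1$ or $L_2$. Handling this cleanly will require carefully exploiting (S2) (distinct hypos meet only inside $X$) and the no-three-coplanar-singular-lines corollary to force enough non-degeneracy. A secondary subtlety is the base case when $x$ lies on only one obvious singular line or none at the outset (e.g.\ $x$ a vertex-like point of some hypo): there one first has to manufacture a second singular line through $x$ before the bootstrap can begin, again by joining $x$ to external points and tracing generators, which is where the hypothesis $N\geq 5$ is genuinely used.
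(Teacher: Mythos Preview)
Your overall strategy for producing singular lines through $x$ is in the right spirit and close to the paper's, but there are two concrete gaps.

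First, the ``consequently $N\geq 6$'' step is wrong as written. You claim that four singular lines through $x$, no three coplanar, must span at least a $4$-space; this is false, since in $\PG(3,\K)$ one can take four concurrent lines corresponding to four points of the quotient plane in general position. And even granting a $4$-dimensional span, that only yields $N\geq 4$, not $N\geq 6$. The paper's argument is different and uses (S2) essentially: the hyperbolic $3$-space $\xi_{12}$ through $L_1,L_2$ and the hyperbolic $3$-space $\xi_{34}$ through $L_3,L_4$ satisfy $\xi_{12}\cap\xi_{34}\subseteq X$, and one checks this forces $\xi_{12}\cap\xi_{34}=\{x\}$, so the two $3$-spaces together span a $6$-space. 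You should replace your span-counting by this (S2) argument.

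Second, your bootstrap from three to four singular lines is where the real content lies, and your outline does not isolate the key mechanism. The paper's move is cleaner than quadrangle-chasing: if $x$ had only three singular lines, then by Lemma~\ref{lemma1}(1) there would be exactly three hypos through $x$; but taking a point $y$ of $H$ not on a singular line with $x$, and a singular line $L$ through $y$ outside $H$, the distinct points $z$ of $L$ give pairwise distinct hypos $[x,z]$ (else $\langle x,y\rangle\subseteq[x,z]$, contradicting (S2)). For $|\K|\geq 3$ this immediately gives too many hypos through $x$. You also need a separate argument for $|\K|=2$, which the paper handles by a short counting argument (constant number of singular lines per point, then a double count of incident point--hypo pairs); your plan does not address this case.
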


\begin{proof}
By considering an arbitrary hypo $H$ through $x$, which exists by (S1) and the fact that $X$ 
generates $\PG(N,\K)$, $N\geq 5$, we see that there are at least two singular lines through $x$. 
We can now choose a point $y_1\in X\setminus\<H\>$ and consider the hypo $H_1$ in $[x,y_1]$. 
Axiom~(S2) implies that $H\cap H_1$ does not contain the two singular lines of $H$ through $x$. 
Hence there is at least one more singular line through $x$. If no more singular lines through $x$ 
existed, Lemma~\ref{lemma1} would imply that there are exactly three hypos containing $x$. 

Now choose a point $y\in H$ not on a singular line with $x$. Interchanging the roles of $x$ and $y$ 
in the previous paragraph, we can choose a singular line $L$ through $y$ not contained in $H$. Let 
$z_1,z_2$ be two distinct points on $L$. If $[x,z_1]=[x,z_2]$, then $[x,z_1]\cap H$ contains the line 
$\<x,y\>$, a contradiction. Hence $[x,z_1]\neq [x,z_2]$. This implies that, if $|\K|\geq 3$, then we 
have at least $4$ hypos through $x$, proving our lemma. Indeed, since in this case we can consider 2 points $z_{1},\:z_{2}$ different from $y$ and not in $H_{1}$. 

Now suppose $|\K|=2$. If $z$ is a point on a singular line with $x$, then, clearly, the number of 
singular lines distinct from $\<x,y\>$  through $z$ is equal to the number of singular lines distinct 
from $\<x,z\>$ through $x$ (indeed, a bijection is given by ``being contained in the same hypo with 
$\<x,z\>$'', and use Lemma~\ref{lemma1}). By connectivity, we obtain that through every point there 
are a constant number of singular lines. If this constant is equal to $3$, then one counts $|X|=19$, 
and a double count of the pairs (point,hypo), where the point is in the hypo, results in 
$19\times 3=n\times 9$, where $n$ is the total number of hypos. Clearly a contradiction. 

So we have at least four singular lines $L_1,L_2,L_3,L_4$ through the point $x\in X$.  Now (S2) 
implies that the hyperbolic spaces containing $L_1,L_2$, and $L_3,L_4$, respectively, meet only in $\{x\}$. 
Consequently $N\geq 6$.

The lemma is proved. 
\end{proof}

The last paragraph of the proof of the previous lemma also proves the following lemma.

\begin{lemma} \label{lemma3}If $X$ does not contain singular planes, and $N\geq 5$, then four 
arbitrary singular lines through a point $x\in X$ span a $4$-space. \qed
\end{lemma}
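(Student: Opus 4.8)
The plan is to reduce the claim to the observation already isolated in the last paragraph of the proof of Lemma~\ref{lemma2}. Let $L_1,L_2,L_3,L_4$ be four \emph{distinct} singular lines through $x$; such lines exist by Lemma~\ref{lemma2}. Since $X$ contains no singular plane, Lemma~\ref{lemma1}(1) applies to the intersecting singular lines $L_1,L_2$ and yields a unique hypo $H_{12}$ containing $L_1\cup L_2$; write $\xi_{12}=\<H_{12}\>$ for the hyperbolic $3$-space it spans, so that $\xi_{12}\cap X=H_{12}$. In the same way one gets a hyperbolic $3$-space $\xi_{34}$ with $L_3\cup L_4\subseteq \xi_{34}\cap X=H_{34}$. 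By the very definition of a hypo (equivalently, since a hyperbolic quadric carries exactly two lines through each of its points), the only singular lines through $x$ lying in $\xi_{12}$ are $L_1$ and $L_2$, and likewise $\xi_{34}$ contains only $L_3$ and $L_4$ among the singular lines through $x$.

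Next I would show $\xi_{12}\cap\xi_{34}=\{x\}$. First, $\xi_{12}\ne\xi_{34}$, for otherwise the hypo $\xi_{12}\cap X$ would contain four distinct singular lines through $x$, contradicting the previous paragraph. Hence (S2) gives $\xi_{12}\cap\xi_{34}\subseteq X$. If this intersection had projective dimension at least $2$ it would contain a singular plane, which is excluded; if it were a line $M$ (necessarily passing through $x$), then $M$ would be a singular line through $x$ contained in both $\xi_{12}$ and $\xi_{34}$, forcing $M\in\{L_1,L_2\}\cap\{L_3,L_4\}=\emptyset$, again impossible. So $\xi_{12}\cap\xi_{34}=\{x\}$, which is exactly the conclusion drawn at the end of the proof of Lemma~\ref{lemma2}.

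It remains to do the dimension count. The distinct lines $L_1,L_2$ span a plane $\pi_{12}\subseteq\xi_{12}$, and $L_3,L_4$ span a plane $\pi_{34}\subseteq\xi_{34}$; therefore $\pi_{12}\cap\pi_{34}\subseteq\xi_{12}\cap\xi_{34}=\{x\}$, so $\pi_{12}$ and $\pi_{34}$ meet in the single point $x$. By the Grassmann identity for projective subspaces, $\<L_1,L_2,L_3,L_4\>=\<\pi_{12},\pi_{34}\>$ has dimension $2+2-0=4$, as asserted; if moreover (S3*) is assumed, this $4$-space must coincide with $T_x$, since $T_x$ is a $4$-space containing all four lines. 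I expect no real obstacle here: the one point requiring mild care is the elementary fact that a hyperbolic quadric has exactly two lines through each of its points, which is built into the definition of a hypo, together with the harmless convention that the four given singular lines are pairwise distinct, as provided by Lemma~\ref{lemma2}.
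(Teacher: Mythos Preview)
Your proof is correct and takes essentially the same approach as the paper: the paper simply points to the last paragraph of the proof of Lemma~\ref{lemma2}, where (S2) is invoked to conclude that the hyperbolic spaces through $L_1,L_2$ and through $L_3,L_4$ meet only in $\{x\}$. You have spelled out exactly this argument in full detail, including the case analysis on the dimension of $\xi_{12}\cap\xi_{34}$ and the final Grassmann count, which the paper leaves implicit.
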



Now we fix a hypo $H$. We want to study the projection of $X\setminus H$ from $\<H\>$ onto some $(N-4)$-dimensional subspace $F$. In order to do so, we first prove some additional lemmas.

\begin{lemma}\label{lemma4} \label{cor1}Suppose $X$ does not contain singular planes. Let $H$ 
be a hypo, and let $L_1$ and $L_2$ be two distinct singular lines of $X$ meeting $H$ in the points 
$x_1,x_2$, respectively. Then the subspace generated by $H,L_1,L_2$ is $5$-dimensional.
\end{lemma}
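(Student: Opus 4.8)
Since $\langle H\rangle$ is a $3$-dimensional subspace with $\langle H\rangle\cap X=H$, a singular line that meets $H$ in exactly one point cannot lie in $\langle H\rangle$ (it would be a line of the quadric $H$, hence a generator, and meet $H$ in more than one point). Thus $L_i\cap\langle H\rangle=\{x_i\}$, so $\langle H,L_1\rangle$ is $4$-dimensional and $\langle H,L_1,L_2\rangle$ has dimension $4$ or $5$; the whole content of the lemma is therefore that $L_2\not\subseteq\langle H,L_1\rangle$. Suppose, for a contradiction, that $W:=\langle H,L_1,L_2\rangle$ is $4$-dimensional; then $W=\langle H,L_1\rangle=\langle H,L_2\rangle$ and $\langle H\rangle$ is a hyperplane of $W$. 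I will use repeatedly the following consequence of (S2) and the absence of singular planes: if two distinct hypos $H'$, $H''$ contain a common singular line $G$, then $\langle H'\rangle\cap\langle H''\rangle=G$, because that intersection lies in $X$ by (S2), contains $G$, is not a plane (no singular planes) and is not a whole $3$-space (that would force $\langle H'\rangle=\langle H''\rangle$, hence $H'=H''$); in particular $\langle H',H''\rangle$ is $5$-dimensional.

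I first treat the case $x_1=x_2=:x$. By Lemma~\ref{lemma1}(1), the two intersecting singular lines $L_1,L_2$ lie in a unique hypo $H'$, and $H'\ne H$ since $L_1\subseteq H'$ but $L_1\not\subseteq H$. Now $\langle H\rangle\cap\langle H'\rangle$ lies in $X$, is not a plane, and is not a whole $3$-space; it contains $x$, so it is either $\{x\}$ or a line $m$ through $x$. In the latter case $m\subseteq X\cap\langle H\rangle=H$ and $m\subseteq X\cap\langle H'\rangle=H'$, so $m$ would be a generator of $H$ through $x$ that is also a generator of $H'$ through $x$; but the two generators of $H'$ through $x$ are $L_1$ and $L_2$, which do not lie in $\langle H\rangle$, whereas every generator of $H$ does — a contradiction. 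Hence $\langle H\rangle\cap\langle H'\rangle=\{x\}$. Since $\langle L_1,L_2\rangle\subseteq\langle H'\rangle$, we get $\langle H\rangle\cap\langle L_1,L_2\rangle\subseteq\{x\}$, so $W$ is spanned by the $3$-space $\langle H\rangle$ and the plane $\langle L_1,L_2\rangle$, which meet only in $x$; thus $\dim W=3+2-0=5$, contradicting $\dim W=4$.

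It remains to handle $x_1\ne x_2$. Here one applies the auxiliary-hypo construction at both points: choose a generator $G_1$ of $H$ through $x_1$ with $x_2\notin G_1$ and a generator $G_2$ of $H$ through $x_2$ with $x_1\notin G_2$, and use Lemma~\ref{lemma1}(1) to form the hypos $H_1\supseteq L_1\cup G_1$ and $H_2\supseteq L_2\cup G_2$. The observation above gives $\langle H_i\rangle\cap\langle H\rangle=G_i$ and $\langle H,H_i\rangle$ $5$-dimensional; since $L_i\subseteq H_i$ we have $W=\langle H,L_i\rangle\subseteq\langle H,H_i\rangle$ as a hyperplane, so $\langle H_i\rangle\not\subseteq W$, whence $\langle H_i\rangle\cap W$ equals the tangent plane $T_{x_i}(H_i)=\langle G_i,L_i\rangle$ and $H_i\cap W=G_i\cup L_i$. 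One then analyses, inside the $4$-space $W$, the mutual position of the hyperplane $\langle H\rangle$, the planes $\langle G_i,L_i\rangle$, the lines $L_1,L_2,G_1,G_2$ and the line $\langle x_1,x_2\rangle$, using Lemma~\ref{lemma3} and the fact (from Lemma~\ref{lemma1}) that no three singular lines are coplanar, to reach a contradiction with $\dim W=4$; when $\langle x_1,x_2\rangle$ happens to be a generator of $H$ one argues instead via the hypo bridging $L_1$ and $L_2$ along $\langle x_1,x_2\rangle$, as in the equal-point case.

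I expect the genuine difficulty to be precisely this last case $x_1\ne x_2$: there is no single hypo containing both $L_1$ and $L_2$, the auxiliary hypos $H_1,H_2$ may ``poke out'' of $W$, so a bare dimension count does not close the argument. Making it airtight needs careful bookkeeping of the subspaces listed above together with repeated use of (S2), the absence of singular planes, and the non-existence of three coplanar singular lines, and it is natural to split according to whether $\langle x_1,x_2\rangle$ is a secant, a tangent, or a generator of $H$.
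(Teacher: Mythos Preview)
Your treatment of the case $x_1=x_2$ is correct and is essentially what lies behind the paper's one-line appeal to Lemma~\ref{lemma3} (whose proof is exactly the observation that the two hyperbolic spaces through $x$ meet only in $\{x\}$).

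The problem is the case $x_1\neq x_2$. You set up auxiliary hypos $H_1,H_2$, correctly compute $\langle H_i\rangle\cap W=\langle G_i,L_i\rangle$, and then stop, saying that ``careful bookkeeping'' of these subspaces together with Lemma~\ref{lemma3} and the ban on three coplanar singular lines should finish it. This is not a proof; it is a plan, and in fact it is not clear that your configuration of tangent planes inside $W$ leads anywhere without importing a further idea. You are making the case much harder than it is.

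The paper's argument avoids auxiliary hypos entirely. Assuming $\dim W=4$, the two lines $L_1,L_2$ are skew (if they met, Lemma~\ref{lemma1}(1) would put them in a hypo $H'$ and $\langle H\rangle\cap\langle H'\rangle$ would contain the line $\langle x_1,x_2\rangle$, forcing three coplanar singular lines), so $\langle L_1,L_2\rangle$ is a $3$-space and meets the $3$-space $\langle H\rangle$ in a plane $\pi$. Pick any point $y\in\pi$ with $y\notin H$ and $y\notin\langle x_1,x_2\rangle$; such a point exists since $\pi\cap H$ is a conic. The unique transversal through $y$ to the skew lines $L_1,L_2$ meets them in points $z_1,z_2$ which are forced to lie outside $\langle H\rangle$ (otherwise the transversal would be $\langle x_1,x_2\rangle$). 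Now any hyperbolic space $[z_1,z_2]$ contains $y$, and $[z_1,z_2]\neq\langle H\rangle$ since $z_1\notin\langle H\rangle$; but $y\in\langle H\rangle\setminus H\subseteq\langle H\rangle\setminus X$, contradicting (S2). That is the whole argument: one transversal line and one application of (S2), with no case split on the type of $\langle x_1,x_2\rangle$ and no auxiliary hypos.
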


\begin{proof} If $x_1=x_2$, then this follows from Lemma~\ref{lemma3}. Now suppose 
$x_1\neq x_2$, and assume that the subspace generated by $H,L_1,L_2$ is $4$-dimensional. 
Then the 
$3$-space $\<L_1,L_2\>$ intersects $\<H\>$ in a plane $\pi$. It is easy to see that $\pi$ contains a 
point $y$ not on $H$ and not on the line $\<x_1,x_2\>$. It follows that $y$ lies on a line $M$ 
meeting both $L_1$ and $L_2$ in points, say $z_1,z_2$, respectively, not in $\<H\>$. Since 
$y\in  [z_1,z_2]\cap \<H\>$, and $[z_1,z_2]\neq \<H\>$, this contradicts (S2). 
\end{proof} 

The next lemma is the last one before we invoke (S3*). It basically says that $N\geq 7$.

\begin{lemma} \label{lemma5bis} Suppose $X$ does not contain singular planes. Let $H$ be a 
hypo and $L$ a singular line on $H$. Let $H_1$ and $H_2$ be two different hypos distinct from 
$H$ containing $L$. Then $W=:\<H,H_1,H_2\>$ has dimension $7$.
\end{lemma}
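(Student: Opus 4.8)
The plan is to reduce the statement to the single assertion $\xi_2\cap\langle H,H_1\rangle=L$, where $\xi:=\langle H\rangle$, $\xi_1:=\langle H_1\rangle$, $\xi_2:=\langle H_2\rangle$, and then to rule out a bigger intersection by producing a fourth hypo and using (S2) to exhibit a singular plane. Throughout I use the standing assumption that $X$ has no singular plane, together with (S1), (S2), Lemma~\ref{lemma1}, and the dimension computations below.

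\emph{The easy bounds.} The spaces $\xi,\xi_1,\xi_2$ are pairwise distinct $3$-spaces, each containing $L$. For $i\ne j$ the intersection $\xi_i\cap\xi_j$ contains $L$ and lies in $X$ by (S2); were it a plane it would be a singular plane, so $\xi_i\cap\xi_j=L$ and $\langle\xi_i,\xi_j\rangle$ is a $5$-space. In particular $\dim\langle H,H_1\rangle=5$, and since $L\subseteq\xi_2\cap\langle H,H_1\rangle$,
$$\dim W=\dim\langle H,H_1\rangle+\dim\xi_2-\dim\bigl(\langle H,H_1\rangle\cap\xi_2\bigr)=8-\dim\bigl(\langle H,H_1\rangle\cap\xi_2\bigr)\le 7,$$
with equality exactly when $\langle H,H_1\rangle\cap\xi_2=L$. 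So it suffices to show that $\langle H,H_1\rangle\cap\xi_2$ does not properly contain $L$.

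\emph{The fourth hypo.} Suppose, for a contradiction, that $\langle H,H_1\rangle\cap\xi_2$ properly contains $L$. Inside $\xi_2$ this intersection then contains a plane $\Pi$ through $L$; as $L$ is a generator of $H_2$, the plane section $\Pi\cap H_2$ is the union of $L$ and a second generator $L'$ of $H_2$, meeting $L$ in a point $a$. Thus $L'\subseteq\Pi\subseteq\langle H,H_1\rangle$, and since $\xi\cap\xi_2=\xi_1\cap\xi_2=L$ we get $L'\cap\xi=L'\cap\xi_1=\{a\}$. Let $M$ be the generator of $H$ through $a$ other than $L$; then $M$ and $L'$ are distinct singular lines meeting in $a$, so by Lemma~\ref{lemma1}(1) they lie in a unique hypo $G$, whose two generators through $a$ are $M$ and $L'$. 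Comparing generators through $a$ shows $G\notin\{H,H_1,H_2\}$ and that $L$ is not a generator of $G$. Applying (S2) and the absence of singular planes to the pairs $(G,H)$ and $(G,H_2)$ gives $\langle G\rangle\cap\xi=M$ and $\langle G\rangle\cap\xi_2=L'$; and $\langle G\rangle\cap\xi_1$ can contain no singular line, for such a line would be a generator of $H_1$ through $a$, hence $L$ or the second generator $M_1$ of $H_1$ through $a$, and also a generator of $G$ through $a$, i.e. one of $M,L'$ — impossible since $L,M_1\notin\{M,L'\}$. Hence $\langle G\rangle\cap\xi_1=\{a\}$.

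\emph{The contradiction.} Since $M$ and $L'$ both lie in $\langle H,H_1\rangle$, the plane $\langle M,L'\rangle$ (which equals $T_a(G)$) lies in $\langle H,H_1\rangle$. If $\langle G\rangle\subseteq\langle H,H_1\rangle$ we are finished at once: two $3$-spaces of the $5$-space $\langle H,H_1\rangle$ must meet in at least a line, contradicting $\langle G\rangle\cap\xi_1=\{a\}$. So necessarily $\langle G\rangle\not\subseteq\langle H,H_1\rangle$, i.e. $\langle G\rangle\cap\langle H,H_1\rangle=\langle M,L'\rangle$ and $G$ reaches outside $\langle H,H_1\rangle$; ruling out this remaining possibility is the heart of the argument. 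Here one must invoke (S1): pairs of points of $G$ and of $H_1$ lie in common hypos, which by (S2) are forced to meet $\langle G\rangle$ and $\langle H_1\rangle$ along singular lines through $a$, and carefully tracking these incidences pushes a plane into $X\cap\langle H,H_1\rangle$, a singular plane — the desired contradiction. I expect this last step to be the main obstacle: the earlier lemmas only constrain configurations inside $\langle H,H_1\rangle$, whereas the contradiction has to be extracted precisely from the new hypo $G$ escaping that $5$-space.
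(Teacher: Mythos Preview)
Your reduction to showing $\langle H,H_1\rangle\cap\xi_2=L$ is fine, and the construction of the fourth hypo $G$ through $M$ and $L'$ is sound; the computations $\langle G\rangle\cap\xi=M$, $\langle G\rangle\cap\xi_2=L'$, $\langle G\rangle\cap\xi_1=\{a\}$ are all correct, and Case~1 ($\langle G\rangle\subseteq\langle H,H_1\rangle$) is indeed disposed of by a dimension count. But Case~2 is not proved: you explicitly say ``carefully tracking these incidences pushes a plane into $X\cap\langle H,H_1\rangle$'' and then stop. That sentence is not an argument, and I do not see how to complete it along the lines you suggest. Invoking (S1) to produce further hypos through points of $G$ and $H_1$ does give you more singular lines, but there is no mechanism in sight that forces three of them to be coplanar; every such attempt seems to run into the same obstruction you already identified, namely that $G$ may genuinely escape the $5$-space $\langle H,H_1\rangle$.

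The paper's proof avoids this difficulty entirely by arguing inside a single $4$-space rather than trying to trap a new hypo in the $5$-space. Assuming $\dim W\le 6$, one finds a $4$-dimensional subspace $U$ containing $\langle H\rangle$ and contained in both $\langle H,H_1\rangle$ and $\langle H,H_2\rangle$. Since a generator $M_i$ of $H_i$ disjoint from $L$ is skew to $\langle H\rangle$, it meets $U$ in a single point $m_i$; the generator $R_i$ of $H_i$ through $m_i$ in the regulus of $L$ then lies in $U$ and meets $H$ on $L$. Now Lemma~\ref{lemma4} is the key: if $R_1\neq R_2$ then $\langle H,R_1,R_2\rangle$ is $5$-dimensional, impossible inside the $4$-space $U$. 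Hence $R_1=R_2$, and Lemma~\ref{lemma1}(1) forces $H_1=H_2$. The point is that Lemma~\ref{lemma4} does the heavy lifting via a dimension bound, rather than any attempt to manufacture a singular plane.
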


\begin{proof} 
Since $H,H_1,H_2$ all contain $L$, we already have $\dim W\leq 7$. Suppose now, by way of 
contradiction, that $\dim W\leq 6$. Let $M_i$, $i=1,2$, be a singular line on $H_i$ disjoint from $L$.  Since $W_i=:\<H,H_i\>$ has dimension $5$, $i=1,2$, and since $W=\<W_1,W_2\>$, we see that $W_1\cap W_2$ has dimension at least $4$, hence $4$ or $5$. If it has dimension $5$, then we choose a point $m_1$ on $M_1$ and we set $U=\<H,m_1\>$. If it has dimension $4$, then we put $U=W_1 \cap W_2$. In both cases, $U$ has codimension $1$ in $\<H_i\>$, for $i\in\{1,2\}$. Since $M_i$, $i=1,2$, does not meet $\<H\>$, $U$ intersects $M_i$ in exactly one point, which we may denote $m_i$.    If we denote the generator of $H_i$ through $m_i$ distinct from $M_i$ by $R_i$, then $R_i$ intersects $L$, and hence $H$. Lemma~\ref{lemma4} implies that $R_1=R_2$. But then Lemma~\ref{lemma1} tells us that $H_1=H_2$, since both contain $L$ and $R_1=R_2$. This contradiction concludes the proof of the lemma.    
\end{proof} 

We can now prove the following two important lemmas.

\begin{lemma}\label{lemma5} Suppose $X$ does not contain singular planes, and assume that $(X,\Xi)$ satisfies \emph{(S3*)}. Let $H$ be a hypo and $L$ a singular line on $H$. Let $x_1$ and $x_2$ be two distinct points on $L$.
Then the $5$-spaces $U_1=\<H,T_{x_1}\>$ and $U_2=\<H,T_{x_2}\>$ meet in $\<H\>$.
\end{lemma}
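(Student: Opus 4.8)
The plan is to assume for contradiction that $U_1\cap U_2$ strictly contains $\<H\>$, hence has dimension at least $5$ (since $\<H\>$ is a $3$-space and each $U_i=\<H,T_{x_i}\>$ has dimension $5$, being the span of a $3$-space and a $4$-space meeting it in at least a plane). The key observation I want to exploit is that $T_{x_i}$ contains every tangent plane $T_{x_i}([x_i,y])$, in particular it contains the tangent plane $T_{x_i}(H)$ to the fixed hypo $H$ at $x_i$, which is a plane through $x_i$ lying in $\<H\>$. So $T_{x_i}\cap\<H\>$ is exactly that tangent plane (it cannot be larger, else $T_{x_i}$ would meet $\<H\>$ in a $3$-space, forcing $T_{x_i}\subseteq\<H\>$ and contradicting that $T_{x_i}$ contains singular lines off $H$, which exist by Lemma~\ref{lemma2} applied at $x_i$). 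A dimension count then gives $\dim U_i=3+4-2=5$, consistent, and $\dim(U_1\cap U_2)\geq 5$ would force, via $\<U_1,U_2\>=\<H,T_{x_1},T_{x_2}\>$, a bound $\dim\<H,T_{x_1},T_{x_2}\>\leq 5+5-5=5$.

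Next I would pick singular lines witnessing the extra intersection and push them into the configuration of Lemma~\ref{lemma5bis}. Concretely: choose a singular line $M_1$ through $x_1$ not on $H$ (exists by Lemma~\ref{lemma2}), so $M_1\subseteq T_{x_1}\subseteq U_1$; likewise a singular line $M_2$ through $x_2$ not on $H$, with $M_2\subseteq U_2$. Let $H_1=[x_1,m_1]$ for a point $m_1\in M_1\setminus\{x_1\}$ — this is a hypo containing the singular line of $H$ through $x_1$... here I need to be careful: what I actually want are two distinct hypos through the \emph{same} singular line $L$, so that Lemma~\ref{lemma5bis} forces dimension $7$. The cleaner route: the hypos $H_1$ through $L$ and $x_1$-direction and $H_2$ through $L$ and $x_2$-direction — if I can arrange both $H_1$ and $H_2$ to contain $L$ and be distinct from $H$ and from each other, then $\<H,H_1,H_2\>$ has dimension $7$ by Lemma~\ref{lemma5bis}. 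On the other hand $H_i\subseteq\<\<H\>,M_i\>\subseteq U_i$ (since $H_i$ is spanned by $L\subseteq\<H\>$ together with points reachable from $M_i$), so $\<H,H_1,H_2\>\subseteq\<U_1,U_2\>=\<H,T_{x_1},T_{x_2}\>$, which under our contradiction hypothesis has dimension at most $5$. This contradicts dimension $7$.

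The main obstacle, and where I would spend most care, is twofold: first, ensuring that suitable singular lines $M_i$ can be chosen so that the hypos $H_i:=\<L,M_i\>$ (built from $L$ and the singular line through $m_i$ meeting $L$) genuinely contain $L$, are genuinely distinct from $H$, and are distinct from each other — this is where Lemma~\ref{lemma1}(1) on pairs of intersecting singular lines, and the absence of triples of coplanar singular lines, get used; second, justifying the containment $H_i\subseteq U_i$ rigorously, i.e.\ that the whole hyperbolic $3$-space $[L$-through-$m_i]$ lies in the span of $\<H\>$ and $T_{x_i}$. For the latter, the point is that the second generator $R_i$ of $H_i$ through $m_i$ meets $L$ (hence $\<H\>$), so $\<H_i\>=\<L,M_i,R_i\>\subseteq\<\<H\>,M_i\>\subseteq U_i$. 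Once these containments are pinned down, the dimension clash $7\leq 5$ is immediate. I would also double-check the degenerate possibility that $M_1$ and $M_2$ or the associated hypos coincide forces $H_1=H_2$ via Lemma~\ref{lemma1}, which would instead be handled by Lemma~\ref{lemma4} (the span of $H$ and two singular lines on it is $5$-dimensional) to again contradict $\dim(U_1\cap U_2)\geq 5$ being possible together with the lines lying in a common $5$-space in the wrong way.
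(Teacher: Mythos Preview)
Your overall strategy---reducing to Lemma~\ref{lemma5bis} by producing two extra hypos through $L$ and forcing a dimension clash---is the right idea, and it is exactly what the paper does. However, there are two concrete errors in your execution.

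First, a minor slip: if $U_1\cap U_2$ strictly contains $\<H\>$, then $\dim(U_1\cap U_2)\geq 4$, not $\geq 5$. This still yields $\dim\<U_1,U_2\>\leq 6<7$, so the contradiction would survive.

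Second, and more seriously, your justification of $\<H_i\>\subseteq U_i$ is wrong. You claim that the second generator $R_i$ of $H_i$ through $m_i\in M_i\setminus\{x_i\}$ meets $L$. But $L$ and $M_i$ are the two generators of $H_i$ through $x_i$, hence lie in opposite reguli; the generator $R_i$ through $m_i$ distinct from $M_i$ therefore lies in the \emph{same} regulus as $L$, and two distinct lines of one regulus are skew. So $R_i$ does not meet $L$, and in fact $\<H_i\>\not\subseteq U_i$ in general: the missing direction of $\<H_i\>$ beyond the plane $\<L,M_i\>$ is carried by the generator of $H_i$ through the \emph{other} point $x_{3-i}$, which sits in $T_{x_{3-i}}\subseteq U_{3-i}$, not in $U_i$.

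The repair is immediate and lands you on the paper's argument: you only need $\<H_i\>\subseteq\<U_1,U_2\>$, which the previous sentence gives. The paper organizes this more cleanly and avoids contradiction altogether by taking \emph{both} auxiliary hypos from the $x_1$ side. Pick two singular lines $L_1,L_1'$ through $x_1$ off $H$ and let $G_1,G_1'$ be the hypos through $(L,L_1)$ and $(L,L_1')$. Each $G_j$ automatically contains a generator $L_2$ (resp.\ $L_2'$) through $x_2$ distinct from $L$; since any four singular lines through a point span its tangent space (Lemma~\ref{lemma3}), one gets $T_{x_1}\subseteq\<H,L_1,L_1'\>$ and $T_{x_2}\subseteq\<H,L_2,L_2'\>$, so $\<H,G_1,G_1'\>=\<U_1,U_2\>$ exactly. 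Lemma~\ref{lemma5bis} then gives $\dim\<U_1,U_2\>=7$ directly, whence $\dim(U_1\cap U_2)=5+5-7=3$ and $U_1\cap U_2=\<H\>$.
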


\begin{proof} Let $L_1$ and $L_1'$ be two singular lines through $x_1$ not in $H$. By 
Lemma~\ref{lemma3}, $T_{x_1}$ is generated by $L_1,L_1'$ and the two singular lines of $H$ 
passing through $x_1$. The unique hypos $G_{1}$ and $G_{1}'$ containing $L,L_1$ and $L,L_1'$, 
respectively, contain some lines $L_2$ and $L_2'$, respectively, through $x_2$ distinct from $L$. 
Clearly $\<H,G_1,G_1'\>=\<U_1,U_2\>$, which is, by Lemma~\ref{lemma5bis}, $7$-dimensional. It 
follows that $U_1\cap U_2$ is $3$-dimensional, and hence coincides with $\<H\>$. 
\end{proof} 

A natural property to investigate is whether the tangent space $T_x$ at some point $x\in X$ contains points of $X$ not on a singular line with $x$. The following lemma will imply that this is not the case.  

\begin{lemma} \label{lemmatangent}Suppose $X$ does not contain singular planes, and assume 
that $(X,\Xi)$ satisfies \emph{(S3*)}. Let $x\in X$ be arbitrary, and let $H$ be an arbitrary hypo containing $x$. Then all points of $\<H,T_x\>$ in $X$ either lie on $H$, or are on a singular line together with $x$. In particular, all points of $T_x\cap X$ are 
contained in a singular line with $x$.
\end{lemma}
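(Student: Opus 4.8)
The plan is to fix $x\in X$, an arbitrary hypo $H$ through $x$, and an arbitrary point $p\in \<H,T_x\>$ lying in $X$; we must show $p\in H$ or $p$ lies on a singular line with $x$. First I would dispose of the trivial case $p=x$, and then split according to whether $p\in\<H\>$ or $p\in T_x$: if $p\in\<H\>\cap X=H$ we are done, and if $p\in T_x\cap X$ the conclusion is exactly the ``in particular'' clause, so in fact the whole lemma reduces to the case $p\in\langle H,T_x\rangle\setminus(\langle H\rangle\cup T_x)$, and indeed it suffices to prove the final sentence: every point of $T_x\cap X$ lies on a singular line with $x$ (the first statement then follows by the same argument applied inside $\langle H,T_x\rangle$, which has dimension $5$ by Lemma~\ref{lemma3} since $T_x$ meets $\langle H\rangle$ in the plane spanned by the two singular lines of $H$ through $x$). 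So I would concentrate on showing: if $p\in T_x\cap X$ and $p$ is \emph{not} on a singular line with $x$, we reach a contradiction.

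The key device is Lemma~\ref{lemma5}: take a singular line $L$ of $H$ through $x$, pick a second point $x_1\in L$, $x_1\neq x$, and consider $U=\langle H,T_x\rangle$ and $U_1=\langle H,T_{x_1}\rangle$, both $5$-dimensional, meeting in $\langle H\rangle$. By (S1), $p$ and $x_1$ lie in a common hyperbolic space $\xi=[p,x_1]$ (I would first argue it is unique: if $p,x_1$ lay in two hypos, the line $\langle p,x_1\rangle$ would be singular, but $p$ is assumed not on a singular line with any point of $L$ — here one may need to choose $L$ and $x_1$ so that $p$ is on no singular line with $x_1$ either, using that $p$ is on no singular line with $x$ together with Lemma~\ref{lemma1}, or handle the singular-line-with-$x_1$ case separately, which already gives the conclusion after swapping the role of $x$ for a neighbour). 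Now $\xi$ is a $3$-space meeting $X$ in a hypo $X(\xi)$ containing both $p\in T_x\subset U$ and $x_1\in T_{x_1}\subset U_1$. The tangent plane $T_p(X(\xi))$ is contained in $T_p\subseteq U$, and likewise $T_{x_1}(X(\xi))\subseteq T_{x_1}\subseteq U_1$; since every point of a hypo lies on its two generators, the whole hypo $X(\xi)$ is contained in $\langle T_p(X(\xi)), T_{x_1}(X(\xi))\rangle \subseteq \langle U, U_1\rangle$. The point is to locate $\xi$ relative to $\langle H\rangle=U\cap U_1$ and derive that $\xi$ must meet $\langle H\rangle$ in a line or plane, forcing a singular line between $p$ and a point of $H$, hence (again via Lemma~\ref{lemma1} and Lemma~\ref{lemma4}) a singular line between $p$ and $x$, the desired contradiction.

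More concretely, I would argue that $X(\xi)$ meets both $U$ and $U_1$ in more than just $\{p\}$ and $\{x_1\}$ respectively: the generator of $X(\xi)$ through $p$ lies in $T_p\subseteq U$, so at least one singular line through $p$ lies in $U$; similarly a singular line through $x_1$ lies in $U_1$. These two singular lines of $X(\xi)$ either meet, in which case $\xi=\langle$ two intersecting singular lines $\rangle\subseteq \langle U,U_1\rangle$ meets $U\cap U_1=\langle H\rangle$ — and a dimension count ($\dim\langle U,U_1\rangle\le 7$, $\dim\xi=3$, $\dim\langle H\rangle=3$ inside the $7$-space) shows $\xi\cap\langle H\rangle$ is at least a point, actually I would push for a line, giving a singular line from $p$ (or from a generator of $X(\xi)$) into $H$ — or they are the two opposite generators of $X(\xi)$, in which case the whole opposite pair structure together with Lemma~\ref{lemma1}(2) lets one replace $x$ by a suitable neighbouring point and conclude. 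Either way one produces a singular line joining $p$ to a point $h\in H$; since $h$ lies on a singular line with $x$ (both in $H$) unless $h=x$, Lemma~\ref{lemma4} (the span of $H$ with two singular lines meeting it is only $5$-dimensional, forcing configurations into the hypo) or Lemma~\ref{lemma1} upgrades this to a singular line through $p$ and $x$.

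The main obstacle I anticipate is the bookkeeping in the last step: one has a singular line from $p$ to some $h\in H$ and a singular line from $x$ to $h$ inside $H$, and one must conclude these two, together, force $p$ onto a singular line with $x$ — this is not automatic (three singular lines through $h$ with no two coplanar is fine), so one really needs to exploit that $p\in T_x$ and invoke (S3*): the plane $T_p([p,x])$ would have to lie in $T_p$, but also the line $\langle p,x\rangle$ being singular is what we want, so the argument must instead show that if $p$ is \emph{not} on a singular line with $x$ then the hypo $[p,h]$ and the tangent data at $x$ are geometrically incompatible inside the $5$-space $\langle H,T_x\rangle$ — concretely, that $[p,h]$ would have to be contained in $\langle H, T_x\rangle$ and then intersect $\langle H\rangle$ in a plane, contradicting (S2) exactly as in the proof of Lemma~\ref{lemma4}. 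Making this incompatibility precise, and checking it survives the case distinctions (whether the auxiliary singular lines meet or are opposite, whether $|\K|=2$, whether $h=x$), is where the real care is needed; the dimension inputs are all already available from Lemmas~\ref{lemma3}, \ref{lemma4}, \ref{lemma5bis} and~\ref{lemma5}.
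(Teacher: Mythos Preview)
Your argument contains a genuine gap: you assert that $T_p\subseteq U=\langle H,T_x\rangle$, but there is no justification for this. All you know is that $p\in T_x\subseteq U$; the tangent $4$-space $T_p$ at $p$ has no a~priori reason to sit inside the $5$-space $U$. Without this containment, your claim that ``the generator of $X(\xi)$ through $p$ lies in $U$'' collapses, and with it the whole dimension count that follows. The later paragraphs acknowledge that ``the real care is needed'' in the endgame, but the problem occurs earlier and is structural, not bookkeeping.

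Separately, your reduction is backwards. The ``in particular'' clause is an immediate specialisation of the main assertion (since $T_x\subseteq\langle H,T_x\rangle$), not the other way around; your sentence ``the first statement then follows by the same argument applied inside $\langle H,T_x\rangle$'' is circular.

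The paper's proof avoids all of this by choosing the \emph{right} hypo: instead of $[p,x_1]$, take the hypo $H'$ through $x$ and the offending point $y$. The point is that $T_x(H')\subseteq T_x\subseteq\langle H,T_x\rangle$ by (S3*), and $y\in\langle H,T_x\rangle$ by hypothesis, so the entire $3$-space $\langle H'\rangle=\langle T_x(H'),y\rangle$ lies inside the $5$-space $\langle H,T_x\rangle$. Now a dimension count forces $\langle H'\rangle\cap\langle H\rangle$ to be at least a line, hence (by (S2)) a singular line $L$ of $H$ through $x$. Then $y$ lies on a generator $M$ of $H'$ meeting $L$ in some $z\neq x$; but $M\subseteq T_z$ and $M\subseteq\langle H,T_x\rangle$, so $\langle H,T_x,T_z\rangle$ has dimension at most $6$, contradicting Lemma~\ref{lemma5}. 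The whole argument is four lines once you pick $[x,y]$ rather than $[x_1,y]$.
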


\begin{proof}
Suppose by way of contradiction that some point $y\notin H$, for which $\<x,y\>$ is not a singular line,
is contained in $\<H,T_x\>$. Let $H'$ be the unique hypo through $x$ and $y$. Then 
$H'\subseteq\<H,T_x\>$, and so, since the latter is $5$-dimensional, $\<H'\>\cap\<H\>$ is $1$-dimensional. This implies that $H'$ and $H$ share some singular line $L$ through $x$, and 
hence there is a singular line $M$ containing $y$ and a point $z$ of $L$. By assumption, 
$z\neq x$. But the space $\<H,T_x,M\>$ is $5$-dimensional, hence, since $M\subseteq T_z$ and 
$M$ is not in $\<H\>$, this implies that the dimension of $\<H,T_x,T_z\>$ is at most $6$, 
contradicting Lemma~\ref{lemma5bis}.
\end{proof}  

\begin{lemma} \label{lemmaN=8}Suppose $X$ does not contain singular planes, and assume 
that $(X,\Xi)$ satisfies \emph{(S3*)}. Then $N=8$.
\end{lemma}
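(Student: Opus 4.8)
The plan is to prove the two inequalities $N\ge 8$ and $N\le 8$ separately, reusing the dimension bookkeeping already set up.

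For $N\ge 8$ I would build on Lemma~\ref{lemma5bis}, which already gives $N\ge 7$. Fix a hypo $H$, a singular line $L$ on $H$ and two distinct points $x_1,x_2$ of $L$. By Lemma~\ref{lemma5} the $5$-spaces $\langle H,T_{x_1}\rangle$ and $\langle H,T_{x_2}\rangle$ meet exactly in $\langle H\rangle$, so $W:=\langle H,T_{x_1},T_{x_2}\rangle$ is $7$-dimensional. Arguing as in the proof of Lemma~\ref{lemma5} with a hypo $G\neq H$ through $L$ (so that for each $x\in L$ the second ruling line $M_x$ of $G$ lies in $T_x$), one checks that $W$ in fact contains $T_x$ for every point $x$ of $L$, and hence every singular line meeting $L$ and every hypo containing two points of $L$; in particular $W=\langle\bigcup_{x\in L}T_x\rangle$. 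The plan is then to produce a point of $X$ outside $W$ (if none existed we would have $X\subseteq W$ and $N\le 7$): taking $x_3\neq x_1$ on the second singular line $L'$ of $H$ through $x_1$ and a hypo $H'\neq H$ through $L'$, the $7$-space $\langle H,T_{x_1},T_{x_3}\rangle$ and $H'$ cannot both be squeezed into $W$ without contradicting Lemma~\ref{lemma5bis}; here Lemma~\ref{lemmatangent} controls precisely which points of $H'$ could lie in $\langle H,T_{x_1}\rangle$. This forces $\langle W,T_{x_3}\rangle$ to have dimension at least $8$.

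For $N\le 8$ I would project $X$ from the $5$-space $U:=\langle H,T_x\rangle$, for a fixed point $x\in X$ on a hypo $H$, onto a complement $F\cong\PG(N-6,\K)$. By (S1) every point of $X$ lies on a hypo through $x$, and for each hypo $H'\ni x$ the tangent plane $T_x(H')$ lies in $T_x\subseteq U$, so $\langle H'\rangle\cap U$ is a plane and $H'$ projects onto a single point of $F$ (a ``residue'' of $H'$ at $x$); thus the projection of $X$ is exactly the set of these residues. Since all hypos through a fixed singular line through $x$ behave coherently (their tangent planes at $x$ lie in $T_x$, and hypos through two points of such a line share the reasoning above), I would choose three hypos through $x$ in suitably general position and show that every residue lies in the projective plane they span, so that the projection of $X$ spans at most a plane and $N-6\le 2$.

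The hard part, in both directions, is the passage from the local data (Lemmas~\ref{lemma4}, \ref{lemma5bis}, \ref{lemma5} and \ref{lemmatangent}, saying tangent spaces are $4$-dimensional and that various small spans are forced) to a statement about $\dim\langle X\rangle$. Concretely, for the upper bound one must bound the span of \emph{all} residues of hypos through a point, and for the lower bound one must genuinely exhibit a point of $X$ outside the $7$-space $W$; both rely crucially on Lemma~\ref{lemmatangent} and on the coherence of the family of hypos through a fixed singular line.
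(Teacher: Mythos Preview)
Your outline has the right shape (two inequalities, reusing Lemmas~\ref{lemma5bis}--\ref{lemmatangent}), but both halves have genuine gaps where the paper does something quite specific that your sketch does not reproduce.

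For $N\ge 8$ you set $W=\langle H,T_{x_1},T_{x_2}\rangle$ and want to exhibit a point of $X$ outside $W$. But if $N=7$ then $W$ is all of $\PG(7,\K)$, so ``cannot be squeezed into $W$'' has no content: everything \emph{is} in $W$, and Lemma~\ref{lemma5bis} (which only says certain spans are $7$-dimensional) gives no contradiction. Your invocation of Lemma~\ref{lemmatangent} to ``control which points of $H'$ lie in $\langle H,T_{x_1}\rangle$'' is in the right spirit, but you never say how this forces $T_{x_3}\not\subseteq W$. The paper's argument is different and sharper: assuming $N=7$, it projects from $\langle H\rangle$ onto a complementary \emph{$3$-space}. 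There the image of a hypo $G_1$ through $x_1$ (built from two singular lines through $x_1$ outside $H$) is an affine plane whose line at infinity is the image of $T_{x_1}$; the image of $T_{x_2}$ is a line, disjoint from that line at infinity by Lemma~\ref{lemma5}, hence forced (by dimension in $\PG(3,\K)$) to meet the affine part. Pulling back gives a point of $G_1$ in $\langle H,T_{x_2}\rangle$, contradicting Lemma~\ref{lemmatangent}. The contradiction comes from lack of room in the $3$-dimensional target, which your argument never sets up.

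For $N\le 8$, your projection from $U=\langle H,T_x\rangle$ is correct as far as it goes: each hypo through $x$ does project to a single point of $F$. But the step ``choose three hypos in general position and show every residue lies in the plane they span'' is exactly the hard part, and you give no mechanism for it; the hypos through $x$ are parametrized by pairs of singular lines through $x$, and bounding the span of their residues by $2$ is not a formality. The paper avoids this entirely: assuming $N>8$, it finds a singular line $R$ skew to the $7$-space $W$, then picks $v\in R$ not collinear with the second rulings $M_1,M_2$ of $H$; the hypos $[v,x_1]$ and $[v,x_2]$ each meet $H$ in a point, so $T_v$ meets each $T_{x_i}$ in a line, and these two lines are skew (their intersection would lie in $\langle H\rangle$). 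Hence $T_v\cap W$ is $3$-dimensional, yet $T_v$ also contains $R$ which is skew to $W$, forcing $\dim T_v\ge 5$ and violating (S3*). This is a direct tangent-space overcount rather than a projection argument.
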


\begin{proof}
We first show that $N>7$. Indeed, suppose $N=7$. Fix an arbitrary hypo $H$ and two distinct 
points $x_1,x_2$ on a singular line $L$ of $H$. Choose two singular lines $L_1,L_1'$ through 
$x_1$ outside $\<H\>$. Let $G_1$ be the hypo through $L_1,L_1'$. Then $\<H,G_1\>$ is $6$-
dimensional. Let $S$ be a $3$-space skew to $\<H\>$. Then the projection of 
$G_1\setminus(L_1\cup L_1')$ from $\<H\>$ onto $S$ is an affine part $\pi^*$ of some plane $\pi$ 
of $S$. The projection of $T_{x_2}$ is a line $T_2$ in $S$. Noting that the line $\pi\setminus\pi^*$ 
is the projection of $T_{x_1}$, it follows from Lemma~\ref{cor1} that $T_2$ intersects $\pi$ in a 
point $z'$ of $\pi^*$. Let $z$ be the point of $G_1$ projected onto $z'$. Then $z$ lies in the $5$-
space $\<H,T_{x_2}\>$, contradicting Lemma~\ref{lemmatangent}.

Now we show that $N\leq 8$. Indeed, suppose by way of contradiction that $N>8$. Let $H$ again be an arbitrary hypo, and let $x_1,x_2,L$ be as above. Since $W=:\<H,T_{x_1},T_{x_2}\>$ is $7$-dimensional, by Lemma~\ref{cor1}, we can choose two points $y,z\in X$ such that the line $\<y,z\>$ is skew to $W$. Then any hyperbolic space containing $y$ and $z$ has at most a line in common with $W$, implying that we find a singular line $R$ skew to $W$. We claim that there can be at most one singular line $R_1$ connecting a point of $R$ with a point of $M_i$, where $M_i$ is the singular line of $H$ through $x_i$ distinct from $L$, for $i=1,2$. Indeed, if there were at least two such lines, then Lemma~\ref{lemma1} would imply that $R$ and $M_i$ lie in some hypo together, and so some point of $R$ would be contained in $T_{x_i}$, a contradiction to the choice of $R$. Hence we can find a point $v$ on $R$ not on any singular line meeting $M_1\cup M_2$ nontrivially.  This implies that the hypos $X([v,x_1])$ and $X([v,x_2])$ meet $H$ in a point. Consequently, $T_v$ meets $T_{x_i}$ in a line, for $i=1,2$, and these lines are skew as they do not meet $\<H\>\supseteq T_{x_1}\cap T_{x_2}$. Hence $T_v$ intersects $W$ in a $3$-space $W_v$. But $T_v$ also contains $R$ , and our choice of $R$ implies that $\<W_v,R\>$ is $5$-dimensional, contradicting Axiom~(S3*). 
\end{proof}

We can now show that there is at least one singular plane.

\begin{lemma}\label{lemma6} Assume 
that $(X,\Xi)$ satisfies \emph{(S3*)}. Then there exists a singular plane in $X$.
\end{lemma}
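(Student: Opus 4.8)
The plan is to argue by contradiction. Assume $X$ contains no singular plane; then Lemma~\ref{lemmaN=8} gives $N=8$, and all of Lemmas~\ref{lemma1}--\ref{lemmatangent} are available. First I would record the local structure along a generator. One checks that $\langle H\rangle\cap X=H$ for every hypo $H$: a point $p\in(\langle H\rangle\cap X)\setminus H$ would, together with any $q\in H$, lie in a hypo $\xi$ by (S1) with $\langle\xi\rangle\cap\langle H\rangle\supseteq\langle p,q\rangle$, a line lying in $X$ by (S2), hence a singular line of $\langle H\rangle$ through $p$ and therefore a generator of $H$ through $p$ — contradicting $p\notin H$. Next, if $L$ is a generator of a hypo $H$ and $\xi$ is any hypo containing $L$, then $\langle\xi\rangle\cap\langle H\rangle$ is a subspace of $\langle H\rangle$ through $L$ contained in $X$ by (S2); it is not a plane (no singular planes), so it equals $L$. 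Hence any two distinct hypos through $L$ have spans meeting exactly in $L$ and meet (as point sets) exactly in $L$. It follows that the singular lines through a point $x\in L$ are precisely $L$, the second generator of $H$ through $x$, and, for each hypo $\xi\ne H$ through $L$, the generator of $\xi$ through $x$ distinct from $L$; thus the number of singular lines through $x$ is one more than the number of hypos through $L$, so by Lemma~\ref{lemma2} there are at least three hypos through $L$.

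Now fix a generator $L$ of $H$ and distinct points $x_1,x_2\in L$. By Lemma~\ref{lemma5}, $U_i:=\langle H,T_{x_i}\rangle$ is $5$-dimensional and $U_1\cap U_2=\langle H\rangle$, so $W:=\langle H,T_{x_1},T_{x_2}\rangle=\langle U_1,U_2\rangle$ is $7$-dimensional — a hyperplane of $\PG(8,\K)$ — and, since $\langle T_{x_1},T_{x_2}\rangle\supseteq\langle H\rangle$, also $T_{x_1}\cap T_{x_2}=L$. Every hypo $\xi$ through $L$ is spanned by its two generators through $x_1$ and through $x_2$ other than $L$ (two skew generators), which lie in $T_{x_1}\cup T_{x_2}\subseteq W$; hence $\langle\xi\rangle\subseteq W$, and likewise $T_x\subseteq W$ for every $x\in L$. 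As $X$ spans $\PG(8,\K)$ there is a point $y\in X\setminus W$; then no singular line joins $y$ to a point of $L$. For a hypo $H'$ through $x_1$ and $y$ (such exists by (S1)) we get $H'\not\subseteq W$, so $H'$ is not among the hypos through $L$, the two generators of $H'$ through $x_1$ are singular lines through $x_1$ distinct from $L$ — hence lie in $T_{x_1}\subseteq W$ — and $\langle H'\rangle\cap W$ is exactly the tangent plane $T_{x_1}(H')$; similarly for a hypo $H''$ through $x_2$ and $y$.

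The remaining, and main, difficulty is to convert the position of $y$ into a contradiction with (S3*). Each generator of $H'$ through $y$ meets $T_{x_1}\subseteq W$ in a point $v$, and such a $v$ then lies on a singular line with both $y$ and $x_1$; the idea is to choose $v$ generically and play off the tangent spaces $T_{x_1}$, $T_v$, $T_y$ (and $T_{x_2}$), using Lemma~\ref{lemma5} applied to the generator of $H'$ through $v$ and $x_1$, Lemma~\ref{lemma5bis}, and the equality $\dim\langle H,T_{x_1},T_{x_2}\rangle=7$, so as to produce either a point of some $\langle H,T_w\rangle\cap X$ lying neither on $H$ nor on a singular line with $w$ (contradicting Lemma~\ref{lemmatangent}) or a configuration of singular lines through a single point forcing its tangent space to have dimension greater than $4$ (contradicting (S3*)). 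I expect the genuinely delicate part to be the bookkeeping that guarantees a choice of $v$ (or $w$) with all the required genericity — avoiding the finitely many accidental singular lines and hypos that could spoil the argument — which, for very small fields and in particular for $|\K|=2$, will likely require an ad hoc counting argument in the spirit of the one in the proof of Lemma~\ref{lemma2}.
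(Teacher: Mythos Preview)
Your setup is sound: the observations about $\langle H\rangle\cap X=H$, about hypos through $L$ meeting $\langle H\rangle$ exactly in $L$, about $W=\langle H,T_{x_1},T_{x_2}\rangle$ being a hyperplane containing every hypo through $L$ and every $T_x$ for $x\in L$, and about the structure of a hypo $H'$ through $x_1$ and a point $y\notin W$, are all correct and cleanly argued. But the proposal stops precisely where the work begins. You yourself write that ``the remaining, and main, difficulty is to convert the position of $y$ into a contradiction,'' and then give only a wish list: play off $T_{x_1},T_v,T_y,T_{x_2}$ to force either a violation of Lemma~\ref{lemmatangent} or of (S3*). No concrete configuration is produced, no dimensions are computed, and no contradiction is actually reached. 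As it stands this is a promising framework, not a proof.

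For comparison, the paper's argument is organised quite differently. Rather than working inside the hyperplane $W$, it projects from $\langle H\rangle$ onto a complementary $4$-space $F$. Two carefully chosen hypos $G_1$ (through $x_1$, meeting $H$ only in $x_1$) and $G_2$ (through $x_2$, meeting $H$ only in $x_2$) project to affine planes $\pi_1^*,\pi_2^*$ in $F$; Lemmas~\ref{lemma5} and~\ref{lemmatangent} control how their projective completions can meet, forcing $\pi_1^*\cap\pi_2^*$ to be a single affine point. Pulling back yields points $z_1\in G_1$, $z_2\in G_2$ in a common $4$-space with $H$; one then argues $z_1=z_2$ and closes with a quadrangle of singular lines via Lemma~\ref{lemma1}(2). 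The projection device is what makes the dimension bookkeeping tractable, and the endgame uses Lemma~\ref{lemma1} rather than a direct (S3*) violation. If you want to salvage your hyperplane approach you will need an equally explicit mechanism; the vague intention to ``choose $v$ generically'' does not yet supply one.
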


\begin{proof} Suppose on the contrary that there is no singular plane in $X$. Let $H$ be an arbitrary hypo, and, as above, let $x_1,x_2$ be two points on $H$ on a common singular line $L$.
Let $L_1,L_1'$ be two distinct singular lines through $x_1$, not inside $H$. Let $H_1$ be the hypo through $L_1$ and $L$ and let $L_2$ be the singular line of $H_1$ not in $H$ and passing through $x_2$. Finally, let $L_2''$ be an arbitrary singular line through $x_2$, distinct from $L_2$ and not in $H$. We denote the hypo through $L_1$ and $L_1'$ by $G_1$. As before, we have 
$\<G_1\>\cap\<H\>=\{x\}$. By Lemma~\ref{lemmaN=8}, we have $N=8$ and so we can choose a $4$-dimensional subspace $F$ skew to $\<H\>$. With ``projection'', we mean in this proof the projection from $\<H\>$ onto $F$. As before, the projection of $\<G_1\setminus(L_1\cup L_1')\>$ is a set of 
points  of an affine plane $\pi^*_1$ in $F$, with projective completion $\pi_1$. Likewise the projection of the hypo $G_2$ through 
$L_2,L_2''$, except for $L_2$ and $L_2'$, is an affine plane $\pi^*_2$ in $F$, with projective completion $\pi_2$. The projective 
planes $\pi_1$ and $\pi_2$ meet by a dimension argument. But by Lemma~\ref{lemma5}, the intersection $(\pi_1\setminus\pi^*_1)\cap(\pi_2\setminus\pi_2^*)$ is empty. 

Suppose now that a point of $\pi_1$ coincides with a point at infinity of $\pi_2$ (so belonging to $\pi_2\setminus\pi_2^*$). This means that a 
point $z$ of $G_1$ is contained in $\<H,T_{x_2}\>$, contradicting Lemma~\ref{lemmatangent}. 

Hence, by symmetry, the affine planes $\pi^*_1$ and $\pi_2^*$ meet in a unique (affine) point and so we 
have points $z_1$ in $G_1$ and $z_2$ in $G_2$ lying in a common $4$-space $S$ with $H$. We now prove that $z_1=z_2$. To that aim, suppose $z_1\neq z_2$. Let $G$ be a hypo through $z_1,z_2$.
Considering $G\cap\<H\>$ and (S2), we see that $\<z_1,z_2\>$ is a singular line hitting $H$ in some 
point $u$. By possibly interchanging the roles of $x_1$ and $x_2$, we may assume that $\<u,x_2\>$ is not a singular line ($u\notin L$ because otherwise $L$ belongs to $G_1$ since by Lemma~\ref{lemma1}, there is a (unique) hypo through $z_1,x_1,L$).  Note that $z_1$ does not belong to $T_{x_2}$ (as $z$ does not belong to the projection $\pi_2\setminus\pi_2^*$ of $T_{x_2}$). So we can consider the unique hyperbolic space $[z_1,x_2]$ and the corresponding hypo $G_2'$. 
Suppose that $G_2'$ contains a singular line of $H$. Then we can find a quadrangle of singular lines 
containing $z_1, u$ and two singular lines of $H$. By Lemma~\ref{lemma1}, this  implies that $z_1$ belongs to $H$, a 
contradiction. Hence the two singular lines of $G_2'$ through $x_2$ do not lie in $\<H\>$ and 
consequently, the projection from $\<H\>$ of $[z_1,x_2]\setminus\{x_2\}$ coincides with the plane $\pi_2$. Now, the arguments in the previous paragraph imply that $\pi_1$ and $\pi_2$ span 
$F$, and this implies that the singular lines in $G_1$ and $G_2'$ through $z_1$ span a $4$-dimensional space which must necessarily coincide with $T_{z_1}$, and which is  projected onto $F$.  Consequently $T_{z_1}$ is disjoint from $\<H\>$, contradicting $u\in T_{z_1}$. 

Hence we have shown that $z_1=z_2$. Now let $M_i$ be the singular line in $G_i$ meeting $L_i$, $i=1,2$. Note that $M_1\neq M_2$ as otherwise the quadrangle $M_1,L_1,L,L_2$ and Lemma~\ref{lemma1} imply that $G_1=G_2$. Remember that $L_1,L_2$ are contained in the hypo $H_1$, and so there is a singular line intersecting $L_1$ and containing the intersection point of $L_2$ and $M_2$. This gives us again a quadrangle of singular lines (it cannot degenerate to a triangle, as this contradicts the hypothesis that we do not have singular planes). Again, Lemma~\ref{lemma1} implies that $z_1$ belongs to $H_1$, a contradiction. 

This contradiction finally implies that a singular plane exists. 
\end{proof}

Next we show that if there is a \emph{singular $3$-space} in $X$, we obtain $\cS_{1,3}$. 

\begin{lemma}\label{lemma9} Assume 
that $(X,\Xi)$ satisfies \emph{(S3*)}. If there is a $3$-space entirely contained in $X$, then $X$ is $\cS_{1,3}$.
\end{lemma}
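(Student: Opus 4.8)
The plan is to show that a singular $3$-space $\Sigma\subseteq X$ forces the whole configuration to collapse onto a low-dimensional situation that is exactly the Segre variety $\cS_{1,3}$ (equivalently, the cone over a twisted cubic / the Segre of a line with a $3$-space). First I would fix the singular $3$-space $\Sigma$ and pick an arbitrary point $x\in\Sigma$. Since $\Sigma$ consists of singular lines through $x$, axiom (S3*) says all of $\Sigma$ sits inside the $4$-space $T_x$, so $T_x=\<\Sigma,L\>$ for any singular line $L$ through $x$ not in $\Sigma$ (if such an $L$ exists). The first key step is to understand the singular lines through $x$ that are \emph{not} in $\Sigma$: using Lemma \ref{lemma1} (two intersecting singular lines not in a common singular plane lie in a unique hypo) together with the fact that $\Sigma$ is singular, each such line $L$ together with each line of $\Sigma$ through $x$ spans a singular plane or a hypo; a dimension count inside the $4$-space $T_x$ should show there is in fact at most one singular line through $x$ outside $\Sigma$, or perhaps a whole further singular subspace — I would pin this down and expect it to yield that $T_x\cap X$ is a union of $\Sigma$ and one extra line through $x$, i.e.\ locally the picture already looks like $\cS_{1,3}$.

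Next I would leverage the hypos. Take a hypo $H$ meeting $\Sigma$; by (S2) and the structure of a hyperbolic quadric, $H\cap\Sigma$ must be a single line of $H$ (two skew lines of $H$ can't both lie in a plane of $\Sigma$, and $H\not\subseteq\Sigma$ since a hypo isn't flat), so every hypo meeting $\Sigma$ shares exactly one of its two reguli-generators with $\Sigma$. This means the ``other'' generators of all these hypos, as the base line runs over the lines of $\Sigma$, sweep out a ruled family; I would show this family, together with $\Sigma$, spans exactly a $\PG(7,\K)$ — using Lemma \ref{lemma5bis} type dimension arguments — and that $X$ is contained in this span, forcing $N=7$. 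Then I would identify $X$ concretely: parametrize $\Sigma$ as one factor ($\PG(3,\K)$) and the pencil of generator-directions as the other factor ($\PG(1,\K)$), and check that the incidence of points and hypos matches the Segre embedding $\cS_{1,3}:\PG(1,\K)\times\PG(3,\K)\to\PG(7,\K)$, the hypos corresponding to $\PG(1,\K)\times\ell$ for lines $\ell$ of $\PG(3,\K)$ (whose Segre image is indeed a hyperbolic quadric in a $3$-space).

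The main obstacle I anticipate is the bookkeeping of \emph{which} singular subspaces can occur through a point and \emph{how} the hypos attach to $\Sigma$ — in particular ruling out that the extra singular structure through $x$ is larger than a single line (which would push us toward $\cS_{2,2}$ or worse, or toward a contradiction with (S3*)), and ruling out ``stray'' hypos that meet $\Sigma$ in only a point rather than a full generator. Both of these should follow from repeated application of Lemma \ref{lemma1} (no triangle of singular lines can be non-planar-closed without forcing a hypo, and every non-degenerate quadrangle of singular lines lies in a hypo) combined with the $4$-dimensionality of each $T_x$ from (S3*); the delicate part is doing the dimension counts cleanly enough to conclude $N=7$ and to set up the explicit coordinatization. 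Once the span is nailed down to $\PG(7,\K)$ and $X$ is shown to be ruled by the generators over $\Sigma$, recognizing $\cS_{1,3}$ is a matter of matching the standard description recalled at the start of Section \ref{Segre}.
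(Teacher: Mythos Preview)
Your overall architecture is right --- show each point of $\Sigma$ carries exactly one singular line outside $\Sigma$, show every hypo shares a generator with $\Sigma$, then recognize $\cS_{1,3}$ --- but two of your proposed mechanisms do not work as stated. First, the uniqueness of the external singular line through $x$ is \emph{not} a dimension count in $T_x$. Dimension alone allows an entire singular plane $\pi$ through $x$ meeting $\Sigma$ in a line; what actually kills this is that $\Sigma\cup\pi$ would force \emph{all} of $T_x$ into $X$ (any point $p\in T_x$ lies in $\langle L_S,L_\pi\rangle$ for two different choices of intersecting lines $L_S\subseteq\Sigma$, $L_\pi\subseteq\pi$, giving two distinct hyperbolic spaces through $p$, hence $p\in X$ by (S2)), and a singular $4$-space $T_x$ is incompatible with any hypo through $x$ since $\xi\cap X$ must be exactly the quadric. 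That is the missing idea, and it is the heart of the lemma.

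Second, you cannot invoke Lemma~\ref{lemma5bis} (even ``type'' arguments): that lemma and its companions are proved under the standing hypothesis that $X$ contains no singular plane, which fails spectacularly here. The paper's route to $N=7$ is different and cleaner than sweeping generators: once each $x\in\Sigma$ has its unique external line $L_x$, one shows every $y\in X\setminus\Sigma$ lies on a unique singular line meeting $\Sigma$, and then that the external lines through the points of a fixed $y$'s neighborhood assemble into a \emph{second} singular $3$-space $S_y$ disjoint from $\Sigma$, with the hypos inducing a collineation $\Sigma\to S_y$. Then $\langle\Sigma,S_y\rangle$ has dimension $7$ and the identification with $\cS_{1,3}$ follows (the paper cites Zanella, but your direct Segre coordinatization would also work at that point). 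Your ``ruled family of other generators'' is morally $S_y$, but you need the uniqueness-of-external-line step done properly before that family is well-defined.
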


\begin{proof}
Suppose that some $3$-space $S$ is entirely contained in $X$. Let $x$ be any 
point of $S$ and let $H$ be a hypo through $x$. Then $H$ cannot contain two lines of $S$. If none 
of the singular lines $L_1,L_2$ of $H$ through $x$ belongs to $S$, then (S2) implies that the 
intersection of the plane $\<L_1,L_2\>$ with $S$, which is a line by (S3), belongs to $H$, a 
contradiction.  Hence exactly one of $L_1,L_2$ belongs to $S$. 

In fact, there is a unique singular line $L_x$ through $x$ not in $S$. Indeed, in the previous 
paragraph we showed that there was at least one. If there were at least two, then the plane 
generated by these would contain a third one (in $S$), and so this would be a singular plane 
$\pi$. If some point $p$ in $T_x$ did not belong to $X$, then we find two intersecting lines $L_S$ 
and $L_\pi$ in $S$ and $\pi$, respectively, such that $p\in\<L_S,L_\pi\>$. As in the proof of 
Lemma~\ref{lemma1}, this implies that $L_S\cup L_\pi$ is contained in a unique hypo $H_L$. We 
now consider another pair of intersecting lines $M_S,M_\pi$, in $S$ and $\pi$, respectively, such 
that $L_S\cap L_\pi\neq M_S\cap M_\pi$, and such that $p\in\<M_S,M_\pi\>$. Then $M_S$ and 
$M_\pi$ are contained in a unique hypo $H_M\neq H_L$, implying that $p$, which belongs to 
$\<H_L\>\cap\<H_M\>$, belongs to $X$ after all. Hence we see that this would imply that all lines 
of $T_x$ are singular, a contradiction (this would imply that there are no hypos through $x$). 

Considering the hypo through any point $y\in X\setminus S$ and any point $x\in S$, we see that 
every point $y$ of $X$ not in $S$ is on a (necessarily unique) singular line that intersects $X$. 

Now let $y_1,y_2$ be two points of $X\setminus S$, and let $x_1,x_2$ be the unique points of 
$S$ on a singular line with $y_1,y_2$, respectively. Suppose that $\<y_1,y_2\>$ is not a singular 
line. Since the hypo containing $y_1,x_2$ automatically contains the singular line through $x_2$ 
not contained in $S$, it also contains $y_2$ and hence it coincides with the hypo through 
$y_1,y_2$. We have shown that all hypos have a unique line in $S$. This immediately implies that 
there is no hypo containing two intersecting singular lines not meeting $S$. The same argument 
as in the proof of Lemma~\ref{lemma1} implies that all singular lines through a point 
$y\in X\setminus S$ distinct from the one intersecting $S$ in a point, form a singular subspace 
$S_y$, which is thus defined by any of its points. It follows that, using the hypos through a point of 
$S$ and a point of $S_y$, there is a (bijective) collineation between $S$ and $S_y$ given by 
``lying on a common singular line'', and hence $S_y$ has dimension $3$. Also, all points of $X$ 
are on lines intersecting both $S$ and $S_y$ (we can see this also by letting $S_y$ play the role 
of $S$). Since $S$ and $S_y$ generate a space of dimension seven, \cite{Zan:96} implies that we obtain $S(1,3)$.
\end{proof}

From now on we assume that $X$ does not contain a singular 3-space so that $X$ is not a Segre variety 
of type $(1,3)$. 

Our next goal is to single out $\cS_{1,2}$ and to show that otherwise every point of a singular plane belongs to a second singular plane. 
We first show that, if $X$ is not $\cS_{1,2}$, then there must be a lot of other singular lines through a point of such singular plane 
besides the ones in that singular plane.

\begin{lemma}\label{lemma7} Assume 
that $(X,\Xi)$ satisfies \emph{(S3*)}, and let $\pi$ be a singular plane such that 
$x\in \pi$. Then either there are at least three singular lines through $x$ not contained in $\pi$, or $N=5$ and $X$ is projectively equivalent to $\cS_{1,2}$. 
\end{lemma}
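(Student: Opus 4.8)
The plan is to argue by contradiction: suppose $\pi$ is a singular plane, $x\in\pi$, and there are at most two singular lines through $x$ outside $\pi$. I would first use the earlier lemmas to show that through $x$ there cannot be exactly one or zero singular lines outside $\pi$ (using that $X$ spans and hence $x$ lies on a hypo $H$, which contributes two singular lines through $x$; at most one of these can lie in $\pi$ since two lines of $\pi$ through $x$ would force $\pi\subseteq H$ by Lemma~\ref{lemma1}, contradicting $\pi$ being a whole plane while $H$ is a hypo). So generically $H$ contributes at least one singular line through $x$ outside $\pi$; by varying $H$ (choosing a point outside $\langle H,\pi\rangle$ as in Lemma~\ref{lemma2}) one produces a second one, giving exactly two, say $L_1,L_2$, outside $\pi$.

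Next I would analyse the tangent space $T_x$. By Axiom (S3*), all planes $T_x([x,y])$ lie in the $4$-space $T_x$. The singular plane $\pi$ lies in $T_x$ (every line of $\pi$ through $x$ is singular, so $\pi\subseteq T_x(H')$ for any hypo $H'$ meeting $\pi$ in a line through $x$), accounting for $3$ of the $4$ available dimensions; the two singular lines $L_1,L_2$ outside $\pi$ then span, together with $\pi$, at most a $5$-space, so they must actually be coplanar with a line of $\pi$ or lie in a common plane with each other inside $T_x$ — I expect this forces $\langle L_1,L_2\rangle$ to be a plane meeting $\pi$ in a line $\ell$ through $x$. The key claim is then that this plane $\langle L_1,L_2\rangle$ is itself singular: pick any point $p$ in it; $p$ lies on a line joining a point of $L_1$ to a point of $L_2$, and as in the proof of Lemma~\ref{lemma1} (using two such transversals through $p$ meeting $L_1,L_2$ in distinct point-pairs, each lying in a hypo, the two hypos distinct since neither contains $L_1\cup L_2\cup \ell$), we get $p\in X$ via (S2). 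Hence we have a second singular plane $\pi'=\langle L_1,L_2\rangle$ through $x$, and by the analogue of the argument in Lemma~\ref{lemma9} the two singular planes meeting in the line $\ell$ force all singular lines through every point of $\pi$ to lie in $\pi\cup\pi'$, so in fact $X$ is swept out by transversals between $\pi$ and $\pi'$; invoking \cite{Zan:96} (as in Lemma~\ref{lemma9}) identifies $X$ with $\cS_{1,2}$ and forces $N=5$.

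I would also have to treat the degenerate sub-case where $L_1$ (or $L_2$) is coplanar with a line of $\pi$ but $L_1,L_2$ are \emph{not} coplanar with each other: here $\langle \pi,L_1,L_2\rangle$ would be $5$-dimensional inside $T_x$, which is impossible since $\dim T_x=4$; so this case does not occur, and the analysis above is exhaustive. The main obstacle I anticipate is the second paragraph — pinning down exactly how $L_1,L_2$ sit relative to $\pi$ inside the $4$-dimensional $T_x$, and in particular ruling out that $L_1,L_2$ meet $\pi$ only at $x$ (which would make $\langle\pi,L_1,L_2\rangle$ too big). This should follow from a careful dimension count combined with (S3*) applied to a well-chosen hypo through $x$ (one containing one of $L_1,L_2$ and a line of $\pi$), but getting the incidences right is the delicate part; once $\pi'$ is shown to be a singular plane the rest parallels the $\cS_{1,3}$ argument of Lemma~\ref{lemma9} almost verbatim.
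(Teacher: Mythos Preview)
Your proposal has two genuine gaps, both in the handling of the case-split.

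\textbf{The ``exactly one line'' case is the $\cS_{1,2}$ case; you cannot rule it out.} You try to eliminate this case by varying the hypo $H$ (as in Lemma~\ref{lemma2}) to manufacture a second singular line through $x$ outside $\pi$. But that argument needs a point of $X$ outside $\langle H,\pi\rangle$, which is at most $5$-dimensional; when $N=5$ there is no such point, and $N=5$ is precisely the $\cS_{1,2}$ situation. In $\cS_{1,2}$ each point really does lie on exactly one singular plane and exactly one transversal singular line, so the ``exactly one line outside $\pi$'' case is not a contradiction but the desired conclusion. The paper treats this case directly: it shows that if $x$ has a unique such line then so does every point of $\pi$ (via an (S2) argument on two hypos sharing the line $\langle x,y\rangle$), and then the configuration of $\pi$ together with the disjoint plane swept out by those transversals yields $\cS_{1,2}$ as in Lemma~\ref{lemma9}.

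\textbf{In the ``exactly two lines'' case, $\langle L_1,L_2\rangle$ is \emph{not} a singular plane.} By Lemma~\ref{lemma1}(1), two intersecting singular lines not in a singular plane lie in a \emph{unique hypo} $H_x$; the plane $\langle L_1,L_2\rangle$ is then the tangent plane $T_x(H_x)$, which meets $X$ only in $L_1\cup L_2$. Your Lemma~\ref{lemma1}-style argument (take $p\in\langle L_1,L_2\rangle$, two transversals through $p$, two hypos, apply (S2)) breaks down because both transversals lie in the \emph{same} hypo $H_x$, so you never get two distinct hyperbolic spaces to intersect. Consequently your route to a second singular plane $\pi'$ through $x$ collapses, and with it the conclusion: note also that two singular planes meeting in a line is not the $\cS_{1,2}$ picture at all (it would rather push toward a singular $3$-space, already excluded). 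The paper's argument for this case is substantially different and more delicate: starting from the hypo $H_x\supseteq L_x\cup L_x'$, it takes a line $M$ of $\pi$ not through $x$, builds hypos through $L_x$ and three points of $M$, and analyses whether the resulting lines through a fixed point $t\in L_x$ are coplanar; in either subcase a quadrangle argument via Lemma~\ref{lemma1}(2) produces a third singular line through $x$ outside $\pi$, a contradiction.
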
 

\begin{proof} Considering an arbitrary hypo through $x$ we see that there must be at least one 
singular line through $x$ not in $\pi$. Suppose now that there is only one such, namely $L_x$. 
Suppose that there are at least two singular lines $L_y,L_y'$ not in $\pi$ but through some other 
point $y\in \pi$. Then there must be hypos $H$ containing $L_y$ and $\<x,y\>$ and $H'$ 
containing $L_y'$ and $\<x,y\>$. These must automatically contain $L_x$, but the intersection of 
the corresponding hyperbolic spaces is then a plane, a contradiction. Hence through every point 
of $\pi$ there is a unique singular line not contained in $\pi$. As in the proof of 
Lemma~\ref{lemma9}, we deduce that $X$ is contained in the space generated by two disjoint 
planes. Similarly to the last part of the proof of Lemma~\ref{lemma9}, we obtain the Segre variety $\cS(1,2)$.

Now suppose that there are exactly two lines $L_x,L_x'$ through $x$ not in $P$. Then there is a 
hypo $H_x$ containing these. 



Consider three points $x_{1},x_{2},x_{3}$ on a line $M$ in $\pi$ not through $x$, and a point $t$ on $L_{x}$. Consider the hypos $H_{1},H_{2}$ and $H_{3}$ through $L_{x}$ and $x_{1},x_{2},x_{3}$, respectively. Denote the lines through $t$ different from $L_{x}$ and contained in these hypos by $M_{1},M_{2}$ and $M_{3}$ respectively. The points $x_{1},x_{2}$ and $x_{3}$ are collinear with points $t_{1},t_{2}$ and $t_{3}$ on $M_{1},M_{2}$ and $M_{3}$ respectively. 

Suppose first that $M_{1},M_{2}$ and $M_{3}$ are not coplanar. Now consider the hypos $H'_{1},H'_{2}$ and $H'_{3}$ determined by $M$ and $t_{1},t_{2}$ and $t_{3}$, respectively. Since $t_{1},t_{2}$ and $t_{3}$ are not collinear we obtain three different lines $L_{1},L_{2}$ and $L_{3}$ through $x_{2}$ outside of $\pi$. Now looking at hypos determined by $L_{i}$ and $x$ yields three different lines through $x$, a contradiction.



Hence $M_{1}$ and $M_{2}$ are contained in a singular plane. Now choose a point 
$z$ on $L_x'$ and a point $z'$ on $M_{1}$, but not on $L_x$. If $\<z,z'\>$ were singular, then the 
quadrangle $L_x,L_x',\<z,z'\>,M_{1}$ would show, using Lemma~\ref{lemma1}(2), that $M_{1}$ belongs to 
$H_x$, a contradiction. Let $H_z$ be the hypo through $z$ and $z'$. Let $L_z$ be a singular line 
of $H_z$ through $z$. Suppose that $L_z$ is contained in $H_x$. Let $z''$ be the unique point on 
$L_z$ such that $\<z',z''\>$ is singular, and let $z'''$ be the unique point on $L_z$ such that 
$\<t,z'''\>$ is singular. Considering the quadrangle $L_z,\<t,z'''\>,M_{1},\<z',z''\>$, it follows from 
Lemma~\ref{lemma1}(2) that $M_{1}$ belongs to $H_x$, a contradiction. Hence no singular line 
through $z$ in $H_z$ belongs to $H_x$. But now interchanging the roles of $L_x$ and $L_x'$, 
and of $t$ and $z$, we arrive at a contradiction as in the previous paragraph.

The lemma is proved.
\end{proof}

From now one, we assume that through any point $x$ of any singular plane $\pi$, there are at least three singular lines not contained in $\pi$.
We can now show the penultimate step.

\begin{lemma}\label{lemma8} Assume 
that $(X,\Xi)$ satisfies \emph{(S3*)}. Let $\pi$ be a singular plane and 
$x\in \pi$. Then there is a second singular plane through $x$. The union of both planes contains all 
singular lines through $x$. 
\end{lemma}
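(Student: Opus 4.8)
The plan is to do everything inside the $4$-dimensional space $T_x$ provided by (S3*). First note that $\pi\subseteq T_x$ and, more generally, that \emph{every} singular line through $x$ lies in $T_x$: if $L$ is such a line and $y\in L\setminus\{x\}$, then $L$ is a generator through $x$ of the hypo $X([x,y])$, hence $L\subseteq T_x([x,y])\subseteq T_x$. Thus the whole local configuration at $x$ — the singular plane $\pi$ and all singular lines through $x$ — sits in the $4$-space $T_x$. By the standing hypothesis (after Lemma~\ref{lemma7}) we may fix three singular lines $L_1,L_2,L_3$ through $x$ not contained in $\pi$.

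The first task is to produce the candidate second plane. Assign to a singular line $L$ through $x$ with $L\not\subseteq\pi$ the solid $\langle\pi,L\rangle$, a hyperplane of $T_x$ containing $\pi$; this is a map from such lines into the pencil of hyperplanes of $T_x$ through $\pi$. I claim this map is not constant. If it were, all singular lines through $x$ would lie in one fixed solid $\Pi\supseteq\pi$; fixing a line $M$ of $\pi$ through $x$, the planes $\langle M,L_i\rangle$ would then all lie in the pencil of planes of $\Pi$ through $M$, and analysing the (unique, by Lemma~\ref{lemma1}) hypos through the pairs $L_i,M$ — with the exceptional case that some $\langle M,L_i\rangle$ is itself singular treated directly — forces, via (S2), either a singular $3$-space inside $\Pi$ (excluded by the standing hypothesis) or three distinct singular planes sharing the line $M$; the latter is ruled out by counting points and hypos exactly as in the proof of Lemma~\ref{lemma2}, which in particular settles the small case $|\K|=2$. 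Hence two of our lines, say $L_1,L_2$, satisfy $\langle\pi,L_1\rangle\neq\langle\pi,L_2\rangle$, and a dimension count in $T_x$ gives that $\pi':=\langle L_1,L_2\rangle$ meets $\pi$ in $\{x\}$ and satisfies $\langle\pi,\pi'\rangle=T_x$.

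The remaining step — and the one I expect to be the main obstacle — is to prove that $\pi'$ is singular and that $\pi\cup\pi'$ absorbs every singular line through $x$. Since the lines through a point of a plane cover that plane, it suffices to show every line of $\pi'$ through $x$ is singular; for $|\K|=2$ this is immediate, as $L_1,L_2,L_3$ are then already the three lines of $\pi'$ through $x$. For $|\K|\ge 3$ I would argue by contradiction: assuming some line $L\subseteq\pi'$ through $x$ is not singular, run along the generators meeting a fixed line $M$ of $\pi$ through $x$ in the hypos determined by $L_1,M$ and by $L_2,M$, and close up quadrangles of singular lines inside $T_x$; using Lemma~\ref{lemma1} and (S2) this should force the transversals of $L_1$ and $L_2$ in $\pi'$ to be singular, hence that through a point of $L_1$ every line of $\pi'$ is singular, whence $\pi'\subseteq X$ — contradicting the non-singularity of $L$. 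Once $\pi'$ is known to be a singular plane it is automatically distinct from $\pi$ (it contains $L_1\notin\pi$). Finally, to see that $\pi\cup\pi'$ contains all singular lines through $x$, one shows every hypo $H$ through $x$ has exactly one generator through $x$ in $\pi$ and one in $\pi'$: not both can lie in $\pi$ (their span would be a plane contained in the quadric $X(H)$, impossible), nor both in $\pi'$, and a dimension count inside $T_x=\langle\pi,\pi'\rangle$ rules out the remaining possibilities; since every singular line through $x$ is a generator through $x$ of some hypo through $x$, it then lies in $\pi$ or in $\pi'$. This proves the lemma, the delicate point throughout being the passage from "$\pi'$ is spanned by singular lines through $x$'' to "$\pi'$ is entirely singular and exhausts, with $\pi$, the singular lines at $x$''.
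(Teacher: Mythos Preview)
Your candidate for the second plane is wrong, and this is where the argument breaks. You set $\pi'=\langle L_1,L_2\rangle$ after arranging $\langle\pi,L_1\rangle\neq\langle\pi,L_2\rangle$, and then try to prove $\pi'$ is singular. But there is no reason it should be: by Lemma~\ref{lemma1}(1), if $\langle L_1,L_2\rangle$ is not a singular plane then $L_1,L_2$ lie in a unique hypo $H$, and then $\langle L_1,L_2\rangle\cap X=\langle L_1,L_2\rangle\cap H=L_1\cup L_2$, so no third singular line through $x$ can appear in $\pi'$. Nothing in your set-up rules this out. Your $|\K|=2$ sentence exposes the problem cleanly: you assert that $L_3$ is the third line of $\pi'$ through $x$, but you never showed $L_3\in\langle L_1,L_2\rangle$, and in general it is not. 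The sketched quadrangle argument for $|\K|\ge 3$ does not repair this: transversals of $L_1,L_2$ produced from hypos on $M\subseteq\pi$ live in those hyperbolic spaces, not in $\pi'$, and nothing forces a third singular line into $\pi'$.

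The paper avoids this by not committing to $\langle L_1,L_2\rangle$. If $L_1,L_2,L_3$ are coplanar, that plane is singular and we are done. Otherwise they span a $3$-space $S\subseteq T_x$, which (by dimension in $T_x$) contains a line $L_4$ of $\pi$. The two planes $\langle L_1,L_2\rangle$ and $\langle L_3,L_4\rangle$ in $S$ meet in a line $L_5$; assuming neither is singular, each sits in a hyperbolic space, and (S2) forces $L_5\subseteq X$. Then $\langle L_3,L_4\rangle$ carries three concurrent singular lines $L_3,L_4,L_5$ and is therefore singular. So the second singular plane is found as $\langle L_3,L_4\rangle$, with one foot in $\pi$; the trick is producing the extra singular line $L_5$ via (S2), not via any delicate quadrangle chase. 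For the final clause, a third singular line $L\not\subseteq\pi\cup\pi'$ is, by the same mechanism, coplanar with a line of each of $\pi,\pi'$, and iterating yields singular subspaces of dimension $\ge 3$, contradicting the standing hypothesis. Your ``map not constant'' step is also unnecessary once you argue this way.
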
 

\begin{proof} By the previous lemma there are at least three singular lines through $x$ not in 
$\pi$, name them $L_1,L_2,L_3$. If they are contained in a plane, then this plane is singular. If 
they are not contained in a plane, then the $3$-space they generate contains a line $L_4$ of $\pi$ 
(using (S3*)). If no pair of $\{L_1,L_2,L_3\}$ is contained in a singular plane, then the planes 
$\<L_1,L_2\>$ and $\<L_3,L_4\>$ are distinct and hence meet in line $L_5$. 
Lemma~\ref{lemma1}(1) implies that $L_5$ is a singular line, and therefore, $\<L_3,L_4\>$ is 
singular after all. 

So we always have at least two singular planes. If another singular line through $x$ existed (not contained in either singular plane), then it would be contained in a plane together with a line of each singular plane. This now leads to a singular 4-space, a contradiction. 
\end{proof}

Now we can finish the proof of Theorem~\ref{theo1}. 

It follows from the previous lemmas that through every point of the two singular planes through $x$ 
there are precisely two singular planes. By connectivity, this is true for all points of $X$. Also, 
consider two disjoint singular planes. By considering appropriate hypos, we see that every 
singular plane intersecting one of them, intersects the other. We can use this to see that, starting 
from two intersecting singular planes $\pi,\pi'$, the set $X$ is the disjoint union of all singular 
planes meeting $\pi$ and likewise for those meeting $\pi'$. Moreover, as in the proof of Lemma~\ref{lemma9}, the hypos define 
collineations between disjoint planes and this is enough to conclude with \cite{Zan:96} that we have a Segre variety of type $(2,2)$.


\section{Hjelmslevian Veronesean Sets} \label{sec:HVS}
In this section, we consider the Mazzocca-Melone axioms for the class of ``tubes''. 
 
A \emph{tube} $C$ (as a short synonym for ``cylinder'') in a $3$-dimensional projective space $\Sigma$ (over $\K$) is the set of points of 
$\Sigma$ on some nondegenerate cone with base a plane oval $O$, except for the vertex $v$ ($v$ will also be called the \emph{vertex} of the tube, although it does never belong to the tube).  For every point $x\in C$, there is a unique plane $\pi$ 
through $x$ intersecting $C$ in the unique generator which contains $x$. The plane $\pi$ 
contains all lines through $x$ that meet $C$ in only $x$ and is called the \emph{tangent plane} at 
$x$ to $C$ and denoted $T_x(C)$. 

Let $X$ be a spanning point set of $\mathrm{PG}(N,\mathbb{K})$, $N>3$, and let $\Xi$ be a 
collection of 3-dimensional projective subspaces of $\mathrm{PG}(N,\mathbb{K})$, called the 
\emph{cylindric spaces} of $X$, such that, for any $\xi\in\Xi$, the intersection $\xi\cap X$ is a tube 
$X(\xi)$ in $\xi$ (and then, for $x\in X(\xi)$, we sometimes denote $T_x(X(\xi))$ simply by 
$T_x(\xi)$). Conversely, for a tube $C$, we denote $\Xi(C)$ the unique member of $\Xi$ containing $C$.  We denote by $Y$ the set of all vertices and we obviously have $X\cap Y=\emptyset$.  
We call $(X,\Xi)$, or briefly $X$, a \emph{Hjelmslevian Veronesean set (of index $2$)} if the following properties hold :

\begin{itemize}
\item[(H1)] Any two points $x$ and $y$ lie in at least one element of $\Xi$, which we denote by $[x,y]$ if it is unique.

\item[(H2)] If $\xi_1,\xi_2\in \Pi$, with $\xi_1\neq \xi_2$, then $\xi_1\cap\xi_2\subset X\cup Y$, and $\xi_1\cap\xi_2\cap Y$ is contained in a codimension 1 subspace of $\xi_1\cap\xi_2$. If $Y\cap\xi_1\cap\xi_2\neq\emptyset$, then $Y\cap\xi_1\neq\emptyset\neq Y\cap\xi_2$.

 


\item[(H3*)] For each $x\in X$, all planes $T_x([x,y])$, $y\in X\setminus\{x\}$, are contained in a 
fixed 4-dimensional subspace of $\mathrm{PG}(N,\mathbb{K})$, denoted by $T_x$.
 \end{itemize}

We again have that (H1) implies that $|\Xi|>1$. 

The rest of this section is devoted to prove the following theorem.

\begin{theorem} \label{theo2} Either $N=6$ or $N=8$. In either case, the point set $X$ of a Hjelmslevian Veronesean set of index $2$ is projectively unique.
\end{theorem}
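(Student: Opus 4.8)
The plan is to mirror as closely as possible the treatment of Segrean Veronesean sets in Section~\ref{Segre}, with the understanding that the ovals now appearing as bases of the tubes force a new feature: a singular line through a point $x$ that plays the role of ``the generator of a tube through $x$'', and the vertices in $Y$ that must be carefully tracked through every projection argument. First I would establish the local structure at a point: through each $x\in X$ and each cylindric space $\xi\ni x$, the intersection $\xi\cap X$ is a tube, so there is exactly one generator (a singular line) of $X(\xi)$ through $x$, and $T_x(\xi)$ is the tangent plane there. The first substantial claim to prove is that all these generators through a fixed $x$ coincide, i.e.\ there is a \emph{unique} singular line $N_x$ through $x$, and that every tube through $x$ contains $N_x$ as its generator at $x$. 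This is the Hjelmslev analogue of the ``there are four singular lines'' lemmas, but here it goes the other way: the cone structure forbids many generators. I would argue this using (H1), (H2) and the codimension-$1$ condition on $\xi_1\cap\xi_2\cap Y$ together with (H3*): if two distinct singular lines through $x$ existed lying in distinct tubes, their two cylindric $3$-spaces would intersect in a subspace meeting $X\cup Y$, and analysing where the vertices can go (using the last sentence of (H2)) yields a contradiction with the oval structure of a single tube.

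Having the unique singular line $N_x$ through each $x$, the geometry $\cG(X)$ (points $=X$, lines $=\Xi$) should be shown to be a ring geometry in which the relation ``lies on the common singular line'' is the neighbour relation of a projective Hjelmslev plane of level~$2$; but for the purpose of Theorem~\ref{theo2} I only need projective uniqueness of the point set, so I would postpone the Hjelmslev-plane identification. Next I would run the dimension count. Fix a tube $C$ with vertex $v$, set $\Sigma=\Xi(C)=\langle C\cup\{v\}\rangle$, a $3$-space, and project $X\setminus\Sigma$ from $\Sigma$ onto a complementary $F$ of dimension $N-4$. The analogues of Lemmas~\ref{lemma4}, \ref{lemma5bis}, \ref{lemma5}, \ref{lemmatangent} and \ref{lemmaN=8} must be re-derived: the span of $C$ and two external singular lines meeting $C$ is $5$-dimensional (same argument via (H2)); the span of three tubes through a common generator is $7$-dimensional; the tangent $4$-spaces $T_{x_1},T_{x_2}$ at two points of one generator meet exactly in $\langle\Sigma\rangle$; the tangent space $T_x$ contains no point of $X$ off the generator $N_x$; and finally $N\le 8$. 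The extra wrinkle compared to Section~\ref{Segre} is that the locus $Y$ of vertices is itself a substructure that projects to something we must control — I expect that $Y$, under the Veronese-type correspondence, is exactly the ``degenerate'' part of $\overline X\setminus X$, and the codimension-$1$ clause in (H2) is precisely what is needed so that the projection arguments are not spoiled by vertices landing in the wrong place.

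The case split $N=6$ versus $N=8$ is then the heart of the theorem, and it is the Hjelmslev counterpart of the dichotomy ``singular $3$-space present / absent'' in Section~\ref{Segre}. I would show: if the singular lines $N_x$, as $x$ ranges over $X$, together with the oval cones, force $X$ to lie in the span of the ``base structure'' already at dimension $6$, we get the small case (geometrically, the cone over a Veronese surface, $N=6$); otherwise the analogue of Lemma~\ref{lemmaN=8} pins down $N=8$ and $X$ is a $V$-set defined by parametrization with $V=\K[\epsilon]$, $\epsilon^2=0$. Projective uniqueness in each case would follow from a Zanella-style reconstruction argument (as invoked via \cite{Zan:96} in Lemmas~\ref{lemma9} and at the end of Section~\ref{Segre}): the combinatorics of singular lines, tubes and vertices determines the coordinates up to a projectivity. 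The main obstacle, I expect, is controlling the vertex set $Y$ — ensuring that in each intersection $\xi_1\cap\xi_2$ the vertices sit in the predicted codimension-$1$ subspace and nowhere else, and that when one projects from a cylindric space the image of $Y$ does not create phantom incidences. This bookkeeping, rather than any single clever step, is what makes the Hjelmslevian case genuinely harder than the Segrean one, where no analogue of $Y$ is present.
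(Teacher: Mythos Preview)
Your first substantial claim is false, and this derails the whole outline. You assert that through each $x\in X$ there is a \emph{unique} singular line $N_x$, arguing that ``the cone structure forbids many generators.'' In fact the opposite holds in the main case $N=8$: the paper shows (Lemma~\ref{lemmaH3}) that any two singular lines meeting in a point of $X$ span a \emph{singular plane}, and then (Lemma~\ref{lemmaH4} and Proposition~\ref{propH1}) that the points of $X$ on a singular line with $x$ form an entire affine plane. So through each $x$ there is a pencil of singular lines, not one. Your proposed contradiction via (H2) does not materialise: two cylindric spaces through $x$ may well meet in a full singular line (sharing the vertex), and this is exactly what happens generically. The only situation in which your picture is correct is the degenerate case $N=6$, where $X\cup Y$ is a cone with a single vertex $t$ and every singular line passes through $t$; the paper isolates this case early (Lemma~\ref{lemmaH2}) and then discards it.

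Because of this, the analogy with Section~\ref{Segre} is much looser than you suggest, and the dimension count does not proceed via direct analogues of Lemmas~\ref{lemma4}--\ref{lemmaN=8}. The paper instead builds up the structure of $Y$: first one shows that $Y$ is the point set of a \emph{plane} $\pi_Y$ (Corollary~\ref{corH1}), that every line of $\pi_Y$ is the radical line of a unique singular plane (Corollary~\ref{corH2}), and that for each $y\in Y$ the set $X_y$ of points on a singular line through $y$ is a cone over a \emph{normal rational cubic scroll} in a $5$-space (Proposition~\ref{propH1}). This scroll is the key object you are missing; it is what forces all tubes to be quadratic cones and what later gives the cross-ratio argument. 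The bound $N=8$ then comes from a projection argument from a single cylindric space (Proposition~\ref{propH3}), not from the tangent-space gymnastics of the Segrean section.

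Finally, projective uniqueness is not obtained via a Zanella-type reconstruction. The paper locates a quadric Veronese variety $\mathcal{V}$ \emph{inside} $X$, in a $5$-space complementary to $\pi_Y$, and shows that $X$ is the union of the affine parts of lines $\langle p,\chi(p)\rangle$ for $p\in\mathcal{V}$, where $\chi:\mathcal{V}\to\pi_Y^*$ is a collineation. Uniqueness then reduces to showing that $\chi$ is \emph{linear}, and this is exactly where the normal rational cubic scroll pays off: it forces $\chi$ to preserve cross-ratios (Corollary~\ref{corH3}). Your outline contains no mechanism for producing either $\mathcal{V}$ or the cross-ratio control, so even with the singular-line error repaired you would still lack the ingredient that actually nails down $X$ up to projectivity.
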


A {\em singular line} $L$ is a generator of a tube; if $|\K|>2$, then $L$ is obviously characterized by saying that it contains at least three points of $X$, and then all of its points belong to $X$, except for one, which belongs to $Y$, and which we denote by $y(L)$. The set $L\setminus y(L)$ is sometimes referred to as a \emph{singular affine line}.


The case $|\K|=2$ will be treated separately at the end. So, for most of the results below, we assume $|\K|>2$, although for the moment, we allow $|\K|=2$.

\begin{lemma}\label{lemmaH0} Let $C,C'$ be two tubes with common vertex. Then $C$ and $C'$ share a unique generator.
\end{lemma}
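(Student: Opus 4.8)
The plan is to track the two $3$-spaces $\xi:=\Xi(C)$ and $\xi':=\Xi(C')$. Since a cylindric space carries a unique tube, we may assume $C\neq C'$, and then $\xi\neq\xi'$, so $\dim(\xi\cap\xi')\le 2$, while the common vertex $y$ lies in $\xi\cap\xi'$. From (H2) we get $\xi\cap\xi'\subseteq X\cup Y$, that $\xi\cap\xi'\cap Y$ lies in a subspace $Z$ of codimension one in $\xi\cap\xi'$, and $\xi\cap\xi'\cap X\subseteq X(\xi)\cap X(\xi')=C\cap C'$; hence $(\xi\cap\xi')\setminus Z\subseteq C\cap C'$. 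I would use throughout the elementary fact (valid since $|\K|>2$; the case $|\K|=2$ is the one the paper defers to the end) that a line meeting a tube in at least three points is a generator of it, and that a line through the vertex of a tube which meets the tube is a generator of it. The goal is to show $\dim(\xi\cap\xi')=1$.

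Next I would rule out $\dim(\xi\cap\xi')=2$. If $\pi:=\xi\cap\xi'$ is a plane, then inside $\xi$ it passes through the vertex $y$ of $C$ and hence meets $C$ in at most two generators; so $\pi\cap X$ lies on two lines, while $\pi\setminus Z\subseteq\pi\cap X$ and $Z$ is a line, so $\pi$ would be covered by three lines, which is impossible for $|\K|>2$.

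The main obstacle is excluding $\xi\cap\xi'=\{y\}$. Suppose it holds. Fix $a\in C$ with generator $g=\langle y,a\rangle$ of $C$, fix a generator $h$ of $C'$, and put $P=\langle g,h\rangle$; since $g\cap h\subseteq\xi\cap\xi'=\{y\}$ this is a plane with $P\cap\xi=g$ and $P\cap\xi'=h$. For each $b\in h\setminus\{y\}$ choose, by (H1), a cylindric space $\eta_b\ni a,b$, so $\langle a,b\rangle\subseteq\eta_b\cap P$. Whenever two of these coincide, or some $\eta_b$ meets $\xi$ in more than $\{a\}$, the elementary fact forces $P\subseteq\eta_b$, whence $y\in\eta_b$ and then $g,h$ are generators of $X(\eta_b)$ whose vertex lies in $g\cap h=\{y\}$; thus one produces a cylindric space $\eta$ with vertex $y$, distinct from $\xi$ and $\xi'$, meeting each of $\xi,\xi'$ in a generator. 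Feeding $\eta$ back into the same construction (and varying $h$), and using that the tangent plane at $a$ of any cylindric space with vertex $y$ through $a$ contains $\langle y,a\rangle$ and meets $T_a(C)$ exactly in $\langle y,a\rangle$, one keeps producing such spaces, whose tangent planes at $a$ must all lie in the $4$-dimensional space $T_a$ of (H3*) and be pairwise in general position modulo $\langle y,a\rangle$; a dimension count then fails, giving the contradiction. (If on the contrary no such auxiliary space ever appears, all the $\eta_b$ are distinct and meet $\xi$ only in $a$ and $\xi'$ only in $b$, and the contradiction comes instead from the span of these spaces together with $\xi$ and $\xi'$.) Carrying out this bookkeeping carefully is the real work of the lemma.

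Once $\dim(\xi\cap\xi')=1$, write $M=\xi\cap\xi'$; it contains $y$, its codimension-one subspace $Z$ is a single point, and $y\in Z$ gives $Z=\{y\}$, so $M\setminus\{y\}\subseteq C\cap C'$. Hence $M$ is a line through the vertex $y$ of $C$ which meets $C$, i.e. a generator of $C$, and likewise a generator of $C'$. Finally any common generator is a line contained in $\xi\cap\xi'=M$, hence equals $M$; so $M$ is the unique common generator.
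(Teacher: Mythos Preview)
Your setup is correct, and your handling of the plane case is actually more explicit than the paper's (the paper simply asserts the dichotomy ``generator or point'' as a consequence of (H2), tacitly using that a linear subspace contained in a cone and passing through its vertex has dimension at most one). Your conclusion in the line case is also fine.

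However, you have massively overcomplicated the point case --- and in fact you do not prove it, only sketch a plan whose details you yourself call ``the real work of the lemma''. The paper disposes of this case in one line, using precisely the clause of (H2) you already quoted: if $\xi\cap\xi'=\{y\}$, then $\xi\cap\xi'\cap Y=\{y\}$, and by (H2) this set must lie in a codimension-$1$ subspace of $\xi\cap\xi'$. But a codimension-$1$ subspace of a single point is the empty set, so $\{y\}\subseteq\emptyset$, a contradiction. That is the entire argument. You stated the codimension-$1$ clause at the outset and then failed to apply it in the one place where it does all the work. Your elaborate construction with auxiliary cylindric spaces $\eta_b$, tangent-plane bookkeeping inside $T_a$, and dimension counts is unnecessary, and as written it is not a proof: several implications are asserted rather than justified (why should two of the $\eta_b$ ever coincide, or some $\eta_b$ meet $\xi$ in more than $\{a\}$; and why does the alternative branch produce a contradiction from spans?). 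Drop all of it and use the codimension clause directly.
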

\begin{proof} By Axiom (H2), the intersection $\Xi(C)\cap \Xi(C')$ is either a generator of both $C$ and $C'$ (in which case there is nothing to prove), or it is a point $t$ of $X\cup Y$. In our assumptions, $t$ is the vertex, and so belongs to $Y$. But this contradicts (H2) as a codimension 1 subspace of a point is the empty set.  
\end{proof}

\begin{lemma}\label{lemmaH1} Let $C,C'$ be two tubes and suppose $|\Xi(C)\cap \Xi(C')|>1$. Then $C\cap C'$ is a singular affine line. In particular, $C$ and $C'$ have the same vertex.
\end{lemma}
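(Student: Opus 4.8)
The plan is to argue with the two cylindric spaces $\xi_1:=\Xi(C)$ and $\xi_2:=\Xi(C')$. Since each cylindric space meets $X$ in a unique tube (namely $\xi\cap X=X(\xi)$), two distinct tubes have distinct cylindric spaces, so $\xi_1\neq\xi_2$ and $D:=\xi_1\cap\xi_2$ is a projective subspace of dimension $1$ or $2$ (dimension at least $1$ by hypothesis). By Axiom~(H2), $D\subseteq X\cup Y$ and $D\cap Y$ is contained in a subspace of codimension $1$ in $D$; moreover $D\cap X=(\xi_1\cap X)\cap(\xi_2\cap X)=C\cap C'$ since $C,C'\subseteq X$. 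The first goal is to exclude $\dim D=2$, and then to identify $D$ as the common generator in the remaining case.

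Suppose, for contradiction, that $D=\pi$ is a plane. Then (H2) gives a line $\ell\subseteq\pi$ with $\pi\setminus\ell\subseteq X$, hence $\pi\setminus\ell\subseteq C$. Fix $p\in\pi\setminus\ell$, and let $v$ be the vertex of the cone underlying $C$ (so $v\in\xi_1$, $v\notin C$, and $p\neq v$). Each of the $|\K|+1$ lines of $\pi$ through $p$ is different from $\ell$, hence meets $\pi\setminus\ell$, and therefore $C$, in at least $|\K|\geq 3$ points. Here I would use the standard fact that a line meeting a tube in at least three points is a generator of it: projection from the vertex maps any line not through $v$ injectively onto a line, which meets the base oval in at most two points, so a line with at least three points on the underlying cone must pass through $v$, and having a further point on the cone it is a generator. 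Consequently all $|\K|+1\geq 4$ lines of $\pi$ through $p$ pass through $v$; but at most one line through $p$ passes through $v$ — a contradiction. Hence $\dim D=1$.

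Write $L:=D$. By (H2), $L$ contains at most one point of $Y$, hence at least $|\K|\geq 3$ points of $X$, so $L$ is a singular line by the characterization recalled before the lemma. Since $L\subseteq\xi_1$ and $L\setminus(L\cap Y)\subseteq X\cap\xi_1=C$, the line $L$ meets $C$ in at least three points, so by the projection argument above $L$ is a generator of $C$ through its vertex $v$; thus $L\cap C=L\setminus\{v\}$ and $y(L)=v$. Repeating with $C'$ and its vertex $v'$ gives $y(L)=v'$, whence $v=v'=y(L)$, so $C$ and $C'$ have the same vertex. Finally $C\cap C'=D\cap X=L\cap X=L\setminus\{y(L)\}$, which is a singular affine line, as required.

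The main obstacle is the exclusion of the case $\dim D=2$, i.e.\ preventing a tube from almost containing a plane; the pencil-of-lines argument above achieves this uniformly over all fields with $|\K|>2$ and for an arbitrary oval base, by forcing every line of the putative plane with three points on $C$ through the vertex. The remaining ingredients (the characterization of singular lines, and that a line with three points on a cone is a generator) are routine. The case $|\K|=2$, where a singular line carries only two points of $X$, is handled separately as announced.
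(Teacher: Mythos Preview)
Your proof is correct and follows essentially the same route as the paper's. The paper's proof is very terse: it simply asserts that (H2) forces $\Xi(C)\cap\Xi(C')$ to be a common generator $L$, and then argues that if the vertices $t,t'$ differed, both would lie in $X\cap Y=\emptyset$. Your argument makes the first step explicit by ruling out $\dim D=2$ via the pencil-of-generators contradiction, and then recovers the paper's vertex argument in the form $y(L)=v=v'$. The one difference worth noting is that your exclusion of the plane case, and your use of the ``three points on a line'' characterization of singular lines, rely on $|\K|>2$, which you correctly flag; the paper places this lemma \emph{before} the standing assumption $|\K|>2$ and glosses over the plane case entirely, so your version is in fact more carefully argued on this point (at the cost of deferring $|\K|=2$, as the paper also ultimately does).
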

\begin{proof}
From (H2) it follows that $\Xi(C)\cap \Xi(C')$ is a generator $L$ of both $C$ and $C'$. It suffices to show that the vertices of $C$ and $C'$ coincide. Let $t$ and $t'$ be these respective vertices, and assume $t\neq t'$. Then both $t$ and $t'$ belong to $X\cap Y$, a contradiction.
\end{proof}

A plane $\pi$ in $\PG(N,\K)$ will be called \emph{singular} if it contains a unique line $L$ all of whose points belong to $Y$, and all other points of $\pi$ belong to $X$. The line $L$ will be referred to as the \emph{radical line} of $\pi$.

\begin{lemma}\label{lemmaH1'}
Let $L$ and $L'$ be two singular lines intersecting in a point $y\in Y$. Then either $L$ and $L'$ belong to a unique common tube, or $\<L,L'\>$ is singular.
\end{lemma}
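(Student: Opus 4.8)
Let $L$ and $L'$ be two singular lines intersecting in a point $y\in Y$. Then either $L$ and $L'$ belong to a unique common tube, or $\langle L,L'\rangle$ is singular.

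The plan is to take an arbitrary point $p$ in the plane $\pi=\langle L,L'\rangle$, distinct from $y$ and not on $L\cup L'$, and argue that $p\in X$; this forces $\pi$ to be singular (the only line all of whose points lie in $Y$ being a line through... — actually we must be slightly careful, since $\pi$ could a priori contain more than one $Y$-line; but once every point of $\pi\setminus(L\cup L')$ lies in $X$, every line of $\pi$ other than those through $y$ meets $\pi\setminus(L\cup L')$ in at least $|\K|-1\ge 2$ points, hence is not entirely in $Y$, so only lines through $y$ can be radical, and by a counting/intersection argument only finitely many... in fact the structure forces a unique radical line). So the heart of the matter is: either some such $p$ fails to lie in $X$, and then I want to build a tube containing both $L$ and $L'$; or every such $p$ lies in $X$ and $\pi$ is singular.

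First I would set up the dichotomy via a point $p\in\pi$, $p\notin L\cup L'$, $p\ne y$. Pick points $x\in L\setminus\{y\}$ and $x'\in L'\setminus\{y\}$ on $X$ (these exist since singular affine lines are nonempty). The line $\langle x,x'\rangle$ lies in $\pi$ and avoids $y$, so it is not $L$ or $L'$; choose $p$ on it, distinct from $x,x'$ (possible as $|\K|\ge 3$ in the main case — the case $|\K|=2$ is deferred per the text). By (H1), $p$ and $x$ lie in some $\xi_1\in\Xi$ and $p$ and $x'$ in some $\xi_2\in\Xi$. If $\xi_1=\xi_2$, this cylindric space contains $x,x',p$, hence the line $\langle x,x'\rangle$, hence (a secant line of a tube meeting it in $\ge 2$ points is a generator minus vertex, but $\langle x,x'\rangle$ does not pass through $y$ and $L,L'$ are two distinct generators through $y$...) — here I would use that a cylindric space whose tube already contains the two generators $L,L'$ through the common vertex $y$ must have vertex $y$, and then $\langle x,x'\rangle\subset X(\xi_1)$ would be a third generator, contradicting that a tube's generators are pairwise disjoint off the vertex (they all pass through the vertex). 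So $\xi_1\ne\xi_2$, and then $p\in\xi_1\cap\xi_2\subset X\cup Y$ by (H2). If $p\in X$ for every such choice of $p$ (as $x,x',p$ vary), I would run this for all lines of $\pi$ not through $y$ — every point of $\pi\setminus(L\cup L')$ that is not $y$ lies on such a line through two $X$-points of $L\cup L'$ — concluding $\pi\setminus(L\cup L')\subseteq X$, and then that $\pi$ is singular with radical line (the lines through $y$ other than $L,L'$ meet $\pi\setminus(L\cup L')$, so they are not in $Y$; thus the only candidate radical points all lie on $L\cup L'$, and since $L,L'$ themselves are singular lines containing the single $Y$-point $y$ and otherwise $X$-points, the radical line of $\pi$ is... forced — I'd verify $\pi$ has exactly one $Y$-line).

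Otherwise some such $p$ lies in $Y$. Then $\langle p, x'\rangle$ is a line through two points, one in $X$ ($x'$) and one in $Y$ ($p$); together with $\langle x,x'\rangle$ meeting it... I would instead directly aim to produce the tube. The cleaner route: if $p\in Y$, then the line $M=\langle x,x'\rangle$ contains the $Y$-point $p$ and the $X$-points $x,x'$, so by (H1) $M$ lies in some cylindric space $\xi$, and within $X(\xi)$ the line $M$ — containing $\ge 2$ points of $X$ and a point of $Y$ — is precisely a generator of the tube $X(\xi)$ with vertex $p$. Now I would argue that this tube, or a tube obtained from it, contains $L$ and $L'$: since $x\in L$ and $x\in M\subset X(\xi)$, and tubes through $x$ are governed by (H1) and the tangent-space axiom (H3*), I would show $L$ and the generator $M$ span... and run the analogous argument with a second line of $\pi$ to pin down that $L,L'$ are both generators of one tube. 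Honestly, the main obstacle is exactly this last step: \textbf{upgrading "$\pi$ contains a $Y$-point off $L\cup L'$" to "$L$ and $L'$ lie in a common tube"}, because I must rule out a "mixed" configuration where $\pi$ contains some $Y$-points and some $X$-points in an irregular pattern. I expect to handle it by a careful case analysis on how many $Y$-points $\pi$ contains: if $\pi\cap Y$ is just a line $R$ through $y$ (other than $L,L'$), then $L\cup L'\cup R$ are three concurrent $Y$-... no, $L,L'$ are singular lines (one $Y$-point each), only $R$ is a $Y$-line — then a tube with vertex on $R$ and generators spanning $\pi$ gives the conclusion; if $\pi\cap Y$ contains points off every line through $y$, a counting argument against (H2) (the "codimension 1" clause applied to $\xi_1\cap\xi_2$) yields a contradiction. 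I would organize the proof so that either we land in the singular-plane case, or every $Y$-point of $\pi$ lies on one line through $y$, whereupon that line is the vertex-locus of a tube through $L$ and $L'$, and uniqueness of this tube follows from Lemma~\ref{lemmaH1} (two tubes sharing two generators share a singular affine line, forcing equality here since they'd share $L$, $L'$).
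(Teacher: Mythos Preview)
Your proposal has two genuine gaps, one technical and one structural.

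\textbf{Technical gap.} You invoke (H1) to put $p$ and $x$ in a common $\xi_1\in\Xi$, and $p$ and $x'$ in some $\xi_2$. But (H1) only applies to pairs of points of $X$, and at this stage you do not yet know $p\in X$. The paper avoids this by taking \emph{two} lines $M,M'$ in $\pi$ through $p$, neither passing through $y$; each meets $L$ and $L'$ in points of $X$, so (H1) applied to those $X$-points yields tubes $C,C'$ whose cylindric spaces both contain $p$. Your single line $\langle x,x'\rangle$ gives only one such tube, so you cannot reach $p\in\xi_1\cap\xi_2$ and invoke (H2).

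\textbf{Structural gap.} Your dichotomy is inverted. You claim that ``$p\in X$ for all such $p$'' forces $\pi$ to be singular; but a singular plane by definition contains an entire \emph{line} of $Y$-points, whereas your conclusion would leave only the single point $y$ in $Y$. The paper instead observes that ``$p\in X$ for all $p$'' is \emph{impossible}, since it would produce projective lines entirely contained in $X$. Hence some $p$ must lie in $Y$, and it is in \emph{this} case that one proves $\pi$ is singular: one looks at the line $N=\langle p,y\rangle$, notes it carries two $Y$-points, argues it can carry at most one $X$-point (else a tube through it would have two vertices), and then shows that if $N$ did carry an $X$-point $x_0$, every line of $\pi$ through $x_0$ would lie entirely in $X$---a contradiction. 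So $N\subseteq Y$ and $\pi$ is singular. You instead try, in the ``some $p\in Y$'' branch, to manufacture a common tube through $L$ and $L'$; this is the wrong target (the paper works under the standing assumption that no such tube exists), and you yourself flag the step as the ``main obstacle'' and leave it as a sketch.

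In short: start by assuming $L,L'$ share no tube; use two secants through $p$ to get $p\in X\cup Y$; rule out the all-$X$ case as impossible; and in the $Y$-case analyse the line $\langle p,y\rangle$ to force the radical line.
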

\begin{proof}
Suppose $L$ and $L'$ do not belong to a unique common tube. Take a point $p$ in the plane $\pi$ spanned by $L$ and $L'$ arbitrarily, but not on $L\cup L'$. We consider two lines $M,M'$ in $\pi$ through $p$ not incident with $y$. These lines intersect $L$ and $L'$, respectively, in the points $x_{LM},x_{LM'}$ and $x_{L'M}$ and $x_{L'M'}$, respectively, using self-explaining notation. Arbitrary tubes $C$ and $C'$ through $x_{LM},x_{L'M}$ and $x_{LM'},x_{L',M'}$, respectively, satisfy $p\in\Xi(C)\cap\Xi(C')$. Now note that, if $C=C'$, then $C$ contains $L$ and $L'$, a contradiction. Hence $C\neq C'$. By (H2), $p$ belongs to $X\cup Y$. It is impossible that $p$ belongs to $X$ for all legal choices of $p$, since this would imply that there are lines all of whose points are contained in $X$. Hence we may assume that $p$ belongs to $Y$ (if $|\K|=2$, then this is automatic, and we have a singular plane; so from now on we may assume $|\K|>2$). Then all other points of $M\cup M'$ belong to $X$. The line $N=\<p,y\>$ contains two elements of $Y$; hence we claim it can contain at most one element of $X$.  Indeed, if it contained two points of $X$, then it would be contained in some member of $\Xi$, which contains only one element of $Y$, by Lemmas~\ref{lemmaH0} and~\ref{lemmaH1}. Suppose now that $N$ contains a unique point $x\in X$. Now note that every point $z$ of $\<L,L'\>\setminus N$ belongs to $X$, as the line $\<p,z\>$ contains at least two points of $X$, namely the intersections with $L$ and $L'$. So any line through $x$ in $\<L,L'\>$ only contains points of $X$, a contradiction. Hence $\<L,L'\>$ is a singular plane.  
\end{proof}

\begin{lemma}\label{lemmaH2} Let $C$ be a tube and $c\in C$. Then either $N=6$ and $X\cup Y$ is projectively equivalent to a cone with vertex some point $t$ and base a quadric Veronese variety of $\PG(2,\K)$, with $Y=\{t\}$, or there exists some tube $C'$ intersecting $C$ in the singleton $\{c\}$. 
\end{lemma}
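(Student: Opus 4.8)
The plan is to prove the dichotomy by assuming that no tube meets $C$ in exactly $\{c\}$ and deriving the first (degenerate) alternative; throughout I assume $|\K|>2$, the case $|\K|=2$ being deferred as announced.

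\emph{Cone reduction.} Let $L$ be the generator of $C$ through $c$. For any tube $C'\ne C$ through $c$, the set $\Xi(C)\cap\Xi(C')$ contains $c$ and, by assumption, is not $\{c\}$; by Axiom~(H2) it then has dimension exactly $1$ — dimension $\ge 2$ is impossible, since a plane in $\Xi(C)$ would force an affine plane into the tube $C=\Xi(C)\cap X$, whereas the intersection of a tube with a plane is at most two generators or an oval — and it must be a generator of both $C$ and $C'$, hence equal to $L$. Lemma~\ref{lemmaH1} then gives that $C$ and $C'$ share a vertex; call it $v:=y(L)$. So all tubes through $c$ contain $L$ and have vertex $v$. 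Consequently, for each $y\in X$ the tube $[c,y]$ (any tube through $c$ and $y$, e.g.\ $C$ itself when $y\in C$) has vertex $v$, so $y$ lies on a generator of it, namely the singular line $\langle v,y\rangle$, all of whose points but $v$ lie in $X$. Hence $X\cup\{v\}$ is a cone with vertex $v$; write $\overline X\subseteq\PG(N-1,\K)$ for the projection of $X$ from $v$, so that $X$ is the cone over $\overline X$ with vertex $v$, and put $\overline\xi:=\xi/\langle v\rangle$ for $\xi\in\Xi$.

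\emph{All tubes have vertex $v$.} Next I would rule out a tube $C''$ with vertex $v''\ne v$. Such a $C''$ satisfies $c\notin C''$, and, since $X\cup\{v\}$ is a cone with vertex $v$ while $C''$ is a cone over an oval with the different vertex $v''$, also $v\notin\Xi(C'')$. For $z\in C''$ the tube $[c,z]$ has vertex $v$, so $\Xi([c,z])$ cannot meet $\Xi(C'')$ in more than $z$ (otherwise Lemma~\ref{lemmaH1} would equate $v$ and $v''$). Taking a generator $M$ of $C''$, the cone structure makes $\langle v,M\rangle$ a singular plane with radical line $\langle v,v''\rangle$, and analysing the singular lines of this plane through $v$ and through $v''$ with Lemmas~\ref{lemmaH1} and~\ref{lemmaH1'} yields a contradiction. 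Hence $Y=\{v\}$ and every cylindric space passes through $v$, so each $\overline\xi$ is a plane meeting $\overline X$ in the oval that is the image of the tube $\xi\cap X$.

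\emph{Identifying $\overline X$.} Now $(\overline X,\overline\Xi)$ is a candidate $\cC$-Veronesean set of index $2$ with $\cC$ the class of all plane ovals: Axiom~(V1) descends from (H1) and Axiom~(V2) from (H2) (no vertices remain apart from the centre of projection). For Axiom~(V3*) one must check that the tangent lines $T_{\overline c}(\overline\xi)$, which are the images of the tangent planes $T_c(\xi)$ (all containing $L$), lie in a fixed plane; (H3*) places them in the image of the $4$-space $T_c$, so it remains to squeeze that image down to a plane, i.e.\ to show that the planes $T_c(\xi)$ span only a $3$-space — which I would obtain by a tangent-dimension count using (H3*) at two points of $L$, in the spirit of Lemmas~\ref{lemma5}--\ref{lemmaN=8}. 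The classification of oval $\cC$-Veronesean sets \cite{Sch-Mal:**} then forces $\overline X$ to be projectively a quadric Veronese variety of $\PG(2,\K)$ spanning $\PG(5,\K)$; hence $N-1=5$, so $N=6$, and $X\cup Y$ is projectively the cone with vertex $t:=v$ over that Veronese variety, with $Y=\{t\}$.

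\emph{Main obstacle.} The real work lies in two places: the bootstrap that \emph{every} tube has vertex $v$ — the cone structure alone does not visibly forbid a tube transversal to the pencil through $c$, and one must exploit Lemmas~\ref{lemmaH1} and~\ref{lemmaH1'} together with the induced singular plane — and, within the Veronese identification, pinning the span of the tangent planes at $c$ down to dimension $3$, since (H3*) a priori only gives $4$, which is precisely the input the oval-Veronese characterization requires.
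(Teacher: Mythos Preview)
Your overall architecture matches the paper's: assume no tube meets $C$ in $\{c\}$ alone, deduce that everything is a cone over the common vertex, project, and identify the base as a quadric Veronese. The two places you flag as ``main obstacles'' are exactly where the work lies, and the paper resolves them with different mechanisms than the ones you sketch.

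\textbf{First obstacle (all tubes through $v$).} The paper does not try to analyse a hypothetical tube $C''$ with a foreign vertex directly. Instead it proves the following key step: \emph{no two singular lines $L,L'$ through $t$ span a singular plane}. The argument runs: if $\langle L,L'\rangle$ were singular, take a singular line $M$ in it not through $t$, a tube $D\supseteq M$, and a point $z\in D\setminus M$; since $K:=\langle t,z\rangle$ is singular (cone structure), Lemma~\ref{lemmaH1'} gives either $\langle K,L\rangle$ singular---forcing $\langle z,z'\rangle$ singular with $z':=L\cap M$, impossible as $z,z'$ lie on $D$ but not on a common generator---or a unique tube $D'\supseteq K\cup L$, and then $\Xi(D)\cap\Xi(D')\supseteq\{z,z'\}$ violates (H2). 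Once no singular plane contains $t$, Lemma~\ref{lemmaH1'} yields a \emph{unique} tube through any two singular lines through $t$, so the quotient $\mathcal G$ is already a projective plane; your separate bootstrap step is absorbed into this. (That $Y=\{t\}$ then falls out: a generator $M$ of a tube with vertex $v''\neq t$ would sit in a plane $\langle t,M\rangle$ spanned by two singular lines through $t$, hence inside the cylindric space of a tube with vertex $t$, and a line with $\geq 3$ points on a quadratic cone is a generator.)

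\textbf{Second obstacle (the tangent axiom for the quotient).} The paper does \emph{not} verify (V3*) for $(\overline X,\overline\Xi)$, so your worry about squeezing $T_c$ down by one dimension is bypassed. Instead it argues purely with dimensions: two oval-planes in the quotient meet in a single point of $\overline X$ (descending (H2)), which forces $\dim\geq 4$; the cited result of \cite{Sch-Mal:**} gives $\dim\leq 5$; and $\dim=4$ is excluded by a short direct argument (pick a $3$-space through one oval avoiding two tangent lines at a point, produce an oval $D'$ whose plane meets the first in a point on a secant, contradicting that ovals have no three collinear points). With $\dim=5$ established, the same reference identifies $\overline X$ as the quadric Veronese.

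So your proposal is correct in spirit but incomplete precisely where you say it is; the paper's route around those two spots---the no-singular-plane lemma and the dimension-count substitute for (V3*)---is what you are missing.
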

\begin{proof}
Suppose that all tubes whose intersection with $C$ contains $c$ have a generator in common with $C$. Let $x\in X$ be arbitrary. 
Then any tube through $x$ and $c$ must have the same vertex $t$ as $C$, by Lemma~\ref{lemmaH1}. Hence $\<x,t\>$ is a singular line. 
Now we project $X$ from $t$ onto some hyperplane $H$ not containing $x$ and denote the projection operator by $\rho$. We 
define the following geometry $\mathcal{G}$. The point set is $\rho(X)$, and the line set is the family of projections of tubes 
with vertex $t$. In $H$, the points of $\mathcal{G}$ are projective points, and the lines of $\mathcal{G}$ are planar 
ovals.  By Lemma~\ref{lemmaH0}, every pair of distinct lines of $\mathcal{G}$ has a unique point in common. Now let $L$ and $L'$ be 
two singular lines through $t$. Suppose that $\<L,L'\>$ is singular. Let $C'$ be any tube containing $L$. Choose a singular 
line $M$ in $\<L,L,'\>$ not though $t$ and a tube $D$ through $M$. Let $z$ be a point of $D$ not on $M$. Since by the beginning of 
our proof, $K=:\<t,z\>$ is a singular line, we have, by Lemma~\ref{lemmaH1'}, that either $\<K,L\>$ is a singular plane, or 
$K,L$ are contained in a unique tube $D'$. In the latter case, $\Xi(D)\cap\Xi(D')$ violates (H2), because it contains the two 
points $z$ and $z'=:L\cap M$ not on a singular line. In the former case, the line $\<z,z'\>$ is singular, again a contradiction. 

So we conclude that $\<L,L'\>$ is not singular, and so, by Lemma~\ref{lemmaH1'},  there is a unique tube containing both of them. 
This implies that every pair of points of $\mathcal{G}$ is contained in a unique line of $\mathcal{G}$. Hence $\mathcal{G}$ 
is a projective plane (since lines have at least three points). Since the $3$-spaces of two different tubes intersect in a line, we deduce that the two planes generated by the two ovals corresponding to two lines of $\mathcal{G}$ meet in a unique point, which implies that the dimension of $H$ is at least $4$. We claim that $H$ has dimension $5$. Indeed, by the second main result of \cite{Sch-Mal:**}, the dimension of $H$ is at most $5$. So assume that $H$ has dimension $4$. Let $D$ be any oval corresponding to a line of $\mathcal{G}$, and let $x\in D$. Let $D_1,D_2$ be two distinct ovals corresponding to lines of $\mathcal{G}$ containing $x$, and distinct from $D$. Let $\zeta$ be a $3$-dimensional space containing $D$ and not containing the tangent lines at $x$ of $D_1$ and $D_2$ ($\zeta$ exists since there are at least three $3$-spaces through the plane $\<D\>$). Then $\zeta$ intersects $D_1$ and $D_2$ in distinct points $x_1,x_2$, respectively. There is a unique oval $D'$ corresponding to a line of $\mathcal{G}$ containing $x_1,x_2$, and since $x_1,x_2$ belongs to $\zeta$, the planes $\<D\>$ and $\<D'\>$ intersect in a point of $\<x_1,x_2\>$, which is a point of $\mathcal{G}$ by Axiom~(H3*). But then $D'$ contains three points on a projective line, a contradiction. Our claim is proved. 

Now, again by the second main result of \cite{Sch-Mal:**}, $\mathcal{G}$ is isomorphic to the quadric Veronese variety of the projective plane $\PG(3,\K)$. Adding $t$, we see that $N=6$. 
\end{proof}

From now on we assume $|\K|>2$ and we may also assume that $X$ is not a point over a quadratic Veronese variety so that the second conclusion of Lemma 3.5 holds.

\begin{lemma}\label{lemmaH3} Two singular lines $L,L'$ intersecting in a point $x\in X$ generate a singular plane.
\end{lemma}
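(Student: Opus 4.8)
The goal is to show that two singular lines $L, L'$ meeting in a point $x \in X$ span a singular plane; by our standing assumption (after Lemma~\ref{lemmaH2}), through every point of any singular plane there are "many" singular lines, and we are in the generic case $N \geq 6$ with the second conclusion of Lemma~\ref{lemmaH2} available. The plan is to argue by contradiction: suppose $\pi := \langle L, L'\rangle$ is not singular, so $\pi$ contains a point $p$ with $p \notin X \cup Y$. I would then mimic the Segrean argument (compare the proof of Lemma~\ref{lemma1} and Lemma~\ref{lemmaH1'}): since $L$ and $L'$ are singular and meet in $x \in X$ — not in a common vertex — any tube containing a line through $p$ meeting $L$, and another tube containing a line through $p$ meeting $L'$, must be distinct (a single tube containing both $L$ and $L'$ would force $\langle L, L'\rangle$ into its $3$-space and put $p \in X \cup Y$), and then (H2) forces $p \in X \cup Y$, a contradiction — \emph{provided} one can choose the two auxiliary lines through $p$ to be singular-generating lines of legitimate tubes, i.e.\ to meet $L$ and $L'$ in points of $X$, not at the vertices $y(L), y(L')$.

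\textbf{Key steps, in order.} First, fix that $L, L'$ meet at $x \in X$, so $x \neq y(L)$ and $x \neq y(L')$; pick an arbitrary point $p \in \pi \setminus (L \cup L')$ and, to get going, also avoid the line $\langle y(L), y(L')\rangle$ (if the two vertices are distinct) — there is room for this since a plane has more than two lines once $|\K| > 2$. Second, choose two lines $M, M'$ in $\pi$ through $p$, with $M$ meeting $L$ in a point of $X$ and $L'$ in a point of $X$, and similarly $M'$; since $L$ has only one bad point $y(L)$ and $L'$ only $y(L')$, and $p$ was chosen off $\langle y(L), y(L')\rangle$, generic lines through $p$ work. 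Third, invoke (H1) to get tubes $C \supseteq$ (two $X$-points spanning $M$) and $C' \supseteq$ (two $X$-points spanning $M'$), so that $p \in \Xi(C) \cap \Xi(C')$. Fourth, observe $C \neq C'$: equality would give $L, L' \subseteq \Xi(C)$ (each line is determined as a generator by two of its points), hence $\pi \subseteq \Xi(C)$ and $p \in \Xi(C) \cap X$... wait — more carefully, $p$ would then lie on a genuine tube's $3$-space but need not be on the tube; the cleaner contradiction is that a tube contains at most two generators through any of its points and $L, L'$ both pass through $x$, so $C = C'$ is impossible unless $L, L'$ are the two generators of $C$ at $x$, and even then $\pi = \langle L, L'\rangle \cap X$ would be the union of two lines, making $\pi$ a singular plane, which is what we want — so WLOG $C \neq C'$. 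Fifth, with $C \neq C'$ and $p \in \Xi(C) \cap \Xi(C')$, Axiom (H2) gives $p \in X \cup Y$. Sixth, run this for enough choices of $p$: if $p \in X$ for all legal $p$, then (as in Lemma~\ref{lemmaH1'}) some line through $x$ in $\pi$ lies entirely in $X$, contradicting that singular lines have exactly one point in $Y$ — unless \emph{every} point of $\pi$ is forced into $X \cup Y$ with the $Y$-part confined to a single line, i.e.\ $\pi$ is singular. Tracking which points land in $Y$ (using that $Y \cap \Xi(C)$ is a single point, the vertex of $C$, by Lemmas~\ref{lemmaH0}, \ref{lemmaH1}) pins the $Y$-points of $\pi$ down to one line, yielding: $\pi$ is singular. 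This is the desired contradiction to "$\pi$ not singular", completing the proof.

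\textbf{Main obstacle.} The delicate point is the bookkeeping around the vertices $y(L)$ and $y(L')$ and the line $\langle y(L), y(L')\rangle$: one must ensure that the auxiliary lines $M, M'$ through $p$ genuinely contain two $X$-points on each of $L, L'$ (so that the tubes $C, C'$ exist via (H1) and actually have $M, M'$ as chords), and one must handle the case $y(L) = y(L')$ — which by Lemma~\ref{lemmaH1'} already gives the dichotomy "common tube or singular plane" and is essentially done — versus $y(L) \neq y(L')$, which cannot actually happen for singular lines through $x \in X$ if $x$ lies on a tube, but should be excluded cleanly. A secondary subtlety, exactly as in Lemma~\ref{lemmaH1'}, is the final step: after showing every point of $\pi$ lies in $X \cup Y$, argue that the $Y$-points form a line (not more), so that $\pi$ meets the definition of a singular plane; here one uses that a member of $\Xi$ contains a unique $Y$-point, so a point $z \in \pi$ forced into $Y$ must be the common vertex of the relevant tubes, and transversality considerations confine all such $z$ to one line. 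I expect the vertex-bookkeeping to be where the real care is needed; the rest parallels Lemma~\ref{lemma1} and Lemma~\ref{lemmaH1'} closely.
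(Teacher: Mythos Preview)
Your plan is the right genre of argument --- use (H1) and (H2) to trap an auxiliary point between two distinct cylindric spaces --- but it misses the simplification that makes the paper's proof short, and as written it has a gap.

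\textbf{What the paper does differently.} Rather than attacking a generic point $p$ of $\pi=\langle L,L'\rangle$, the paper goes straight to the line $N:=\langle t,t'\rangle$ with $t=y(L)$ and $t'=y(L')$. For any $z\in N\setminus\{t,t'\}$ one finds two lines in $\pi$ through $z$, each meeting $L$ and $L'$ in points of $X$; the associated tubes are distinct (a common tube would force $t=t'$), so $z\in X\cup Y$ by (H2). At most one point of $N$ can lie in $X$ (else a tube through two such points would have both $t$ and $t'$ as vertex), so since $|\K|>2$ there is a $z_0\in Y$ on $N$ distinct from $t,t'$. Now the two lines through $z_0$ used above are \emph{themselves singular}: each lies in a cylindric space whose unique $Y$-point is $z_0$, so $z_0$ is the vertex and the line is a generator. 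Thus $\pi$ contains two singular lines meeting in $z_0\in Y$, and Lemma~\ref{lemmaH1'} (together with the observation that $\pi$ already contains at least four singular lines, so the ``common tube'' alternative is excluded) finishes. The point is that the paper reduces to the already-proved $Y$-case (Lemma~\ref{lemmaH1'}) instead of redoing the full ``$Y$-part is a line'' bookkeeping.

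\textbf{Gaps and confusions in your plan.} First, for $|\K|=3$ your step~2 fails: through a point $p\notin L\cup L'\cup N$ there are four lines, and the three lines $\langle p,x\rangle,\langle p,t\rangle,\langle p,t'\rangle$ (pairwise distinct since $x,t,t'$ are non-collinear) are all bad, leaving only one good line --- not two. Second, your discussion of the dichotomy $y(L)=y(L')$ versus $y(L)\neq y(L')$ is reversed: since $L\neq L'$ meet at $x$, they cannot also meet at a common vertex, so $y(L)\neq y(L')$ \emph{always}. Third, a tube has exactly \emph{one} generator through any of its points, so the phrase ``the two generators of $C$ at $x$'' is meaningless; the correct reason $C\neq C'$ is that $C=C'$ forces $L,L'\subseteq\Xi(C)$ and hence $t=t'=$ vertex of $C$, contradicting $t\neq t'$. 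Finally, your step~6 (confining the $Y$-points to a single line) is exactly where the argument lives, and you would end up analysing $N=\langle t,t'\rangle$ anyway --- which is what the paper does from the start.
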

\begin{proof}
Let $t$ and $t'$ be the members of $Y$ on $L,L'$, respectively. Then every point $z$ on $\<t,t'\>$ except for $t$ and $t'$ themselves is the intersection of two lines meeting $(L\cup L')\setminus\{t,t'\}$ in two points. Since $L\cup L'$ cannot be contained in a tube (since that tube would have two vertices $t$ and $t'$), these two lines are contained in distinct tubes. Hence $z\in X\cup Y$. But at most one point on $\<t,t'\>$ can belong to $X$, as otherwise a tube through $\<t,t'\>$ would have at least two vertices. So, there exists a point $z_0\in Y$ on $\<t,t'\>$ contained in two singular lines of $\<L,L'\>$. Noting that the plane $\<L,L'\>$ contains already at least four distinct singular lines, the assertion follows from Lemma~\ref{lemmaH1'}.
\end{proof}

\begin{lemma}\label{lemmaH4} Let $x\in X$ be arbitrary. Then the set of points of $X$ on a common singular line with $x$ is either an affine plane, or an affine $3$-space. 
\end{lemma}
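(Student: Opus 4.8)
The plan is to show that the set $\Sigma_x$ consisting of $x$ together with all $p\in X$ for which $\langle x,p\rangle$ is a singular line is an \emph{affine} subspace of $\PG(N,\K)$, and then to pin its projective dimension down to $2$ or $3$ by means of Axiom~(H3*).

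First I would set up the affine structure. Write $\tau$ for the set of points $y(L)$, with $L$ running over the singular lines through $x$; then $\tau\subseteq Y$ and $x\notin\tau$. Two elementary remarks about a singular plane $\pi$ with radical line $R$ are used throughout: every line of $\pi$ other than $R$ carries at least $|\K|\ge 3$ points of $X$, hence is a singular line; and, conversely, each point of $R$ is the point $y(M)$ of the unique line $M$ of $\pi$ joining it to $x$, so that $R\subseteq\tau$ and $\pi\setminus R\subseteq\Sigma_x$. Feeding these into Lemma~\ref{lemmaH3} (any two singular lines through $x$ span a singular plane), a short case check — on whether two given points of $\Sigma_x\cup\tau$ lie in $X$ or in $Y$ — yields that $U:=\Sigma_x\cup\tau$ and $\tau$ are projective subspaces. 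Since each singular line $L$ through $x$ is $\langle x,y(L)\rangle\subseteq\langle x,\tau\rangle$, we get $U=\langle x,\tau\rangle$, whence $\dim U=\dim\tau+1$ (because $x\notin\tau$) and $\Sigma_x=U\setminus\tau$. Thus $\Sigma_x$ is an affine space of projective dimension $\dim\tau+1$ with hyperplane at infinity $\tau$, and it remains to prove $1\le\dim\tau\le 2$.

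The lower bound is quick: $x$ lies on some tube $C$ (as $X$ spans $\PG(N,\K)$ and by (H1)), the standing second conclusion of Lemma~\ref{lemmaH2} provides a tube $C'$ with $C\cap C'=\{x\}$, the generators of $C$ and of $C'$ through $x$ are distinct singular lines through $x$ (distinct because $C\cap C'=\{x\}$), and two distinct singular lines through $x$ carry distinct points of $Y$ (else they would coincide); hence $|\tau|\ge 2$ and $\dim\tau\ge 1$. For the upper bound, the essential input is that every singular line through $x$, being a generator of a tube $C\ni x$, lies in the tangent plane $T_x(C)=T_x([x,y])$ (for $y\ne x$ on that generator) and therefore, by (H3*), in $T_x$; so $U\subseteq T_x$ and $\dim\tau\le 3$. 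Assume for contradiction that $\dim\tau=3$, i.e.\ $U=T_x$, so $T_x=\Sigma_x\sqcup\tau\subseteq X\cup Y$. Fix a tube $C\ni x$ with generator $L$ through $x$. Then $C\cap T_x=L\setminus\{y(L)\}$: the inclusion $\supseteq$ is immediate, and if $p\in C\cap T_x$ with $p\ne y(L)$ then $\langle x,p\rangle$ is a singular line meeting $C$ in the two points $x,p$, so it must be a generator of $C$ (a secant of a tube is never a singular line — otherwise, by (H2) and Lemma~\ref{lemmaH1}, the two distinct cylindric spaces involved would share that line as a common generator, which is absurd), whence $\langle x,p\rangle=L$ and $p\in L$. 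Now $T_x(C)\subseteq\langle C\rangle$ and $T_x(C)\subseteq T_x$, so $T_x(C)\subseteq\langle C\rangle\cap T_x\subseteq\langle C\rangle\cap(X\cup Y)=C\cup(\langle C\rangle\cap Y)$; since $T_x(C)\cap C=L\setminus\{y(L)\}$, this forces $T_x(C)\setminus L\subseteq Y$, hence $\subseteq\tau$. But $T_x(C)\setminus L$ is a plane with a line deleted and so spans $T_x(C)$; thus $T_x(C)\subseteq\tau\subseteq Y$, and in particular $x\in Y$ — a contradiction. Therefore $\dim\tau\le 2$ and $\Sigma_x$ is an affine plane or an affine $3$-space.

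The hard part is the exclusion of the $4$-dimensional case in the last step: one has to turn the purely projective statement ``$T_x$ lies inside $X\cup Y$'' into a contradiction with the fact that a cylindric space meets $X$ in a genuinely curved set (an oval cone), rather than in a linear one. The lever is the remark that a line meeting a tube in two points is never a generator of any tube — which itself rests on (H2) together with Lemma~\ref{lemmaH1} — because this is exactly what pins $\langle C\rangle\cap T_x$, and with it the tangent plane $T_x(C)$, down to a single generator.
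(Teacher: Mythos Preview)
Your proof is correct and follows the same skeleton as the paper's: both use Lemma~\ref{lemmaH2} for the lower bound, Lemma~\ref{lemmaH3} to build up the affine subspace structure, Axiom~(H3*) to cap the dimension at~$4$, and the tangent plane $T_x(C)$ of a tube to exclude the $4$-dimensional case. The paper's version is much terser: it simply says that if $V_x$ filled an affine part of $T_x$, this would contradict the fact that $T_x$ contains planes (namely the $T_x(C)$) meeting $X$ in only an affine line. Your argument is the contrapositive of this --- you show $T_x(C)\setminus L\subseteq\tau$ and deduce $x\in\tau$ --- and you are considerably more explicit in setting up $\tau$ and $U$ as genuine projective subspaces.

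Two minor remarks. First, your detailed claim $C\cap T_x=L\setminus\{y(L)\}$ and the accompanying discussion of secants versus generators is correct but not actually needed: what you use in the next sentence is $T_x(C)\cap C=L\setminus\{y(L)\}$, which is just the definition of the tangent plane to a tube. The direct route is simply $T_x(C)\cap\Sigma_x\subseteq T_x(C)\cap X=L\setminus\{y(L)\}$, hence $T_x(C)\setminus L\subseteq T_x\setminus\Sigma_x=\tau$. Second, in your closing paragraph you say the lever ``pins $\langle C\rangle\cap T_x$ \ldots\ down to a single generator''; in fact $\langle C\rangle\cap T_x$ equals the full tangent plane $T_x(C)$, not a line --- it is $C\cap T_x$ (or equivalently $T_x(C)\cap X$) that is pinned down to an affine line. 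This is only a slip in the summary, not in the proof proper.
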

\begin{proof}
Let $V_x$ be the set of points of $X$ on a common singular line with $x$. Lemma~\ref{lemmaH2} tells us that there are at least two singular lines $L,L'$ through $x$. Hence, by Lemma~\ref{lemmaH3}, the affine plane consisting of the points of $X$ in $\<L,L'\>$ belongs to $V_x$.  If there is some other singular line $L''$ through $x$ not contained in $\<L,L'\>$, then a repeated use of Lemma~\ref{lemmaH3} shows that the set of points of the $3$-space $\<L,L',L''\>$ belonging to $X$ (and on a singular line with $x$) is an affine $3$-space. A similar argument shows that, if there was still another singular line not contained in $\<L,L',L''\>$, then an affine part of $T_x$ would be contained in $V_x$, but this contradicts the fact that $T_x$ contains planes which intersect $X$ in just an affine line. 

The proof is complete.      
\end{proof}

We will call an affine $3$-space as in the statement of Lemma~\ref{lemmaH4} a \emph{singular $3$-space}. 

Before stating the next proposition, we need a definition. Let $k\leq l;n=k+l+1$ and take complementary subspaces $\Pi,\Pi'$ in $\PG(n,\mathbb{K})$ of dimensions $k$ and $l$ respectively. Now choose normal rational curves $C$ and $C'$ in $\Pi$ and $\Pi'$ respectively and a bijection $\phi:C'\to C$ between them which preserves the cross-ratio. Then we define a \emph{normal rational scroll} $\mathfrak{S}_{k,l}=\cup_{p\in C'} \<p,p^\phi\>$. From now on we call $\mathfrak{S}_{1,2}$ a
{\em normal rational cubic scroll}.

\begin{prop}\label{propH1} Let $y\in Y$ and let $X_y$ be the set of all members $x$ of $X$ such 
that $\<x,y\>$ is a singular line. Then $\<X_y\>$ is a $5$-dimensional subspace. Also, $X_y\cup\{y\}$ is a cone with vertex $y$ on a normal rational cubic scroll. In particular, all tubes arise from quadratic cones. Also, for every $x\in X$, the  set of points of $X$ on a common singular line with $x$ is an affine plane.
\end{prop}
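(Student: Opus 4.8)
The plan is to study the local structure of $X$ at the vertex $y$ by projecting from $y$, and to recognise the projected configuration as (a generating set of) a normal rational cubic scroll; the remaining three assertions of the proposition will then drop out.

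\textbf{Setup and the easy half of the dimension count.} Since we are in the second case of Lemma~\ref{lemmaH2}, there is a tube with vertex $y$, and by Lemmas~\ref{lemmaH0} and~\ref{lemmaH1} the tubes with vertex $y$ are exactly the tubes meeting a fixed one of them in a generator, any two of them meeting in a unique generator. Every generator of such a tube is a singular line through $y$, so the projection $\rho$ of $\PG(N,\K)$ from $y$ onto a complementary subspace $F$ collapses each of these generators to a point. Hence $\rho(X_y)$ carries an incidence structure $\mathcal{G}$ whose points are the $\rho$-images of the singular lines through $y$ and whose lines are the planar ovals $\rho(C)$, $C$ a tube with vertex $y$; by Lemma~\ref{lemmaH0} two lines of $\mathcal{G}$ meet in exactly one point. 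Moreover $\dim\<X_y\>\geq 5$: if $C,C'$ are tubes with vertex $y$ sharing a generator $L$, then $\Xi(C)\cap\Xi(C')\supseteq L\cup\{y\}=L$, and by (H2) the intersection cannot be a plane (a plane in $X\cup Y$ with its $Y$-part on a line would be a singular plane, which cannot lie inside the tube $C$), so $\Xi(C)\cap\Xi(C')=L$ and $\<C,C'\>=\<\Xi(C),\Xi(C')\>$ is $5$-dimensional.

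\textbf{Recognising the scroll (the crux).} The main work is to determine $\mathcal{G}$. For two points $\rho(x_1),\rho(x_2)$, put $L_i=\<x_i,y\>$; by Lemma~\ref{lemmaH1'} either $L_1,L_2$ lie in a unique common tube with vertex $y$ — i.e. $\rho(x_1),\rho(x_2)$ lie on a unique line of $\mathcal{G}$ — or $\<L_1,L_2\>$ is a singular plane $\pi$. In the latter case the lines of $\pi$ through the point where its radical line meets $L_1$, other than that radical line, all carry $|\K|\geq 3$ points of $X$, hence are singular lines through $y$, and by Lemma~\ref{lemmaH1'} no two of them lie in a common tube; so they project onto a line of $F$ contained in $\rho(X_y)$ but disjoint from every oval of $\mathcal{G}$. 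Calling these projected lines the \emph{radical lines} of $\mathcal{G}$, one verifies, by iterating Lemma~\ref{lemmaH1'} (and using that singular planes through a point of $X$ behave as in Lemma~\ref{lemmaH3}), that being on a common radical line is an equivalence relation on the points of $\mathcal{G}$, that each oval of $\mathcal{G}$ meets each radical line in exactly one point, and that two distinct ovals meet in a point lying on exactly one radical line. Thus the ovals are transversals to the family of radical lines — the combinatorial pattern of rulings and plane sections of a scroll — and combining this with the upper bound $\dim\<X_y\>\leq 5$, which one must extract from (H3*) together with the relations $\dim\<C,C'\>=5$ established above, forces the radical lines to form a normal rational curve of degree $1$ and, via the oval classification of \cite{Sch-Mal:**} (exactly as in the proof of Lemma~\ref{lemmaH2}), each oval to be a conic; hence $\rho(X_y)$ together with its radical lines is a normal rational cubic scroll $\mathfrak{S}_{1,2}$ spanning a $\PG(4,\K)\subseteq F$.

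\textbf{Conclusions.} Therefore $\<X_y\>=\<y,\PG(4,\K)\>$ is $5$-dimensional, and $X_y\cup\{y\}$ is the cone with vertex $y$ on $\mathfrak{S}_{1,2}$. Each tube $C$ with vertex $y$ has base oval $\rho(C)$ a conic by the previous paragraph, so $C$ is a quadratic cone; since every singular line is a generator of some tube and hence has a vertex, every tube has a vertex, so all tubes arise from quadratic cones. Finally, fix $x\in X$ and a singular line $L$ through $x$, with $y=y(L)$; for any $w\in V_x$ the plane $\<L,\<x,w\>\>$ is singular by Lemma~\ref{lemmaH3} and contains $y$ on its radical line, so $\<w,y\>$ carries $|\K|\geq 3$ points of $X$, is a singular line, and has vertex $y$; thus $V_x\subseteq X_y\subseteq X_y\cup\{y\}$. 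The cone $X_y\cup\{y\}$ is an irreducible $3$-dimensional variety containing no projective $3$-space (such a $3$-space would project from $y$ onto a plane lying on the irreducible surface $\mathfrak{S}_{1,2}$). By Lemma~\ref{lemmaH4}, $V_x$ is an affine plane or an affine $3$-space; in the latter case its projective closure is a $3$-space contained in $X_y\cup\{y\}$, a contradiction. Hence $V_x$ is an affine plane.

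\textbf{Main obstacle.} The hard step is the middle one: promoting the dichotomy of Lemma~\ref{lemmaH1'} to the full cubic-scroll structure on $\mathcal{G}$ and, above all, pinning the dimension down to exactly $5$ — that is, excluding ``higher scrolls''. This is where (H3*), the dimension relations for spans of pairs of tubes, and the oval classification of \cite{Sch-Mal:**} must be combined; once $X_y\cup\{y\}$ is identified as a cone on $\mathfrak{S}_{1,2}$, the remaining assertions are routine.
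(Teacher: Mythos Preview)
Your overall architecture---project from $y$, identify the incidence structure $\mathcal{G}$, recognise a scroll---is sound in spirit, but the execution has two genuine gaps that the paper's proof handles by a different mechanism.

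\textbf{The dimension bound and the exclusion of singular $3$-spaces are not established.} You write that the upper bound $\dim\<X_y\>\leq 5$ ``must be extracted from (H3*) together with the relations $\dim\<C,C'\>=5$'', and later you appeal to the scroll structure to rule out singular $3$-spaces. But these two facts are intertwined, and neither follows from the ingredients you list. The paper does \emph{not} project from $y$ for this step; it projects from the plane $\alpha=\<G,G'\>$ spanned by two generators of a fixed tube $C$ with vertex $y$. Under that projection, the tubes $C_{GM}$ and $C_{G'M}$ (with $M$ ranging over the lines of a fixed singular plane $\pi$ through $y$) are shown to map into a single plane $\beta$; a hypothetical singular $3$-space through a generator of $C$ would force a tube $C_{GN}$ whose projection is simultaneously inside and outside $\beta$. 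This is how the paper gets both $\dim\<X_y\>=5$ and the non-existence of singular $3$-spaces in one argument, \emph{before} any scroll is mentioned. Your ordering---scroll first, then no singular $3$-space---is at best circular, and your final paragraph's claim that the projective closure of a would-be affine $3$-space $V_x$ lies in $X_y\cup\{y\}$ is also wrong: the plane at infinity of $V_x$ consists of many $Y$-points, not just $y$.

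\textbf{The identification of the scroll uses the wrong tool.} You invoke \cite{Sch-Mal:**} ``exactly as in the proof of Lemma~\ref{lemmaH2}'', but that lemma applies \cite{Sch-Mal:**} to a full projective-plane structure (a quadric Veronesean), whereas your $\mathcal{G}$ is a dual affine plane (ovals from tubes together with radical lines from singular planes). The paper first proves that $\mathcal{G}_y$ is a dual affine plane---which requires showing that every tube with vertex $y$ meets every singular plane through $y$ in an affine line, a non-trivial fact you assert without proof---and then applies the generalised Veronesean embedding result of \cite{Tha-Mal:11} to conclude the projection is $\mathfrak{S}_{1,2}$. Your sketch does not supply the ``each oval meets each radical line in exactly one point'' step, which in the paper occupies two careful paragraphs of case analysis using (H2).
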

\begin{proof}
Let $C$ be a tube with vertex $y$. Pick a generator $L$ on $C$, and a singular plane $\pi$ through $L$, which exists by Lemma~\ref{lemmaH4}. Let $G$ be any generator of $C$ distinct from $L$, and let $M$ be any singular line in $\pi$ through $y$ distinct from $L$. Suppose $\<G,M\>$ is singular. Take any point $t\neq y$ in $\<G,M\>$, with $t\in Y$. Let $G'$ be a line through $t$ meeting $G$ in a point $x'$ of $X$, and let $M'$ be a line through $t$ meeting $L$ in a point $x''$ of $X$.  If $G'\cup M'$ is contained in a cylindric space, then the intersection with $\Xi(C)$, which contains $\<x',x''\>$,  violates (H2), noticing that $x'$ and $x''$ are not contained in a common generator of $C$. Hence, by Lemma~\ref{lemmaH1'}, the plane $\<G',M'\>$ is singular. But that would mean that all points of $\<x',x''\>$ belong to $X\cup Y$, also a contradiction (to $X(\Xi(C))=C$).

Hence $G$ and $M$ define a unique tube $C_{GM}$.    Now we fix two generators $G,G'$ on $C$ distinct from $L$. 
We denote by $U$ an $(N-3)$-space skew to the plane $\alpha=:\<G,G'\>$, and we let $\rho$ be the projection operator from $\alpha$ onto $U$. With ``projection'', we always mean $\rho$ in the rest of this proof, except for the last paragraph.  

The projection of $C_{GM}$, for any line $M$ in $\pi$ through $y$ distinct from $L$, is an affine part of a line $L_M$; we denote the projection of the tangent plane to $C_{GM}$ at any point of $G\cap X$ by $p_M$ (it is the point at infinity of $L_M$ with regard to the affine part alluded to above). Likewise, the projection of $C_{G'M}$ is a line $L'_M$ minus a point $p'_M$. All these lines have a point in common with the projection of $\pi$, which is a line $L_\pi$. As two tubes never belong to the same $4$-space due to (H2), we see that $L_\pi$ differs from any such $L_M$ and any such $L'_M$, and no $L_M$ coincides with any $L'_{M'}$. Moreover, intersecting $L_\pi$ defines a bijection from $\{L_M: M\mbox{ a line in }\pi\mbox{ through }y\mbox{ distinct from }L\}$ into $L_\pi\setminus\{\rho(L)\}$. All this now easily implies that all $L_M$ and all $L'_M$ above are contained in a unique plane $\beta$.  

Now suppose that $L$ is contained in a singular $3$-space $S$. One easily checks that $\rho(S)$ cannot coincide with $\beta$. Consider a line $N$ through $y$ in $S$ projecting outside $\beta$. As above, there is a unique tube $C_{GN}$ containing $G$ and $N$. This tube has to intersect all tubes $C_{G'M}$, with $M$ a line in $\pi$ distinct from $L$ and incident with $y$, in distinct generators.  Since $\rho$ is injective on the set of generators of $C_{GN}$ distinct from $G$, this easily implies that the projection of $C_{GN}$ is contained in $\beta$, and hence $\rho(N)$ belongs to $\beta$, contradicting our choice. 

We have shown that singular $3$-spaces do not exist. Hence Lemma~\ref{lemmaH4} implies that every singular line is contained in a unique singular plane. Denote the singular planes through $G$ and $G'$ by $\pi_G$ and $\pi_{G'}$, respectively.

Now let $P$ be any singular line through $y$, $P\notin\{G,G'\}$. Then $P$ cannot be contained in  both $\pi_G$ and $\pi_{G'}$; suppose it is not contained in $\pi_G$. Then $P$ and $G$ are contained in a unique tube $C_{PG}$, and the arguments in the paragraph preceding the previous one show that the projection of this tube is contained in $\beta$. Hence $\<X_y\>$ is a $5$-dimensional subspace, as it coincides with $\<\alpha,\beta\>$. But we can say more. Let $L_{PG}$ be the projection of $C_{PG}\setminus G$. Then $L_{PG}$ and $L_\pi$ have a point $z$ in common. Taking inverse images, we see that we have two cases. First case: there is a generator of $C_{PG}$, which we may take to be $P$, such that the $3$-space $\<\alpha,P\>$ contains a line $M$ of $\pi$ through $y$. If $\<\alpha,P\>\in\Xi$, then $P=L=M$ and so $P\subseteq \pi$. If $\<\alpha,P\>\notin\Xi$, then $M\neq L$. If $P\neq M$, then $P\not\subseteq \pi$ and so $P$ and $M$ determine a unique tube, and (H2) leads to a contradiction. Hence, in this first case, we have shown that $P$ belongs to $\pi$. 

Second case: the tangent plane to $C_{PG}$ at $G$ belongs to a $3$-space $\Sigma$ containing $G,G'$ and some line $M$ of $\pi$ through $y$. Then again (H2) implies that $C_{PG}$ and $C_{G'M}$ share a line in that tangent plane, which must necessarily be $G$, and so $M=L$. But then $\Sigma=\Xi(C)$ and $\Xi(C_{PG})\cap\Xi(C)$ is a plane, contradicting (H2) once again. So the second case cannot occur. 

All in all, we have shown that every tube through $G$ intersects $\pi$ in an affine line. Varying $L$, we conclude that every tube with vertex $y$ intersects every singular plane containing $y$ in a singular affine line.  In other words,  the geometric structure $\mathcal{G}_y$ with point set the set of singular lines through $y$ and line set the set of tubes and singular planes through $y$ is a dual affine plane.

Now let $A$ be a $4$-space in $\<\alpha,\beta\>$ not containing $y$. We now project $X_y$ from $y$ onto $A$. This yields a generalized Veronesean embedding of the projective completion of $\mathcal{G}_y$ in $\<\alpha,\beta\>$, provided we let $y$ play the role of the unique point at infinity. The Main Result|General Version of Section~5 of \cite{Tha-Mal:11} implies that the projection of $X_y$ from $y$ onto $A$ is a normal rational cubic scroll. Hence, since $X_y$ consists of the union of lines through $y$, the intersection of $X_y$ with $A$ is itself a normal rational cubic scroll, finalizing the proof of the proposition.    
\end{proof}

For the record, we explicitly write down a particular thing we proved in the course of the previous proof.

\begin{lemma}\label{lemmaHP} For $y\in Y$, the geometric structure $\mathcal{G}_y$ with point set the set of singular lines through $y$ and line set the set of tubes and singular planes through $y$ is a dual affine plane.\qed
\end{lemma}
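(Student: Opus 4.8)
Since this lemma merely records a fact that emerged in the course of proving Proposition~\ref{propH1}, the plan is to reassemble the relevant parts of that proof into a direct verification of the defining incidence properties of a dual affine plane. Recall that the two families of lines of $\mathcal{G}_y$ are the tubes with vertex $y$ and the singular planes whose radical line passes through $y$, and that (as established inside the proof of Proposition~\ref{propH1}) one may work throughout inside the cone $X_y\cup\{y\}$ over a normal rational cubic scroll; the point $y$ itself will play the role of the unique point at infinity of the projective completion, the lines through it being the singular planes through $y$.

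First I would show that any two distinct points $L\neq L'$ of $\mathcal{G}_y$ lie on exactly one common line. Since $L\cap L'=\{y\}$ with $y\in Y$, Lemma~\ref{lemmaH1'} applies and produces either a unique common tube or a singular plane $\<L,L'\>$; in the first case the tube has vertex $y$ since $y$ is the only point of $L$ in $Y$, and in the second case the radical line of $\<L,L'\>$ passes through $y$, so in both cases at least one line of $\mathcal{G}_y$ contains $L$ and $L'$. For uniqueness: a common tube is unique by Lemma~\ref{lemmaH0}; a singular plane through $L$ is unique because singular $3$-spaces do not exist (shown within the proof of Proposition~\ref{propH1}), whence Lemma~\ref{lemmaH4} forces every singular line to lie in a unique singular plane; and the two alternatives exclude one another, for if a tube $C$ and a singular plane $\pi$ shared two generators then $\pi\subseteq\Xi(C)$ and $\pi\cap X$ would be the union of just those two generators, contradicting that $\pi$ is singular.

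Next I would check that two distinct lines of $\mathcal{G}_y$ meet in at most one point and that the singular planes through $y$ form a parallel class of pairwise non-intersecting lines. Two tubes with vertex $y$ share exactly one generator by Lemma~\ref{lemmaH0}; a tube with vertex $y$ and a singular plane through $y$ share exactly one singular affine line — this is precisely the last conclusion drawn in the proof of Proposition~\ref{propH1}, uniqueness again following since sharing two generators would give $\pi\subseteq\Xi(C)$; and two distinct singular planes through $y$ share no singular line through $y$. Finally, every line of $\mathcal{G}_y$ carries at least three points (a tube has a conic's worth of generators, a singular plane through $y$ a pencil's worth of singular lines through $y$, and $|\K|>2$) and every point lies on at least two lines; together with the previous items this is exactly the statement that $\mathcal{G}_y$ is a dual affine plane with point at infinity $y$.

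The main obstacle is the parallel-class assertion — that two distinct singular planes through $y$ cannot share a common singular line through $y$ — since that is what makes the point at infinity well defined, and it is the one step that genuinely uses the description of $X_y\cup\{y\}$ as a cone over a cubic scroll rather than just the formal incidence Lemmas~\ref{lemmaH0}, \ref{lemmaH1'} and~\ref{lemmaH4}; this is precisely why the fact is extracted from the proof of Proposition~\ref{propH1}, where that description is already available.
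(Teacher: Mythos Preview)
Your proof is correct and matches the paper's approach exactly: the paper gives no separate argument for this lemma (note the \qed\ in the statement), explicitly saying it merely records something established inside the proof of Proposition~\ref{propH1}, and you have faithfully reassembled those pieces.

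One small remark on your closing paragraph: the ``main obstacle'' you flag---that two distinct singular planes through $y$ share no singular line through $y$---is actually already handled by the uniqueness argument you gave in your first step. Once the non-existence of singular $3$-spaces is in hand (this is the genuine input from inside Proposition~\ref{propH1}, obtained via the projection argument \emph{before} the scroll is identified), Lemma~\ref{lemmaH4} forces every singular line to lie in a unique singular plane, and the parallel-class assertion follows immediately. So the crux is the non-existence of singular $3$-spaces, not the full cubic-scroll description; the latter is a consequence drawn afterwards in the paper, not an ingredient needed here.
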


There are a few interesting corollaries of Proposition~\ref{propH1}, the first of which does not require a proof since it follows immediately from the fact that for every $x\in X$, the set of points of $X$ on a common singular line with $x$ is an affine plane. 

\begin{cor}\label{corH0} Every singular line $L$ is contained in a unique singular plane, containing all the singular lines that intersect $L$ in a point of $X$. \qed
\end{cor}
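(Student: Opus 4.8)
The plan is to read the statement off from Proposition~\ref{propH1}, Lemma~\ref{lemmaH3}, and the characterization of singular lines (valid under the standing hypothesis $|\K|>2$) as the lines of $\PG(N,\K)$ carrying at least three points of $X$.

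For existence, I would fix a point $x\in L\cap X$ (nonempty, as $L$ is a generator of a tube) and, invoking Proposition~\ref{propH1}, a second singular line $M\neq L$ through $x$ (at least two exist, since the set $V_x$ of points of $X$ on a common singular line with $x$ is a full affine plane, not an affine line). By Lemma~\ref{lemmaH3} the plane $\pi:=\<L,M\>$ is singular and contains $L$; moreover, by (the proof of) Lemma~\ref{lemmaH4} together with Proposition~\ref{propH1} one has $\pi\cap X=V_x$, which shows in particular that $\pi$ is independent of the auxiliary choice of $M$.

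For uniqueness, suppose $\pi'$ is any singular plane with $L\subseteq\pi'$, and let $\rho'\subseteq Y$ be its radical line, so $\pi'\cap X=\pi'\setminus\rho'$. Since $x\in X$ we have $x\notin\rho'$, hence every line of $\pi'$ through $x$ meets $\rho'$ in exactly one point and therefore carries $|\K|\ge 3$ points of $X$; thus it is a singular line through $x$, and so it lies in $\pi$ (its points of $X$ lie in $V_x=\pi\cap X$, and two of them determine it). As $\pi'$ is the union of its lines through $x$, this gives $\pi'\subseteq\pi$, whence $\pi'=\pi$. Finally, if $L'$ is a singular line meeting $L$ in a point of $X$ with $L'\neq L$, then $\<L,L'\>$ is a singular plane by Lemma~\ref{lemmaH3}, and it contains $L$, so by the uniqueness just established $\<L,L'\>=\pi$ and hence $L'\subseteq\pi$.

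The only place requiring a little care --- and the reason the corollary is hardly more than a remark --- is to keep the convention that ``affine plane'' means the complement $\pi\setminus\rho$ of a line in a projective plane, and to note that in a singular plane every line other than the radical is a genuine singular line; the latter is exactly where $|\K|>2$ enters, and so implicitly leaves the case $|\K|=2$ to the separate treatment announced for the end of the section.
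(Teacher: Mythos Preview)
Your argument is correct and follows essentially the same approach as the paper: the corollary is stated there as an immediate consequence of the last assertion of Proposition~\ref{propH1} (that the points of $X$ on a singular line with a given $x\in X$ form an affine plane), and you have simply spelled out this implication in detail using Lemma~\ref{lemmaH3} for existence and the identification $\pi\cap X=V_x$ for uniqueness. The paper gives no proof beyond the remark that it ``does not require a proof,'' so your write-up is a faithful expansion of the same reasoning.
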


\begin{cor}\label{corH1} The set $Y$ is the point set of a plane $\pi_Y$ of $\PG(N,\K)$.
\end{cor}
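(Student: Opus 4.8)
The plan is to work with the collection of \emph{radical lines} — the lines of $\PG(N,\K)$ all of whose points lie in $Y$ — to show that $Y$ is exactly their union, that any two of them meet, and that this forces $Y$ to be a subspace of dimension $2$.

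\textbf{Step 1 (the radical lines).} By Corollary~\ref{corH0} every singular line lies in a unique singular plane; and since, by Proposition~\ref{propH1}, the set of points of $X$ on a common singular line with a given $x\in X$ is an affine plane, the singular lines through $x$ span a singular plane $\pi_x$, whose radical line $R_x$ is contained in $Y$. Every $y\in Y$ is the vertex of some tube $C$, and any generator of $C$ is a singular line $L$ through a point $x\in X$ with $y=y(L)\in R_x$; conversely $R_x\subseteq Y$, and every radical line of every singular plane $\pi$ is of this form (choose $x\in\pi$ off its radical line and check $\pi_x=\pi$). Hence $Y=\bigcup_{x\in X}R_x$.

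\textbf{Step 2 (pairwise intersection; two radical lines per point).} Given $R_x$ and $R_{x'}$, pick by (H1) a tube $C$ through $x$ and $x'$; its vertex lies on the generator of $C$ through $x$, hence in $R_x$, and likewise in $R_{x'}$, so $R_x\cap R_{x'}\neq\emptyset$. Next, for fixed $y\in Y$ the set $X_y$ spans a $5$-space by Proposition~\ref{propH1}, so it cannot lie in a single singular plane; thus at least two singular planes pass through $y$. Two distinct singular planes through $y$ meet only in $\{y\}$: a common line would be a singular line lying in two singular planes (contradicting Corollary~\ref{corH0}) or a radical line containing points of $X$ (absurd). Hence such planes carry distinct radical lines, so at least two radical lines pass through each $y\in Y$; in particular the radical lines are not all concurrent.

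\textbf{Step 3 (conclusion).} A family of pairwise-intersecting lines of $\PG(N,\K)$ that is not concurrent spans a plane $\pi_Y$, so $Y=\bigcup_{x}R_x\subseteq\pi_Y$. For the reverse inclusion one shows $Y$ is closed under joins: if $y_1\neq y_2$ in $Y$ lie on a common radical line $R_x$ then $\<y_1,y_2\>=R_x\subseteq Y$; otherwise one finds a point $x\in X$ lying on a singular line with $y_1$ and on a singular line with $y_2$ (i.e. $x\in X_{y_1}\cap X_{y_2}$), and then the two singular lines $\<x,y_1\>$, $\<x,y_2\>$ span, by Lemma~\ref{lemmaH3}, a singular plane whose radical line contains both $y_1$ and $y_2$, so again $\<y_1,y_2\>\subseteq Y$. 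Thus $Y$ is a projective subspace; being covered by pairwise-meeting lines that are not all concurrent, it lies in $\pi_Y$, and since it contains more than one line it equals $\pi_Y$, a plane.

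\textbf{Main obstacle.} The delicate point is the step producing a point of $X$ on a singular line with both $y_1$ and $y_2$, equivalently $X_{y_1}\cap X_{y_2}\neq\emptyset$, equivalently that the radical lines cover all of $\pi_Y$. Here the $5$-dimensional spans $\<X_{y}\>$ and the cone structure of $X_y\cup\{y\}$ over a normal rational cubic scroll (Proposition~\ref{propH1}) must be exploited: since $\<X_{y_1}\>$ and $\<X_{y_2}\>$ are $5$-spaces in $\PG(8,\K)$ they meet in at least a plane, but one still has to argue, using that cone structure together with (H2), that this intersection actually contains a point of $X$ rather than only points of $Y$ (or of no tube at all). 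Everything else is soft incidence geometry.
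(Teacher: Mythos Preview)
Your Steps~1 and~2 are fine, and the plan in Step~3 is sensible, but the gap you flag as the ``main obstacle'' is genuine and your proposed patch does not work here. You want to use that $\<X_{y_1}\>$ and $\<X_{y_2}\>$ are $5$-spaces in $\PG(8,\K)$, but $N=8$ is only established in Proposition~\ref{propH3}, which depends on the present corollary (via Corollary~\ref{corH2} and Proposition~\ref{propH2}); so this is circular. Even granting $N=8$, you still owe the passage from ``the two $5$-spaces meet in at least a plane'' to ``they share an actual point of $X$'', which you leave open.

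The paper avoids this difficulty by extracting more from Proposition~\ref{propH1} than you do. For a fixed $y\in Y$ and a tube $C$ with vertex $y$, the generators of $C$ lie in pairwise distinct singular planes through $y$, and the radical lines of these planes (all through $y$) project from $y$ precisely onto the directrix (``vertex'') line of the normal rational cubic scroll. Hence the radical lines through $y$ do not merely lie in a plane: their union already \emph{is} a full plane $\pi_y\subseteq Y$. The paper then shows, by a short tube-chasing contradiction, that every other $y'\in Y$ lies in $\pi_y$, so $Y=\pi_y$. Your argument can be repaired by inserting exactly this observation: once you know that the radical lines through a single $y$ fill a plane $\pi_y\subseteq Y$, your Step~2 gives $\pi_y\subseteq Y\subseteq\pi_Y$ with both $\pi_y$ and $\pi_Y$ planes, hence $Y=\pi_Y$, and the problematic case in Step~3 never arises.
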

\begin{proof}
Let $y\in Y$ be arbitrary and let $C$ be a tube with vertex $y$. Then, by Proposition~\ref{propH1}, there is a unique singular plane through every generator of $C$, and the union of the radical lines of all these planes is a plane $\pi_y$. 

Now consider a second point $y'\in Y$, and let $x\in X$ be such that $\<x,y'\>$ is a singular line. We may suppose $y'\notin \pi_y$ and consequently $\<x,y\>$ is not singular. If $c\in C$, then any tube $C'$ containing $x$ and $c$ has a vertex $t$ belonging to $\pi_y$ (as $t$ must belong to the radical line of the singular plane through $c$). We clearly have $y'\in\pi_t$. Considering a tube through a point of $C\setminus\<c,y\>$ and $C'\setminus\<c,t\>$, we see that some point of $\pi_y\setminus\<y,t\>$ belongs to $\pi_{t}$. Since also $y\in\pi_t$, this implies that $\pi_y=\pi_t$. But now $y'\in\pi_t=\pi_y$, a contradiction, and so $y'\in\pi_y$ after all.   

The proof of the corollary is complete.
\end{proof}

We keep $\pi_Y$ of the previous corollary as standard notation.

\begin{cor}\label{corH2} Every line $L$ of $\pi_Y$ is contained in a unique singular plane $\pi_L$. The union of all these planes is precisely $X\cup Y$.
\end{cor}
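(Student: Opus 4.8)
The plan is to reduce the statement to two facts about radical lines, exploiting that $Y=\pi_Y$ by Corollary~\ref{corH1}. First observe that a singular plane $\pi$ meets $\pi_Y$ in exactly its radical line: the radical line lies in $Y=\pi_Y$, while the affine part of $\pi$ lies in $X$, hence is disjoint from $Y=\pi_Y$. Consequently, a singular plane $\pi$ contains a line $L$ of $\pi_Y$ if and only if $L$ is the radical line of $\pi$. So it suffices to prove: (i) every line $L$ of $\pi_Y$ is the radical line of exactly one singular plane $\pi_L$; and (ii) $\bigcup_L\pi_L=X\cup Y$, which will follow easily from (i).

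For the \emph{existence} part of (i), let $L$ be a line of $\pi_Y$ and pick $y\in L$. Since $y\in Y$, it is the vertex of some tube $C$. By Corollary~\ref{corH0} each generator $G$ of $C$ lies in a unique singular plane $\pi_G$, whose radical line $R_G$ is the unique line of $\pi_G$ lying in $Y$; as $y\in\pi_G\cap Y$ we get $y\in R_G$. The proof of Corollary~\ref{corH1} shows that $\bigcup_G R_G=\pi_Y$. Since the lines through $y$ inside $\pi_Y$ partition $\pi_Y\setminus\{y\}$ and every $R_G$ is such a line, the family $\{R_G\}$ exhausts all lines through $y$ in $\pi_Y$; hence $L=R_{G_0}$ for some generator $G_0$ of $C$, and $\pi_{G_0}$ is a singular plane with radical line $L$.

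The \emph{uniqueness} part of (i) is the crux. Suppose $\pi_1\neq\pi_2$ are singular planes both with radical line $L$, and fix $y\in L$; then $\pi_1\cap\pi_2=L$ (two distinct planes sharing a line meet exactly in it). Let $C$ be a tube with vertex $y$. By the proof of Proposition~\ref{propH1}, $C$ meets each of $\pi_1,\pi_2$ in a generator, and no singular plane can contain two generators of $C$ (such a plane would equal $\langle G,G'\rangle\subseteq\Xi(C)$, forcing its affine part into $\Xi(C)\cap X=C\cap\langle G,G'\rangle$, which is too small when $|\K|\geq 3$); hence $\pi_1,\pi_2$ come from two distinct generators of $C$. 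Now project from $y$ inside the $5$-space $\langle X_y\rangle$ onto a complementary $4$-space: by Proposition~\ref{propH1} the image of $X_y$ is (the affine part of) a normal rational cubic scroll, the generators of $C$ map to distinct points of its conic directrix, and each $\pi_i$ maps to the ruling of the scroll through the corresponding point, so $\pi_1,\pi_2$ map to two distinct rulings. Distinct rulings of a scroll are pairwise disjoint, yet the image of $L$ lies on both (as $L\subseteq\pi_1\cap\pi_2$) — a contradiction. I expect this to be the main obstacle: the combinatorial structure of Lemma~\ref{lemmaHP} (that $\mathcal{G}_y$ is a dual affine plane) by itself does not force distinct singular planes through $y$ to have distinct radical lines, so the rigidity of the scroll from Proposition~\ref{propH1} is genuinely needed.

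Finally, for (ii): each $\pi_L\subseteq X\cup Y$, its affine part lying in $X$ and its radical line in $Y=\pi_Y$, so $\bigcup_L\pi_L\subseteq X\cup Y$. Conversely every point of $Y=\pi_Y$ lies on some line $L$ of $\pi_Y$, the radical line of $\pi_L$, so $Y\subseteq\bigcup_L\pi_L$; and each $x\in X$ lies on at least one singular line (by Proposition~\ref{propH1}, or Lemma~\ref{lemmaH2}), hence on the unique singular plane $\pi$ through that line (Corollary~\ref{corH0}), and $\pi=\pi_L$ for its radical line $L$. Thus $\bigcup_L\pi_L=X\cup Y$, as required.
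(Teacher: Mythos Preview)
Your proposal is correct and follows essentially the same approach as the paper. For existence both you and the paper invoke the structure coming from Proposition~\ref{propH1} (you route it through the covering argument in the proof of Corollary~\ref{corH1}, the paper uses the vertex line of the scroll directly); for uniqueness both arguments project from a point of $L$ and use that the two hypothetical singular planes would yield two disjoint rulings of the normal rational cubic scroll, contradicting that they share $L$. The only cosmetic difference is that the paper first constructs a specific tube through chosen generators $M\subseteq\alpha$, $M'\subseteq\alpha'$ (via Lemma~\ref{lemmaH1'}, after noting $\langle M,M'\rangle$ is not singular), whereas you take an arbitrary tube with vertex $y$ and invoke the fact, established inside the proof of Proposition~\ref{propH1}, that every such tube meets every singular plane through $y$ in a generator.
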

\begin{proof}
Let $y\in L$. Since, by Proposition~\ref{propH1}, the projection of $\pi_Y\setminus\{y\}$ is a line 
(corresponding to the ``vertex line'' of the normal rational cubic scroll), the line $L$ is the radical line of 
some singular plane $\alpha$. If $L$ were the radical line of some second singular plane 
$\alpha'$, then we can consider a line $M$ in $\alpha$ and a line $M'$ in $\alpha'$, with 
$M\cap M'=\{t\}$, $t\in L$. The plane $\<M,M'\>$ is not singular, as this would lead to a singular 
$3$-space $\<\alpha,\alpha'\>$. Hence there is a tube $C_{MM'}$ containing $M$ and $M'$. Now 
Proposition~\ref{propH1} implies that the projection of $\alpha\setminus\{t\}$ and 
$\alpha'\setminus\{t\}$ from $t$ are disjoint lines on a normal rational cubic scroll, hence 
$\alpha\cap\alpha'=\{t\}$, a contradiction.   

By Lemmas~\ref{lemmaH2} and~\ref{lemmaH3}, every point $x\in X$ is contained in some singular plane, which, by Corollary~\ref{corH1}, meets $\pi_Y$ in some line. 
\end{proof}

We can now determine the geometric structure of $X$.

\begin{prop}\label{propH2} Let $T$ be the set of all tubes. Then $(X,T)$ is a {\em projective Hjelmslev plane of level $2$}. More exactly this means the following: the map $\chi:X\rightarrow \pi^*_Y$, where $\pi_Y^*$ is the plane dual to $\pi_Y$, sending the point $x\in X$ to the radical line of the unique singular plane through $x$, is an epimorphism of $(X,T)$ onto $\pi_Y^*$ enjoying the following properties.
\begin{itemize}
\item[\emph{(Hj1)}] Two points of $X$ are always joined by at least one member of $T$; they are joined by a unique member of $T$ if and only if their images under $\chi$ are distinct. 
\item[\emph{(Hj2)}] Two members of $T$ always intersect in at least one point; they intersect in a unique element of $X$ if and only if their images under $\chi$ are distinct.
\item[\emph{(Hj3)}] The inverse image under $\chi$ of a point, endowed with the intersections with non-disjoint tubes, is an affine plane.
\item[\emph{(Hj4)}] The set of tubes contained in the inverse image under $\chi$ of a line, endowed with all mutual intersections, is an affine plane.
\end{itemize}
\end{prop}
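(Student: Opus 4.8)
The plan is to derive everything in Proposition~\ref{propH2} from the structure already built, chiefly Corollaries~\ref{corH0}, \ref{corH1} and~\ref{corH2}, Proposition~\ref{propH1}, Lemmas~\ref{lemmaH0} and~\ref{lemmaH1}, and the dual affine plane $\mathcal{G}_y$ of Lemma~\ref{lemmaHP}. First I would check that $\chi$ is a well-defined epimorphism. Every $x\in X$ lies on some singular line (it lies on a tube by (H1), hence on a generator, and $x$ is not the vertex since $x\in X$); by Lemma~\ref{lemmaH3} two singular lines through $x$ span a singular plane, and by Corollary~\ref{corH0} each singular line lies in a \emph{unique} singular plane, so all singular lines through $x$ lie in one common singular plane $\pi_x$ and $\chi(x)$, the radical line of $\pi_x$, is well defined. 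By Corollary~\ref{corH2} every line $L$ of $\pi_Y$ is the radical line of a singular plane $\pi_L$, all of whose points off $L$ map to $L$, so $\chi$ is onto. Sending each tube $C$ to its vertex defines $\chi$ on $T$; this is onto $Y=\pi_Y$ since every point of $Y$ is by definition a vertex, and it preserves incidence: if $x\in C$, the generator $G$ of $C$ through $x$ is a singular line through $x$, so $G\subseteq\pi_x$ and the vertex $y(G)$ of $C$ lies on the radical line $\chi(x)$ of $\pi_x$.

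Next I would establish (Hj1) and (Hj2). Two points of $X$ lie on a common tube by (H1). If they lie on two distinct tubes $C_1,C_2$, then $|\Xi(C_1)\cap\Xi(C_2)|>1$, so Lemma~\ref{lemmaH1} makes $\langle x,x'\rangle$ a singular line; it lies in $\pi_x\cap\pi_{x'}$, so Corollary~\ref{corH0} forces $\pi_x=\pi_{x'}$, i.e.\ $\chi(x)=\chi(x')$. Conversely, if $\chi(x)=\chi(x')$ then $\pi_x=\pi_{x'}=:\pi$ (radical line $L$), the line $G:=\langle x,x'\rangle$ is singular through $y_0:=G\cap L$ (using $|\K|>2$), and every tube through $x$ and $x'$ has vertex $y_0$ and contains $G$ as a generator (a tube with vertex $v\in L\setminus\{y_0\}$ through both would give two distinct points $\langle x,v\rangle,\langle x',v\rangle$ of $\pi$ lying on the line $C$ of $\mathcal{G}_v$ and on the line $\pi$ of $\mathcal{G}_v$, contradicting that two points of a dual affine plane lie on at most one line); by Proposition~\ref{propH1} there are at least two tubes with vertex $y_0$ through $G$. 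For (Hj2): two distinct tubes with a common vertex share a generator by Lemma~\ref{lemmaH0}, hence meet in a singular affine line; if the vertices $v_1\neq v_2$ differ, let $\pi^*$ be the singular plane with radical line $\langle v_1,v_2\rangle$; by Proposition~\ref{propH1} each $C_i$ meets $\pi^*$ in a generator $G_i$, and $p:=G_1\cap G_2$ is a point of the plane $\pi^*$ off the radical line (else some $G_i$ would \emph{be} that line), hence $p\in X\cap C_1\cap C_2$, while $C_1\cap C_2=\{p\}$ since two common points would force $v_1=v_2$ via Lemma~\ref{lemmaH1}.

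Then (Hj3) and (Hj4). For (Hj3), $\chi^{-1}$ of a point $L$ of $\pi_Y^*$ is $\pi_L\setminus L$, an affine plane (Corollary~\ref{corH2}); a tube $C$ meeting it has the generator of $C$ through the common point inside $\pi_L$, so $C$ has vertex on $L$, and then $C\cap\pi_L$ is a single generator (Proposition~\ref{propH1}), so $C\cap\chi^{-1}(L)$ is an affine line of $\pi_L\setminus L$; conversely every affine line of $\pi_L\setminus L$ is a singular line in $\pi_L$, hence a generator of a tube, so arises this way, which is exactly what (Hj3) asserts. For (Hj4), $\chi^{-1}$ of a line $y$ of $\pi_Y^*$ consists of the $x\in X$ with $y\in\pi_x$; a tube $C$ lies in it iff $y$ belongs to the singular plane of every generator of $C$, and since those singular planes all contain the vertex of $C$ and two distinct singular planes through a point meet only in that point (Corollaries~\ref{corH0} and~\ref{corH2}), this forces $y$ to be the vertex of $C$. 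So the tubes in $\chi^{-1}(y)$ are the tubes with vertex $y$; by Lemma~\ref{lemmaH0} two of them meet in a unique generator through $y$, and the resulting structure (points: tubes with vertex $y$; lines: the sets of tubes sharing a fixed generator through $y$) is precisely the affine plane dual to $\mathcal{G}_y$ with its ``line at infinity'' --- whose points correspond to the singular planes through $y$ --- deleted.

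The step I expect to require the most care is the converse half of (Hj1): passing from $\chi(x)=\chi(x')$ to a \emph{second} common tube rests on the normal rational cubic scroll description of $X_{y_0}\cup\{y_0\}$ in Proposition~\ref{propH1} together with the identification of the tubes with vertex $y_0$ with the conic sections of that scroll, a one-parameter family of which runs through each of its points when $|\K|>2$ (equivalently, with the fact that in the dual affine plane $\mathcal{G}_{y_0}$ the point $G$ lies on one singular plane and on $|\K|$ tubes). Making this dictionary between the tubes and singular planes through a vertex and the conic sections and ruling lines of the scroll explicit --- it is also what makes the affine planes in (Hj3) and (Hj4) come out correctly --- is where the genuine content lies; the remainder is bookkeeping with the incidence lemmas already proved.
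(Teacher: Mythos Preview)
Your proposal is correct and follows essentially the same route as the paper: well-definedness and surjectivity of $\chi$ via Corollaries~\ref{corH0} and~\ref{corH2}, (Hj1) via Lemma~\ref{lemmaH1} and the dual affine plane $\mathcal{G}_{y_0}$, (Hj2) via Lemma~\ref{lemmaH0} and the common singular plane $\pi_{\langle v_1,v_2\rangle}$, and (Hj3), (Hj4) by reading off the affine planes from the singular plane and from the dual of $\mathcal{G}_y$. Your write-up is in fact more detailed than the paper's, which dispatches (Hj4) in a single line (``follows from Lemma~\ref{lemmaHP}, by dualizing''); the one place your self-assessment slightly overcomplicates matters is the converse of (Hj1), where your own argument via $\mathcal{G}_{y_0}$ already suffices without explicitly invoking the scroll picture.
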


\begin{proof}
Surjectivity of $\chi$ follows from Corollary~\ref{corH2}. Now consider the image $\chi(C)$ of an arbitrary tube $C$. Trivially, the radical lines of all singular planes containing points of $C$ contain the vertex $t$ of $C$. By Proposition~\ref{propH1}, all lines in $\pi_Y$ through $t$ can be obtained this way.  Hence $\chi$ is an epimorphism. Note that the image under $\chi$ of a tube is simply its vertex.  

We now show (Hj1). By definition, two elements of $X$ are contained in a tube. From Proposition~\ref{propH1} we deduce that this tube is not unique if and only if the two elements are contained in a singular line. By the last assertion of Proposition~\ref{propH1} and Corollary~\ref{corH0}, this happens if and only if the two elements are contained in a unique singular plane, hence if and only if their images under $\chi$ coincide.

Now we show (Hj2). Let $C,C'$ be two tubes. If they have the same vertex (hence their images under $\chi$ coincide), then they share a singular affine line (Lemma~\ref{lemmaH0}). Suppose now that they have distinct vertices $t$ and $t'$, respectively.  Let $\pi_{tt'}$ be the unique singular plane containing $t,t'$ (see Corollary~\ref{corH2}). Then Lemma~\ref{lemmaHP} implies that $C$ and $C'$ intersect $\pi_{tt'}$ in singular lines $L$ and $L'$, respectively. Since both lines are contained in a common plane, they intersect in a point (and one easily verifies that the intersection point belongs to $X$, and not to $Y$!). 

Given a point $x$, the affine plane in (Hj3) is the affine plane of Corollary~\ref{corH0} arising from the unique singular plane containing $x$ by removing its radical line.

Finally, (Hj4) follows immediately from Lemma~\ref{lemmaHP}, by dualizing.  
\end{proof}

We say that a tube $C$ and a point $x\in X$ are \emph{neighbors} if $\chi(x)$ is incident with $\chi(C)$. 
We denote by $C(X)$ all neighbors of $C$.

Now we prove the following major step.

\begin{prop}\label{propH3} The set $X$ contains a quadratic Veronese variety $\mathcal{V}$ whose conics are conics on tubes and whose points are in canonical bijection with the singular planes. The $5$-space generated by $\mathcal{V}$ is skew to $\pi_Y$, and $N=8$.
\end{prop}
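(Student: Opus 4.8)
The plan is to locate inside $X$ the ``real subplane'' of the Hjelmslev plane $(X,T)$ of Proposition~\ref{propH2}, to show that it is a quadratic Veronese variety by verifying the Mazzocca--Melone axioms for conics (the case $n=2$) and invoking the classification of \cite{Sch-Mal:**}, and then to finish with a dimension count. Concretely, $\cV$ will be a section of the epimorphism $\chi\colon X\to\pi^*_Y$, meeting every singular plane in exactly one point, and meeting suitable tubes in conics which will serve as the conics of the Veronesean. Here I write $\ell_t$ for the line of $\pi_Y^*$ dual to a vertex $t\in\pi_Y$, i.e. the pencil of lines of $\pi_Y$ through $t$.

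To build $\cV$, I would start from one tube $C_0$ with vertex $t_0$. By Proposition~\ref{propH1} the set $C_0\cup\{t_0\}$ is a quadratic cone, so a plane of $\langle C_0\rangle$ not through $t_0$ meets it in an honest conic $O_0\subseteq X$ with one point on each generator of $C_0$. The generators of $C_0$ are in canonical bijection, via $\chi$, with the points of $\ell_{t_0}$ (each generator spans a singular plane whose radical line passes through $t_0$, by Corollary~\ref{corH0}), so $O_0$ already assigns one point $v_p\in\pi_p\setminus p$ to every $p\in\ell_{t_0}$. For $p\notin\ell_{t_0}$ one picks distinct $p_1,p_2\in\ell_{t_0}$, sets $s_i:=p\cap p_i$ as lines of $\pi_Y$, notes that $v_{p_i}$ is a neighbour of the tube with vertex $s_i$, and uses the dual affine plane $\mathcal{G}_{s_i}$ (Lemma~\ref{lemmaHP}) together with Proposition~\ref{propH1} to get a unique tube through $v_{p_1},v_{p_2}$ with vertex $s_i$; its generator in $\pi_p$ then supplies a candidate $v_p$, and a short argument (varying the auxiliary pencil) shows this is independent of all choices. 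This produces $\cV:=\{v_p:p\in\pi_Y^*\}$, a section of $\chi$, together with the family $\Omega$ of those tubes $C$ for which $\cV\cap C$ is a conic. (Alternatively one coordinatizes $(X,T)$ from a fixed non-degenerate triangle and unit point, exactly as in the affine coordinatization of $\cS_{2,2}$ in Section~\ref{Segre}, and takes $\cV$ to be the set of points with coordinates in $\K$.)

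I would then check the Mazzocca--Melone axioms with $n=2$ for $\cV$ with plane-set $\{\langle\cV\cap C\rangle:C\in\Omega\}$. For (V1), two points $v_p,v_q\in\cV$ have distinct $\chi$-images, hence by (Hj1) lie on a unique tube $C$, which by construction lies in $\Omega$, and then on the unique conic $\cV\cap C$. For (V2), if two such conic-planes meet then the corresponding cylindric spaces meet, and (H2) together with Lemmas~\ref{lemmaH0} and~\ref{lemmaH1} forces the two tubes to have distinct vertices and to meet in a single point of $X$, which one checks lies in both conics (no point of $Y$ intervenes, since $\overline{\cV}\subseteq\cV\subseteq X$). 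For (V3$^*$), the tangent line at $v\in\cV$ to the conic $\cV\cap C$ lies in the tangent plane $T_v(C)$ of the tube (a quadratic cone), hence in the $4$-space $T_v$ by (H3$^*$), and intersecting with $\langle\cV\rangle$ yields the required fixed plane. By the classification of oval Veroneseans in \cite{Sch-Mal:**}, $\cV$ is the quadratic Veronese variety of $\PG(2,\K)$; in particular its conics are precisely the sets $\cV\cap C$ with $C\in\Omega$, its points are in canonical bijection with $\pi_Y^*$ and hence, by Corollary~\ref{corH2}, with the singular planes, and $\Pi:=\langle\cV\rangle$ has dimension $5$.

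For the dimension statement, every $x\in X$ lies in the singular plane $\pi_{\chi(x)}=\langle v_{\chi(x)},\chi(x)\rangle\subseteq\langle\cV,\pi_Y\rangle$, so $\langle X\rangle=\langle\Pi,\pi_Y\rangle$ and $N\le 8$. To see $\Pi\cap\pi_Y=\emptyset$, hence $N=8$, fix a vertex $t$: by Proposition~\ref{propH1} the space $\langle X_t\rangle$ is $5$-dimensional and contains $\pi_Y$ (since $\pi_Y\setminus\{t\}$ projects from $t$ onto the directrix line of the cubic scroll), while $\cV\cap X_t$ is a conic of $\cV$ spanning a plane not containing $t$. Taking three vertices $t,t',t''$ not collinear in $\pi_Y$, the three conics $\cV\cap X_t,\cV\cap X_{t'},\cV\cap X_{t''}$ span $\Pi$ and the three $5$-spaces $\langle X_t\rangle,\langle X_{t'}\rangle,\langle X_{t''}\rangle$ together with $\pi_Y$ span all of $\PG(N,\K)$; comparing dimensions pins down $N=8$ and forces $\Pi\cap\pi_Y=\emptyset$. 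The main obstacle is the construction of $\cV$ in the second step: making the local data (one point per generator of a single tube) propagate consistently to a global section of $\chi$ whose intersections with the tubes in $\Omega$ are conics. This is exactly where one reconstructs the dual-numbers structure inside $X$, and it requires careful bookkeeping with the dual affine planes $\mathcal{G}_y$ and the cubic scrolls of Proposition~\ref{propH1}; everything after it is comparatively formal.
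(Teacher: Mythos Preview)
Your outline takes a genuinely different route from the paper. The paper first proves $N=8$ by projecting $X\setminus C$ from a fixed cylindric space $\Xi(C)$ onto a complementary subspace, showing the projection is injective on non-neighbours of $C$ and lands in an affine $4$-space; only then does it build $\mathcal{V}$, by choosing a line at infinity skew to the projection of $\pi_Y$ and reading off an affine subplane of $(X,T)$ in the projection, which is then completed to a projective subplane by switching the role of the base tube. The $5$-dimensionality of $\langle\mathcal{V}\rangle$ is obtained by intersecting two explicit $6$-spaces, and disjointness from $\pi_Y$ follows because the vertex of the base tube is visibly not in that intersection. Your plan reverses the logic: build $\mathcal{V}$ first as a section of $\chi$, verify the conic Mazzocca--Melone axioms, apply \cite{Sch-Mal:**}, and deduce $N=8$ afterwards.

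There are two genuine gaps. The first you identify yourself: propagating the single conic $O_0$ to a global section $\mathcal{V}$ with the property that $\mathcal{V}\cap C$ is a \emph{conic} (not just a one-point-per-generator transversal of the cone) for every $C\in\Omega$. This is the heart of the matter, and ``a short argument (varying the auxiliary pencil)'' does not suffice; the paper spends most of its proof on exactly this, and the projection machinery is what makes it tractable. The second gap is your verification of (V3$^*$): you place the tangent line to $\mathcal{V}\cap C$ at $v$ inside the $4$-space $T_v$ and then ``intersect with $\langle\mathcal{V}\rangle$'' to get a plane, but you do not know $\dim\langle\mathcal{V}\rangle$ until \emph{after} \cite{Sch-Mal:**} has been applied, and even then you would need $T_v\cap\langle\mathcal{V}\rangle$ to be exactly $2$-dimensional. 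This is circular. Relatedly, your final dimension argument (``comparing dimensions pins down $N=8$'') is not an argument: you need an independent reason why $\Pi\cap\pi_Y=\emptyset$, and the paper gets this for free from its construction because it already knows $N=8$.
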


\begin{proof}
Let us fix a tube $C$ (with vertex $y$), and consider a subspace $F$ complementary to $\Xi(C)$. We consider the projection $\rho$ of $X\setminus C$ from $\Xi(C)$ into $F$. We first 
claim that $\rho$ is injective on $X\setminus C(X)$. Indeed, suppose two points 
$x_1,x_2\in X\setminus C(X)$ are projected onto the same point $a\in F$. Then the $5$-space 
$\<C,a\>$ contains $x_1,x_2$, and hence the line $\<x_1,x_2\>$, which is contained in any 
member of $\Xi$ through $x_1,x_2$, must intersect $\Xi(C)$ in a point $u$ of $X\cup Y$. If $u\in X$, 
then $\<x_1,x_2\>$ is singular and $x_1,x_2$ are neighbors of $C$, a contradiction. Hence 
$u\in Y$, which implies, by Proposition~\ref{propH1}, that $x_1,x_2$ belong to a singular plane 
containing a generator of $C$, leading to the same contradiction. Our claim follows.   

Now consider two tubes $C_1,C_2$, both intersecting $C$ in unique points, and such that 
$|C_1\cap C_2|=1$. Applying $\chi$, we see that $x=:C_1\cap C_2$ is no neighbor of $C$. Let 
$i\in\{1,2\}$ and put $C\cap C_i=x_i$. Since $\<C,C_i\>$ is $6$-dimensional, the projection of 
$C_i\setminus\{x_i\}$ from $\Xi(C)$ is isomorphic to the projection of $C_i\setminus\{x_i\}$ from 
$x_i$ (onto a suitable plane), and hence it is an affine plane $\alpha_i$ with one ``point at infinity'' 
$p_i^\infty$ added (the projection of the generator through $x_i$). By injectivity of $\rho$, 
$\alpha_1\cap\alpha_2=\{\rho(x)\}$ and so $\<\alpha_1,\alpha_2\>$ is a $4$-space inside $F$. 
This implies $N\geq 8$. The line at infinity $L_i^\infty$ corresponds to the tangent plane to $C_i$ at 
$x_i$. Note that, by (H3*), all tubes intersecting $C$ in just $x_i$ will be projected onto planes 
containing $L_i^\infty$. Considering the projections of all tubes connecting $x_1$ with a point of 
$C_2$ not in $C(X)$, we see that $X\setminus C(X)$ 
is projected bijectively into the affine $4$-space $A$ 
in $\<\alpha_1\alpha_2\>$ obtained by deleting the $3$-space 
$A^\infty=:\<L_1^\infty,L_2^\infty\>$. It follows that $N=8$. 

In $\alpha_1$, we see that the lines through $p_1^\infty$ are the projections of the generators of 
$C_1$; all other lines distinct from $L_1^\infty$ are projections of conics on $C_1$ through $x_1$. 
Noting that every pair of points is contained in a tube and every tube containing points of 
$X\setminus X(C)$ intersects $C$ in a unique point, we see that every line of $A$ is the projection 
of either a conic on some tube (and the conic intersects $C$ nontrivially), or a generator of some 
tube intersecting $C$ in a unique point. The latter case happens if and only if the point at infinity of 
the line is contained in the projection of $\Pi_Y\setminus\{y\}$, which is a line $L^\infty$ connecting 
$p_1^\infty$ and $p_2^\infty$.

We take a line $L$ in $A^\infty$ skew to  $L^\infty$, and meeting the non-intersecting  lines 
$L_1^\infty$ and $L_2^\infty$, say in the points $z_1$ and $z_2$, respectively. The set $B_i$ of 
affine points on $\<x,z_i$, $i\in\{1,2\}$, is the projection of a ``pointed'' conic. Hence, translated to 
the Hjelmslev plane $(X,T)$, it corresponds to a set of points mapped bijectively under $\chi$ to an 
affine line $B^*_i$ of $\pi_Y^*$, where the point at infinity of that affine line is incident with 
$\chi(C)$. Let $z$ be any point on $L$, then a similar property holds for the set $B$ of affine points 
of $\<x,z\>$; denote the corresponding affine line of $\pi_Y^*$ with $B^*$. Suppose $B^*=B_i^*$ 
for some $i\in\{1,2\}$. Then there are points $u\in B$ and $u_i\in B_i$, distinct from $z$ 
corresponding to the same point of $\pi_Y^*$. Hence the corresponding points of $X$ lie on a 
singular line, and so, by the above, the line $\<u,u_i\>$ meets $L^\infty$, contradicting the fact that 
$L$ is skew to $L^\infty$. By varying the point $x$ in $\alpha=:\<x,L\>\setminus L$, we see that 
$\alpha$ corresponds to a set $\alpha'$ of points of $(X,T)$ mapped bijectively under $\chi$ to an 
affine part of $\pi_Y^*$, namely all points of $\pi_Y^*$ except those corresponding to the image of 
$C$.    Also, $\alpha'$ defines an affine subplane of $(X,T)$. We now claim that we can extend this 
to a projective subplane in a unique way. 

Indeed, if we interchange the roles of $C$ and $C_1$, then the conics corresponding to those lines 
of $\alpha$ intersecting $\<x,z_1\>$, project again onto affine lines. It is easy to see, 
since $|\K|>2$, see also Lemma 3.1 of \cite{Tha-Mal:11}, that all these affine lines are contained in a  
unique affine plane $\alpha_1$. Also, if two lines of $\alpha$ intersect $\<x,z_1\>$ in the same 
point, then it is obvious that the corresponding lines    in $\alpha_1$ are parallel, and so the line 
$\<x,z_1\>$ of $\alpha$ corresponds to the line at infinity of $\alpha_1$. The plane $\alpha_1$ also 
contains an affine line $L_O$ that corresponds to a conic $O$ on $C$ and which corresponds to the line at 
infinity of $\alpha$. It follows that $O$ extends $\alpha'$ to a projective plane, except possibly in the 
point that corresponds with the point at infinity of $L_O$ (meaning that we do not know whether the 
unique point of $O$ that does not correspond to any affine point of $L_O$ is incident with all lines 
of $\pi'$ that neighbor that point). But the same reasoning with now $C_2$ in the role of $C$ 
shows that $O$ really extends $\alpha'$ to a projective plane, and the claim is proved. 

Let us denote the point set of this projective plane by $P$.

Now, the conic $O$ lies in the $6$-space $\<C_1,\alpha_1\>$. The latter does not contain 
$y$. So, all points of $\alpha$ which are the projection of a point of $X$ that corresponds with an 
affine point of $\alpha_1$ are contained in the two $6$-spaces $\<C,\alpha\>$ and 
$\<C_1,\alpha_1\>$, which intersect in a $5$-space (since $y\in\<C,\alpha\>$; the intersection 
cannot 
have dimension $<5$ as $O$ spans a plane, hence a codimension 1 subspace of $\<C\>$). 
Consequently, we see that $P$ spans a $5$-space, and so it defines a Veronesean embedding of 
$\PG(2,\K)$. By \cite{Sch-Mal:**}, $P$ defines a quadric Veronese variety $\mathcal{V}$. 

By the properties of $\alpha'$ mentioned above, it is now also clear that $\chi$ defines a bijection 
between $P$ and the point set of $\pi^*_Y$. Also, as $y$ does not belong to $\<\mathcal{V}\>$, and 
$C$ can be regarded as an arbitrary conic of $\mathcal{V}$, we see that $\<\mathcal{V}\>$ and 
$\pi_Y$ are disjoint.  
\end{proof}

This proposition hints at the following construction of $X$. Consider the plane $\pi_Y$, and 
consider the quadric Veronese variety $\mathcal{V}$ in a complementary $5$-space. These objects are 
projectively unique. As we saw, there is a collineation, which we can denote by $\chi$, from 
$\mathcal{V}$ to $\pi^*_Y$ such that for each point $p\in\mathcal{V}$, the point set  
$\<p,\chi(p)\>\setminus\chi(p)$ is contained in $X$, and all points of $X$ arise in this way. In order to 
show that $X$ is projectively unique, there remains to nail down the collineation $\chi$. Since all 
linear collineations are projectively equivalent, it suffices to show that $\chi$  is linear, i.e., $\chi$ 
preserves the cross-ratio. This will be established in our last proposition.

\begin{cor}\label{corH3} Let $p_1,p_2,p_3,p_4$ be four points on a common conic of 
$\mathcal{V}$ corresponding to the singular planes $\pi_{L_1},\pi_{L_2},\pi_{L_3},\pi_{L_4}$, 
respectively. Then $L_1,L_2,L_3,L_4$ are concurrent lines of $\pi_Y$ and the cross-ratios 
$(p_1,p_2;p_3,p_4)$ and $(L_1,L_2;L_3,L_4)$ are equal.
\end{cor}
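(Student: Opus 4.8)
First I would settle the concurrency. By Proposition~\ref{propH3} the conic of $\cV$ carrying $p_1,\dots ,p_4$ lies on a tube $C$; let $y$ be the vertex of $C$ and let $G_i$ be the generator of $C$ through $p_i$. Since $G_i$ is a singular line through $p_i$, Corollary~\ref{corH0} identifies the unique singular plane through $p_i$ as $\pi_{G_i}$, so that $L_i=\chi(p_i)$ is its radical line. Now $y=y(G_i)\in G_i\subseteq\pi_{G_i}$ and $y\in Y=\pi_Y$ (Corollary~\ref{corH1}), hence $y$ lies on the radical line $L_i$. Thus $L_1,\dots ,L_4$ all pass through $y$, which is the first assertion.

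For the equality of cross-ratios I would project from $y$. Let $\rho$ be the projection from $y$ onto a complementary $\PG(7,\K)$. As $\langle\cV\rangle$ is disjoint from $\pi_Y$ (Proposition~\ref{propH3}) and $y\in\pi_Y$, the map $\rho$ is injective on $\langle\cV\rangle$, so $\rho(O)$ is a conic with $(\rho(p_1),\rho(p_2);\rho(p_3),\rho(p_4))=(p_1,p_2;p_3,p_4)$, while $\rho(p_i)=\rho(G_i)$ lies on $\rho(C)$. Dually, $\rho(\pi_Y)$ is a line $\ell$, each $\rho(L_i)$ is a single point of $\ell$, and $(\rho(L_1),\rho(L_2);\rho(L_3),\rho(L_4))=(L_1,L_2;L_3,L_4)$ as cross-ratios in the pencil of lines through $y$. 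So the Corollary reduces to showing that $\rho(p_i)\mapsto\rho(L_i)$ is a projectivity from the conic $\rho(O)$ onto the line $\ell$.

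Here I would exploit the scroll already produced in Proposition~\ref{propH1}: $X_y\cup\{y\}$ is a cone with vertex $y$ over a normal rational cubic scroll, so after projecting from $y$ one has at hand a normal rational cubic scroll $\mathfrak S$ whose two directrices (a conic and a line) are joined by its ruling through a cross-ratio-preserving bijection $\phi$ --- this is part of the very definition of such a scroll. The plan is then to identify, by tracing the generalized Veronesean embedding of the dual affine plane $\cG_y$ (Lemma~\ref{lemmaHP}, Proposition~\ref{propH1}), the directrix conic of $\mathfrak S$ with $\rho(C)$ and its directrix line with $\ell=\rho(\pi_Y)$. Recalling (proof of Corollary~\ref{corH1}) that the radical lines of the singular planes through the generators of $C$ exhaust $\pi_Y$, and that every singular plane through $y$ is $\pi_G$ for a unique generator $G$ of $C$, this should exhibit the ruling line of $\mathfrak S$ through $\rho(p_i)$ as $\rho(\pi_{G_i})$, which meets $\ell$ precisely in $\rho(L_i)$. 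Thus $\rho(p_i)\mapsto\rho(L_i)$ is $\phi$ restricted to $\rho(p_1),\dots ,\rho(p_4)$, hence cross-ratio preserving; combined with the previous paragraph this gives $(p_1,p_2;p_3,p_4)=(L_1,L_2;L_3,L_4)$. (If $\rho(C)$ turns out to be a conic on $\mathfrak S$ other than the distinguished directrix conic, one first reparametrises it along the ruling, which is again cross-ratio preserving, so nothing is lost.)

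The main obstacle is exactly this identification: matching the two directrices of the relevant normal rational cubic scroll with $\rho(C)$ and $\rho(\pi_Y)$, so that its (by definition cross-ratio preserving) ruling correspondence becomes the map ``generator of $C$ $\mapsto$ radical line of its singular plane''. This requires a careful bookkeeping inside the generalized Veronesean embedding of $\cG_y$ from Proposition~\ref{propH1} --- keeping track of which ruling line is glued to which generator of $C$ and which to the corresponding line of $\pi_Y$. Once this is done, no further computation is needed and the cross-ratio identity is immediate.
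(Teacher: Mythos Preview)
Your proposal is correct and follows essentially the same route as the paper's own proof: project $X_y$ from the common vertex $y$, land on the normal rational cubic scroll supplied by Proposition~\ref{propH1}, identify the projections of the conic and of $\pi_Y$ with the two directrices, and invoke the defining cross-ratio-preserving bijection of the scroll. The paper handles concurrency in one line (via $\chi$ being a collineation, from Proposition~\ref{propH2}) and asserts the directrix identification without further comment, whereas you argue concurrency directly through the tube's vertex and spell out the identification---but the substance is identical, and your extra bookkeeping (including the parenthetical on non-distinguished conics) is exactly what makes the paper's terse sentence rigorous.
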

\begin{proof}
The fact that $L_1,L_2,L_3,L_4$ are concurrent lines of $\pi_Y$ follows from the fact that $\chi$ 
defines a collineation. Now let $y$ be the common point of the $L_i$, $i=1,2,3,4$. The points 
$p_1,p_2,p_3,p_4$ lie on singular lines together with $y$. If we consider the projection of all 
points of $X$ that lie on a singular line together with $y$, then, according to 
Proposition~\ref{propH1}, we obtain a normal rational cubic scroll, and $p_1,p_2,p_3,p_4$ can be 
seen as four points on the base conic, while the projections of $L_1,L_2,L_3,L_4$ are the four 
corresponding points on the vertex line. Just by the very definition of a normal rational cubic scroll, 
the assertion follows. 
\end{proof}

Now we treat the case $|\K|=2$. Let $x\in X$. Since $T_x$ contains at most $7$ planes sharing pairwise at most a line, we see that there are at most $7$ different tubes through $x$. This implies that $N\leq 4+7=11$, hence finite. Let $n=|X|\in\mathbb{N}$ be the number of points. Notice that not all tubes contain $x$ (indeed, consider two tubes $C_1,C_2$ through $x$, then no tube through the points $x_i\in C_i$, $i=1,2$, where $x_i$ is not collinear to $x$ on $C_i$, contains $x$). Let $C$ be a tube not through $x$. Since $C$ has three generators, there are at least three tubes connecting $x$ with $C$. So the number $n_x$ of tubes through $x$ satisfies $3\leq n_x\leq 7$, and it follows from the previous that, if $n_x=3$, then the three tubes through $x$ all meet in a fixed singular affine line. Let $x_0\in X$ be such that there are two tubes through $x_0$ only meeting in $x$ ($x_0$ exists by Lemma~\ref{lemmaH2}). Then $n_{x_0}\geq 4$. Let, for $x\in X$, $g_x$ be the number of generators through $x$, then one calculates that $|X|=4n_x+g_x+1$, where obviously $1\leq g_x\leq n_x$. We also have $g_{x_0}\geq 2$ by assumption. Hence $|X|\geq 19$. 

For $x\in X$, let $T_x'$ be the $3$-space obtained from $T_x$ by factoring out $x$. Then every tube $C$ through $x$ gives rise to a point-line flag in $T_x'$, where the point corresponds to the singular line of $C$ through $x$, and the line to the tangent plane to $C$ at $x$. It follows from (H2) that the set of all tubes through $x$ yields a set of such flags with the properties that, if two lines of different flags meet, then they must meet in their point (and therefore they have their point in common). It follows for instance that $g_x\leq 5$ since $T'_x$ contains $g_x$ disjoint lines. 

Now we consider the different possibilities for $g_x$. If $g_x=1$, then $n\in\{22,26,30\}$.  Note that, if $g_x=2$, then looking in $T_x'$, the only possibilities are $n_x\in\{2,3,4,5,6\}$, hence if $g_x=2$, then $n\in\{19,23,27\}$. If $g_x=3$, then likewise $n_x\in\{3,4,5,6\}$, hence $n\in\{20,24,28\}$. If $n_x=4$, then $g_x=4$ and $n=21$. Finally, if $g_x=5$, then $n_x=5$ and $n=26$. 

Since all these possibilities yield different possibilities for $n$, except for $n=26$, we see that either $n_x$ is constant, or there exist $x\in X$ with $(g_x,n_x)=(1,6)$ and $y\in X$ with $(g_y,n_y)=(5,5)$. 
Suppose first the latter happens, and let $x'$ be the other point on the unique singular line through $x$.

The five points $u$ on a common singular line with $y$ also have $g_u=5$. Let $t$ be the vertex of the tubes through $x$. Suppose there is some point $w\in X\setminus\{x,x'\}$ with $g_w=1$. Then all tubes through $w$ also have vertex $t$, and so the singular lines of these tubes coincide with the singular lines of the tubes through $x$. hence all points off the tube $X[x,w]$ are contained in singular lines that are contained in at least two tubes; hence $g_z=1$ for all such points $z$. There are now not enough points $u$ left with $g_u=5$. Hence $x,x'$ are unique with $g_x=1$. Let $y'$ be any point different from $x,x',y$, but contained in the tube $X[x,y]$. Let $C'$ be any tube through $y$ not containing $x$. Then there are unique tubes through $y'$ and the five respective points of $C'\setminus\{y\}$. Since these tubes can have at most one point in common with $C'$, these must all be different. But then one of these must coincide with $X[y,y']$, since there are only $5$ tubes through $y'$. That is clearly a contradiction.

We conclude that $g_x=g_y$ and $n_{x}=n_{y}$ for all $x,y\in X$. By Lemma~\ref{lemmaH2} we have $g_{x}>1$. Moreover
this implies that the number of cones $\frac{nn_{x}}{6}$ has to be an integer. This excludes $(g_{x},n_{x},n)\in \{(2,4,19);(2,5,23);(3,4,20);(5,5,26)\}$.

Consequently $(g_{x},n_{x},n)\in\{(2,6,27);(3,5,24);(3,6,28);(4,4,21)\}$. 

If $g_x=n_x=4$, then $\cG(X)$ is a linear space. For any point $x$, there is a line $L$, of size $6$, not incident with $x$. hence the number of lines through $x$ joining a point of $L$ is at least $6$, implying $n_x\geq 6$, a contradiction. This rules out the case $(4,4,21)$.


Now suppose $g_x=2;n_{x}=6,n=27$. Since $n_x>g_x$ in this case, there is some singular line $L$ contained in at least two tubes $C_1,C_2$. Let $L_1$ be a singular line of $C_1$ with $L_1\neq L$ and let $L_2$ be a singular line of $C_2$ with $L_2\neq L$. Suppose $L_i=\{t,x_i,y_i\}$, $i=1,2$. Suppose the tube $X[x_1,x_2]$ does not contain $L_1$. Then neither $X[y_1,y_2]$ contains $L_1$, and, since $X$ does not contain three points on a line,  the intersection $\<x_1,x_2\>\cap\<y_1,y_2\>$ is the common vertex of the tubes $X[x_1,x_2]$ and $X[y_1,y_2]$, and so $x_1$ and $x_2$ are on a singular line. Likewise $x_1$ and $y_2$ are on a singular line. This implies $g_{x_1}\geq 3$, a contradiction. Hence the singular lines through $t$ form a linear space with line sizes 3 defined by the tubes through $t$. Such a space has at least seven elements, and each singular line is contained in at least three tubes. Hence we have shown that, if a singular line is contained in at least two tubes, it is contained in at least three. In $T'_x$, this implies that we have two sets of point-line flags, each set with common point, and one set contains at least three members. Then it is obvious that this set contains at most four elements and the other set contains at most two elements, hence exactly one!
Each point $x$ is contained in a unique singular lines that is contained in at least two tubes, with common vertex $t$. The inverse of the application $x\mapsto t$ defines a partition of $X$, each partition set consisting of $2(2(n_x-1)+1)=4n_x-2$ points. Hence there are $\frac{4n_x+3}{4n_x-2}=\frac{19}{14}$ partition classes, which is not an integer, excluding the case $(2,6,27)$.

Hence $g_x=3$. Note that the structure of point-line flags in $T'_x$ implies that every singular line is contained in either one or two tubes. Let $L=\{t,x,y\}$ be a singular line, with $t\in Y$, and such that there are exactly two cylindric spaces through $L$. Let $L,L_1,M_1$ and $L,L_2,M_2$ be the singular lines of the corresponding tubes. Then $L_1$ cannot be contained in tubes together with $L_2$ and $M_2$, as in that case $L_1$ would be contained in at least three tubes. As in the previous paragraph, we deduce that, if $x_1\in L_1\cap X$, and $x_2\in L_2\cap X$, and $L_1\cup L_2$ is not contained in a cylindric space, then $\<x_1,x_2\>$ is a singular line. Since there are only three singular lines through $x_1$, we deduce that we may assume that $L_1,L_2$ are contained in a cylindric space, and so are $M_1,M_2$, but $L_1$ and $M_2$ are not contained in a cylindric space, and neither are $L_2,M_1$.   By symmetry, it follows that there is a unique additional singular line $L'$ through $t$ contained in the cylindric spaces determines by $L_1,L_2$ and by $M_1,M_2$.  Hence the set of singular lines through a fixed point of $Y$ covers 12 points of $X$. The intersection of two such 12-point sets contains at most 4 points (such as those of $L_1\cup M_2$). 
If $n=24$, then every point is contained in a unique singular line which is contained in a unique cylindric space. No singular lines of such unique cylindric space is contained in a second cylindric space, as otherwise we can apply our argument above and obtain a 12-set. So those cylindric spaces partition $X$, and there are exactly 4 of them. But every other tube intersects each of these cylindric spaces in at most one point, contradicting the fact that a tube contains 6 points of $X$. 

Hence $n=28$. Now with the above, it is not difficult to see that any 12-set as defined before forms the set of points neighboring a line in a level 2 Hjelmslev plane over the dual numbers of size 4.  

The result now follows as in the general case. 

The proof of Theorem~\ref{theo2} is now complete.


\section{Some 2-dimensional associative algebras}\label{algebras}

In this section we classify the algebras in the hypothesis of our Main Result.

\begin{theorem}\label{2algebras}
Let $\K$ be a commutative field and $V$ a $2$-dimensional vector space over $\K$ with scalar product denoted by $r\cdot v$. Let $\times$ be a commutative and associative multiplication on $V$ such that
\begin{itemize}
\item[\emph{(V1)}] $(r\cdot v+w)\times u=r\cdot (v\times u) + w\times u$, for all $r\in\K$ and all $u,v,w\in V$,
\item[\emph{(V2)}] each vector is the product of two other vectors. 
\end{itemize}
Then either $V,+,\times$ is a quadratic field extension, or $V,+,\times$ has the structure of the dual numbers, or $V,+,\times$ is the direct product $\K\times\K$. In each case there is a unique identity element $\mathbf{1}$ with respect to $\times$ and there exists a unique (linear) automorphism $\sigma$ of $V$ of order at most $2$ fixing $\mathbf{1}$ and such that both $v+v^\sigma$ and $v\times v^\sigma$ belong to $\K\cdot\mathbf{1}$, for all $v\in V$.
\end{theorem}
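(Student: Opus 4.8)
The plan is to proceed in two stages: first classify the algebra structure $(V,+,\times)$, then establish existence and uniqueness of $\mathbf{1}$ and $\sigma$.

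For the classification, I would fix a basis and write the multiplication table. Pick any $v \in V$ not in $\K \cdot u$ for a yet-to-be-chosen generator; actually the cleanest route is to first produce the identity. Axiom (V2) forces the multiplication to be nondegenerate in the sense that for each $w$ the map $u \mapsto u \times w$ cannot be identically zero (else $w$ is not a product), so there is no nonzero $w$ with $w \times V = 0$. I would use this plus commutativity and associativity to show $V$ has an identity $\mathbf 1$: take any $a$ with $a \times V \neq 0$; the image $a \times V$ is a subspace, and if it is all of $V$ then left-multiplication by $a$ is bijective, so there is $e$ with $a \times e = a$, and then for any $b = a \times c$ one gets $b \times e = a \times c \times e = c \times a \times e = c \times a = b$, while also one must check $b \times e = b$ for all $b$ using surjectivity of $a\times(-)$ on a complement — here the one-dimensional-image case $\dim(a\times V)=1$ needs separate handling, which is where (V2) is used again to rule it out (if every $a$ had one-dimensional image, the union of these lines would have to cover $V$, forcing all images equal to a common line $\ell$, but then no vector outside $\ell$ is a product). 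Once $\mathbf 1$ exists, write $V = \K\mathbf 1 \oplus \K t$ for some $t$, and $t \times t = \alpha \mathbf 1 + \beta t$ for scalars $\alpha,\beta$. Completing the square (replacing $t$ by $t - \tfrac{\beta}{2}\mathbf 1$ when $\mathrm{char}\,\K \neq 2$) reduces to $t \times t = \gamma \mathbf 1$, and then the trichotomy is: $\gamma = 0$ gives the dual numbers; $\gamma$ a nonzero square gives $\K \times \K$ (via $\mathbf 1 \pm t$ as orthogonal idempotents after rescaling); $\gamma$ a nonsquare gives a quadratic field extension. In characteristic $2$ one cannot complete the square, and $t\times t = \alpha\mathbf 1 + \beta t$ with the cases $\beta = 0$ (giving dual numbers if $\alpha$ is a square — note in char $2$ every element is a square only over perfect fields, so $X^2 = \alpha$ may be an inseparable extension, which is why the Main Result excludes that case) versus $\beta \neq 0$ (rescale $t$ so $\beta = 1$; then $X^2 + X + \alpha$ is either irreducible, giving the separable quadratic extension, or splits, giving $\K\times\K$). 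I must double-check that (V2) is actually satisfied by all three outcomes — it clearly is for a field extension and for $\K\times\K$, and for the dual numbers $\K[\epsilon]$ one has $c = (c\mathbf 1)\times \mathbf 1$ always, with $\epsilon = \epsilon \times \mathbf 1$, so every element is a product — so (V2) does not actually exclude the dual numbers, only degenerate tables like $t\times t = 0$ together with $\mathbf 1$ absent; this is consistent since we derived $\mathbf 1$'s existence from (V2) first.

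For the second part, with the standard form $V = \K\mathbf 1 \oplus \K t$ and $t\times t \in \K\mathbf 1 + \K t$ in hand, I would define $\sigma$ to be the $\K$-linear map fixing $\mathbf 1$ and sending $t \mapsto -t$ (char $\neq 2$, after normalizing $t\times t \in \K\mathbf 1$) or $t \mapsto t + \mathbf 1$ (char $2$, after normalizing $t \times t = \alpha\mathbf 1 + t$), and in the degenerate char-$2$ inseparable/dual subcase $t \mapsto -t = t$, i.e. $\sigma = \mathrm{id}$. One checks directly that $\sigma$ is a ring automorphism of order $\leq 2$ fixing $\mathbf 1$ (for the extension cases it is the Galois involution or Frobenius-type map; for the dual numbers $\epsilon \mapsto -\epsilon$), and that $v + v^\sigma$ is the "trace" lying in $\K\mathbf 1$ and $v\times v^\sigma$ is the "norm" lying in $\K\mathbf 1$ — a one-line computation on $v = a\mathbf 1 + bt$. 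For uniqueness: suppose $\tau$ is another linear automorphism of order $\leq 2$ fixing $\mathbf 1$ with $v + v^\tau, v\times v^\tau \in \K\mathbf 1$. Fixing $\mathbf 1$ forces $\tau(t) = c\mathbf 1 + dt$ for scalars $c,d$; the condition $t + t^\tau \in \K\mathbf 1$ gives $d = -1$ (char $\neq 2$) respectively $d = 1$ with the trace condition then pinning $c$ (char $2$); then $\tau^2 = \mathrm{id}$ and the norm condition $t \times t^\tau \in \K\mathbf 1$ force the remaining scalar, yielding $\tau = \sigma$. The one genuinely delicate point is the characteristic $2$ bookkeeping, where "fixing $\mathbf 1$" plus "order $\leq 2$" plus the two $\K$-valued conditions must be shown to leave exactly one choice; I expect this case analysis — and making sure the normalization of $t$ used to define $\sigma$ does not secretly depend on choices — to be the main obstacle, everything else being routine linear algebra over $\K$.

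Finally I would remark that $\sigma = \mathrm{id}$ exactly in the inseparable extension and in the case where $\K$ is perfect of characteristic $2$ and $V$ is the (separable = inseparable, both equal to $\K$-Frobenius-twist) situation — but more to the point, $\sigma \neq \mathrm{id}$ whenever $V$ is the split algebra $\K \times \K$ or a separable quadratic field extension or the dual numbers in odd characteristic, which is the generic picture relevant to the rest of the paper; this is why the Hermitian matrix formalism in Section~\ref{planes} makes sense uniformly.
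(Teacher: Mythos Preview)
Your approach is correct but follows a genuinely different route from the paper's. You first manufacture the identity $\mathbf{1}$ from (V2) and then classify via the minimal polynomial of a generator $t$ over $\K\mathbf{1}$, splitting on characteristic to complete the square. The paper instead branches immediately on whether $V$ has zero divisors: if not, $V$ is a field extension; if so, it picks a zero divisor $w$ and a complement $v$ and distinguishes the mutually exclusive cases $w\times w=0$ (yielding the dual numbers after normalization) and $v\times w=0$ (yielding $\K\times\K$), with (V2) used only to exclude their coincidence. The identity and $\sigma$ then drop out of each explicit table. The paper's zero-divisor-first argument is characteristic-free and short, and it foregrounds the structure of $V_0$, which is precisely what governs the geometry later (the elliptic/cone/hyperbolic trichotomy is $V_0$ being $\{0\}$, a line, or two lines). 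Your identity-first route is the textbook classification of $2$-dimensional unital commutative $\K$-algebras and makes the construction and uniqueness of $\sigma$ more uniform, at the cost of the characteristic-$2$ bookkeeping you flag.

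One small repair: the parenthetical ``else $w$ is not a product'' is not the reason no nonzero $w$ annihilates $V$ --- in a non-unital algebra a product can annihilate everything. The correct argument (which you essentially give two sentences later for the one-dimensional-image case) is that if $w\times V=0$ then every product lies in $\K\cdot(v\times v)$ for any complement $v$, contradicting (V2). With that fix the outline goes through.
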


\begin{proof}
If there are no zero divisors, then the result is obvious (and $\sigma$ must be the unique nontrivial element in the Galois group if the extension is separable; if not then $\sigma$ must be the identity).

Now let the vector $w$ be a zero divisor and choose a vector $v$ linear independent from $w$. There are two possibilities: either $w\times w=0$ or we can rechoose $v$ such that $v\times w=0$. Note that not both possibilities can occur at the same time by condition (V2). Indeed, as otherwise the products of two vectors only generate a 1-dimensional vector space, namely the multiples of $v\times v$.

\textbf{First Case: $w\times w=0$.} 
Associativity on $v\times w\times w$ implies that $v\times w$ is a scalar multiple of $w$ (use (V2) too). We can rechoose $v$ such that $v \times w=w$. Associativity implies that $(v\times v)\times w=v\times w$, which yields $v\times v=v+tw$, for some $t\in\K$. Replacing $v$ by $v-tw$, we may assume that $t=0$. Hence we obtain the dual numbers. It is easy to see that $v$ is an identity element (the unique one of course) and that $\K\rightarrow \K v:k\mapsto k\cdot v$ is a ring monomorphism. Moreover, $\sigma$ must fix $v$ and map $w$ to $-w$.

\textbf{Second Case: $v\times w=0$.}
 Associativity on  $v\times v\times w$ implies that $v\times v$ is a nontrivial scalar multiple of $v$ (use (V2) too), say $v\times v=kv$, $k\in\K^\times$. Replacing $v$ by  $k^{-1}v$, we may assume that $k=1$. Similarly, we may assume that $w\times w=w$. Hence we obtain the direct product $\K\times \K$. A direct check reveals that the diagonal mapping $\K\rightarrow \K (v+w):k\mapsto k\cdot (v+w)$ is a ring monomorphism, and $v+w$ is an identity element for $\times$. It is also easily checked that $\sigma$ must act as $(rv+sw)^\sigma=sv+rw$. This is never the identity. 
\end{proof}

Now note that the three algebras obtained above are matrix algebras over $\K$. More exactly, if $V$ is a quadratic field 
extension, say with respect to the quadratic polynomial $x^2-tx+n$, then $V$ can be thought of to consist of the matrices 
$$\left[\begin{array}{cc}x & y \\ -ny & x+ty\end{array}\right], x,y \in\K.$$ 
If $V$ is the direct product of $\K$ with itself, then obviously we can identify $V$ with the matrix algebra of the diagonal 
matrices. However, in order to allow for a uniform treatment, we identify the algebra with the algebra of matrices  $$\left[\begin{array}{cc}x & y \\ 0 & x+y\end{array}\right], x,y \in\K.$$
In other words $(1,0)$ identifies with 
$\left[\begin{array}{cc}1 & -1 \\ 0 & 0\end{array}\right], x,y \in\K$
and $(0,1)$ identifies with
$\left[\begin{array}{cc}0 & 1 \\ 0 & 1\end{array}\right], x,y \in\K$.

If $V$ is the ring of dual numbers of $\K$, then it can be thought of to consist of the matrices  $$\left[\begin{array}{cc}x & y \\ 0 
& x\end{array}\right], x,y \in\K.$$ In each case, the automorphism $\sigma$ is given by assigning to each matrix $A$ its adjugate 
$A^{\ad}$ (which is the transposed of the matrix of cofactors and it equals the determinant times the inverse, if the latter 
exists). More exactly, $$\sigma:V\rightarrow V: \left[\begin{array}{cc}a & b \\ c & d\end{array}\right]\mapsto  
\left[\begin{array}{cc}d & -b \\ -c & a\end{array}\right],$$ for all appropriate $a,b,c,d$ such that the given matrix belongs to $V$. 

In order to allow for a uniform treatment, we note that the second and the third of the three above algebras can be described in 
exactly the same way as the first one by putting $(t,n)$ equal to $(1,0)$ and $(0,0)$, respectively.  In fact, isomorphic 
descriptions are obtained if one now varies $(t,n)$ over all possible values in $\K\times\K$. If the quadratic polynomial 
$x^2-tx+n$ has no solutions, exactly one solution, or two distinct solutions in $\K$, then we we obtain a matrix algebra isomorphic 
to first, second or third case above, respectively.

So from now on, we assume that we have been given the pair $(n,t)\in\K\times\K$ arbitrarily, and we consider the corresponding 
matrix algebra $V$. But we emphasize that there are other matrix representations of these algebras not captured by assigning values 
to $(t,n)$. Also for these representations, the results below hold, with a similar proof. 

We write each element of $V$ as $x\mathfrak{R}+y\mathfrak{I}$, with $$\mathfrak{R}= \left[\begin{array}{cc}1 & 0 \\ 0 & 1\end{array}\right]  \mbox{ and } \mathfrak{I}= \left[\begin{array}{cc}0 & 1 \\ -n & t\end{array}\right].$$

Although $\mathfrak{R}$ is the identity matrix, we insist on keeping the notation $\mathfrak{R}$ for it, as $\mathfrak{R}$ could be a different matrix for another representation of the algebra $V$. And all results below hold for arbitrary $2\times2$ matrix representations. 

For an arbitrary element $A$ of $V$, write $x(A)$ and $y(A)$ for the unique elements of $\K$ such that $A=x(A)\mathfrak{R}+y(A)\mathfrak{I}$. This is a canonical way to write $V$ as a $2$-dimensional vector space over $\K$. 

The following lemma is easily verified by direct computation.

\begin{lemma} \label{frak}
For every matrix $M\in V$, we have 
$M=\left[\begin{array}{cc}x(\mathfrak{R}M)&y(\mathfrak{R}M)\\ x(\mathfrak{I}M)&y(\mathfrak{I}M)\end{array}\right]$.
\qed
\end{lemma}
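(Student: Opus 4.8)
The plan is to simply unwind the definitions and use the defining quadratic of $\mathfrak{I}$. First I would write an arbitrary $M\in V$ in its canonical form $M=x(M)\mathfrak{R}+y(M)\mathfrak{I}$. By the explicit shape $\mathfrak{I}=\left[\begin{smallmatrix}0&1\\-n&t\end{smallmatrix}\right]$ this means $M=\left[\begin{smallmatrix}x(M)&y(M)\\-n\,y(M)&x(M)+t\,y(M)\end{smallmatrix}\right]$, so the task is exactly to show that the two rows of $M$ are recovered as the coordinate pairs $(x(\mathfrak{R}M),y(\mathfrak{R}M))$ and $(x(\mathfrak{I}M),y(\mathfrak{I}M))$. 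Since $\mathfrak{R}$ is the identity matrix, $\mathfrak{R}M=M$, whence $x(\mathfrak{R}M)=x(M)$ and $y(\mathfrak{R}M)=y(M)$, which is the first row.

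The only computational ingredient concerns the second row. Here I would use that $\mathfrak{I}$ satisfies its characteristic equation $\mathfrak{I}^2=t\mathfrak{I}-n\mathfrak{R}$; this is Cayley--Hamilton for a $2\times 2$ matrix of trace $t$ and determinant $n$, matching the quadratic $x^2-tx+n$ fixed earlier (or one just squares the $2\times 2$ matrix by hand). Then
\[
\mathfrak{I}M=\mathfrak{I}\bigl(x(M)\mathfrak{R}+y(M)\mathfrak{I}\bigr)=x(M)\,\mathfrak{I}+y(M)\,\mathfrak{I}^2=-n\,y(M)\,\mathfrak{R}+\bigl(x(M)+t\,y(M)\bigr)\mathfrak{I},
\]
so $x(\mathfrak{I}M)=-n\,y(M)$ and $y(\mathfrak{I}M)=x(M)+t\,y(M)$, which is precisely the second row of $M$. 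Assembling the two rows yields the claimed identity.

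There is no genuine obstacle: the statement is a bookkeeping observation, and the only points requiring a little care are the sign conventions in $\mathfrak{I}^2=t\mathfrak{I}-n\mathfrak{R}$ and in reading off the $(2,1)$-entry $-n\,y(M)$. Everything else is a direct substitution, and — as the paper remarks — the same argument applies verbatim to any $2\times 2$ matrix representation of $V$, since all it uses is that $\mathfrak{R}$ acts as the identity and that $\mathfrak{I}$ satisfies a quadratic over $\K$.
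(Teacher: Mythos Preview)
Your proof is correct and is exactly the direct computation the paper has in mind; the paper itself gives no details beyond ``easily verified by direct computation,'' and you have supplied those details accurately.
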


And also the next lemma is straightforward and requires no explicit proof.

\begin{lemma}\label{frak2}
Let $M,N\in V$, then the six determinants corresponding to the six $2\times 2$ matrices obtained by deleting two columns in the 
$2\times 4$ matrix constructed from $M$ and $N$ by juxtaposition, equal, up to sign and as a multiset, the four entries of the matrix 
$N^{\ad}M$ and the two determinants $\det M$ and $\det N$. \qed
\end{lemma}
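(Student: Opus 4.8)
The plan is to check this by a direct computation; the statement is exactly the $2\times2$ instance of the classical identity expressing the $2\times2$ minors of a horizontally juxtaposed matrix $[\,A\mid B\,]$ in terms of the entries of $B^{\ad}A$ together with $\det A$ and $\det B$, so I anticipate no conceptual difficulty, only bookkeeping.

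Concretely, I would write $M=\left[\begin{smallmatrix}a&b\\c&d\end{smallmatrix}\right]$, $N=\left[\begin{smallmatrix}e&f\\g&h\end{smallmatrix}\right]$, with columns $M_1,M_2$ and $N_1,N_2$, so the $2\times4$ matrix obtained by juxtaposition is $[\,M_1\mid M_2\mid N_1\mid N_2\,]$. (The assertion concerns arbitrary $2\times2$ matrices, so Lemma~\ref{frak} is not needed here; it applies in particular to $M,N\in V$.) Of the $\binom{4}{2}=6$ ways of keeping two of the four columns, keeping $\{M_1,M_2\}$ and $\{N_1,N_2\}$ returns $M$ and $N$ themselves, hence contributes the determinants $\det M$ and $\det N$ promised in the statement.

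For the remaining four choices one gets the matrices $[\,M_i\mid N_j\,]$, $i,j\in\{1,2\}$, and I would compute their determinants and compare with $N^{\ad}M=\left[\begin{smallmatrix}h&-f\\-g&e\end{smallmatrix}\right]\left[\begin{smallmatrix}a&b\\c&d\end{smallmatrix}\right]$: the minors $\det[\,M_1\mid N_2\,],\det[\,M_2\mid N_2\,]$ are the two entries of the first row of $N^{\ad}M$, while $\det[\,M_1\mid N_1\,],\det[\,M_2\mid N_1\,]$ are the two entries of the second row up to a common sign. Equivalently, and perhaps more cleanly, for $2\times2$ matrices $(N^{\ad})_{ik}=(-1)^{i+k}N_{\bar k\bar i}$ (the bar denoting the complementary index), so $(N^{\ad}M)_{ij}=\sum_k(-1)^{i+k}N_{\bar k\bar i}M_{kj}$, which for $i=1$ equals $N_{22}M_{1j}-N_{12}M_{2j}=\det[\,M_j\mid N_2\,]$ and for $i=2$ equals $N_{11}M_{2j}-N_{21}M_{1j}=-\det[\,M_j\mid N_1\,]$. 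Thus, as a multiset and up to sign, the four mixed minors are precisely the four entries of $N^{\ad}M$, which finishes the argument.

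There is no real obstacle: the entire content is listing six $2\times2$ minors, and since the assertion is only ``up to sign and as a multiset'' there is no need to track individual signs. The only point deserving an explicit sentence is that deleting the last two (resp.\ the first two) columns of the juxtaposition literally returns $M$ (resp.\ $N$), which is why two of the six minors are $\det M$ and $\det N$.
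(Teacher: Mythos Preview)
Your proposal is correct and is precisely the straightforward direct computation the paper has in mind; note that the paper gives no proof at all for this lemma (it ends with a \qed\ immediately after the statement and the preceding sentence calls it ``straightforward''), so your explicit bookkeeping simply fills in what the authors left to the reader.
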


\begin{rem}\em We cannot dispense with the commutativity in Theorem~\ref{2algebras} as the matrix algebra of $2\times 2$ matrices with zero 
second row is a counterexample. Although it is also a matrix algebra 
(as all associative 2-dimensional algebras), it does not suit our 
purposes here.
\end{rem}    


\section{Equivalence of $V$-sets}

In this section, we consider the constructions of the various $V$-sets given in Subsection~\ref{planes}, and show some equivalence amongst them. 

We start with the $V$-sets defined by reduction and the ones defined by juxtaposition. 

\begin{prop} \label{theoV1}
Let $V$ be a commutative and associative $2$-dimensional algebra over some commutative field $\K$ such that each vector is the product of two other vectors. Then the $V$-set defined by reduction is projectively equivalent with the $V$-set defined by juxtaposition.
\end{prop}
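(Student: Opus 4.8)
The plan is to show that the two constructions, starting from the same datum (a triple of $2\times 2$ matrices over $\K$ in the guise of elements of $V$, subject to the nondegeneracy condition from the definition of points of $\cG(V)$), produce the same collection of $2$-spaces of $\PG(5,\K)$, up to a single fixed change of coordinates that does not depend on the triple. Concretely, a point $V^*(x,y,z)$ of $\cG(V)$ gives, in the reduction picture, a $1$-dimensional $V$-submodule of the free module $V^3$, i.e.\ the set $\{(ax,ay,az):a\in V\}$; reconsidering $V^3$ as a $6$-dimensional $\K$-space via a fixed $\K$-basis of $V$, this is a $2$-dimensional $\K$-subspace, hence a line of $\PG(5,\K)$. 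In the juxtaposition picture, one writes $x,y,z$ as $2\times2$ matrices and stacks them into a $6\times 2$ matrix; the two columns (as vectors of $\K^6$) span a $2$-space of $\PG(5,\K)$, and one takes that span. First I would write both $2$-spaces down explicitly in coordinates with respect to the basis $\{\mathfrak{R},\mathfrak{I}\}$ of Section~\ref{algebras}, using Lemma~\ref{frak} to express an element $M$ of $V$ through the four scalars $x(\mathfrak{R}M),y(\mathfrak{R}M),x(\mathfrak{I}M),y(\mathfrak{I}M)$.

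The key observation is that multiplication of $M$ on the left by a fixed element $a\in V$ is, in the basis $\{\mathfrak{R},\mathfrak{I}\}$, a fixed $\K$-linear map of $V$; so the $V$-submodule $\{(ax,ay,az):a\in V\}$ is spanned over $\K$ by $(\mathfrak{R}x,\mathfrak{R}y,\mathfrak{R}z)=(x,y,z)$ and $(\mathfrak{I}x,\mathfrak{I}y,\mathfrak{I}z)$. On the other hand, by Lemma~\ref{frak}, the two rows of the matrix $M\in V$ are exactly $(x(\mathfrak{R}M),y(\mathfrak{R}M))$ and $(x(\mathfrak{I}M),y(\mathfrak{I}M))$, i.e.\ they record the $\K$-coordinates of $\mathfrak{R}M$ and of $\mathfrak{I}M$. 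Hence, stacking the three matrices $x,y,z$ into a $6\times2$ matrix and reading off its two columns produces, entry by entry, precisely the coordinates of the two generators $(\mathfrak{R}x,\mathfrak{R}y,\mathfrak{R}z)$ and $(\mathfrak{I}x,\mathfrak{I}y,\mathfrak{I}z)$ of the reduction submodule — just written in a different order (columns of stacked $2\times2$ blocks versus the two $V$-module generators). So there is a fixed permutation, hence a fixed linear automorphism, of $\K^6$ carrying one $2$-space onto the other for every triple simultaneously. I would then check the nondegeneracy conditions match up: the condition ``$v(x,y,z)=(0,0,0)\Rightarrow v=0$'' is exactly the condition that the $2\times2$ block-stacked matrix has $\K$-rank $2$, which is what makes the span a genuine line rather than a point, and likewise it is the condition that the $V$-module generated is free of rank $1$; so the two constructions have the same underlying index set of points, and the bijection is realised by that fixed linear map.

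Finally I would note that the same fixed linear map carries lines to lines: a line $V^*[a,b,c]$ of $\cG(V)$ gives in the reduction picture a $2$-dimensional $V$-submodule of $V^3$ (the ``orthogonal'' module $\{(u,v,w):au+bv+cw=0\}$), i.e.\ a $4$-dimensional $\K$-subspace, hence a $3$-space of $\PG(5,\K)$, and in the juxtaposition picture the analogous stacking of the dual data; the argument above, applied to a basis of the rank-$2$ module, shows these agree under the same map. Therefore the whole incidence geometry — and in particular the image under the line Grassmannian in $\PG(14,\K)$, and a fortiori the spanned $\PG(8,\K)$ — is carried projectively from one model to the other. The step I expect to be the real bookkeeping obstacle is getting the indexing/ordering conventions exactly right: being careful about row-versus-column in Lemma~\ref{frak}, about which basis of $V$ and which ordering of the six $\K$-coordinates of $V^3$ one uses, and about the ``substitute row by column'' remark in the rank-$1$ definition, so that the permutation matrix realising the equivalence is pinned down unambiguously and shown to be independent of $(x,y,z)$. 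The genuinely mathematical content is light; the whole point is that left-multiplication by a basis of $V$ and the matrix-entry readout of Lemma~\ref{frak} are two descriptions of the same linear gadget.
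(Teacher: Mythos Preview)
Your proposal is correct and follows essentially the same approach as the paper: both identify the $\K$-span of the $V$-line through $T=(x,y,z)$ as $\langle \mathfrak{R}T,\mathfrak{I}T\rangle$ and then invoke Lemma~\ref{frak} to see that these two vectors are exactly the rows of the $2\times 6$ matrix obtained by juxtaposing the matrices of $x,y,z$. The paper's proof is a single terse paragraph that omits the checks you add (nondegeneracy conditions matching, compatibility with lines, the row/column bookkeeping); those additions are harmless padding rather than a different method, and in fact with the natural ordering of coordinates on $V^3\cong\K^6$ no permutation is needed at all, so the ``fixed linear automorphism'' you worry about is the identity.
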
 

\begin{proof}
As we learned in the previous section, we can write any element of $V$ as $x\mathfrak{R}+y\mathfrak{I}$, and we will write this vector as $(x,y)$. Let $R_1(x,y)$ be the first row of this matrix, and $R_2(x,y)$ be the second (both are $2$-dimensional vectors over $\K$). Take any point of $\mathcal{G}(V)$, i.e., a triple $T=((x,y),(x',y'),(x'',y''))$ of elements of $V$, up to a scalar factor in $V^*$, and such that no common  multiple of $(x,y)$, $(x',y')$ and $(x'',y'')$ using a nonzero factor in $V$ is zero.  The $1$-space over $V$ defined by $T$ is given by all multiples over $V$ of $T$, and viewed as vector space over $\K$, the $2$-space over $\K$ defined by $T$ is hence generated by $\mathfrak{R}T$ and $\mathfrak{I}T$. Lemma~\ref{frak} implies that this space is hence generated by the two points obtained by juxtaposition of the matrices of $(x,y), (x',y')$ and $(x'',y'')$ and taking the points with coordinates in $\K$ given by the respective rows in this juxtaposition. This completes the proof of the proposition.
\end{proof}

We now look at the $V$-sets defined by matrices.

\begin{prop}\label{theoV2}
Let $V$ be a commutative and associative $2$-dimensional algebra over some commutative field $\K$ such that each vector is the product of two 
other vectors. Then the $V$-set defined by matrices is projectively equivalent with the $V$-set defined by juxtaposition.
\end{prop}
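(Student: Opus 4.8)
The plan is to exhibit an explicit linear isomorphism between the two ambient spaces of the $V$-set defined by matrices and the $V$-set defined by juxtaposition, and check it carries one point set onto the other. Recall that the $V$-set defined by matrices sends a point $V^*(x,y,z)$ of $\cG(V)$ to the Hermitian matrix $(x~y~z)^t(x~y~z)^\sigma$, whose entries are $x^\sigma x$, $y^\sigma y$, $z^\sigma z$, $x^\sigma y$, $y^\sigma x$, $y^\sigma z$, $z^\sigma y$, $z^\sigma x$, $x^\sigma z$. Since $\sigma$ is exactly the adjugate map $A\mapsto A^{\ad}$ by the discussion in Section~\ref{algebras}, we may interpret all these products via $2\times 2$ matrix multiplication. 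On the juxtaposition side, a point is obtained by forming the $6\times 2$ matrix whose three pairs of rows are the matrices of $x$, $y$, $z\in V$, and then taking the join of the six rows under the line Grassmannian, i.e. recording the $\binom{6}{2}=15$ pairwise $2\times 2$ determinants (Plücker coordinates).

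The key step is Lemma~\ref{frak2}: the six $2\times 2$ determinants formed from the four columns coming from two elements $M,N\in V$ (juxtaposed as a $2\times 4$ matrix) reproduce, up to sign and as a multiset, the four entries of $N^{\ad}M$ together with $\det M$ and $\det N$. Applying this to the three pairs $(x,y)$, $(y,z)$, $(z,x)$ shows that the Grassmann (Plücker) coordinates of the juxtaposition point are precisely the entries of $x^\sigma y = x^{\ad} y$, $y^\sigma z$, $z^\sigma x$ (each a $2\times 2$ matrix, hence $4$ coordinates), together with $\det x$, $\det y$, $\det z$ — and $\det w$ for $w\in V$ is (up to the fixed normalization coming from $n$ and $t$) exactly the scalar $w^\sigma w\in\K\cdot\mathbf 1$. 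The within-pair determinants of the rows of a single $2\times 2$ matrix are, of course, just its determinant, so the three "diagonal" Grassmann coordinates give $\det x,\det y,\det z$; this accounts for $3+12=15$ coordinates, matching $\binom{6}{2}$. Meanwhile the Hermitian matrix has the three diagonal entries $x^\sigma x,y^\sigma y,z^\sigma z\in\K$ and three off-diagonal entries $x^\sigma y, y^\sigma z, z^\sigma x\in V$ (the other three being their $\sigma$-images, hence not independent over $\K$), i.e. $3+ 3\cdot 2 = 9$ coordinates. The point is that, although the juxtaposition lands a priori in $\PG(14,\K)$, the relations among Plücker coordinates of a rank-$1$ configuration of this shape force all but $9$ of them to be $\K$-linear combinations of a fixed $9$; concretely the four entries of a matrix $x^{\ad}y\in V$ only span a $2$-dimensional $\K$-space inside $\K^4$ because $V$ is $2$-dimensional. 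Spelling this out identifies the span of the juxtaposition $V$-set with a $\PG(8,\K)$ and gives an explicit $\K$-linear bijection between that $\PG(8,\K)$ and the $\PG(8,\K)$ of Hermitian matrices, sending one $V$-set to the other; since the lines of both are described by the analogous $2\times 2$ data, the bijection is automatically an isomorphism of $\cC$-Veronesean sets.

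Concretely, the steps I would carry out are: (1) fix the matrix model of $V$ from Section~\ref{algebras}, recall $\sigma=\ad$ and that $w^\sigma w=\det w\in\K$; (2) write out the Hermitian-matrix image of $V^*(x,y,z)$ in terms of $2\times 2$ matrix products $x^\sigma y$ etc.; (3) write out the juxtaposition image, i.e. the $15$ Plücker coordinates of the $6\times 2$ matrix with row-pairs $x,y,z$; (4) apply Lemma~\ref{frak2} to each of the three pairs to match the Plücker coordinates, as a $\K$-vector, with the entries of $x^\sigma y, y^\sigma z, z^\sigma x$ plus $\det x,\det y,\det z$; (5) observe that the entries of any element of $V$ (written as a $2\times2$ matrix) depend $\K$-linearly on its two coordinates $x(\cdot),y(\cdot)$ via Lemma~\ref{frak}, so the $12$ off-diagonal Plücker coordinates actually lie in a $6$-dimensional $\K$-subspace, pinning the span down to a $\PG(8,\K)$; (6) conclude that the resulting $\K$-linear map between the two $9$-dimensional coordinate spaces is a projectivity carrying the matrix $V$-set onto the juxtaposition $V$-set, compatibly with the line structure.

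The main obstacle I anticipate is bookkeeping: correctly tracking the signs and the identification of "$\det w$" with "$w^\sigma w$" under the chosen normalization $(t,n)$, and making sure the count of independent coordinates comes out to $9$ on the nose (equivalently, verifying that no further linear dependence collapses the span below dimension $8$). The projective equivalence in Proposition~\ref{theoV1} already tells us reduction and juxtaposition agree, and Proposition~\ref{span8} guarantees the span is $8$-dimensional, so the genuinely new content here is just the explicit dictionary between Hermitian-matrix coordinates and Plücker coordinates — a finite, if slightly tedious, linear-algebra identification driven entirely by Lemma~\ref{frak2}.
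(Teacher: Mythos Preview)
Your proposal is correct and follows essentially the same approach as the paper: both identify the nine coordinates of the Hermitian-matrix $V$-set as $\det M,\det N,\det L$ together with $x(\cdot),y(\cdot)$ of the three off-diagonal products $M^{\ad}N$, $N^{\ad}L$, $L^{\ad}M$, then invoke Lemma~\ref{frak2} to recognise the fifteen Pl\"ucker coordinates of the juxtaposition as exactly these nine values plus the six second-row entries, which by Lemma~\ref{frak} are fixed $\K$-linear combinations of the first-row entries. The paper's proof is simply a terser version of your steps (1)--(6).
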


\begin{proof}
The coordinates in $\PG(8,\K)$ of the point corresponding with the point $V^*(M,N,L)$, where $M,N,L$ are elements of the matrix algebra 
$V$, are given by the nine values $$\det M,\det N,\det L,x(M^{\ad}N),y(M^{\ad}N), x(N^{\ad}L),y(N^{\ad}L), x(L^{\ad}M),y(L^{\ad}M).$$
Now Lemma~\ref{frak2} implies that the line Grassmannian coordinates in $\PG(14,\K)$ of the line of $\PG(5,\K)$ determined by the points whose 
coordinates correspond with the two rows of the $2\times 6$ matrix obtained by juxtaposing $M,N,L$, are given by the three determinants 
$\det M,\det N,\det L$, and by the twelve entries of the matrices $M^{\ad}N,N^{\ad}L$ and $L^{\ad}M$. But the entries in the first rows of these coincide with $x(M^{\ad}N)$, $y(M^{\ad}N)$, $x(N^{\ad}L)$, $y(N^{\ad}L$, $x(L^{\ad}M)$ and $y(L^{\ad}M)$, whereas the entries in the second rows
are simple linear combinations of the other entries. This determines a projective linear transformation between these two $V$-sets.   
\end{proof}

Finally, we consider $V$-sets defined by parametrization. One easily sees that the construction of these $V$-sets is independent of the chosen element $\zeta$ (as any other element is a linear $\K$-combination of $1$ and $\zeta$). 

\begin{prop}\label{theoV3}
Let $V$ be a commutative and associative $2$-dimensional algebra over some commutative field $\K$ such that each vector is the product of two 
other vectors. Then the $V$-set defined by matrices is projectively equivalent with the $V$-set defined by juxtaposition if and only if $V$ is neither isomorphic to the dual numbers, nor isomorphic to an inseparable quadratic extension of $\K$. If $V$ is isomorphic to the dual numbers over a field of characteristic distinct from $2$, then the $V$-set defined by parametrization is projectively equivalent with a quadric Veronese variety over $\K$. If $V$ is isomorphic to the dual numbers over a field of characteristic $2$, or $V$ is isomorphic to an inseparable quadratic extension of $\K$, then the $V$-set defined by parametrization is projectively equivalent with a projective plane over $\K^2$, the field of squares of $\K$, or over $V^2$, the field of squares of $V$, respectively.
\end{prop}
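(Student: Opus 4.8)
The plan is to compare the $V$-set defined by matrices (equivalently, by juxtaposition, by Propositions~\ref{theoV1} and~\ref{theoV2}) with the $V$-set defined by parametrization, by writing both explicitly in coordinates over $\K$ and searching for a linear map between the ambient $\PG(8,\K)$'s. Both are images of a point $V^*(x,y,z)$ under a degree-two map; the matrix (Hermitian) version uses the entries $x^\sigma x,\, y^\sigma y,\, z^\sigma z$ on the diagonal and the symmetrized products $x^\sigma y+y^\sigma x$, etc., off-diagonal, together with a ``skew'' part which in the matrix model is determined by the fact that the off-diagonal entry of a Hermitian matrix is a single element of $V$, i.e.\ carries two $\K$-coordinates. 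The parametrization model records the off-diagonal entry of $V$ as the pair $(x^\sigma y+y^\sigma x,\ \zeta x^\sigma y+\zeta^\sigma y^\sigma x)$. So the comparison reduces to the linear-algebra question: given $u=x^\sigma y\in V$, when do the two scalars $u+u^\sigma$ and $\zeta u + \zeta^\sigma u^\sigma$ determine $u$ (equivalently, span the same $\K$-plane in $V$ as the coordinates $x(u),y(u)$)?

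First I would set up, for each of the three algebras from Theorem~\ref{2algebras}, explicit formulas: write $u = p\mathfrak{R}+q\mathfrak{I}$, compute $u^\sigma$ using $\sigma=\mathrm{adj}$, and compute $u+u^\sigma = (\mathrm{tr}\,u)\mathfrak{R}$ (up to the identification $\K\cdot\mathbf 1=\K$) and $\zeta u+\zeta^\sigma u^\sigma$. The key observation is that the map $u\mapsto (u+u^\sigma,\ \zeta u+\zeta^\sigma u^\sigma)$, viewed $\K$-linearly from $V$ (dimension $2$) to $\K^2$, is invertible precisely when its determinant is nonzero; a short computation shows this determinant is a nonzero multiple of $\zeta-\zeta^\sigma$ (the ``imaginary part'' of $\zeta$), together with the discriminant data of the algebra. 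Hence: if $V$ is a separable quadratic extension, or the split algebra $\K\times\K$, one can choose $\zeta$ with $\zeta\neq\zeta^\sigma$ and the map is bijective, giving the projective linear equivalence; if $V$ is the dual numbers or an inseparable quadratic extension, then $\sigma=\mathrm{id}$ (dual numbers: $\sigma$ fixes $\mathfrak R$ and negates $\mathfrak I$, and in characteristic $2$ negation is trivial; inseparable extension: $\sigma=\mathrm{id}$), so $u+u^\sigma=2u=0$ in characteristic~$2$, and the parametrization map collapses and cannot be linearly equivalent to the faithful matrix $V$-set --- this proves the ``only if'' direction. In odd characteristic with $V$ the dual numbers, $u+u^\sigma = 2p\mathfrak R$ recovers $p$ but the second coordinate $\zeta u+\zeta^\sigma u^\sigma$ only sees $p$ again (since $u=u^\sigma$ forces $q=0$ in the relevant combination... here I must be careful: for dual numbers $\sigma$ is nontrivial, $\mathfrak I^\sigma=-\mathfrak I$, so $u^\sigma = p\mathfrak R - q\mathfrak I$, hence $u+u^\sigma=2p\mathfrak R$ and $\zeta u+\zeta^\sigma u^\sigma$ is a combination still losing $q$ because $\zeta-\zeta^\sigma$ is a nilpotent multiple of $\mathfrak I$); the upshot is that the $q$-coordinate is lost, so the parametrization image is a quadric Veronese variety of $\PG(2,\K)$ rather than the full $V$-set.

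Next I would identify what the parametrization image actually \emph{is} in the degenerate cases. For the dual numbers in odd characteristic, after discarding the three identically-redundant coordinates one is left exactly with $(x^2,y^2,z^2,xy,yz,zx)$ up to scalars and a change of basis --- the classical quadric Veronese map $\PG(2,\K)\to\PG(5,\K)$ --- which spans only a $5$-space; so the claim is that the parametrization ``$V$-set'' in this case is projectively a quadric Veronese variety over $\K$. For the dual numbers in characteristic $2$: here $x^\sigma = x$ (as $\sigma=\mathrm{id}$ on the reduced part... again $\sigma$ negates $\mathfrak I$, but $-1=1$), so every entry becomes $x^2$, $xy+yx = 2xy = 0$? --- no: in a commutative algebra $x^\sigma y + y^\sigma x = xy+yx = 2xy=0$ only if $2=0$; so in characteristic $2$ the off-diagonal ordinary coordinates vanish and the surviving nonzero data is $x^2, y^2, z^2$ and $\zeta xy+\zeta^\sigma xy = (\zeta+\zeta^\sigma)xy$, plus the cyclic analogues. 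One checks $\zeta+\zeta^\sigma\in\K$ is nonzero (it is the trace, and $\zeta\notin\K$ forces... actually in char $2$ dual numbers, $\zeta=a\mathfrak R+b\mathfrak I$ has $\zeta+\zeta^\sigma=2a\mathfrak R=0$; so one instead uses an invertible element not in $\K\mathbf 1$, as the definition of $\zeta$ explicitly allows). Working this through, the six surviving coordinates $x^2,y^2,z^2,xy,yz,zx$ (after normalizing) are exactly the coordinates of a point of $\PG(2,\K^2)$, the projective plane over the field of squares: the Frobenius $t\mapsto t^2$ identifies $\PG(2,\K)$ with $\PG(2,\K^2)$ and the parametrization image is literally the point set of this plane sitting inside $\PG(8,\K)$ (actually spanning a smaller subspace). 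The inseparable quadratic extension case is handled identically with $\K$ replaced by $V$ and squares of $V$: $\sigma=\mathrm{id}$, characteristic $2$, $x^\sigma x=x^2\in V^2$, and the image is $\PG(2,V^2)$.

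The main obstacle, I expect, is the bookkeeping in characteristic $2$: one must be scrupulous about which of the nine coordinates are identically zero, which coincide, and which are genuinely the image of a Frobenius-twisted plane, and one must correctly invoke the clause in the definition of $\zeta$ (``if this cannot be found, then any invertible element not in $\K\times 1$ will do'') since in the inseparable and char-$2$-dual-number cases no $\sigma$-moved invertible $\zeta$ exists. A secondary subtlety is verifying that the surviving-coordinate subspace has the right dimension (a $5$-space for the Veronese, and the appropriate span for the twisted planes) so that one can legitimately say ``projectively equivalent to a quadric Veronese variety / a projective plane over $\K^2$ or $V^2$'' rather than merely ``contained in''. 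Once the explicit coordinate dictionary is written down for all three algebras and both parities, each assertion of the proposition follows by a direct linear-algebra comparison, so the proof is essentially a careful but routine computation organized by a case split on the isomorphism type of $V$ and on $\mathrm{char}\,\K$.
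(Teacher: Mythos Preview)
Your approach is essentially the paper's: both reduce the comparison to the $\K$-linear map $u\mapsto(\mathrm{tr}(u),\mathrm{tr}(\zeta u))$ on the off-diagonal entry $u=x^\sigma y\in V$, and check when this $2\times2$ map is invertible. The paper streamlines the computation by fixing $\zeta=\mathfrak{I}$ and working in the uniform $(t,n)$-parametrization of Section~\ref{algebras}: one gets directly
\[
u+u^\sigma=2x(u)+ty(u),\qquad \mathfrak{I}u+\mathfrak{I}^\sigma u^\sigma=tx(u)+(t^2-2n)y(u),
\]
so the determinant is $t^2-4n$, the discriminant of $X^2-tX+n$. This makes the case split (separable extension or $\K\times\K$ versus dual numbers or inseparable extension) immediate without the somewhat heuristic ``nonzero multiple of $\zeta-\zeta^\sigma$'' you invoke.

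One genuine slip in your degenerate analysis: in characteristic~$2$ with $\sigma=\mathrm{id}$ (dual numbers or inseparable extension), \emph{all six} off-diagonal coordinates vanish, not just three. Indeed $x^\sigma y+y^\sigma x=2xy=0$ and, since $\zeta^\sigma=\zeta$, also $\zeta x^\sigma y+\zeta^\sigma y^\sigma x=2\zeta xy=0$; the fallback choice of $\zeta$ does not rescue these terms. So only the three diagonal entries $x^\sigma x=x(M)^2+ny(M)^2$ (and cyclic) survive, and the image sits in a projective plane, exactly as the proposition claims. Your ``six surviving coordinates $x^2,y^2,z^2,xy,yz,zx$'' is the odd-characteristic dual-number picture (the quadric Veronese), not the characteristic-$2$ one. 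Once this is corrected, your outline matches the paper's proof.
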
 

\begin{proof}
We may put $\zeta=\mathfrak{I}$. Recall that we have fixed the number $t$ and $n$ in $\K$.  Then one easily calculates for two arbitrary matrices $M,L\in V$ that \begin{eqnarray*} M^{\ad}L+L^{\ad}M & = & 2x(M^{\ad}L) + ty(M^{\ad}L),\\  \mathfrak{I}M^{\ad}L+\mathfrak{I}^{\ad}L^{\ad}M & = & tx(M^{\ad}L) + (t^2-2n)y(M^{\ad}L).\end{eqnarray*} These are again linear combinations of $x=x(M^{\ad}L)$ and $y=y(M^{\ad}L)$, and they are invertible (meaning that we can calculate $x$ and $y$ from the two right hand sides) if and only if
$$\det\left[\begin{array}{cc}2 & t \\ t & t^2-2n\end{array}\right]\neq 0.$$ This happens if and only if $t^2-4n\neq 0$. If the characteristic of $\K$ is unequal $2$, then this happens if and only if $V$ is not isomorphic to the dual numbers (since then, the equation $\alpha^2+t\alpha+n=0$ has either $0$ or $2$ different solutions in $\K$). If the characteristic of $\K$ equals $2$, then this happens if and only if $t\neq 0$. If in characteristic $2$ we have $t=0$, then the equation $\alpha^2+n=0$ either has one or no solutions and we either have the dual numbers or an inseparable quadratic extension of $\K$. If in characteristic $2$ we have $t\neq 0$, then the equation $\alpha^2+t\alpha+n=0$ has either no solutions (and then $V$ is a separable quadratic extension of $\K$) or exactly $2$ solutions (since, if $\beta$ is a solution, also $\beta+t$ is one). In the latter case $V$ is the direct product $\K\times\K$.

Suppose now that the characteristic of $\K$ is unequal $2$ and $V$ is isomorphic to the dual numbers over $\K$. Then, for ease of 
computations, we may put $t=n=0$. We easily obtain that the point $V^*(M,N,L)$ is mapped under the Veronese  correspondence to the 
point $(x(M)^2,x(N)^2,x(L)^2,2x(M)x(N),0,2x(N)x(L),0,2x(L)x(M),0)$, which clearly defines a quadric Veronese variety. 

If the characteristic of $\K$ is equal to $2$ and $t=0$, then the point $V^*(M,N,L)$ is mapped under the Veronese  correspondence to the 
point $(x(M)^2+ny(M)^2,x(N)^2+ny(N)^2,x(L)^2+ny(L)^2,0,0,0,0,0,0)$, which clearly defines a projective plane over the field of squares of $\K$, 
if $n$ is itself a square in $\K$, and over the field of squares of $V$, if $n$ is not a square in $\K$, but then it is a square in $V$.  
\end{proof}

For completeness' sake, we now show the following proposition.  

\begin{prop} Let $V$ be a commutative and associative $2$-dimensional algebra over some commutative field $\K$ such that each vector is the product of two 
other vectors. The set $S_1$ of rank $1$ Hermitian $3\times 3$ matrices over $V$ coincides with the set $S_2$ of all scalar multiples of matrices $(M~N~L)^t(M~N~L)^{\ad}$, for $V^*(M,N,L)$ a point of $\cG(V)$. \end{prop}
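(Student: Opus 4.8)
The plan is to prove the two inclusions $S_2\subseteq S_1$ and $S_1\subseteq S_2$ separately, throughout using the matrix model of Theorem~\ref{2algebras}: $\sigma$ is the adjugate $v\mapsto v^{\ad}$, every $v\in V$ satisfies $v+v^{\ad}\in\K\cdot\mathbf 1$ and $vv^{\ad}=\nu(v)\mathbf 1$ for a scalar $\nu(v)\in\K$, and, $V$ being a finite-dimensional $\K$-algebra, $v$ is invertible exactly when it is not a zero divisor, exactly when $\nu(v)\neq 0$. As is customary in the projective setting, the zero matrix is tacitly excluded from both $S_1$ and $S_2$.

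For $S_2\subseteq S_1$, fix a point $V^*(M,N,L)$ and put $A=(M~N~L)^t(M~N~L)^{\ad}$, so $A_{ij}=P_iP_j^{\ad}$ with $(P_1,P_2,P_3):=(M,N,L)$. Hermiticity is immediate: $A_{ii}=\nu(P_i)\mathbf 1\in\K\cdot\mathbf 1$, and $A_{ij}^{\ad}=P_i^{\ad}P_j=P_jP_i^{\ad}=A_{ji}$ because $\sigma$ is a ring automorphism of $V$ of order at most $2$; and Hermiticity is preserved by $\K$-scalar multiplication. Since the rows of $A$ are the $V$-multiples $R_i=P_i\rho$ of $\rho=(M^{\ad}~N^{\ad}~L^{\ad})$, and $aR_i+bR_j=(aP_i+bP_j)\rho$, the rank-$1$ condition for $A$ reduces to: for each pair $i,j$ there are $a,b\in V$ with $aP_i+bP_j=0$ and trivial common right annihilator. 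Such $a,b$ always exist — if one of $P_i,P_j$ is $0$ take $\{a,b\}=\{\mathbf 1,0\}$; if one of them is invertible take $(a,b)=(P_j,-P_i)$; and if both are nonzero zero divisors then, by Theorem~\ref{2algebras}, either $P_i,P_j$ are proportional over $\K$, when a pair $a,b\in\K\cdot\mathbf 1$ works, or $V\cong\K\times\K$ and $\{P_i,P_j\}$ spans $V$, when the two primitive idempotents of $\K\times\K$ do. Finally, the ``point'' hypothesis enters precisely to ensure $A\neq 0$: were $A=0$, all of $M,N,L$ would be zero divisors with $MN^{\ad}=NL^{\ad}=LM^{\ad}=0$, and a short inspection of the three algebras then yields a nonzero $c\in V$ with $cM=cN=cL=0$, contradicting that $V^*(M,N,L)$ is a point. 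Hence $A$, and every $\K$-multiple of it, lies in $S_1$.

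For $S_1\subseteq S_2$, let $A\neq 0$ be rank $1$ and Hermitian. First suppose $A$ has a nonzero diagonal entry; after rescaling by a scalar and permuting indices we may take $A_{11}=\mathbf 1$. Apply the rank-$1$ condition to rows $1$ and $j$ with $j\in\{2,3\}$: the first coordinate of $aR_1+bR_j=0$ reads $a+bA_{j1}=0$, so $a=-bA_{j1}$, whence $\mathrm{ann}(b)\subseteq\mathrm{ann}(a)$ by commutativity, so the common right annihilator of $\{a,b\}$ equals $\mathrm{ann}(b)$, which is trivial; thus $b$ is invertible. The remaining coordinates give $b(A_{jk}-A_{j1}A_{1k})=0$, hence $A_{jk}=A_{j1}A_{1k}$ for all $j,k$ (the cases $j=1$ being trivial). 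Using $A_{j1}=A_{1j}^{\ad}$, this is exactly $A=(\mathbf 1~A_{12}^{\ad}~A_{13}^{\ad})^t(\mathbf 1~A_{12}^{\ad}~A_{13}^{\ad})^{\ad}$, and $(\mathbf 1,A_{12}^{\ad},A_{13}^{\ad})$ is a point since its first entry is $\mathbf 1$. There remains the case where every diagonal entry of $A$ vanishes; applying the rank-$1$ condition to a pair of rows meeting in a nonzero off-diagonal entry and using $A\neq 0$ quickly forces $V\cong\K\times\K$ (over a field or the dual numbers one derives a contradiction). Then $\sigma$ interchanges the two factors, so a Hermitian matrix has the form $A=(B,B^t)$ with $B$ a $3\times 3$ matrix over $\K$, and working over each factor shows $A$ has rank $1$ exactly when $\mathrm{rk}_\K B\leq 1$; writing the nonzero $B$ as $B=uv^t$ with $u,v\in\K^3\setminus\{0\}$, the triple $(M,N,L)$ with $P_i=(u_i,v_i)$ is a point and $A=(M~N~L)^t(M~N~L)^{\ad}$, finishing the proof.

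The main obstacle is bookkeeping the non-degeneracy clause (``no nonzero $c$ with $ac=bc=0$'') built into the definition of rank $1$: the whole argument consists of feeding this clause from the ``point'' hypothesis on one side and extracting structural consequences from it on the other. The genuinely delicate configuration is a rank-$1$ Hermitian matrix with no invertible diagonal entry; it occurs only over $\K\times\K$, and is tamed only by descending to the two coordinate fields.
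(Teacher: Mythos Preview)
Your proof is correct and follows the same two-case split as the paper (a nonzero diagonal entry versus all diagonal entries zero, the latter forcing $V\cong\K\times\K$). Your treatment of the first case is in fact tidier than the paper's: where the paper works through several subcases depending on which of the off-diagonal entries $R_2,R_3$ vanish or are invertible, you observe directly that the annihilator clause in the rank-$1$ definition forces the coefficient $b$ to be invertible, and everything follows uniformly from $A_{jk}=A_{j1}A_{1k}$. You also spell out $S_2\subseteq S_1$, which the paper dismisses as ``clearly''. The only places where your write-up is more compressed than the paper's are the two assertions in the zero-diagonal case (that a field or the dual numbers are excluded, and that rank~$1$ over $\K\times\K$ amounts to $\mathrm{rk}_\K B\le 1$); both are straightforward to verify and your sketch contains the right idea.
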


\begin{proof} Clearly, $S_2\subseteq S_1$. Now let $$H=\left[\begin{array}{ccc} k_1 & R_3 & R_2^{\ad}\\ R_3^{\ad} & k_2 & R_1 \\ R_2 & R_1^{\ad} & 
k_3\end{array}\right],$$ with $k_1,k_2,k_3\in\K$ and $R_1,R_2,R_3\in V$ be a Hermitian matrix of rank 1. Suppose first 
$(k_1,k_2,k_3)\neq(0,0,0)$. Then we may without loss assume $k_1=1$. But then $H=(M~N~L)^t(M~N~L)^{\ad}$, with $(M,N,L)=(1,R_3^{\ad},R_2)$ 
because the first row and the first column of left hand and right hand sides coincide, and we claim that this determines the rest by the fact that the ranks 
of both matrices are $1$. 

Indeed, suppose first that $R_3=0$. Then $R_3^{\ad}R_3=0$ and we must show $k_2=0$. If $k_2\neq0$, then $k_2$ is 
invertible, and so $a$ times the first row plus $b$ times the second row (with $(a,b)\neq(0,0)$) always gives a nonzero row as only for $a=b=0$ 
the first two entries can be $0$. Likewise $R_2^{\ad}R_3=0$ and we must show $R_1=0$. Suppose $a$ times the first row plus $b$ times the second 
row is zero, with $a,b\in L$ and such that no $c\in L\setminus\{0\}$ exists with $ac=bc=0$. Then looking at the first entry, we see $a=0$. Hence 
$bR_1=0$, contradicting the nonexistence of $c$ with the just mentioned properties. Now suppose $R_3\neq 0$. Let $a,b\in L$ be as before. Then 
$a+bR_3^{\ad}=0=aR_3+bk_2$. Remember $k_2\in \K$ is invertible. It follows that $b=-aR_3k_2^{-1}$ and so $a-aR_3R_3^{\ad}k_2^{-1}=0$. If the 
second term of the left hand side of this equality is $0$, then $a=0$ and this leads to $b$ invertible, contradicting $bR_3^{\ad}=0$. Hence $R_3$ 
is invertible, and so $b=-aR_3^{-\ad}$. This implies that both $a$ and $b$ are invertible, and so $k_2=-aR_3b^{_1}=R_3R_3^{\ad}$. It remains to 
determine $R_1$ when both $R_2$ and $R_3$ are nonzero. Let again $a,b$ be as before. Then $a+bR_3^{\ad}=0=aR_2^{\ad}+bR_1$. We calculate 
$a=-bR_3^{\ad}$ and so $bR_1=bR_2^{\ad}R_3^{\ad}$.  Remember that, by the foregoing, we now know that both $R_3$ and $R_2$ are invertible. 
Hence, similarly as before,  this implies that both $a$ and $b$ are invertible. Consequently, $R_1=R_2^{\ad}R_3^{\ad}$, which completes the 
proof of our claim. 

Now assume $k_1=k_2=k_3=0$. We may then assume $R_3\neq 0$. Let $a,b$ be as before. Then $0=aR_3=bR_3^{\ad}=aR_2^{\ad}+bR_1$. So $R_3$ cannot be 
invertible. Neither $a$ nor $b$ are invertible, and so $V$ is not isomorphic to the dual numbers, as this would imply that $a\in b\K$ and so $a^2=ab=0$, a contradiction. So $V$ is $\K\times\K$. We can identify $V$ with $\K\times\K$ and write every element of $V$ as a pair $(x,y)$ with coordinatewise addition and multiplication. By rescaling we may assume without loss that $R_3=(1,0)$. Hence $a=(0,A)$ and $b=(B,0)$, for some $A,B\in\K^\times$. It follows from $aR_2^{\ad}+bR_1=0$ that $R_2=(0,r_2)$ and $R_1=(0,r_1)$, for some $r_1,r_2\in\K$.

If $r_1\neq 0$, then linear dependence of the second and the third row implies easily that $r_2=0$. In this case we see that $(M,N,L)=(R_3,R_3^{\ad},R_1^{\ad})$. If $r_1=0$, then we immediately have $(M,N,L)=(R_3,R_3^{\ad},R_2)$, and the proof is complete.
\end{proof}

Finally, we show that all $V$-sets we considered span an $8$-dimensional space. By isomorphism, it is enough to do so for the $V$-sets defined by matrices.  

\begin{prop}\label{span8}Let $V$ be a commutative and associative $2$-dimensional algebra over some commutative field $\K$ such that each vector is the product of two 
other vectors. Then the $V$-set defined by matrices spans an $8$-dimensional projective space.
\end{prop}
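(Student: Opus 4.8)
The plan is to exhibit nine points in the $V$-set defined by matrices whose images in the $9$-dimensional vector space of Hermitian $3\times 3$ matrices over $V$ are linearly independent over $\K$; this forces the span to be all of $\PG(8,\K)$, and combined with the fact that the $V$-set is contained in that $8$-dimensional space (the matrices are Hermitian), we get exactly dimension $8$. Concretely, I would first dispose of the diagonal directions: the three points $V^*(\mathbf{1},0,0)$, $V^*(0,\mathbf{1},0)$, $V^*(0,0,\mathbf{1})$ map to the three ``diagonal'' Hermitian matrices $E_{11},E_{22},E_{33}$ (with $\mathbf{1}$ in one diagonal slot and $0$ elsewhere), which are obviously independent and span the ``diagonal'' subspace $\K^3$.

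Next I would handle the six off-diagonal $\K$-coordinates, which come in three pairs corresponding to the three off-diagonal positions, each pair recording $x(\cdot)$ and $y(\cdot)$ of an entry of $V$. For the $(1,2)$-position, take the two points $V^*(\mathbf{1},\mathbf{1},0)$ and $V^*(\mathbf{1},\zeta,0)$, where $\zeta=\mathfrak I$ (or any element of $V$ not in $\K\cdot\mathbf 1$, which exists since $V$ is $2$-dimensional). The matrix $(M~N~L)^t(M~N~L)^\sigma$ with $(M,N,L)=(\mathbf 1,\zeta,0)$ has off-diagonal $(1,2)$-entry $\zeta^\sigma$ (equivalently, up to the transpose/adjugate conventions fixed in Section~\ref{algebras}, an element of $V$ whose $\mathfrak I$-coordinate is nonzero), while for $(\mathbf 1,\mathbf 1,0)$ that entry is $\mathbf 1$. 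Since $\mathbf 1$ and $\zeta^\sigma$ span $V$ over $\K$, these two points, together with the three diagonal points (which are needed to cancel the nonzero diagonal entries $k_1,k_2$ of these matrices), contribute two new independent directions in the $(1,2)$-slot. Doing the same for the $(2,3)$- and $(1,3)$-positions with the analogous six points gives six more points; modulo the already-secured diagonal subspace, the resulting six vectors are block-diagonal across the three distinct off-diagonal slots and pairwise independent within each slot, hence jointly independent. Altogether the $3+6=9$ points have images spanning a $9$-dimensional subspace of the $9$-dimensional ambient vector space, so the projective span is $\PG(8,\K)$.

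The only genuinely delicate point is bookkeeping: one must check that the specific rank-$1$ Hermitian matrices $(M~N~L)^t(M~N~L)^\sigma$ attached to the chosen points really do have the claimed entries under whatever conventions for $\sigma$ (the adjugate) and the transpose are in force, and in particular that the map ``$b\mapsto \mathbf 1^\sigma b + b^\sigma \mathbf 1$'' (the form in which an off-diagonal entry of such a product appears) is $\K$-linear and surjective onto $V$ — this is immediate from $\mathbf 1^\sigma=\mathbf 1$ and $b+b^\sigma\in\K\cdot\mathbf 1$, so the entry is simply a nonzero $\K$-scalar multiple of $b$ plus a diagonal contribution, which is exactly what we want. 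I expect this to be the main (though still routine) obstacle; once the entries are written out, linear independence is transparent, and since the $V$-set is by construction a set of Hermitian matrices, it cannot span more than $8$ projective dimensions, so the span is exactly $8$.
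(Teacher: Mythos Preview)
Your approach is essentially the paper's: the authors exhibit the very same nine triples (in their notation $\mathfrak{R}=\mathbf{1}$, $\mathfrak{I}=\zeta$), namely the three ``unit vectors'' together with $(\mathfrak{R},\mathfrak{R},0)$, $(\mathfrak{R},\mathfrak{I},0)$ and their cyclic shifts, and simply assert that the corresponding Hermitian matrices are $\K$-linearly independent. Your block-by-block justification (diagonal first, then the three off-diagonal slots separately) is more explicit than what the paper writes, but the content is the same.

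One slip to clean up: near the end you describe the off-diagonal entry as arising from the map $b\mapsto \mathbf{1}^\sigma b + b^\sigma\mathbf{1}=b+b^\sigma$, and claim this is surjective onto $V$. It is not; it lands in $\K\cdot\mathbf{1}$. You are conflating the matrix construction with the parametrization construction (where symmetrized sums do appear). In the $V$-set defined by matrices, the $(1,2)$-entry of $(\mathbf{1}~b~0)^t(\mathbf{1}~b~0)^\sigma$ is simply $b^\sigma$, as you yourself computed a few lines earlier. Since $\sigma$ is a $\K$-linear automorphism fixing $\K\cdot\mathbf{1}$, the entries $\mathbf{1}^\sigma=\mathbf{1}$ and $\zeta^\sigma$ do span $V$, and your independence argument goes through unchanged once you drop the erroneous aside.
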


\begin{proof}
We consider the nine Hermitian matrices $(M~N~L)^t(M~N~L)^{\ad}$ with $(M,N,L)$ equal to
$$\begin{array}{lll}
(\mathfrak{R},0,0), \hspace{1cm}& (0,\mathfrak{R},0),\hspace{1cm} & (0,0,\mathfrak{R}),\\
(0,\mathfrak{R},\mathfrak{R}),&(\mathfrak{R},0,\mathfrak{R}),& (\mathfrak{R},\mathfrak{R},0),\\
(0,\mathfrak{R},\mathfrak{I}),&(\mathfrak{I},0,\mathfrak{R}),& (\mathfrak{R},\mathfrak{I},0),\end{array}$$ respectively. Then it is easily checked that these are $\K$-linearly independent.
\end{proof}


\section{Some properties of $\cG(V)$}

In this section we show some (transitivity and neighboring) properties of the geometry $\cG(V)$, for $V$ a commutative and associative $2$-dimensional algebra over some commutative field $\K$ such that each vector is the product of two 
other vectors. These properties will allow us later to prove that each of the constructions of $V$-sets $X$ from $V$ turn $X$ into a $\cC$-Veronesean set when endowed with the subspaces determined by the lines of $\cG(V)$ by considering the span of their points.

The results in this section are well known in case $V$ is a field or the dual numbers over a field. For $V$ the direct product of two copies of the same field, it might be harder to find the appropriate literature. In any case, the properties we mention are similar to the ones proved in \cite{SV} for the Hjelmslev-Moufang planes. In fact, we could refer to these proofs, if the latter did not exclude characteristics 2 and 3. Hence, for completeness sake, we include full proofs. 

We start with defining a neigboring relation and proving some properties. Then we will use these properties to state and prove a rather strong transitivity property. 

\subsection{Neighboring properties of $\cG(V)$}

Throughout this section, let $r$ and $s$ in $V$ be such that $V_0=\K r\cup\K s$ (remember that $V_0=V\setminus V^*$). For $V$ a field, $s=r=0$, for $V$ the dual numbers over $\K$, we may assume $r=s\neq 0$, and for $V$ the direct product of $\K$ with itself, we have $\K r\neq \K s$. Note that $r$ and $s$ play the same role; they can be interchanged in any argument, and we will not explicitly state this every time. Note also that $rs=0$, and, in case $r\neq 0$, the set of elements $t\in V$ such that $tr=0$ is precisely $\K s$.

\begin{defi} \label{defnei} \rm Two points $V^*(x,y,z)$ and $V^*(u,v,w)$ of $\cG(V)$ are called \emph{neighboring}, 
denoted $V^*(x,y,z)\sim V^*(u,v,w)$, if there exist elements $k,\ell\in V$ such that $k(x,y,z)=\ell(u,v,w)$. Similarly for lines. 
Also, a point $V^*(x,y,z)$ and a line $V^*[a,b,c]$ are \emph{neighboring}, denoted $V^*(x,y,z)\sim V^*[a,b,c]$, if $ax+by+cz\in V_0$.
\end{defi}

We note that, if two distinct points $V^*(x,y,z)$ and $V^*(u,v,w)$ of $\cG(V)$ are neighboring, and $k,\ell\in V$ are such that $k(x,y,z)=\ell(u,v,w)$, then both $k$ and $\ell$ are zero divisors and $k\in \K \ell$. Indeed, if $k$ is invertible, then clearly the points are equal; if $k$ and $\ell$ would be linearly independent over $\K$, then we know $k^2\in\K k$ and $k\ell=0$, hence $k^2(x,y,z)=(0,0,0)$ without $k^2$ being $0$, a contradiction. 

Our first concern is to find equivalent geometric conditions for the neighboring relation. To that aim, we consider two lines $L_1=:V^*[a_1,b_1,c_1]$ and $L_2=:V^*[a_2,b_2,c_2]$ and we set 
$$A=: \left|\begin{array}{cc} b_1 & c_1\\ b_2 & c_2\end{array}\right|, \vspace{1cm} B=:\left|\begin{array}{cc} c_1 & a_1\\ c_2 & a_2\end{array}\right|, \vspace{1cm} C=:\left|\begin{array}{cc} a_1 & b_1\\ a_2 & b_2\end{array}\right|.$$
\begin{lemma}\label{N1}
With the above notation, the lines $L_1$ and $L_2$ are not neighboring if and only if $V^*(A,B,C)$ is a point of $\cG(V)$.
\end{lemma}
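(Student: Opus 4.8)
The plan is to prove the contrapositive equivalence: the lines $L_1$ and $L_2$ \emph{are} neighbouring if and only if $V^*(A,B,C)$ is \emph{not} a point of $\cG(V)$, i.e.\ some nonzero $v\in V$ satisfies $v(A,B,C)=(0,0,0)$. Write $\rho_i=(a_i,b_i,c_i)$ for $i=1,2$. Two structural inputs will be used repeatedly: first, each $\rho_i$ is \emph{primitive}, i.e.\ $w\rho_i=(0,0,0)$ forces $w=0$, since $V^*[\rho_i]$ is a line (and, as a sanity check, the Cramer-type identities $a_iA+b_iB+c_iC=0$ say that $V^*(A,B,C)$, when it is a point, lies on both lines); second, the remark following Definition~\ref{defnei} and its evident analogue for lines: if $k\rho_1=\ell\rho_2$ with $(k,\ell)\neq(0,0)$ then $k,\ell\neq 0$, and if moreover $L_1\neq L_2$ then $k,\ell$ are zero divisors. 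Finally, by Theorem~\ref{2algebras} there are only three possibilities for $V$, and the argument is run in parallel through them: $V$ a field (no nonzero zero divisors); $V$ the dual numbers, which is local with radical $\mathfrak{m}=\K r=\K s$ and $r^2=0$; and $V=\K\times\K$ with $r=(1,0)$, $s=(0,1)$, $rs=0$.

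For ``neighbouring $\Rightarrow$ not a point'', begin from $k\rho_1=\ell\rho_2$ with $k,\ell\neq 0$. If one of $k,\ell$ is a unit, then $\rho_1$ and $\rho_2$ are $V$-multiples of one another; substituting into the $2\times 2$ determinants gives $A=B=C=0$, and we may take $v=\mathbf{1}$. Otherwise $k,\ell$ are zero divisors, so $V$ is not a field. In the dual numbers case, write $k=\kappa r$, $\ell=\lambda r$ with $\kappa,\lambda\in\K^\times$ and reduce $k\rho_1=\ell\rho_2$ modulo $\mathfrak{m}$ to see that $\rho_1$ and $\rho_2$ become $\K$-proportional; hence all the minors $A,B,C$ lie in $\mathfrak{m}=\K r$ and are annihilated by $r$. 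In the $\K\times\K$ case, primitivity of $\rho_1$ and $\rho_2$ forces the unique nonzero coordinate of $k$ and of $\ell$ to lie in the \emph{same} slot, say the first, whence the first coordinates of $\rho_1,\rho_2$ are $\K$-proportional, the minors $A,B,C$ lie in $\{0\}\times\K=\K s$, and are annihilated by $r$. In every case $V^*(A,B,C)$ is not a point.

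For ``not a point $\Rightarrow$ neighbouring'', fix $v\neq 0$ with $v(A,B,C)=(0,0,0)$. If $v$ is a unit then $A=B=C=0$, so every $2\times 2$ minor of the matrix with rows $\rho_1,\rho_2$ vanishes; using primitivity of each $\rho_i$ this upgrades to a genuine proportionality of the rows — over $\K$ in the field case; over the residue field for the dual numbers, where a unit lift converts it into $\rho_2=w\rho_1$ with $w\neq 0$; and in each of the two factors separately for $\K\times\K$, the two scalars combining into a unit of $V$ — which directly yields $k,\ell\neq 0$ with $k\rho_1=\ell\rho_2$ (in fact $L_1=L_2$). If instead $v$ is a zero divisor, rescale so that $v\in\{r,s\}$, say $v=r$; then $A,B,C\in\mathrm{Ann}(r)=\K s$, and reducing modulo $\mathfrak{m}$ (dual numbers) or reading off the first coordinate ($\K\times\K$) again gives $\K$-proportionality of $\rho_1$ and $\rho_2$ over the relevant residue field, from which explicit $k,\ell$ are written down (for instance $k=\bar\mu r$, $\ell=r$ when $\bar\rho_2=\bar\mu\bar\rho_1$; respectively $k=(\mu^{(1)},0)$, $\ell=(1,0)$ when $\rho_2^{(1)}=\mu^{(1)}\rho_1^{(1)}$).

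I expect the main obstacle to be organisational rather than conceptual. One must invoke primitivity of the rows $\rho_1,\rho_2$ at essentially every step in order to exclude degenerate configurations (an entirely zero coordinate-row, or a zero divisor whose product with all three coordinates already vanishes), and for $V=\K\times\K$, which is not local, one cannot pass to a single quotient ring: the linear-algebra argument must be carried out coordinate by coordinate and only then reassembled. This coordinate bookkeeping, together with keeping straight the unit-versus-zero-divisor dichotomy for $k,\ell$ and for $v$, is the part most prone to slips.
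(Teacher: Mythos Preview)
Your approach is correct and will go through, but it differs from the paper's in one structural respect. You invoke Theorem~\ref{2algebras} to split into the three concrete algebras and then run the linear algebra separately (mod~$\mathfrak{m}$ for the dual numbers, coordinatewise for $\K\times\K$). The paper instead keeps the uniform $r,s$ notation for $V_0=\K r\cup\K s$ throughout and never names which algebra it is in: for the forward direction it normalises to $r\rho_1=r\rho_2$ and multiplies a row of the minor by $r$; for the converse it assumes $r(A,B,C)=0$, deduces $A,B,C\in\K s$, and then argues entry by entry that $a_1\in\K s\Leftrightarrow a_2\in\K s$, obtaining scalars $k_a,k_b$ with $ra_1=k_ara_2$, $rb_1=k_brb_2$ and using $rC=0$ to force $k_a=k_b$. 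Your case split is more transparent in each individual case but costs you the bookkeeping you anticipate; the paper's argument is shorter and treats the three algebras simultaneously.

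One spot in your sketch is under-justified: in the dual-numbers case with $A=B=C=0$, passing to the residue field gives only $\bar\rho_2=\bar\mu\bar\rho_1$, which does not by itself lift to $\rho_2=w\rho_1$ in $V$. The clean fix bypasses the residue field: by primitivity some coordinate of $\rho_1$, say $a_1$, is a unit, and then $C=0$ and $B=0$ solve $b_2,c_2$ in terms of $a_1^{-1}a_2$, giving $\rho_2=(a_1^{-1}a_2)\rho_1$ directly; primitivity of $\rho_2$ then forces $a_1^{-1}a_2\in V^*$. This is a minor patch and does not affect the overall strategy.
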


\begin{proof}
First suppose $L_1$ and $L_2$ are neighboring. Then, without loss, we may assume $r(a_1,b_1,c_1)=r(a_2,b_2,c_2)$ (after multiplying one of the triples with a unit of $V$). Then $$rA=r\left|\begin{array}{cc} b_1 & c_1\\ b_2 & c_2\end{array}\right|=\left|\begin{array}{cc} rb_1 & rc_1\\ b_2 & c_2\end{array}\right|=\left|\begin{array}{cc} rb_2 & rc_2\\ b_2 & c_2\end{array}\right|=0.$$

Similarly $rB=rC=0$, hence $V^*(A,B,C)$ is not a point of $\cG(V)$.

Now suppose $r(A,B,C)=(0,0,0)$. Then $A,B,C$ are scalar multiples of $s$. This immediately implies that, if $a_1$ is a scalar multiple of $s$, and $a_2$ is not, then both $b_1$ and $c_1$ are (since $B$ and $C$ are), a contradiction to $L_1$ being a line. Hence, $a_1$ is a scalar multiple of $s$ if and only if $a_2$ is. If two of $a_1,b_1,c_1$ are scalar multiples of $s$, then this implies that $r(a_1,b_1,c_1)$ is a scalar multiple of $r(a_2,b_2,c_2)$, and so $L_1$ and $L_2$ are neighboring. Suppose now neither $a_1$ nor $b_1$ is a multiple of $s$. Then there exist nonzero scalars $k_a,k_b\in\K$ such that $ra_1=k_ara_2$ and $rb_1=k_brb_2$. But then one easily calculates $0=rC=r(k_a-k_b)a_2b_2$. Since none of $a_2,b_2$ is a multiple of $s$, we have $k_a=k_b$. Hence $r(a_1,b_1)=k_ar(a_2,b_2)$. If $c_1$ is not a multiple of $s$, then similarly, we obtain $r(a_1,b_1,c_1)=k_ar(a_2,b_2,c_2)$, and so $L_1$ and $L_2$ are neighboring. If $c_1$, and hence also $c_2$, is a multiple of $s$, then $rc_1=k_arc_2=0$, and hence $L_1$ again neighbors $L_2$.  
\end{proof}

\begin{lemma}\label{N2}
Suppose that $L_1$ and $L_2$ are not neighboring, and let $A',B',C'\in V$ be such that $$\left\{\begin{array}{rcl}A'a_1+B'b_1+C'c_1&=&0,\\
A'a_2+B'b_2+C'c_2&=&0.\end{array}\right.$$
Then $(A',B',C')$ is a multiple in $V$ of $(A,B,C)$. Conversely, every multiple $(A',B',C')$ of $(A,B,C)$ satisfies the above equations. 
\end{lemma}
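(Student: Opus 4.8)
The plan is to split the statement into the (routine) converse and the (substantive) direct implication, and to reduce everything to one determinant identity together with the characterisation of ``being a point'' supplied by Lemma~\ref{N1}.

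For the converse, suppose $(A',B',C')=\mu\cdot(A,B,C)$ with $\mu\in V$. Then $A'a_i+B'b_i+C'c_i=\mu\,(Aa_i+Bb_i+Cc_i)$ for $i=1,2$, so it suffices to check that $Aa_i+Bb_i+Cc_i=0$. But, with the notation of the excerpt, $A,B,C$ are (up to sign) the $2\times2$ minors of the rows $(a_1,b_1,c_1)$ and $(a_2,b_2,c_2)$, so $Aa_i+Bb_i+Cc_i$ is precisely the cofactor expansion of the $3\times3$ determinant whose first row is $(a_i,b_i,c_i)$ and whose remaining rows are $(a_1,b_1,c_1),(a_2,b_2,c_2)$. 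Since $V$ is commutative and associative, the usual Leibniz argument (pairing permutations that differ by the transposition of the two equal rows) shows such a determinant vanishes.

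For the direct implication, assume $(A',B',C')$ solves both equations. First I would eliminate variables pairwise: taking suitable $V$-linear combinations of the two equations (e.g.\ multiply the first by $c_2$, the second by $c_1$, subtract, and likewise for the other two pairs of columns) yields, after rearranging with commutativity, the three ``vanishing cross product'' relations
$$AB'=BA',\qquad BC'=CB',\qquad CA'=AC'.$$
Since $L_1,L_2$ are not neighbouring, Lemma~\ref{N1} gives that $V^*(A,B,C)$ is a point of $\cG(V)$; equivalently, the annihilator in $V$ of $\{A,B,C\}$ is zero. I claim this forces the ideal $J:=AV+BV+CV$ to equal $V$, so that in particular $\mathbf 1=\alpha A+\beta B+\gamma C$ for some $\alpha,\beta,\gamma\in V$. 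Granting the claim, put $\lambda:=\alpha A'+\beta B'+\gamma C'$. Using the three relations above to shift primes (so $B'A=A'B$, $C'A=A'C$, and similarly) one computes $\lambda A=A'(\alpha A+\beta B+\gamma C)=A'$, and in the same way $\lambda B=B'$ and $\lambda C=C'$; hence $(A',B',C')=\lambda\cdot(A,B,C)$.

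The only place where the hypotheses on $V$ really enter is the claim that a triple with trivial annihilator generates $V$ as an ideal — over a field this is immediate, but in general it needs the classification of Theorem~\ref{2algebras}. I would argue by $\K$-dimension: $J$ is a $\K$-subspace and, since it is closed under multiplication by $V$ (here commutativity/associativity and the existence of $\mathbf 1$ are used), an ideal, so $\dim_\K J\in\{0,1,2\}$. If $\dim_\K J=2$ then $J=V$. If $\dim_\K J=0$ then $A=B=C=0$ and every $v\in V$ annihilates the triple, a contradiction. If $\dim_\K J=1$, write $J=\K e$ with $e\neq 0$; then $e$ cannot be a unit (else $J=V$), so $e$ is a zero divisor, and in each of the three algebras of Theorem~\ref{2algebras} a zero divisor has a nonzero annihilator. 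Since $\mathrm{Ann}_V(\{A,B,C\})=\mathrm{Ann}_V(J)=\mathrm{Ann}_V(e)$, this again contradicts the fact that $V^*(A,B,C)$ is a point. Hence $J=V$, and the proof is complete. Thus the ``hard part'' is just this short ring-theoretic step; the rest is formal and valid over any commutative ring.
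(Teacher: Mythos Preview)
Your proof is correct, and it takes a genuinely different route from the paper's.

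The paper argues by cases: if one of $A,B,C$ is invertible (say $A$), it forms the pair $(AB'-A'B,\,AC'-A'C)$, checks that it satisfies the $2\times 2$ system with coefficient determinant $A$, and concludes $AB'=A'B$, $AC'=A'C$, whence $(A',B',C')=A^{-1}A'(A,B,C)$. If none of $A,B,C$ is invertible, it observes that two of them, say $A$ and $B$, are not scalar multiples (else the triple would have a nonzero annihilator), so $A+B$ is invertible, and a linear change of the line coordinates reduces to the first case.

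Your approach instead derives the three cross relations $AB'=BA'$, $BC'=CB'$, $CA'=AC'$ uniformly by elimination, and then uses a single ring-theoretic fact---that a triple with trivial annihilator generates the unit ideal---to manufacture $\lambda$ directly from a relation $\mathbf{1}=\alpha A+\beta B+\gamma C$. This is slicker and avoids the coordinate change. One small remark: you do not actually need the classification of Theorem~\ref{2algebras} for your ideal claim. Since $V$ is a finite-dimensional commutative $\K$-algebra, multiplication by any $e\in V$ is a $\K$-linear endomorphism of $V$; if it is injective it is surjective and $e$ is a unit, otherwise $e$ has nonzero annihilator. So ``non-unit $\Rightarrow$ zero divisor with nonzero annihilator'' holds automatically, and your $\dim_\K J=1$ case is ruled out without ever splitting into the three algebra types. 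With that observation your argument is both shorter and more general than the paper's case analysis; the paper's version, on the other hand, is a shade more explicit about what the scalar $\lambda$ actually is (namely $A^{-1}A'$, or its analogue after the coordinate change).
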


\begin{proof}
By Lemma~\ref{N1}, $V^*(A,B,C)$ is a point and an elementary computation shows that it lies on both $L_1$ and $L_2$. Now 
suppose first that one of $A,B,C$ is invertible, say $A\in V^*$. Then the pair $(AB'-A'B,AC'-A'C)$ satisfies

$$\left\{\begin{array}{rcl}(AB'-A'B)b_1+(AC'-A'C)c_1&=&0,\\
(AB'-A'B)b_2+(AC'-A'C)c_2&=&0.\end{array}\right.$$

Since $A=b_1c_2-b_2c_1$ is invertible, we see, by multiplying the first equation with $c_2$ and subtracting $c_1$ times the second 
equation, that $AB'-A'B=0$, and similarly $AC'-A'C=0$. Hence $(A',B',C')=A^{-1}A'(A,B,C)$. 

If none of $A,B,C$ is invertible, then two of these are not scalar multiples of one another, say $A$ and $B$. Then $A+B$ is 
invertible, and we apply the first part of the proof to the lines $V^*(a_1,b_1-a_1,c_1)$ and $V^*(a_2,b_2-a_2,c_2)$ (one indeed 
checks easily that these are lines, and that they are not neighboring). The lemma now follows. 
\end{proof}

\begin{lemma}\label{N3}
Suppose that $L_1$ and $L_2$ are not neighboring. Then $L_1$ and $L_2$ have a unique point in common, namely $V^*(A,B,C)$. Also, if a point $V^*(x,y,z)$ of $L_1$ neighbors $L_2$, then it neighbors the intersection point $V^*(A,B,C)$ of $L_1$ with $L_2$. 
\end{lemma}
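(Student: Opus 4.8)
The plan is to establish the three claims in sequence, using Lemmas~\ref{N1} and~\ref{N2} as the main tools. First, by Lemma~\ref{N1}, since $L_1$ and $L_2$ are not neighboring, the triple $V^*(A,B,C)$ is a genuine point of $\cG(V)$; an elementary computation (expanding the relevant $3\times 3$ determinant, which has two equal rows) shows $a_iA+b_iB+c_iC=0$ for $i=1,2$, so $V^*(A,B,C)$ lies on both $L_1$ and $L_2$. This gives existence of a common point. For uniqueness, suppose $V^*(x,y,z)$ is any point on both lines. Then $(x,y,z)$ satisfies the same homogeneous system as in Lemma~\ref{N2}, so by that lemma $(x,y,z)$ is a $V$-multiple of $(A,B,C)$, say $(x,y,z)=k(A,B,C)$ for some $k\in V$. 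Since $(x,y,z)$ represents a point of $\cG(V)$, the defining condition forces $k$ to be a non-zero-divisor: indeed if $k$ were a zero divisor, there would be a nonzero $c\in V$ with $ck=0$, whence $c(x,y,z)=(0,0,0)$ with $c\neq 0$, contradicting that $V^*(x,y,z)$ is a point. Hence $k\in V^*$ and $V^*(x,y,z)=V^*(A,B,C)$, proving the intersection is the single point $V^*(A,B,C)$.

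For the final assertion, suppose $V^*(x,y,z)$ is a point of $L_1$ that neighbors $L_2$; I must show it neighbors the intersection point $p=:V^*(A,B,C)$. Write $p=V^*(x_0,y_0,z_0)$ with $(x_0,y_0,z_0)=(A,B,C)$. The idea is to parametrize $L_1$: since $V^*(x,y,z)\in L_1$ and $p\in L_1$, and $L_1$ is (after choosing a representative) a free rank-$2$ submodule of $V^3$ spanned by two triples, one of which we may take to be $(x_0,y_0,z_0)$ (as $p$ represents a point, this is a ``unimodular'' generator). Complete to a basis $\{(x_0,y_0,z_0),(x_1,y_1,z_1)\}$ of this module. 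Then $(x,y,z)=\lambda(x_0,y_0,z_0)+\mu(x_1,y_1,z_1)$ for suitable $\lambda,\mu\in V$. Now $V^*(x,y,z)\sim L_2$ means $a_2x+b_2y+c_2z\in V_0$; since $a_2x_0+b_2y_0+c_2z_0=0$, this reduces to $\mu\cdot(a_2x_1+b_2y_1+c_2z_1)\in V_0$. The scalar $d:=a_2x_1+b_2y_1+c_2z_1$ is a unit: if it were a zero divisor, then the point $V^*(x_1,y_1,z_1)$ of $L_1$ would also neighbor $L_2$, and combining with $p\not\sim L_2$ (which holds because $p\in L_2$ forces... wait — actually $p\in L_2$, so $p\sim L_2$; I need instead that a second point of $L_1$ fails to neighbor $L_2$, which follows since $L_1\not\sim L_2$ means not every point of $L_1$ neighbors $L_2$, and more precisely one shows $d$ invertible directly from $L_1,L_2$ non-neighboring by the Lemma~\ref{N1} criterion applied to coordinates). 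Granting $d\in V^*$, we get $\mu=d^{-1}(\mu d)\in V_0$, i.e.\ $\mu$ is a zero divisor. Then pick $0\neq c\in V$ with $c\mu=0$; we get $c(x,y,z)=c\lambda(x_0,y_0,z_0)$, exhibiting $V^*(x,y,z)\sim p$ by Definition~\ref{defnei} (with $k=c$, $\ell=c\lambda$, noting $c\lambda\neq 0$ since otherwise $c(x,y,z)=0$ with $c\neq 0$ contradicts $V^*(x,y,z)$ being a point).

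The main obstacle I anticipate is the unimodularity/basis bookkeeping over the ring $V$: justifying that $L_1$ can be presented as a free $V$-submodule of $V^3$ with $(A,B,C)$ as part of a basis, and that the ``second coordinate functional'' $d=a_2x_1+b_2y_1+c_2z_1$ is a unit. Both points are genuinely ring-theoretic rather than field-theoretic, so the cleanest route is probably to avoid abstract module language and argue concretely with the explicit $2\times 2$ minors $A,B,C$ of $L_2$ and a chosen pair of generators of $L_1$, re-deriving the needed invertibility from the Lemma~\ref{N1} non-neighboring criterion exactly as was done in the proof of Lemma~\ref{N2} (where the trick of replacing $(a,b,c)$ by $(a,b-a,c)$ to make a coordinate invertible already appears). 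I expect the write-up to lean heavily on that same device and on the zero-divisor structure $V_0=\K r\cup\K s$ with $rs=0$ recorded at the start of the subsection.
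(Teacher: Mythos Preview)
Your treatment of the first assertion (existence and uniqueness of the intersection point) matches the paper's: both follow immediately from Lemma~\ref{N2}, with uniqueness coming from the observation that a $V$-multiple $k(A,B,C)$ representing a point forces $k\in V^*$.

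For the second assertion, your approach is workable in principle but considerably heavier than the paper's, and the obstacles you flag are real. Parametrizing $L_1$ by a free basis containing $(A,B,C)$ and proving the scalar $d$ is a unit both require nontrivial ring-theoretic bookkeeping; in fact the parametrization statement you want is essentially Lemma~\ref{N7}, which in the paper is proved \emph{after} Lemma~\ref{N3} (and via Lemma~\ref{N4}), so invoking it here risks circularity unless you redo that work from scratch.

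The paper sidesteps all of this with a one-line trick exploiting the zero-divisor structure $V_0=\K r\cup\K s$, $rs=0$. If $V^*(x,y,z)\in L_1$ neighbors $L_2$, then $a_2x+b_2y+c_2z\in V_0$; without loss say it lies in $\K r$. Multiplying by $s$ kills it, so $(sx,sy,sz)$ satisfies \emph{both} equations in the hypothesis of Lemma~\ref{N2} (the $L_1$-equation already held). Hence $(sx,sy,sz)$ is a $V$-multiple of $(A,B,C)$, which is exactly the relation $s(x,y,z)=k(A,B,C)$ witnessing $V^*(x,y,z)\sim V^*(A,B,C)$ via Definition~\ref{defnei}. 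No module bases, no invertibility of an auxiliary scalar $d$, no appeal to later lemmas. Your instinct to ``lean heavily on the zero-divisor structure'' was right; the efficient way to do so is to multiply by the complementary zero divisor rather than to build a parametrization.
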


\begin{proof}
The first assertion follows directly from Lemma~\ref{N2}. As for the second assertion, we may suppose that $a_2x+b_2y+c_2z\in\K r$, hence, by Lemma~\ref{N2} again, the triple $(sx,sy,sz)$ is a multiple of $(A,B,C)$ and hence, by definition, neighboring $V^*(A,B,C)$, which is, by the first assertion, the intersection point of $L_1$ and $L_2$.
\end{proof}

\begin{lemma}\label{N4}
No point $P$ neighbors all points of a line $L$.
\end{lemma}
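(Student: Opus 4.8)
The plan is to argue by contradiction: suppose a point $P$ neighbours every point of a line $L$, and exhibit a point on $L$ that is not a neighbour of $P$.

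First I would normalise the situation. Since $L$ is a line, its coefficient triple $[a,b,c]$ is unimodular over $V$: if it were not, then — by the description of $V$ in Theorem~\ref{2algebras} — all of $a,b,c$ would lie in one of the two proper ideals $\K r$, $\K s$, and then $s$ (respectively $r$) would be a nonzero element of $V$ with $s[a,b,c]=[0,0,0]$ (since $rs=0$), contradicting the definition of a line. Hence $[a,b,c]$ completes to an invertible $3\times 3$ matrix over $V$, and as $\GL_3(V)$ acts on $\cG(V)$ by collineations that preserve incidence and, being $V$-linear, preserve the neighbouring relation, we may assume $L=V^*[1,0,0]$; its points are then exactly the $V^*(0,y,z)$ whose coordinate pair $(y,z)$ has trivial common annihilator in $V$.

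The heart of the argument is to make $\sim$ explicit on points. Reading off Theorem~\ref{2algebras}, I would check that the reduction map(s) $V\to V/\K r\cong\K$ and $V\to V/\K s\cong\K$ — these coincide and give a single map when $V$ is the ring of dual numbers, are the two coordinate projections when $V=\K\times\K$, and reduce to the identity when $V$ is a field — induce well-defined surjections $\rho$ from the points and lines of $\cG(V)$ onto those of $\PG(2,\K)$ (a valid point cannot have all of its coordinates in a single $\ker\rho$, so its image is a genuine point), and that two points $P,Q$ of $\cG(V)$ are neighbours if and only if $\rho(P)=\rho(Q)$ for one of these reductions $\rho$. Verifying this equivalence — distinguishing the cases in which the witnessing scalars $k,\ell$ in Definition~\ref{defnei} are units or zero divisors — is the one genuinely computational point, and I expect it to be the main obstacle, though each subcase is short.

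Granting this, the contradiction is immediate. If $V$ is a field or the ring of dual numbers there is a single reduction $\rho$; the hypothesis forces $\rho(Q)=\rho(P)$ for every point $Q$ of $L$, whereas $\rho$ maps $L$ onto a line of $\PG(2,\K)$, which has more than one point — absurd. If $V=\K\times\K$, write the two reductions as $\rho_1,\rho_2$; the hypothesis says that every point $Q$ of $L$ satisfies $\rho_1(Q)=\rho_1(P)$ or $\rho_2(Q)=\rho_2(P)$. But a point of $L$ is determined by its pair of images $(\rho_1(Q),\rho_2(Q))$, which may be prescribed arbitrarily on the two lines $\rho_1(L),\rho_2(L)$ of $\PG(2,\K)$; choosing $\rho_1(Q)\neq\rho_1(P)$ and $\rho_2(Q)\neq\rho_2(P)$ (possible as each of these lines has at least three points) yields a point of $L$ that is not a neighbour of $P$, a contradiction. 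Equivalently, in the $\K\times\K$ case one may select three pairwise non-neighbouring points of $L$ and apply the pigeonhole principle to the coordinate in which each of them agrees with $P$.
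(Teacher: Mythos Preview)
Your proof is correct, but it takes a genuinely different route from the paper's.

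The paper argues by a short direct coordinate computation: if some coordinate of $P$ is invertible, pick a point of $L$ with that coordinate equal to $0$; otherwise (forcing $V\cong\K\times\K$) one writes $P=V^*(k_xr,k_ys,k_zs)$, exhibits three specific points $Q,Q',Q''$ on $L$, and checks by hand that not all three can neighbour $P$. No group action, no reduction maps.

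Your approach is more structural: you identify the neighbouring relation with equality under one of the residue maps $\cG(V)\to\PG(2,\K)$, and then the lemma becomes the trivial fact that a projective line over $\K$ has more than one point (or, in the split case, a short pigeonhole). This is conceptually cleaner and makes the geometry transparent; the paper's argument is more elementary in that it needs no preliminary machinery. Two remarks worth making. First, you invoke the $\GL_3(V)$-action to normalise $L$; in the paper that action (and the fact that it preserves $\sim$) is only established \emph{after} Lemma~\ref{N4}, and the paper's proof of preservation actually quotes Lemmas~\ref{N5} and~\ref{N6}. Your one-line justification ``being $V$-linear, preserve the neighbouring relation'' is correct and avoids the circularity, but it is worth flagging that you are \emph{not} appealing to Proposition~\ref{welldefined}. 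In fact the normalisation is inessential: the surjectivity of $\rho|_L$ onto the line $\rho(L)$ follows for any $L$ just from lifting coordinates, so you could drop that step. Second, in the field case your $\rho$ lands in $\PG(2,V)$ rather than $\PG(2,\K)$; this is harmless since then $\sim$ is equality, but the sentence ``onto a line of $\PG(2,\K)$'' should be read accordingly.
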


\begin{proof}
First suppose that one of the coordinates of $P$, say the first one, is invertible. It is easy to see that $L$ contains a point $P'$ whose first coordinate is $0$. Clearly, $P$ and $P'$ are not neighboring. 

Now suppose that no coordinate of $P$ is invertible. Then $\K r\neq\K s$ and without loss of generality, we may assume that $P$ is the point $V^*(k_x r, k_y s, k_z s)$, with $k_x,k_y,k_z\in\K$. As before, there is a point $Q:=V^*(0,y,z)$ of $L$, and we may assume $y\neq 0$. Then there is a different point $Q'=V^*(x',0,z')$ of $L$. If $Q\sim P$, then $(k_ys^2,k_zs^2)$ is a scalar multiple of $(ys,zs)$. If $Q'\sim P$, then $z'r=0$. Let $Q''$ be the point $V^*(x',y,z+z')$, which is on $L$, and suppose that $Q\sim P$, $Q'\sim P$ and $Q''\sim P$. If $r(k_x r,k_y s, k_zs)\in\K r(x',y,z+z')$, then $0=ry=r(z+z')=rz$, contradicting the fact that $Q$ is a point; if $s(k_x r,k_y s, k_zs)\in\K s(x',y,z+z')$, then $sx'=0$ and $(k_ys^2,k_zs^2)$ is a scalar multiple of $(ys,zs+z's)$, hence $z's=0$ and this, together with $x's=0$, contradicts the fact that $Q'$ is a point. 

The lemma is proved. 
\end{proof}

We can now show a geometric interpretation of the neighboring relation between a point and a line. 

\begin{lemma}\label{N5}
A point $P:=V^*(x,y,z)$ neighbors a line $L:=V^*(a,b,c)$ if and only if $P$ neighbors some point of $L$ if and only if $L$ neighbors some line through $P$. 
\end{lemma}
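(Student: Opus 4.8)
The statement is a three-way equivalence, so the plan is to prove a cycle of implications. I would organize it as: (i) $P\sim L$ implies $P\sim Q$ for some point $Q$ on $L$; (ii) $P\sim Q$ for some $Q\in L$ implies $L\sim M$ for some line $M$ through $P$; and (iii) $L\sim M$ for some line $M$ through $P$ implies $P\sim L$. Throughout I would work with the explicit coordinate description $P=V^*(x,y,z)$, $L=V^*[a,b,c]$, and use the splitting $V_0=\K r\cup\K s$ with $rs=0$ that was fixed at the start of the subsection, together with the earlier neighbor lemmas (especially Lemmas~\ref{N1}--\ref{N4}).

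For (i): assume $ax+by+cz\in V_0$, say (after swapping the roles of $r$ and $s$ if needed) that $ax+by+cz\in\K s$, so $r(ax+by+cz)=0$. The idea is to produce a point $Q=V^*(u,v,w)$ on $L$ with $P\sim Q$. Pick any point $Q_0=V^*(u_0,v_0,w_0)$ on $L$ not neighboring $P$ (possible: the set of points of $L$ neighboring a \emph{fixed} point is a proper subset, which is essentially the content behind Lemma~\ref{N4}, applied with roles reversed — though here I actually want the complementary statement, that not every point of $L$ is non-neighboring to $P$ unless... let me instead argue directly). Concretely, the relation $r\cdot(x,y,z)$ being a $\K$-multiple of $r\cdot(u,v,w)$ is what "$P\sim Q$" amounts to for the "$r$-part"; since $r(ax+by+cz)=0$, the vector $(rx,ry,rz)$ lies on the line $L$ reduced modulo $\ann(r)=\K s$, i.e. on a line of the projective plane $\PG(2,\K)$ obtained by the reduction $v\mapsto rv$. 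Any point of that projective line lifts to a point $Q$ of $L$ with $rQ \in \K rP$, hence $P\sim Q$. This reduction-to-$\PG(2,\K)$ viewpoint is the cleanest way to handle all three cases of $V$ uniformly.

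For (ii): given $Q\in L$ with $P\sim Q$, write $kP=\ell Q$ with $k,\ell$ zero divisors, $k\in\K\ell$ (this normalization was recorded right after Definition~\ref{defnei}). I want a line $M$ through $P$ with $M\sim L$. Take $M$ to be \emph{any} line through $P$ and through a point $Q'$ of $L$ with $Q'$ not neighboring $P$ — unless no such line exists, in which case every line through $P$ passes through a point neighboring $P$ on $L$, which forces a strong degeneracy I'd rule out using Lemma~\ref{N4} again. More efficiently: $M\sim L$ means the $3\times 3$ "determinant" point $V^*(A,B,C)$ built from $M,L$ is \emph{not} a point of $\cG(V)$ (Lemma~\ref{N1}); equivalently $M$ and $L$ meet in a "neighborhood" rather than a genuine single point. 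Since $P$ lies on $M$ and $P\sim Q\in L$, choosing $M=PQ'$ for a suitable second point $Q'$ makes $M\cap L$ degenerate along the neighbor class of $Q$. I'd verify via Lemma~\ref{N3}: if $M$ and $L$ were \emph{non}-neighboring, their unique intersection point would neighbor $P$ (since $P\in M$ and $P\sim Q\in L$ means $P$ neighbors $L$, hence by the second part of Lemma~\ref{N3} neighbors $M\cap L$); playing this off against the freedom in choosing $M$ through $P$ gives the contradiction, so some $M$ through $P$ must neighbor $L$.

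For (iii): if $M\ni P$ and $M\sim L$, I must show $ax+by+cz\in V_0$. By Lemma~\ref{N1}, $M\sim L$ gives $r(A,B,C)=(0,0,0)$ for the determinant triple $(A,B,C)$ of $(M,L)$ (after possibly swapping $r,s$). Write $M=V^*[a',b',c']$ with $a'x+b'y+c'z=0$. Then the two rows $(a',b',c')$ and $(a,b,c)$ have all their $2\times2$ minors killed by $r$; Cramer-type manipulation (exactly as in the proof of Lemma~\ref{N2}) shows $r(a,b,c)\in\K r(a',b',c')$ provided the minors aren't all zero, and the all-zero case means $M\sim L$ is witnessed even more directly. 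From $r(a,b,c)=\lambda r(a',b',c')$ we get $r(ax+by+cz)=\lambda r(a'x+b'y+c'z)=0$, so $ax+by+cz\in\ann(r)=\K s\subseteq V_0$, as desired.

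\textbf{Main obstacle.} The genuine difficulty is the case analysis in steps (i) and (iii) when $V\cong\K\times\K$, where $\K r\neq\K s$ and there are "mixed" zero divisors: one must be careful that killing by $r$ only sees half the structure, and a point/line can neighbor via the $r$-part or the $s$-part independently. Handling the two reductions $v\mapsto rv$ and $v\mapsto sv$ symmetrically — and making sure the word "neighboring" is consistently "$r$-neighboring \emph{or} $s$-neighboring" — is where the bookkeeping can go wrong. I'd isolate this by first proving everything for a single reduction map $v\mapsto rv$ (landing in $\PG(2,\K)$), then remarking that the general neighbor relation is the "or" of the two, so the equivalence is inherited. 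The dual-numbers case ($r=s$) and the field case ($r=s=0$) are then degenerate special cases of this framework.
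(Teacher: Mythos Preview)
Your cycle (i)$\Rightarrow$(ii)$\Rightarrow$(iii) contains a genuine gap in step (ii), and you miss the key idea that makes the paper's proof almost trivial.

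In step (ii) you assume $P\sim Q$ for some $Q\in L$ and want a line $M\ni P$ with $M\sim L$. Your argument reads: take any $M\ni P$; if $M\not\sim L$ then by Lemma~\ref{N3} the point $M\cap L$ neighbors $P$; ``playing this off against the freedom in choosing $M$ through $P$ gives the contradiction.'' But no contradiction is produced. Knowing that every non-neighboring $M$ meets $L$ in a neighbor of $P$ does not by itself force some $M$ to neighbor $L$. You also silently invoke (B)$\Rightarrow$(A) (``$P\sim Q\in L$ means $P$ neighbors $L$'') inside this step, which lies outside your declared cycle. So step (ii) is both incomplete and not self-contained.

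The paper's proof avoids all of this by exploiting the point--line duality of $\cG(V)$ rather than the $r$/$s$ symmetry you focus on. It proves only the first equivalence, and only in two strokes: (B)$\Rightarrow$(A) is a one-line computation (if $r(x,y,z)=kr(x',y',z')$ with $(x',y',z')\in L$, then $r(ax+by+cz)=kr(ax'+by'+cz')=0$); for (A)$\Rightarrow$(B), the \emph{dual} of Lemma~\ref{N4} furnishes a line $M$ through $P$ with $M\not\sim L$, and then the second assertion of Lemma~\ref{N3} gives $P\sim M\cap L\in L$. The second equivalence (A)$\Leftrightarrow$(C) is then literally the dual statement. You briefly considered Lemma~\ref{N4} in step (i), but in the wrong direction (you looked for a \emph{point} of $L$ not neighboring $P$, when what is needed is a \emph{line} through $P$ not neighboring $L$), and then abandoned it for the reduction-to-$\PG(2,\K)$ approach. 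That reduction can be made to work, but your ``any point of that projective line lifts to a point $Q$ of $L$'' needs justification (and in the dual-numbers case $r^2=0$ the na\"{\i}ve reduction of incidence is vacuous), whereas the dual-N4-plus-N3 argument is two sentences and uniform in $V$.
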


\begin{proof}
Suppose first that $P$ neighbors a point $p':=V^*(x',y',z')$ of $L$. We may assume that $r(x,y,z)=kr(x',y',z')$, for some $k\in\K$. Then $r(ax+by+cz)=a(rx)+b(ry)+c(rz)=kr(ax'+by'+cz')=0$ and so $ax+by+cz\in\K s\subseteq V\setminus V^*$. Hence $P$ neighbors $L$.

Now assume that $P$ neighbors $L$. The dual of Lemma~\ref{N4} implies that some line $M$ through $P$ is not neighboring $L$. Then $P$ neighbors the intersection point of $L$ and $M$ by Lemma~\ref{N3}.

The second ``if and only if'' follows by duality.
\end{proof}

We now show a geometric interpretation of the neighboring relation between points. 

\begin{lemma}\label{N6}
Two lines are neighboring if and only if they intersect in at least two common points. 
\end{lemma}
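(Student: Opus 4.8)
The plan is to prove both implications using the machinery already set up in Lemmas~\ref{N1}--\ref{N3}. For one direction, suppose $L_1$ and $L_2$ are neighboring; I want to produce two distinct common points. First I would invoke the dual of Lemma~\ref{N4}: no line neighbors all points of a line (in the dual, no point neighbors all lines through a point), so there is a line $M$ neighboring neither $L_1$ nor $L_2$. By Lemma~\ref{N3}, $M$ meets $L_1$ in a unique point $P_1$ and $L_2$ in a unique point $P_2$, and these are not neighboring (since $M$ is not neighboring either line, and... actually I must be careful here). The cleaner route: pick a point $P$ on $L_1$ not neighboring $L_2$ — such $P$ exists because $L_2$ does not neighbor all points of $L_1$ by (the dual of) Lemma~\ref{N4} together with Lemma~\ref{N5}. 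Then the unique line $PL_2$-intersection... hmm, let me instead argue directly with coordinates. Since $L_1 \sim L_2$, after rescaling we may assume $r(a_1,b_1,c_1) = r(a_2,b_2,c_2)$. Consider the pencil of points on $L_1$: I claim at least two of them neighbor $L_2$. Indeed, multiplying the defining relation of $L_1$ by $r$ shows that for any point $V^*(x,y,z)$ on $L_1$, we have $r(a_2 x + b_2 y + c_2 z) = r(a_1 x + b_1 y + c_1 z) = 0$, so $a_2 x + b_2 y + c_2 z \in \K s \subseteq V_0$, i.e. $V^*(x,y,z) \sim L_2$. Thus \emph{every} point of $L_1$ neighbors $L_2$, and since a line has at least two points, we are done — provided I can show that a point neighboring $L_2$ lies on (or can be replaced by) an actual point of $L_2$. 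This is exactly where Lemma~\ref{N3} (second assertion) and Lemma~\ref{N5} come in: for each point $P$ on $L_1$, since $P \sim L_2$, Lemma~\ref{N5} gives a point $Q_P$ on $L_2$ with $P \sim Q_P$; but I actually need $P$ itself to be \emph{on} $L_2$, not merely neighboring a point of it. So the honest statement I can extract is: every point of $L_1$ neighbors $L_2$, hence neighbors some point of $L_2$.

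So let me reconsider: the correct reading of "intersect in at least two common points" may need care, but the intended proof is surely: $L_1 \sim L_2$ iff they share $\geq 2$ points. For the forward direction I would fix the representatives so that $r\cdot L_1 = r\cdot L_2$ as above; then the argument should actually produce common points, not just neighboring ones. The fix: take any point $P = V^*(x,y,z)$ on $L_1$. Then $(sx, sy, sz)$ — wait, we need $P$ on $L_2$. Since $P\sim L_2$, write $a_2x+b_2y+c_2z = v \in V_0$. If $v = 0$ we already have $P \in L_1\cap L_2$. Otherwise $v \in \K s \setminus\{0\}$ (or $\K r$), and then the point $V^*(sx, sy, sz)$ — no. I think the clean argument is: since $r L_1 = r L_2$, the lines $L_1$ and $L_2$, reduced mod the ideal $\K s$ (equivalently, multiplied by $r$), define the \emph{same} line over $V/\K s \cong \K$, which is an ordinary projective plane; that ordinary line has $\geq 2$ points, and each lifts to a point of $\cG(V)$ lying on both $L_1$ and $L_2$ (a point $(\bar x, \bar y, \bar z)$ of the reduced line lifts to $(x,y,z)$ with $a_1 x + b_1 y + c_1 z \in \K s$; adjusting by an element of $\K s$ in one coordinate kills it, using that some coefficient of $L_1$ is a unit — which holds because $L_1$ is a line). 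I would flesh this out: pick a coordinate, say $a_1$, which is invertible (at least one of $a_1,b_1,c_1$ must be a unit, else $r L_1 = 0$ after suitable scaling, contradicting $L_1$ a line — need to justify this too, possibly after replacing $L_1$ by an equivalent triple as in the trick at the end of Lemma~\ref{N2}), and solve for the first coordinate of a common point given the other two.

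For the converse, suppose $L_1$ and $L_2$ share two distinct points $P$ and $P'$. If they were not neighboring, Lemma~\ref{N3} says they have a \emph{unique} common point, contradiction. Hence $L_1 \sim L_2$. This direction is immediate.

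The main obstacle I expect is the forward direction: turning "$L_1 \sim L_2$" into genuine common points rather than merely mutually-neighboring points. The resolution is to work modulo the zero-divisor ideal $\K s$ (reduction to the residue field $\K$), observe that $L_1$ and $L_2$ become the same ordinary line, and lift its points back — the lifting is possible and unambiguous enough to land on both $L_1$ and $L_2$ simultaneously because each $L_i$ has a unit coefficient. Making the lifting step precise (especially handling the case where no coefficient of $L_i$ is individually a unit, via the substitution trick $(a,b,c)\mapsto(a, b-a, c)$ from the proof of Lemma~\ref{N2}) is the only real work; everything else is bookkeeping with $r$, $s$, and $rs = 0$.
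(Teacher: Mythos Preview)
Your converse direction is fine and matches the paper: if $L_1$ and $L_2$ are not neighboring, Lemma~\ref{N3} gives a unique common point, so two common points force $L_1\sim L_2$.

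The forward direction has a genuine gap. After normalizing so that $r(a_1,b_1,c_1)=r(a_2,b_2,c_2)$, you correctly observe that every point of $L_1$ neighbors $L_2$, and you correctly note that this is not enough. Your proposed fix---reduce mod $\K s$, observe that $\bar L_1=\bar L_2$ is an ordinary line in $\PG(2,\K)$ with many points, and lift each one back to a common point of $L_1$ and $L_2$---fails at the lifting step. Adjusting one coordinate by an element of $\K s$ (using a unit coefficient of $L_1$) does land you on $L_1$, but it does \emph{not} in general land you on $L_2$: the $L_2$-value remains a nonzero element of $\K s$. Concretely, in the dual numbers case write $(a_1,b_1,c_1)-(a_2,b_2,c_2)=s(a_3,b_3,c_3)$; then a point on $L_1$ lies on $L_2$ if and only if its reduction also lies on the reduced line $\bar L_3$. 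If $\bar L_3\neq\bar L_1$ (which is the generic situation), only \emph{one} reduced point---namely $\bar L_1\cap\bar L_3$---lifts to common points, not ``each'' of them.

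What actually produces two distinct common points is that this single reduced point lifts to a whole one-parameter family. The paper gets this by an explicit construction: it introduces the auxiliary line $L_3=V^*[a_3,b_3,c_3]$, chooses the free $r\K$-parts so that $L_3$ is a genuine line not neighboring $L_1$, takes $P=L_1\cap L_3$ (which is automatically on $L_2$ as well), and then perturbs $P$ to $P_1=V^*(x+rx_1,y+ry_1,z+rz_1)$ for any point $(x_1,y_1,z_1)$ of $L_1$; one checks directly that $P_1$ lies on both $L_1$ and $L_2$, and that $(x_1,y_1,z_1)$ can be chosen so that $P_1\neq P$. Your reduction idea can be salvaged along these lines, but only after you abandon the claim that distinct reduced points give distinct common points and instead exploit the freedom in lifting a single reduced point.
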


\begin{proof}
In view of Lemma~\ref{N3}, it suffices to show that two neighboring lines intersect in at least two common points. Suppose the lines $L_i=V^*[a_i,b_i,c_i]$ are neighboring. By possibly multiplying with a nonzero scalar, we may assume that $r(a_1,b_1,c_1)=r(a_2,b_2,c_2)$, i.e., $r(a_1-a_2,b_1-b_2,c_1-c_2)=(0,0,0)$. It follows that we can write $(a_1-a_2,b_1-b_2,c_1-c_2)=s(a_3,b_3,c_3)$, with $a_3,b_3,c_3\in(\K+r\K)^3\setminus\{(0,0,0)\}$ (and the part in $r\K$ of each coordinate can be chosen freely). This implies that $(a_3,b_3,c_3)$ is not a $V^*$-multiple of $(a_1,b_1,c_1)$, as this would imply that $(a_2,b_2,c_2)$ is a $V$-multiple of $(a_1,b_1,c_1)$, a contradiction. Hence $V^*[a_1,b_1,c_1]$ and $V^*[a_3,b_3,c_3]$ are two distinct lines, and we claim that we can choose $a_3,b_3,c_3$ such that they are not neighboring. Indeed, if $s(a_3,b_3,c_3)=ks(a_1,b_1,c_1)$, $k\in\K$, then $(a_2,b_2,c_2)=(1-ks)(a_1,b_1,c_1)$, a contradiction as this would imply that $L_1=L_2$; now assume that $s\neq r$ and for every choice of $a_3,b_3,c_3$, there exists $k\in \K$ such that $kr(a_1,b_1,c_1)=r(a_3,b_3,c_3)$. Since $r^2$ is a scalar multiple of $r$, this is a contradiction. So we may pick $a_3,b_3,c_3$ such that $V^*[a_1,b_1,c_1]$ and $V^*[a_3,b_3,c_3]$ are two distinct non-neighboring lines. The unique intersection point $P=V^*(x,y,z)$ is also incident with $V^*[a_2,b_2,c_2]$. So we already found one point of intersection. Now if $V^*(x_1,y_1,z_1)$ is any point of $L_1$, then $P_1=V^*(x+rx_1,y+ry_1,z+rz_1)$ is clearly a point on both $L_1,L_2$, and it is easy to see that we can choose $x_1,y_1,z_1$ such that $P_1\neq P$.  

The lemma is proved.
\end{proof}

\begin{defi}\rm A triple of points $(P_1,P_2,P_3)$ is called a \emph{proper ordered triangle} if $P_1,P_2,P_3$ are 
pairwise non-neighboring and the unique lines determined by $P_1,P_2$, by $P_2,P_3$ and by 
$P_3,P_1$, are pairwise non-neighboring.
\end{defi}

Note that this is a self-dual definition in the sense that, if $(P_1,P_2,P_3)$ is a proper ordered triangle, then so is any triple formed by the unique lines determined by these three points. 

We now establish a seemingly weaker condition for a a proper ordered triangle.

\begin{lemma}\label{triangle}
A triple of points $(P_1,P_2,P_3)$ is a proper ordered triangle if and only if we can re-order the points such that $P_1$ and $P_2$ are non-neighboring, and $P_3$ does not neighbor the line $P_1P_2$ through $P_1$ and $P_2$.
\end{lemma}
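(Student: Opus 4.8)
The plan is to prove the two directions separately, the forward one being trivial. If $(P_1,P_2,P_3)$ is a proper ordered triangle, then by definition $P_1$ and $P_2$ are non-neighboring, and the line $L:=P_1P_2$ is non-neighboring with the line $P_2P_3$; since $P_3$ lies on $P_2P_3$, Lemma~\ref{N5} tells us that if $P_3$ neighbored $L$, then $L$ would neighbor some line through $P_3$ — in fact we argue more directly: $P_3 \sim L$ would, by Lemma~\ref{N5} applied with the roles of point and line, force $L$ to neighbor a line through $P_3$, but we actually only need that $P_3 \sim L$ implies (via Lemma~\ref{N3}, since $P_3$ lies on the non-neighboring line $P_2P_3$) that $P_3$ neighbors the intersection point of $L$ and $P_2P_3$, which is $P_2$ — contradicting $P_1\not\sim P_2$ being strengthened to $P_2\not\sim P_3$. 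So the forward implication holds after a small reshuffling of the hypotheses. I will state this cleanly using Lemma~\ref{N5}.

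For the converse, assume (after reordering) that $P_1\not\sim P_2$ and $P_3\not\sim L$ where $L=P_1P_2$ is the unique line through $P_1,P_2$ (unique by Lemma~\ref{N3}). First I would show $P_3$ is non-neighboring with both $P_1$ and $P_2$: indeed $P_1,P_2$ lie on $L$, so if $P_3\sim P_1$ then $P_3$ neighbors a point of $L$, whence $P_3\sim L$ by Lemma~\ref{N5}, a contradiction; similarly for $P_2$. Thus the three points are pairwise non-neighboring, and in particular the three lines $P_1P_2$, $P_2P_3$, $P_3P_1$ are all well-defined and unique. It remains to show these three lines are pairwise non-neighboring. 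Suppose $L=P_1P_2$ and $M:=P_2P_3$ were neighboring. By Lemma~\ref{N6}, neighboring lines share at least two common points; but $P_3$ lies on $M$, and if $L\sim M$ then — here is the key move — $P_3$ would neighbor $L$: concretely, $L$ and $M$ neighboring means, by the dual of Lemma~\ref{N5} (or directly: $L$ neighbors the line $M$ through $P_3$, so $L$ neighbors $P_3$), giving $P_3\sim L$, the desired contradiction. The pair $L, N:=P_3P_1$ is handled the same way using $P_3\in N$, and the pair $M,N$ is handled by applying the already-established part with the roles of $P_1$ and $P_2$ swapped — i.e. since $P_2\not\sim P_3$ (shown above) and $P_1\not\sim M$ (because $P_1\sim M$ would, as $P_1$ lies on $L$ and $P_2,P_3$ lie on $M$, force via Lemma~\ref{N5} and Lemma~\ref{N3} a neighboring among $\{P_1,P_2,P_3\}$, contradiction), we may rerun the argument on the reordered triple $(P_2,P_3,P_1)$.

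The main obstacle I anticipate is bookkeeping: making sure that each time I invoke Lemma~\ref{N5} or Lemma~\ref{N3} the hypotheses genuinely hold (e.g. that the relevant line is non-neighboring before concluding that point-neighboring transfers to the intersection point), and that the symmetry arguments are set up so that "rerun with $P_1,P_2$ swapped" is actually legitimate rather than circular. The cleanest organization is probably: (1) the trivial forward direction; (2) from the weak hypothesis deduce pairwise non-neighboring of the three points; (3) deduce $P_1\not\sim M$ and $P_2\not\sim N$ from $P_3\not\sim L$ together with step (2), using Lemma~\ref{N5}; (4) conclude all three joining lines are pairwise non-neighboring via Lemma~\ref{N6} (or directly the dual of Lemma~\ref{N5}). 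No serious computation is needed; everything reduces to the neighboring dictionary already established in Lemmas~\ref{N3}--\ref{N6}.
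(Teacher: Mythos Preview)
Your proposal is correct and follows essentially the same route as the paper's proof: use Lemma~\ref{N5} to get pairwise non-neighboring of the points and to handle the pairs $(L,M)$ and $(L,N)$, then use the second assertion of Lemma~\ref{N3} (via $L\not\sim M$ and $P_1\in L$) for the remaining pair. The paper handles $M\sim N$ in one stroke---$M\sim N$ gives $P_1\sim M$ by Lemma~\ref{N5}, whence $P_1\sim P_2$ by Lemma~\ref{N3}---whereas you first isolate $P_1\not\sim M$ and then ``rerun''; this is the same chain of implications, just packaged slightly less economically.
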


\begin{proof}
Since $P_3$ does not neighbor $P_1P_2$, is neither neighbors $P_1$ nor $P_2$ by Lemma~\ref{N5}. By the same reference, the line $P_1P_2$ neither neighbors the line $P_1P_3$ nor the line $P_2P_3$ (with similar self-explaining notation). If suffices to show that $P_2P_3\sim P_1P_3$ leads to a contradiction. Indeed, Lemma~\ref{N5} implies $P_1\sim P_2P_3$. Since $P_2P_3$ does not neighbor $P_1P_2$, the second assertion of Lemma~\ref{N3} implies $P_1\sim P_2$, our wanted contradiction.
\end{proof}

\begin{defi}
A \emph{proper ordered quadrangle} $(P_1,P_2,P_3,P_4)$ of points is a quadruple for which every ordered subtriple is a proper ordered triangle.
\end{defi}

Lemma~\ref{triangle} implies that a quadruple $(P_1,P_2,P_3,P_4)$ of points is a proper ordered quadrangle if and only if for some permutation $(i,j,k,\ell)$ of $(1,2,3,4)$ we have that $P_i$ does not neighbor $P_j$, $P_k$ does not neighbor $P_iP_j$, and $P_\ell$ does neither neighbor $P_iP_j$, nor $P_iP_k$, nor $P_jP_k$.

We now prove an algebraic characterization of proper ordered quadrangles. First a lemma.

\begin{lemma}\label{N7}
If $P_1,P_2$ are two non-neighboring points of $\cG(V)$, and $P_i=V^*(x_i,y_i,z_i)$, $i=1,2$, then every point of the line $P_1P_2$ can be written as $V^*(a_1x_1+a_2x_2,a_1y_1+a_2y_2,a_1z_1+a_2z_2)$, with $a_1,a_2\in V$, and at most one of $a_1,a_2$ is a scalar multiple of $r$. Similarly, at most one of $a_1,a_2$ is a scalar multiple of $s$. Conversely, for every $a_1,a_2\in V$, where at most one of $a_1,a_2$ is a scalar multiple of $r$ or of $s$, the expression $V^*(a_1x_1+a_2x_2,a_1y_1+a_2y_2,a_1z_1+a_2z_2)$ defines a point of the line $P_1P_2$. 
\end{lemma}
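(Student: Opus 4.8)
The plan is to prove Lemma~\ref{N7} in two directions, treating the ``$r$'' case and the ``$s$'' case symmetrically (as the paper itself remarks, $r$ and $s$ are interchangeable, so it suffices to argue one of them carefully).

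First I would handle the forward direction. A point $P$ of the line $P_1P_2$ is, by definition of the geometry, of the form $V^*(a_1x_1+a_2x_2,\,a_1y_1+a_2y_2,\,a_1z_1+a_2z_2)$ for some $a_1,a_2\in V$ not both zero-divisors in the strong sense (i.e.\ the triple is not killed by a nonzero element of $V$)~--- this needs the standard fact that any point of the line through two given points is a $V$-linear combination of representatives of the two points, which in turn one gets from the module structure of $\cW$ over $V$ (or directly from the defining equations, since $P_1,P_2$ non-neighboring means the $2\times3$ coefficient matrix has a well-behaved rank). The content is then: if both $a_1$ and $a_2$ were scalar multiples of $r$, say $a_1=k_1 r$ and $a_2=k_2 r$ with $k_1,k_2\in\K$, then $a_1=k_1 r$, $a_2=k_2 r$ gives $a_i = r\cdot(k_i\mathbf{1})$, whence $s\cdot(a_1x_1+a_2x_2,\ldots)=(0,0,0)$ because $sr=0$; since $s\neq0$ this contradicts the requirement in the definition of a point of $\cG(V)$. (In the field case $r=s=0$ the hypothesis ``at most one of $a_1,a_2$ is a scalar multiple of $r$'' is vacuous since neither is, and in the dual-numbers case $r=s$ the two statements coincide.) So at most one of $a_1,a_2$ is a scalar multiple of $r$, and by the $r\leftrightarrow s$ symmetry the same holds for $s$.

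Next the converse. Given $a_1,a_2\in V$ with at most one of them a scalar multiple of $r$ and at most one a scalar multiple of $s$, I must show the triple $Q=(a_1x_1+a_2x_2,\,a_1y_1+a_2y_2,\,a_1z_1+a_2z_2)$ actually represents a point, i.e.\ no nonzero $v\in V$ satisfies $vQ=(0,0,0)$. The key observation is that $V_0=\K r\cup\K s$, so a nonzero zero-divisor $v$ lies in $\K r\setminus\{0\}$ or $\K s\setminus\{0\}$; if $v$ is a unit then $vQ=0$ forces $Q=0$, so I must also rule out $Q=0$ outright. Suppose $v=kr$, $k\neq0$. Then $vQ = kr\,a_1(x_1,y_1,z_1)+kr\,a_2(x_2,y_2,z_2)=(0,0,0)$. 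Now $r a_i$ is itself either $0$ (precisely when $a_i\in\K s$) or a nonzero element of $\K r$; by hypothesis at most one $a_i$ lies in $\K s$, so at least one of $r a_1, r a_2$ is a nonzero multiple of $r$. If, say, $ra_1 = \lambda r$ with $\lambda\neq0$ and $ra_2 = \mu r$, then $r(\lambda (x_1,y_1,z_1)+\mu(x_2,y_2,z_2))=(0,0,0)$, i.e.\ $r$ kills the triple $\lambda(x_1,y_1,z_1)+\mu(x_2,y_2,z_2)$; if $\mu=0$ this says $r$ kills $(x_1,y_1,z_1)$, contradicting that $P_1$ is a point; if $\mu\neq0$ it says $P_1$ and $P_2$ are neighboring (both $r$ and $\lambda,\mu$ scalars, so $r(x_1,y_1,z_1)\in\K r(x_2,y_2,z_2)$ after rescaling), contradicting the hypothesis. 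The case $v\in\K s$ is symmetric, and $Q=0$ itself would give $a_1(x_1,y_1,z_1)=-a_2(x_2,y_2,z_2)$, which by a similar zero-divisor analysis (using that not both $a_i$ are zero-divisors of the same type) forces $P_1\sim P_2$ or $P_1=P_2$, again a contradiction. Hence $Q$ is a point, and it visibly lies on the line $P_1P_2$ (it satisfies any linear equation satisfied by both $P_1$ and $P_2$).

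The main obstacle I anticipate is bookkeeping the degenerate algebras uniformly: I must make sure every step is phrased so that it remains correct (or vacuous) when $r=0$ (field case), when $r=s\neq0$ (dual numbers), and when $\K r\neq\K s$ (the split case $\K\times\K$). In particular the phrase ``at most one of $a_1,a_2$ is a scalar multiple of $r$'' must be read as an honest constraint only in the last two cases, and the conclusion ``$P_1\sim P_2$'' derived in the converse must be compared against the standing non-neighboring hypothesis via the definition (Definition~\ref{defnei}) together with the remark following it that the scalars involved may be taken in $\K$. I would also lean on the already-established Lemma~\ref{N3} / Lemma~\ref{N5} machinery only where convenient; most of the argument is self-contained from $V_0=\K r\cup\K s$ and $rs=0$.
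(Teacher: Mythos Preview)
Your converse direction is essentially the paper's argument, only written out in more detail; that part is fine.

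The forward direction, however, has a genuine gap. You assert that ``a point $P$ of the line $P_1P_2$ is, by definition of the geometry, of the form $V^*(a_1\vec{x}_1+a_2\vec{x}_2)$'', and then gesture at ``the module structure of $\cW$'' or ``well-behaved rank'' to justify it. But in the paper's setup a point lies on a line $V^*[a,b,c]$ exactly when $ax+by+cz=0$; nothing in the definition says that the solution set of this equation over the ring $V$ is generated (as a $V$-module) by the two particular solutions $\vec{x}_1,\vec{x}_2$. That statement is true for the three algebras at hand, but it is precisely the nontrivial content of the forward direction, and you have not proved it. A case split (field / $\K\times\K$ / dual numbers) would work, but it is exactly the kind of non-uniform argument the paper is trying to avoid.

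The paper sidesteps this module-theoretic issue entirely. Given $P$ on $P_1P_2$, it invokes the dual of Lemma~\ref{N4} to produce a line $L=V^*[a,b,c]$ through $P$ not neighboring $P_1P_2$; then $P$ is the unique intersection $L\cap P_1P_2$ (Lemma~\ref{N3}). Setting $b_i=ax_i+by_i+cz_i$, one now only has to solve the single equation $a_1b_1+a_2b_2=0$ in $V$ with $(a_1,a_2)$ satisfying the stated constraint, which is a short explicit case check on whether $b_1,b_2$ are zero, invertible, or in $\K r$ versus $\K s$. Any such $(a_1,a_2)$ produces a point of $P_1P_2$ on $L$ (by the already-proved converse), and that point must equal $P$. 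This is both shorter and uniform across the three algebras; your approach would need the missing module computation to reach the same conclusion.
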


\begin{proof}
Put $\vec{x}_i=(x_i,y_i,z_i)$. Choose $a_1,a_2\in V$ such that at most one of $a_1,a_2$ is a scalar multiple of $r$ or of $s$. Then at most one of $ra_1\vec{x}_1$ and $ra_2\vec{x}_2$ is $(0,0,0)$. Also, $r(a_1\vec{x}_1+a_2\vec{x}_2)$ is nonzero as otherwise $P_1$ and $P_2$ are neighboring. Hence we have shown the last assertion.

Now let $P$ be any point of the line $P_1P_2$. Then the dual of Lemma~\ref{N4} asserts that we can find a line $L$ through $P$ not neighboring $P_1P_2$, and hence intersecting $P_1P_2$ in only $P$. Let $L:=V^*[a,b,c]$. Put $b_i=ax_i+by_i+cz_i$, $i=1,2$. We seek $a_1,a_2\in V$, not both a scalar multiple of $r$ or of $s$, such that $a_1b_1+a_2b_2=0$. If one of $b_1,b_2$ is $0$, then this is trivial, as well as if one of $b_1$ or $b_2$ is invertible. If both are scalar multiples of $r$, then we find scalars $a_1,a_2$; similarly if both are scalar multiples of $s$. Finally, if one is a scalar multiple of $r$, say $b_1$, and the other, say $b_2$, of $s$, then we can take $(a_1,a_2)=(s,r)$. In all cases we see that there is a point of $P_1P_2$ in $L$ which can be written as $V^*(a_1\vec{x}_1+a_2\vec{x}_2)$. This must necessarily be $P$, and so $P$ can be written in the desired form. 
\end{proof}

\begin{prop}\label{prop1}
Let $P_i:=V^*(x_i,y_i,z_i)$, $i=1,2,3,4$, be four points of $\cG(V)$. Then $(P_1,P_2,P_3,P_4)$ is a proper ordered quadrangle if and only if $$D:=\left|\begin{array}{ccc} x_1 & y_1 & z_1\\ x_2 & y_2 & z_2 \\ x_3 & y_3 & z_3\end{array}\right|\in V^*$$ and there exist $a_1,a_2,a_3\in V^*$ such that $(x_4,y_4,z_4)= \sum_{i=1}^3a_i(x_i,y_i,z_i)$.
\end{prop}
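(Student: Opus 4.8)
The statement is an iff, so I would split it into the two directions, but the heart of the matter is to translate "proper ordered quadrangle" — a purely synthetic/neighboring notion — into the stated algebraic conditions on the determinant $D$ and on the coefficients $a_i$. The guiding principle throughout is the dictionary between the neighboring relation and vanishing of appropriate $r$- or $s$-multiples, as developed in Lemmas~\ref{N1}–\ref{N7}; in particular I will use repeatedly that three points $V^*(x_i,y_i,z_i)$, $i=1,2,3$, are pairwise non-neighboring and determine pairwise non-neighboring connecting lines precisely when the three $2\times 2$ minors behave like the coordinates of a genuine point of $\cG(V)$, and that by Lemma~\ref{N7} any point of a line $P_iP_j$ through two non-neighboring points is an expression $V^*(a_iX_i+a_jX_j)$ with not both $a_i,a_j$ a scalar multiple of $r$, and not both a scalar multiple of $s$.

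\textbf{First direction: quadrangle $\Rightarrow$ algebraic conditions.} Assume $(P_1,P_2,P_3,P_4)$ is a proper ordered quadrangle. Applying the triangle analysis (Lemma~\ref{triangle}, via its self-dual form) to the subtriple $(P_1,P_2,P_3)$, I claim $D\in V^*$: if not, then $rD=0$ or $sD=0$ (the only way a $3\times 3$ determinant over $V$ fails to be a unit), and expanding $D$ along a row would force, after multiplying by $r$ (resp. $s$), a linear dependence among the $r$-parts (resp. $s$-parts) of the three coordinate rows, which by Lemma~\ref{N1} applied to the connecting lines — whose coefficients are exactly the $2\times 2$ minors — yields a neighboring pair among $P_1,P_2,P_3$ or among their connecting lines, contradicting properness. (This is essentially a three-dimensional version of Lemma~\ref{N1}, and I would phrase it as such.) Once $D\in V^*$, the rows $X_1,X_2,X_3$ form a $V$-basis of $V^3$, so certainly $X_4=\sum_{i=1}^3 a_i X_i$ for unique $a_i\in V$; it remains to see each $a_i\in V^*$. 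Suppose $a_1$ is a zero-divisor, say $ra_1=0$ (the $s$ case is symmetric, and $a_1=0$ is the extreme case). Then $rX_4=r a_2 X_2+r a_3 X_3$, which says $P_4$ neighbors the line $P_2P_3$ (by Lemma~\ref{N7}, read backwards, together with Lemma~\ref{N5}): indeed $rX_4$ lies in the $\K$-span of $rX_2,rX_3$, making $P_4$ neighbor a point of $P_2P_3$. That contradicts $(P_2,P_3,P_4)$ being a proper triangle. Hence all $a_i\in V^*$.

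\textbf{Second direction: algebraic conditions $\Rightarrow$ quadrangle.} Conversely, assume $D\in V^*$ and $X_4=\sum a_i X_i$ with all $a_i\in V^*$. That $D\in V^*$ makes $(P_1,P_2,P_3)$ a proper triangle by the converse half of the $3\times 3$ version of Lemma~\ref{N1} (no nonzero $r$- or $s$-multiple relation among the rows, nor among the minors, since those minors assemble into $D$ via cofactor expansion and $D$ is a unit). By Lemma~\ref{triangle} it now suffices to check that $P_4$ neighbors none of the lines $P_1P_2$, $P_2P_3$, $P_3P_1$; by symmetry I treat $P_2P_3$. If $P_4\sim P_2P_3$, then by Lemma~\ref{N5} and Lemma~\ref{N7} we could write $rX_4=r(b_2X_2+b_3X_3)$ for suitable scalars, i.e. $r a_1X_1=r(b_2-a_2)X_2+r(b_3-a_3)X_3$ — but $ra_1\neq 0$ since $a_1\in V^*$, so this is a nontrivial $r$-relation among $X_1,X_2,X_3$, forcing $rD=0$, contradicting $D\in V^*$. (I will also need the parallel argument with $s$; and strictly the general point of $P_2P_3$ uses the two-parameter form $V^*(c_2X_2+c_3X_3)$ of Lemma~\ref{N7}, so "neighboring" unwinds to an $r$- or an $s$-multiple relation — either way a nonzero multiple relation among $X_1,X_2,X_3$ appears.) This establishes properness of all four subtriples, hence of the quadrangle.

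\textbf{The main obstacle.} The crux — and the place where care is needed — is not either implication in isolation but pinning down the exact statement and proof of the "three-dimensional Lemma~\ref{N1}": namely that the three non-neighboring/connecting-lines-non-neighboring conditions on $P_1,P_2,P_3$ are together equivalent to $D\in V^*$. The delicate case is $\K r\neq\K s$ (the split algebra $\K\times\K$), where $V^*=V\setminus(\K r\cup\K s)$ and one must separately rule out $rD=0$ and $sD=0$, and translate each into a neighboring statement about a specific pair among the points/lines using Lemma~\ref{N1} coordinatewise on the minors. I would isolate this as a preliminary lemma before proving Proposition~\ref{prop1}, so the two directions above become short bookkeeping on top of it, together with the Lemma~\ref{N7} dictionary for "$P_4$ neighbors a connecting line."
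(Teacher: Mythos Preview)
Your proposal is correct and takes essentially the same approach as the paper: both directions hinge on the cofactor expansion of $D$ (giving the line coordinates of $P_jP_k$) together with the dictionary between neighboring and zero-divisor multiples. The only cosmetic difference is that for the step ``$a_i\in V^*$'' the paper plugs $X_4=\sum a_iX_i$ directly into the incidence equation of the line $P_jP_k=V^*[a,b,c]$ to obtain $ax_4+by_4+cz_4=a_i(ax_i+by_i+cz_i)$, and reads off $a_i\in V^*$ from both factors being units---this is slightly slicker than your contrapositive via $r$-multiples, and bypasses the need to invoke Lemma~\ref{N7}; your planned separate ``three-dimensional Lemma~\ref{N1}'' is likewise not isolated in the paper but argued inline.
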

 
\begin{proof} Throughout, we denote by $M$ the matrix corresponding with $D$, as in the statement above.

We first prove that the stated condition is sufficient. 

Suppose $P_1\sim P_2$. Then we may assume that $r(x_1,y_1,z_1)=kr(x_2,y_2,z_2)$, for some $k\in\K$. But then 
$rD=0$ since we can multiply the first row of $M$ by $r$ and then this row becomes equal to $kr$ times the second row. Hence the three 
points $P_1,P_2,P_3$ are mutually non-neighboring. Suppose that $P_3$ neighbors the line $P_1P_2$. Then $P_3$ neighbors some point 
$P_3'=V^*(x_3',y_3',z_3')$ of $P_1P_2$, and we may assume that $r(x_3,y_3,z_3)=kr(x_3',y_3',z_3')$, $k\in\K$. By Lemma~\ref{N7}, 
there exist elements $a_1,a_2\in V$ such that $(x_3',y_3',z_3')=(a_1x_1+a_2x_2,a_1y_1+a_2y_2,a_1z_1+a_2z_2)$. At 
most one of $a_1,a_2$ is a scalar multiple of $s$ and hence we may assume that $a_1$ is not. Then we multiply the first row of $M$ by 
$a_1$ and the third by $r$ and the determinant does not vanish. But we may now add $a_2$ times the second row to the first, to obtain $kr$ 
times the last row. Hence the determinant does vanish, a contradiction.

Let $P_1P_2=V^*[a,b,c]$. Then, clearly, $ax_4+by_4+cz_4=a_3(ax_3+by_3+cz_3)$, which belongs to $V^*$ as $P_3$ does not neighbor $P_1P_2$. Likewise, $P_4$ does neither neighbor $L_2L_3$ nor $L_1L_3$.  

Now we show that the condition is necessary. So we suppose that $(P_1,P_2,P_3,P_4)$ is a proper ordered quadrangle.

Suppose $D\in \K r$. Developing the determinant with respect to the first row of $M$, we see by the dual of Lemma~\ref{N3} that the point $P_1$ neighbors the line $P_2P_3$, a contradiction.

Now the equation $(x~y~z)M=(x_4~y_4~z_4)$ has a solution (just multiply at the left with $M^{-1}$, which exists since the determinant is invertible), which we name $(a_1,a_2,a_3)$. Let $P_1P_2=V^*[a,b,c]$. Then, as before, $ax_4+by_4+cz_4=a_3(ax_3+by_3+cz_3)$, which belongs to $V^*$ as $P_3$ does not neighbor $P_1P_2$. This implies that $a_3\in V^*$. Likewise, $a_1,a_2\in V^*$ and the proof of the proposition is complete.  
\end{proof} 

This proposition shows that there are a lot of proper ordered quadrangles. We make this more explicit in the next subsection.
 
\subsection{Transitivity properties of $\cG(V)$}

Let $M$ be any $3\times3$ matrix with entries in $V$ such that the usual determinant is an invertible element in $V$. Then the 
transposed of the inverse exists, and we denote it by $M^*$. The set of all these matrices forms a group $\GL_3(V)$, and we will 
show some transitivity properties of $\GL_3(V)$ acting on $\cG(V)$.  The action is defined in a standard way: the 
point $V^*(x,y,z)$ is mapped onto the point $V^*(u,v,w)$, where $(u~v~w)=(x~y~z)M$. We denote $(u,v,w)=(x,y,z)^M$.

We start by showing that the above defined action is well defined.

\begin{prop}\label{welldefined}
The action of $\GL_3(V)$ on $\cG(V)$ as defined above is well-defined and maps lines to lines. Moreover, every element of $\GL_3(V)$ respects the neighboring relations. 
\end{prop}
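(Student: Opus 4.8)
The plan is to verify three things about the map $V^*(x,y,z)\mapsto V^*(u,v,w)$ with $(u~v~w)=(x~y~z)M$: that it sends points to points, that it sends lines to lines, and that it preserves the neighboring relation for points, lines, and point-line pairs. All three reduce to elementary manipulations with $3\times 3$ matrices over the commutative associative ring $V$, using the fact that $M$ has an invertible determinant in $V$, so that $M^{-1}$ (and $M^*$, the transpose of $M^{-1}$) exists and has entries in $V$.

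\textbf{Points go to points.} First I would check that the definition does not depend on the representative: if $(x',y',z')=k(x,y,z)$ with $k\in V^*$, then $(x'~y'~z')M=k\cdot((x~y~z)M)$, so the image triple is scaled by the same unit $k$. Next, that the image is a genuine point: suppose $v\in V$ with $v(u,v,w)=(0,0,0)$, i.e.\ $v(x~y~z)M=(0~0~0)$; multiplying on the right by $M^{-1}$ gives $v(x~y~z)=(0~0~0)$, and since $V^*(x,y,z)$ is a point this forces $v=0$. So $V^*(u,v,w)$ is a point of $\cG(V)$.

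\textbf{Lines go to lines, and incidence is preserved.} The natural guess is that the line $V^*[a,b,c]$ is sent to $V^*[a',b',c']$ with $(a'~b'~c')^t=M^{-1}(a~b~c)^t$ — equivalently $[a',b',c']$ is obtained from $[a,b,c]$ via $M^*$. One checks this is well defined and yields a line exactly as for points (using that $M^{-1}$ is invertible). For incidence: if $V^*(x,y,z)$ lies on $V^*[a,b,c]$, i.e.\ $(a~b~c)(x~y~z)^t=0$ (reading the incidence condition $ax+by+cz=0$ as a matrix product, which is legitimate since $V$ is commutative), then the image point-line pair satisfies $(a'~b'~c')(u~v~w)^t = (M^{-1}(a~b~c)^t)^t\,((x~y~z)M)^t = (a~b~c)(M^{-1})^t M^t (x~y~z)^t$, and one verifies $(M^{-1})^tM^t=\id$, so this equals $(a~b~c)(x~y~z)^t=0$. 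Hence incidence is preserved, which in particular shows the image of a line is exactly the set of images of its points, confirming it really is the line $V^*[a',b',c']$.

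\textbf{Neighboring relations are preserved.} For two points $P=V^*(x,y,z)\sim P'=V^*(u,v,w)$, by definition there are $k,\ell\in V$ with $k(x,y,z)=\ell(u,v,w)$; multiplying on the right by $M$ gives $k(x,y,z)^M=\ell(u,v,w)^M$, so the images are neighboring. The same argument works verbatim for lines using the $M^*$-action. For a point $V^*(x,y,z)$ and a line $V^*[a,b,c]$ with $ax+by+cz\in V_0$: by the incidence computation above, the corresponding expression for the images equals $ax+by+cz$ itself, which lies in $V_0$, so the images are neighboring. Since the inverse map is given by $M^{-1}\in\GL_3(V)$, all these implications are in fact equivalences, and the group structure of $\GL_3(V)$ follows from the obvious composition law $(x,y,z)^{M_1M_2}=((x,y,z)^{M_1})^{M_2}$.

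\textbf{The main obstacle} is purely bookkeeping: making sure that every matrix product over $V$ is legitimate (this uses only commutativity of $V$, already in the hypotheses) and that the dual action on lines is set up consistently so that ``$M$ acts on points'' and ``$M^*$ acts on lines'' mesh correctly with incidence — i.e.\ pinning down the single identity $(M^{-1})^t M^t=\id$ and applying it in the right place. There is no genuine difficulty once the conventions are fixed; the proposition is essentially the statement that the standard projective action on row vectors dualizes correctly on functionals, transported to the ring $V$.
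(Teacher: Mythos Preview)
Your argument is correct, and for the first two parts (points to points, lines to lines, incidence preserved) it is essentially the paper's proof verbatim.

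For the neighboring relations your route differs from the paper's. You verify each of the three cases directly from the algebraic definitions: for point--point you carry the witnessing pair $(k,\ell)$ through right multiplication by $M$, dually for line--line, and for point--line you use the identity $(M^{-1})^tM^t=\id$ to see that the incidence expression $ax+by+cz$ is literally unchanged. The paper instead invokes the geometric characterizations established earlier: Lemma~\ref{N6} (two lines are neighboring iff they share at least two points, and dually for points) and Lemma~\ref{N5} (a point neighbors a line iff it neighbors some point of that line). Since the action is an incidence-preserving bijection, those geometric conditions are automatically preserved. Your approach is more self-contained and does not depend on the chain of lemmas preceding the proposition; the paper's approach is shorter once those lemmas are in hand and illustrates why they were proved in the first place.
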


\begin{proof} Let $M\in\GL_3(V)$. Let $V^*(x,y,z)$ be a point of $\cG(V)$, and put $(u,v,w)=(x,y,z)^M$. 
First we show that $V^*(u,v,w)$ is again a legal point of $\cG(V)$. Suppose not, then we may assume that $ru=rv=rw=0$. 
Hence $(rx~ry~rz)M=(0~0~0)$. Multiplying both sides at the right with the inverse of $M$, we see that $tx=ty=tz=0$, a 
contradiction. A standard argument now shows that the image of any point $V^*(x,y,z)$ under $M$ does not depend on the chosen 
representative $(x,y,z)$. 

Now we show that $M$ maps lines to lines. Indeed, let $L=V^*[a,b,c]$ be a line, then a point $V^*(x,y,z)$ is contained 
in $L$ if and only if $xa+yb+zc=0$. This happens if and only if $(x~y~z)MM^{-1}(a~b~c)^t=0$, and this is equivalent with saying 
that the point $V^*(x,y,z)^M$ is incident with the line $V^*[a',b',c']$, where $(a'~b'~c')=(a~b~c)M^*$.

Lemmas~\ref{N5} and~\ref{N6} imply that each element of $\GL_3(V)$ preserves the neighboring relations. 

The proof is complete.
\end{proof}

We continue by showing that $\GL_3(V)$ acts (sharply) transitively on proper ordered quadrangles. This is easy now:

\begin{theorem}
The group $\GL_3(V)$ acts sharply transitively on the family of all proper ordered quadrangles.
\end{theorem}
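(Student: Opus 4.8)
The plan is to play everything off the ``standard'' proper ordered quadrangle $E=\bigl(V^*(1,0,0),V^*(0,1,0),V^*(0,0,1),V^*(1,1,1)\bigr)$, which is indeed a proper ordered quadrangle by Proposition~\ref{prop1} (its relevant determinant is $1\in V^*$ and $(1,1,1)=1\cdot(1,0,0)+1\cdot(0,1,0)+1\cdot(0,0,1)$). By Proposition~\ref{welldefined} the group $\GL_3(V)$ preserves the property of being a point of $\cG(V)$, incidence, and the neighboring relations, hence it maps proper ordered quadrangles to proper ordered quadrangles; so the orbit map $M\mapsto M\cdot E$ is a well-defined map from $\GL_3(V)$ to the set of proper ordered quadrangles, and it suffices to show it is surjective and that its fibre over $E$ is exactly the kernel of the action.

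For surjectivity (transitivity), let $(P_1,P_2,P_3,P_4)$ be an arbitrary proper ordered quadrangle, with $P_i=V^*(\vec x_i)$ and $\vec x_i=(x_i,y_i,z_i)$. By Proposition~\ref{prop1} the matrix $M_0$ with rows $\vec x_1,\vec x_2,\vec x_3$ has invertible determinant $D\in V^*$, and there are $a_1,a_2,a_3\in V^*$ with $\vec x_4=\sum_{i=1}^3 a_i\vec x_i$. Let $M$ be the matrix with rows $a_1\vec x_1,a_2\vec x_2,a_3\vec x_3$, i.e. $M=\mathrm{diag}(a_1,a_2,a_3)\,M_0$. Since $V$ is commutative, $\det M=a_1a_2a_3D$, which lies in $V^*$ (a product of invertible elements is invertible), so $M\in\GL_3(V)$. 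Now $(1,0,0)M$ is the first row $a_1\vec x_1$ of $M$, hence $V^*(1,0,0)^M=P_1$, and likewise $V^*(0,1,0)^M=P_2$, $V^*(0,0,1)^M=P_3$; finally $(1,1,1)M=a_1\vec x_1+a_2\vec x_2+a_3\vec x_3=\vec x_4$, so $V^*(1,1,1)^M=P_4$. Thus $M$ carries $E$ to $(P_1,P_2,P_3,P_4)$, and composing such maps shows any two proper ordered quadrangles lie in one $\GL_3(V)$-orbit.

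For sharpness I compute the stabilizer of $E$. If $M\in\GL_3(V)$ fixes $V^*(1,0,0)$, $V^*(0,1,0)$ and $V^*(0,0,1)$, then the three rows $(1,0,0)M$, $(0,1,0)M$, $(0,0,1)M$ of $M$ are $V^*$-multiples of $(1,0,0)$, $(0,1,0)$, $(0,0,1)$ respectively, so $M=\mathrm{diag}(d_1,d_2,d_3)$ with $d_i\in V^*$; fixing $V^*(1,1,1)$ forces $(d_1,d_2,d_3)$ to be a $V^*$-multiple of $(1,1,1)$, whence $d_1=d_2=d_3=:\lambda\in V^*$ and $M=\lambda I_3$. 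Since scalar matrices act trivially on $\cG(V)$ and in fact form the kernel of the action, the stabilizer of $E$ equals this kernel, so the action of $\GL_3(V)$ on proper ordered quadrangles is sharply transitive (equivalently $\PGL_3(V)$ acts sharply transitively), i.e. any two proper ordered quadrangles are related by an element of $\GL_3(V)$ unique up to a scalar factor. The only genuine subtlety is this last interpretational point: the literal action of $\GL_3(V)$ is not faithful, so ``sharply transitive'' must be read projectively, and the stabilizer computation confirms that the non-faithfulness is entirely accounted for by the scalar matrices; all remaining content rests on Proposition~\ref{prop1}, already proved, so no serious obstacle remains.
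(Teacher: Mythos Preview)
Your argument is correct and follows essentially the same approach as the paper: reduce to the standard quadrangle $(E_1,E_2,E_3,E)$, use Proposition~\ref{prop1} to build the matrix $M$ with rows $a_i\vec{x}_i$ mapping $E$ to the given quadrangle, and then compute the stabilizer. In fact you are more careful than the paper on the sharpness clause: the paper simply asserts that ``a standard argument shows that the stabilizer of $(E_1,E_2,E_3,E)$ in $\GL_3(V)$ is trivial,'' whereas you correctly observe that the stabilizer consists precisely of the scalar matrices $\lambda I_3$, $\lambda\in V^*$, i.e.\ the kernel of the action, so that ``sharply transitive'' must be read projectively.
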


\begin{proof}
Let $(E_1,E_2,E_3,E)$ be the proper ordered quadrangle arising from the standard basis, i.e., $E_1=V^*(1,0,0), E_2=V^*(0,1,0), E_3=V^*(0,0,1)$ and $E=V^*(1,1,1)$. Let $(P_1,P_2,P_3,P_4)$ be an arbitrary proper ordered quadrangle, with $P_i:=V^*(x_i,y_i,z_i)$, $i=1,2,3,4$, and with $a_1,a_2,a_3\in V^*$ such that $(x_4,y_4,z_4)= \sum_{i=1}^3a_i(x_i,y_i,z_i)$, as guaranteed by 
Proposition~\ref{prop1}. Then the matrix 
$$M:=\left(\begin{array}{ccc} a_1x_1 & a_1y_1 & a_1z_1\\ a_2x_2 & a_2y_2 & a_2z_2 \\ a_3x_3 & a_3y_3 & a_3z_3\end{array}\right)$$ maps $(E_1,E_2,E_3,E)$ to $(P_1,P_2,P_3,P_4)$. Also, a standard argument shows that the stabilizer of $(E_1,E_2,E_3,E)$ in 
$\GL_3(V)$ is trivial. 
\end{proof}

Now, in order to derive from this theorem seemingly weaker transitivity properties, like flag-transitivity, one has for instance to show that every flag is contained in a proper ordered quadrangle. We do this in the next proposition.

\begin{prop}
Every point $P_1$ is contained in a flag $(P_1,L_1)$; every flag $(P_1,L_1)$ is contained in a triple $(P_1,L_1,P_2)$, where $P_2$ is a point on $L_1$ not neighboring $P_1$; every triple $(P_1,L_1,P_2)$, where $P_1,P_2$ are non-neighboring points on the line $L_1$, is contained in a quadruple $(P_1,L_1,P_2,L_2)$, where $L_2$ is a line through $P_2$ not neighboring $L_1$; every quadruple $(P_1,L_1,P_2,L_2)$, where $P_1,P_2$ are non-neighboring points on the line $L_1$, and $L_1,L_2$ are non-neighboring lines through the point $P_2$, is contained in a quintuple $(P_1,L_1,P_2,L_2,P_3)$, where $(P_1,P_2,P_3)$ is a proper ordered triangle with $P_3$ on $L_2$; every proper ordered triangle is contained in a proper ordered quadrangle.     
\end{prop}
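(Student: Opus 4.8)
The proposition is a chain of five existence/extension statements about $\cG(V)$, and the plan is to establish each clause by a short direct argument, reducing — as far as possible — to the geometric characterizations of the neighboring relation (Lemmas~\ref{N1}--\ref{N6}, Lemma~\ref{triangle}) rather than computing determinants by hand. Throughout I keep the notation $V_0=\K r\cup\K s$. Since the definition of proper ordered triangle/quadrangle is self-dual, I only need to prove the ``forward'' direction of each clause; the dual statements (needed implicitly, e.g. invoking the dual of Lemma~\ref{N3} or Lemma~\ref{N4}) come for free.

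\textbf{First two clauses.} Every point $P_1=V^*(x,y,z)$ lies on \emph{some} line: at least one of the three lines $V^*[1,0,0]$, $V^*[0,1,0]$, $V^*[0,0,1]$ fails to be incident with $P_1$ only if one of $x,y,z$ is a unit, but one always finds a line through $P_1$ by, say, taking $L_1=V^*[a,b,c]$ with $(a,b,c)$ any solution of $ax+by+cz=0$ with $(a,b,c)\notin V_0^3$ having at least one unit coordinate — such a solution exists because the equation $ax+by+cz=0$ over the $\K$-algebra $V$ has, after expanding in the $\K$-basis $\{\mathbf 1,\mathfrak I\}$, a $4$-dimensional solution space over $\K$ inside the $6$-dimensional ambient, which cannot be contained in the proper subvariety of triples with all coordinates in $V_0$. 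For the second clause: given a flag $(P_1,L_1)$, I must find a point $P_2$ on $L_1$ with $P_2\not\sim P_1$. By Lemma~\ref{N4} (its actual statement: no point neighbors all points of a line) applied with $P=P_1$ and the line $L_1$, there is a point $P_2\in L_1$ with $P_1\not\sim P_2$; this is exactly what is needed.

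\textbf{Third and fourth clauses.} Given a triple $(P_1,L_1,P_2)$ with $P_1,P_2$ non-neighbouring points on $L_1$, I need a line $L_2$ through $P_2$ with $L_2\not\sim L_1$; this is the dual of the second clause (``no line neighbors all lines through a point'', which is the dual of Lemma~\ref{N4}), applied to the flag $(P_2,L_1)$. Given a quadruple $(P_1,L_1,P_2,L_2)$ as in the statement, I want $P_3$ on $L_2$ with $(P_1,P_2,P_3)$ a proper ordered triangle. By Lemma~\ref{triangle} it suffices to produce $P_3\in L_2$ with $P_3\not\sim P_1P_2=L_1$. Now $P_3$ ranges over $L_2$; the points of $L_2$ neighbouring the line $L_1$ form a proper subset, because if \emph{every} point of $L_2$ neighboured $L_1$ then, using Lemma~\ref{N5}, every point of $L_2$ would neighbour some point of $L_1$, and one checks via Lemma~\ref{N6} (two lines neighbour iff they meet in $\geq 2$ points) and Lemma~\ref{N3} that this forces $L_1\sim L_2$, contradicting $L_1\not\sim L_2$ (note $L_2\ni P_2\in L_1$, so $L_1,L_2$ already share $P_2$; if additionally every point of $L_2$ neighboured $L_1$ one produces a second common point). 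Hence a good $P_3$ exists. I expect \emph{this} clause — ruling out that all of $L_2$ neighbours $L_1$ — to be the main obstacle, since it is the one place where I cannot simply quote a single prior lemma and must combine Lemmas~\ref{N3}, \ref{N5}, \ref{N6}; I would, if the clean argument balks, fall back on coordinates: put $L_1=V^*[1,0,0]$, $P_2=V^*(0,1,0)$, $L_2=V^*[0,0,1]$ after a $\GL_3(V)$-transformation (legitimate by Proposition~\ref{welldefined} and the sharp transitivity theorem once one notes non-neighbouring flags can be normalized), and then a point $V^*(0,b,c)$ of $L_2$ neighbours $L_1$ iff its first coordinate $0\in V_0$ is... always — so one instead normalizes $L_1=V^*[1,0,0]$, $L_2=V^*[0,1,0]$, whose common point $V^*(0,0,1)=P_2$, and a general point of $L_2$ is $V^*(\lambda,0,\mu)$; it neighbours $L_1$ iff $\lambda\in V_0$, which certainly fails for $\lambda=1$.

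\textbf{Fifth clause.} Finally, every proper ordered triangle $(P_1,P_2,P_3)$ extends to a proper ordered quadrangle $(P_1,P_2,P_3,P_4)$. By Proposition~\ref{prop1}, since $(P_1,P_2,P_3)$ is a proper ordered triangle the matrix $M$ with rows $(x_i,y_i,z_i)$ has $D=\det M\in V^*$; I then simply set $(x_4,y_4,z_4)=\sum_{i=1}^3 a_i(x_i,y_i,z_i)$ with $a_1=a_2=a_3=\mathbf 1$, i.e. $P_4=V^*(x_1+x_2+x_3,\,\dots)$. The criterion of Proposition~\ref{prop1} (now read in the ``if'' direction) then tells us $(P_1,P_2,P_3,P_4)$ is a proper ordered quadrangle, because $D\in V^*$ and the $a_i$ are all units. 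This closes the chain, and hence, combined with the preceding clauses, shows every point / flag / non-neighbouring triple / non-neighbouring quadruple sits in a proper ordered quadrangle.
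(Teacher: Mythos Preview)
Your argument is correct for clauses 1, 2, 3 and 5, and in those places matches the paper's (one-line) proof exactly: the paper simply says the first assertion is trivial, the next three follow from Lemma~\ref{N4} (together with Lemma~\ref{N3} for the fourth), and the last from Proposition~\ref{prop1}. Your explicit choice $a_1=a_2=a_3=\mathbf 1$ in clause~5 is precisely what the paper's appeal to Proposition~\ref{prop1} amounts to.

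The one genuine weak spot is clause~4. Your main argument there asserts that if every point of $L_2$ neighboured $L_1$ then ``one produces a second common point'' of $L_1$ and $L_2$, forcing $L_1\sim L_2$ via Lemma~\ref{N6}; but neighbouring a point of $L_1$ is strictly weaker than lying on $L_1$, so no second common point is actually produced and this step is unjustified. Your coordinate fallback then invokes a $\GL_3(V)$-normalization of the quadruple $(P_1,L_1,P_2,L_2)$, but transitivity on such quadruples is exactly Corollary~\ref{flagtransitive}, which is \emph{deduced from} the present proposition --- so this is circular as written.

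The fix is much simpler than either route you attempt, and is what the paper's parenthetical ``(for the fourth, also use Lemma~\ref{N3})'' is pointing at. You already have the right lemma in your list; just use its second assertion directly: since $L_1\not\sim L_2$, any point of $L_2$ that neighbours $L_1$ must (by Lemma~\ref{N3}, with the roles of $L_1,L_2$ swapped) neighbour the intersection point $L_1\cap L_2=P_2$. By Lemma~\ref{N4} not every point of $L_2$ neighbours $P_2$, so some $P_3\in L_2$ satisfies $P_3\not\sim L_1$, and Lemma~\ref{triangle} finishes the clause. No detour through Lemma~\ref{N6} or coordinates is needed.
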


\begin{proof}
The first assertion is trivial; the next three follow directly from Lemma~\ref{N4} (for the fourth, also use Lemma~\ref{N3});  the last assertion follows from Proposition~\ref{prop1}. 
\end{proof}

This now immediately implies the following transitivity properties.

\begin{cor}\label{flagtransitive}
The group $\GL_3(V)$ acts transitively on the family of all flags of $\cG(V)$, on the family of all triples  $(P_1,L_1,P_2)$, where $P_1,P_2$ are non-neighboring points on the line $L_1$, on the family of all quadruples $(P_1,L_1,P_2,L_2)$, where $P_1,P_2$ are non-neighboring points on the line $L_1$, and $L_1,L_2$ are non-neighboring lines through the point $P_2$, and on the family of proper ordered triangles. 
\end{cor}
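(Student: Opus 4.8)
The plan is to derive all four transitivity assertions from the sharp transitivity of $\GL_3(V)$ on proper ordered quadrangles (the preceding theorem) together with the extension chain of the preceding proposition. The organizing principle is that each configuration occurring in the corollary sits inside a proper ordered quadrangle, built from three of its base points and the connecting lines of those points; transporting the quadrangle then transports the configuration. Two preliminary remarks make this work. First, if $P,Q$ are non-neighboring points then the line $PQ$ through them is unique (the dual of Lemma~\ref{N6}, and the content of the notation in Lemma~\ref{N7}). Second, by Proposition~\ref{welldefined} every $g\in\GL_3(V)$ maps lines to lines, preserves incidence and preserves the neighboring relations; hence $g(PQ)=g(P)g(Q)$ whenever $P\not\sim Q$, and $g$ sends proper ordered triangles to proper ordered triangles and proper ordered quadrangles to proper ordered quadrangles.

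Now I would treat the four families by showing each is the $\GL_3(V)$-orbit of a fixed standard member, the standard members living inside the standard proper ordered quadrangle $(E_1,E_2,E_3,E)$. Note first that $(E_1,E_2,E_3)$ is a proper ordered triangle (a subtriple of a proper ordered quadrangle), so the lines $E_1E_2,E_2E_3,E_3E_1$ are pairwise non-neighboring. Given a flag $(P_1,L_1)$, the preceding proposition extends it to a proper ordered quadrangle $(P_1,P_2,P_3,P_4)$ with $P_2\in L_1$ and $P_1\not\sim P_2$, so $L_1=P_1P_2$ by uniqueness; by the sharp-transitivity theorem there is $g\in\GL_3(V)$ with $g(E_i)=P_i$ and $g(E)=P_4$, whence $g$ carries the standard flag $(E_1,E_1E_2)$ to $(P_1,L_1)$. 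Running the chain one step further gives the other three families: a triple $(P_1,L_1,P_2)$ with $P_1,P_2$ non-neighboring on $L_1$ extends to a proper ordered quadrangle in which $L_1=P_1P_2$, and the same $g$ sends $(E_1,E_1E_2,E_2)$ to it; a quadruple $(P_1,L_1,P_2,L_2)$ with $L_1,L_2$ non-neighboring lines through $P_2$ extends, via the quintuple clause, to one in which additionally $P_3\in L_2$ and $P_2\not\sim P_3$, so $L_2=P_2P_3$, and $g$ sends $(E_1,E_1E_2,E_2,E_2E_3)$ to $(P_1,L_1,P_2,L_2)$; finally a proper ordered triangle extends directly to a proper ordered quadrangle, and $g$ sends $(E_1,E_2,E_3)$ to it. In each case the orbit of the standard configuration exhausts the family, which is transitivity.

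I do not anticipate a real obstacle. The only point needing care is the bookkeeping that the lines appearing in each configuration genuinely are the connecting lines $P_iP_j$ of the extending quadrangle's base points (so that the quadrangle-transporting element automatically transports the configuration), and that the standard configuration is of the required type; both reduce to uniqueness of the line through two non-neighboring points and to $(E_1,E_2,E_3,E)$ being a proper ordered quadrangle. Once these are in place the corollary is immediate, and one could even note sharp transitivity on the last (quintuple/quadrangle) families along the way.
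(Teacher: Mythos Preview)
Your proposal is correct and is exactly the argument the paper intends: the corollary is stated without proof as an immediate consequence of the sharp transitivity theorem together with the extension proposition, and you have simply spelled out that deduction, including the bookkeeping (uniqueness of the line through two non-neighboring points, and preservation of incidence and neighboring under $\GL_3(V)$) that justifies why transporting the enclosing quadrangle transports the sub-configuration.
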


\begin{rem} \rm If we define a \emph{proper anti-flag} as a pair $\{P,L\}$ consisting of a point $P$ not neighbouring a line $L$, 
then similarly as above, one easily shows that $\GL_3(V)$ acts transitively on the family of all proper anti-flags. But if we 
define an \emph{improper anti-flag} as a pair $\{P,L\}$ consisting of a point $P$ neighboring a line $L$, with $P$ not on $L$, then 
$\GL_3(V)$ does not necessarily acts transitively on the family of all improper anti-flags. Indeed, it does when $r=s$, but if 
$r\neq s$, then we need a semi-linear transformation with a companion automorphism of $V$ interchanging $\K r$ and $\K s$. 
Similarly, transitivity on the family of pairs of neighboring points is only guaranteed if $r=s$, and it does not occur if 
$r\neq s$, unless we extend $\GL_3(V)$ with the above mentioned semi-linear transformations. 

We do not need these remarks for the sequel, so we leave their (straightforward) proofs to the reader.
\end{rem}

\section{A $V$-set is a $\cC$-Veronesean set}

We now check that the $V$-sets defined by matrices verify the Mazzocca-Melone axioms.

To that end, we first show that the action of the group $\GL_3(V)$ on $\cG(V)$ extends to the $V$-set $X$ defined by matrices.

\begin{lemma}\label{groupaction}
Let $X$ be the $V$-set defined by matrices in $\PG(8,\K)$. Then there is a group $G\leq\GL_9(\K)$ acting on $X$, and an isomorphism $\theta: G\rightarrow\GL_3(V)$ such that for $g\in G$, 
and for $V^*(x,y,z)$ a point of $\cG(V)$, $$(V^*(x,y,z))^{\theta(g)}\mbox{ corresponds to }((x~y~z)^t(x~y~z)^\sigma)^g,$$
\end{lemma}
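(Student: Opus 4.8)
The plan is to realise the right $\GL_3(V)$-action on $\cG(V)$ as a linear action on the ambient $\PG(8,\K)$. Recall that the $V$-set $X$ defined by matrices sits inside the $9$-dimensional $\K$-space $\cH$ of Hermitian $3\times3$ matrices over $V$, the point $V^*(x,y,z)$ of $\cG(V)$ corresponding projectively to $H(x,y,z):=(x~y~z)^t(x~y~z)^\sigma$. For $M\in\GL_3(V)$ let $M^\sigma$ denote the matrix obtained by applying $\sigma$ to every entry; since $\sigma$ is a commutative ring automorphism of $V$ one has $(AB)^\sigma=A^\sigma B^\sigma$, $(A^\sigma)^t=(A^t)^\sigma$ and $\sigma^2=\id$, and $\sigma$ sends units to units. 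I would then set $\phi(M)\colon\cH\to\cH,\ H\mapsto M^tHM^\sigma$; after fixing a $\K$-basis of $\cH$ this is an element of $\GL_9(\K)$, and $G:=\phi(\GL_3(V))$ with $\theta$ essentially $\phi^{-1}$.

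There are four things to verify, all routine. (i) \emph{$\phi(M)$ maps $\cH$ to $\cH$:} the off-diagonal symmetry $\big(M^tHM^\sigma\big)_{ji}=\big((M^tHM^\sigma)_{ij}\big)^\sigma$ is a short computation from $(AB)^\sigma=A^\sigma B^\sigma$, commutativity of $V$ and Hermitian-ness of $H$, while each diagonal entry lies in $\K\cdot\mathbf1$ because it is a sum of terms of the shapes $uu^\sigma$ and $u+u^\sigma$ ($u\in V$), all of which lie in $\K\cdot\mathbf1$ by the defining property of $\sigma$. (ii) \emph{$\K$-linearity} of $\phi(M)$ is immediate. (iii) \emph{Invertibility:} $(M^{-1})^t=(M^t)^{-1}$ and $(M^{-1})^\sigma=(M^\sigma)^{-1}$ show $\phi(M^{-1})$ is a two-sided inverse of $\phi(M)$, so $\phi(M)\in\GL_9(\K)$. (iv) \emph{Homomorphism property:} matching the paper's convention that $\GL_3(V)$ acts on the right via $(x,y,z)\mapsto(x,y,z)M$, one computes $\phi(M_1M_2)(H)=(M_1M_2)^tH(M_1M_2)^\sigma=M_2^t\big(M_1^tHM_1^\sigma\big)M_2^\sigma=\phi(M_2)\big(\phi(M_1)(H)\big)$, i.e. $\phi$ is a homomorphism into $\GL_9(\K)$ acting on row vectors on the right.

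The compatibility with the correspondence $\cG(V)\to X$ is then one line: using $\big((x~y~z)M\big)^\sigma=(x~y~z)^\sigma M^\sigma$ we get $\phi(M)\big(H(x,y,z)\big)=M^t(x~y~z)^t(x~y~z)^\sigma M^\sigma=\big((x~y~z)M\big)^t\big((x~y~z)M\big)^\sigma=H\big((x,y,z)M\big)$. Since $(x,y,z)M$ represents the point $\big(V^*(x,y,z)\big)^M$ and the bijection $\cG(V)\to X$ is $V^*(x,y,z)\mapsto H(x,y,z)$ up to a nonzero $\K$-scalar (which $\phi(M)$ absorbs), this is exactly the displayed identity of the lemma with $g=\phi(M)$ and $\theta(g)=M$; in particular, since $\GL_3(V)$ permutes the points of $\cG(V)$ (Proposition~\ref{welldefined}), $\phi(M)$ permutes $X$, so $G:=\phi(\GL_3(V))\leq\GL_9(\K)$ does act on $X$, and $\theta:=\phi^{-1}$ completes the construction.

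The only genuinely non-mechanical point, which I would flag explicitly, is that $\phi$ need not be injective: evaluating $\phi(M)=\id$ on the Hermitian matrices $E_{ii}$ forces the $i$-th row of $M$ to be $c_ie_i$ with $c_ic_i^\sigma=1$, and evaluating on the off-diagonal Hermitian matrices (entries $v,v^\sigma$ in positions $(i,j),(j,i)$) then forces $c_1=c_2=c_3=:c$, so $\ker\phi=\{\,cI:cc^\sigma=1\,\}$. These are precisely the scalars already acting trivially on $\cG(V)$ (note that, in the same way, the stabiliser in $\GL_3(V)$ of the standard ordered quadrangle $(E_1,E_2,E_3,E)$ is $\{\mu I:\mu\in V^*\}$ rather than the identity), so this causes no difficulty: $\theta$ is the induced isomorphism once one reads $\GL_3(V)$, as is customary in this geometric setting, as the group acting effectively on $\cG(V)$. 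Everything else is bookkeeping with transposition versus the entrywise automorphism $\sigma$, which is where I expect whatever friction there is to lie, rather than in any real difficulty.
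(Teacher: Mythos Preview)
Your argument is correct and is exactly the approach the paper takes, only spelled out in considerably more detail: the paper's proof amounts to the single observation that $A\mapsto M^tAM^\sigma$ is $\K$-linear on the space of Hermitian matrices (since the real and imaginary parts of each entry of $M^tAM^\sigma$ are $\K$-linear in those of $A$), leaving the Hermitian-preservation, invertibility, homomorphism property, and compatibility with the Veronese map implicit. Your final paragraph on the kernel of $\phi$ is a genuine point the paper glosses over---the statement's ``isomorphism $\theta$'' is only literally correct after passing to the effective quotient, and your related remark that the stabiliser of the standard quadrangle in $\GL_3(V)$ is $\{\mu I:\mu\in V^*\}$ rather than trivial is a fair correction to the paper's Theorem~6.11; none of this affects the intended application (lifting transitivity from $\cG(V)$ to $X$), as you note.
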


\begin{proof}
This follows from the fact that, if $M\in\GL_3(V)$ and $A$ is a Hermitian matrix over $V=\K\mathfrak{R}+\K\mathfrak{I}$ (the 
coefficient of $\mathfrak{R}$ will be referred to as the \emph{real} part, the one of $\mathfrak{I}$ as the 
\emph{imaginary} part) then the real and imaginary part of each entry of $M^tAM^\theta$ is a linear combination of the real and 
imaginary parts of the entries of $A$, with as coefficients quadratic expressions in the real and imaginary parts of the 
entries of $M$, also involving the elements $t,n\in\K$ for which $\mathfrak{I}^2-t\mathfrak{I}+n=0$. The latter uses the fact that 
$V$ is a quadratic algebra over $\K$.
\end{proof}

\begin{lemma}
The $V$-sets defined by matrices are $\cC$-Veronesean sets, when endowed with the set $\Xi$ of $3$-spaces obtained by considering the spans of the lines of $\cG(V)$, and where $\cC$ is the class of elliptic quadrics, of quadratic cones with removed vertex, or of hyperbolic quadrics according to whether $V_0$ is trivial, a $1$-dimensional subspace of $V$, or the union of two $1$-dimensional subspaces of $V$.
\end{lemma}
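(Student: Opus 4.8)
The plan is to verify the three axioms (V1), (V2), (V3*) directly for the $V$-set $X$ defined by matrices, equipped with the family $\Xi$ of $3$-spaces spanned by (the point sets underlying) the lines of $\cG(V)$, using the transitivity properties of $\GL_3(V)$ established in Corollary~\ref{flagtransitive} together with the group action on $X$ provided by Lemma~\ref{groupaction}. First I would record the local picture: a line $L=V^*[a,b,c]$ of $\cG(V)$ carries the rank $1$ Hermitian $2\times 2$ matrices, which (as already noted in Subsection~\ref{planes}) trace out a quadric in the $3$-space $\xi=\<X(\xi)\>$; by the classification of $V$ in Theorem~\ref{2algebras}, $V_0$ is trivial, a single $1$-space, or a union of two $1$-spaces, and correspondingly the quadric is an elliptic quadric, a quadratic cone with vertex deleted, or a hyperbolic quadric. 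Since $\GL_3(V)$ is flag-transitive and in particular transitive on lines, it suffices to check that $\xi\cap X$ is the set of regular points of a hypersurface in $\cC$ for one line, say $L_0=V^*[0,0,1]$, where the computation is explicit; the general case follows by transport of structure via $\theta$ and Lemma~\ref{groupaction} (which guarantees $G$ acts $\K$-linearly on $\PG(8,\K)$, hence sends $3$-spaces to $3$-spaces and quadrics to quadrics).

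For (V1), two points of $X$ correspond to two points of $\cG(V)$, and any two points of $\cG(V)$ lie on at least one common line (if they are non-neighboring the line is unique by Lemma~\ref{N3}, and by Lemma~\ref{N6} two neighboring points lie on at least two lines); this gives the required element (or elements) of $\Xi$. For (V2), given two distinct lines $L_1,L_2$ of $\cG(V)$ I distinguish the non-neighboring and neighboring cases using Lemma~\ref{N1}: if $L_1,L_2$ are non-neighboring, Lemma~\ref{N3} says they share exactly one point $P=V^*(A,B,C)$, and I must show $\xi_1\cap\xi_2$ (a subspace of $\PG(8,\K)$) is the single point of $X$ corresponding to $P$, together with the fact that $\xi_1\cap\xi_2\cap(\overline{X}\setminus X)$ is empty or a point — here I use that the two quadrics $\overline{X(\xi_i)}$ are tangent-free enough that their spans meet only in $P$ (again reduce by transitivity to a normalized pair, where one computes the spans explicitly); if $L_1,L_2$ are neighboring, then by Lemma~\ref{N6} they meet in at least two points and in fact the neighboring lines through a fixed line form a controlled family, and I must exhibit $\xi_1\cap\xi_2$ as lying in $\overline{X(\xi_1)}\cap\overline{X(\xi_2)}$ with the non-$X$ part contained in a hyperplane of $\xi_1\cap\xi_2$ — this is exactly where the cone/hyperbolic structure of the quadrics, and the "radical'' behavior of $V_0$, enters. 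For (V3*), fix $x\in X$; the tangent spaces $T_x(\xi)$ for the various $\xi\ni x$ are $2$-planes, one for each line of $\cG(V)$ through the corresponding point $P$, and I must show they all lie in a fixed $4$-space. By point-transitivity of $\GL_3(V)$ (Corollary~\ref{flagtransitive}) I may take $P=E_1=V^*(1,0,0)$, and then the Veronese/matrix coordinates make the tangent planes explicitly computable: each $T_x(\xi)$ is spanned by $x$ and the partial derivatives along the line direction, and one reads off that they all lie in the $4$-space cut out by the obvious linear conditions coming from the first row/column of the Hermitian matrix; four dimensions is forced since $2n-2=2$ gives $4$-dimensional $T(x,\pi)$ with $n=3$.

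The main obstacle will be the second half of (V2), the neighboring-lines case: when $V$ is the dual numbers or $\K\times\K$, two lines of $\cG(V)$ can be neighboring without being equal, and then $\xi_1\cap\xi_2$ can be as large as a plane or a line of $\PG(8,\K)$, so I must pin down precisely how much of that intersection lies in $X$ versus in $\overline X\setminus X$, and check the codimension-$1$ containment in (V2). The cleanest route is to again invoke transitivity — by the Remark after Corollary~\ref{flagtransitive}, $\GL_3(V)$ (possibly extended by a semilinear map when $r\neq s$, which is harmless since semilinear maps still preserve $3$-spaces and quadrics over the prime-field-twisted structure) acts transitively enough on pairs of neighboring lines — to reduce to a single normalized pair $V^*[0,0,1]$ and $V^*[0,0,1]+\,r\cdot(\text{something})$, where one computes $\xi_1$, $\xi_2$, $\overline{X(\xi_1)}$, $\overline{X(\xi_2)}$ by hand and verifies the axiom. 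The remaining (V2) and (V3*) verifications, and (V1), are then routine; the bookkeeping of which quadric class $\cC$ arises according to the shape of $V_0$ is immediate from Theorem~\ref{2algebras} and the $2\times 2$ rank-$1$ Hermitian description.
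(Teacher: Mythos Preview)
Your plan is correct and follows essentially the same approach as the paper: reduce all verifications via the transitivity in Corollary~\ref{flagtransitive} and Lemma~\ref{groupaction} to the line $V^*[0,0,\mathfrak{R}]$ and the point $V^*(\mathfrak{R},0,0)$, compute the quadric and the tangent $4$-space explicitly in coordinates, and split (V2) into the non-neighboring and neighboring cases. One small remark on the neighboring case of (V2): you do not need the semilinear extension of $\GL_3(V)$---the paper simply uses flag-transitivity to fix one line as $V^*[0,0,\mathfrak{R}]$ and then parametrizes an arbitrary neighboring line as $V^*[0,kM,\mathfrak{R}+\ell M]$ with $M\in V_0\setminus\{0\}$, computing the intersection of the two $3$-spaces directly; this avoids any worry about whether semilinear maps preserve the $V$-set structure.
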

\begin{proof} Recall that the coordinates $(X_{0},\cdots,X_{8})$  of the point in $\PG(8,\K)$ corresponding with the point $V^*(M,N,L)$, with $M,N,L\in V$, where $V$ is viewed as matrix algebra, are $$(\det M,\det N,\det L,x(M^{\ad}N),y(M^{\ad}N), x(N^{\ad}L),y(N^{\ad}L), x(L^{\ad}M),y(L^{\ad}M)).$$

We first show the claim that the points on an arbitrary line of $\cG(V)$ span a $3$-space which intersects $X$ in a point set lying on a quadric (corresponding to the class $\cC$). Indeed, by Corollary~\ref{flagtransitive}, we may consider the line $V^*[0,0,1]=V^*[0,0,\mathfrak{R}]$. An arbitrary point $V^*(M,N,L)$ on that line has $L=0$, and this translates easily into $X_{2}=X_{5}=X_{6}=X_{7}=X_{8}=0$, which is our wanted $3$-space $\xi$. Now suppose we have a point $V^*(M,N,L)$ of $X$ in this $3$-space. Then the equations tell us that $L^{\ad}M=N^{\ad}L=\det L=0$. These equations imply $L^{\ad}(M,N,L)=(0,0,0)$, hence $L=0$. So we already have that $\xi\cap X$ corresponds to the points of $L$. Using $\det M\det N=\det(M^{\ad}N)$, the matrix representation of an arbitrary point of $L$ yields the quadratic equation $X_{0}X_{1}=X_{3}^{2}+tX_{3}X_{4}+nX_{4}^{2}$, which is the equation of our desired quadric $H$.  Our claim follows.

Now we show that every regular point $P$ of $H$ is a point of the line $V^*[0,0,\mathfrak{R}]$, and no singular point is.

Suppose $P$ has coordinates $(x_0,x_1,0,x_3,x_4,0,0,0,0)$. First assume that $x_0\neq 0$; then we may assume $x_0=1$. Then one easily calculates that $P$ corresponds to the point $V^*(\mathfrak{R},x_3\mathfrak{R}+x_4\mathfrak{I},0)$. Similarly if $x_1\neq 0$, then $P$ corresponds to a point of $V^*[0,0,\mathfrak{R}]$. If $H$ is elliptic, then there are no more points; if $H$ is a cone, then the only point of $H$ with $x_0=x_1=0$ is the vertex and we claim this does not belong to $V^*[0,0,\mathfrak{R}]$. Indeed, by Corollary~\ref{flagtransitive} it would follow that every point of $H$ is singular, a contradiction. Finally, if $H$ is hyperbolic, then there are precisely two points with $x_0=x_1=0$, and if one or both of them would not belong to $V^*[0,0,\mathfrak{R}]$, then the collineation group stabilizing the set of points $H$ belonging to $V^*[0,0,\mathfrak{R}]$ could not act transitively, contradicting Corollary~\ref{flagtransitive}.   

Hence we have shown that the intersection of $X$ with the space generated by the points on $V^*[0,0,\mathfrak{R}]$ is a member of $\cC$. Now Property (V1) is obvious. 

To prove property (V3*), we start by using the above equation of $H$ in $\xi$. The tangent space at $P=(1,0,0,0,0,0,0,0,0)$ to $H$ is the plane given by the equations
$X_{1}=X_{2}=X_{5}=X_{6}=X_{7}=X_{8}=0$. 

Similarly the tangent space at $P$ to the quadric formed by the points on the line $V^*[0,\mathfrak{R},0]$  is given by the equations $X_{1}=X_{2}=X_{3}=X_{4}=X_{5}=X_{6}=0$.
The space spanned by these two tangent spaces is the 4-dimensional space $\Pi_{4}$ determined by the equations $X_{1}=X_{2}=X_{5}=X_{6}=0$.
We must prove that every other tangent space at $p$ to a line is contained in $\Pi_{4}$. 

Consider an arbitrary line through $p$; this must intersect the line $V^*[\mathfrak{R},0,0]$ in some point $V^*(0,N,L)$, $N,L\in V$. We may assume $b\neq 0$ and thus $N=sL$. This implies that $X_{1}=s^{2}X_{2}$. Moreover it follows that $X_{6}=0$.
Hence we obtain the equation $s^{2}X_{1}^{2}=X_{5}^{2}$. But for the tangent space to $p$ to this quadric we clearly have again $X_{1}=0$, and hence this space is contained in $\Pi_{4}$.

Finally, we show property (V2) and first the non-neighboring situation.
By Corollary~\ref{flagtransitive} we may take the lines $V^*[\mathfrak{R},0,0]$ and $V^*[0,0,\mathfrak{R}]$. Similarly as above, the corresponding subspaces in $\PG(8,\K)$ have equations $X_0=X_3=X_4=X_7=X_8=0$ and $X_{2}=X_{5}=X_{6}=X_{7}=X_{8}=0$, respectively. Clearly, these meet only in the point with coordinates  $(0,1,0,0,0,0,0,0,0)$, which corresponds to the point $V^*(0,\mathfrak{R},0)$ of $ \cG(V)$. 

Now we consider the neighboring situation. By Corollary~\ref{flagtransitive} and Definition~ref{defnei}, we may consider the lines $V^*[0,0,\mathfrak{R}]$ and $V^*[0,kM,\mathfrak{R}+\ell M]$, where $M\in V_0\setminus\{0\}$. Put $M=a\mathfrak{R}+b\mathfrak{I}$, with $a,b\in\K$. One calculates that the points $V^*(0,\mathfrak{R}+\ell M,-k M)$ and $(1, \mathfrak{R}+\ell M,-k M)$ belong to $V^*[0,kM,\mathfrak{R}+\ell M]$, and the last four coordinates of these points in $\PG(8,\K)$ are $(-ka,-kb,0,0)$ and $(-ka,-kb,kx+kyt,-ky)$, respectively. This implies, in view of the fact that the last four coordinates of every point of $V^*[0,0,\mathfrak{R}]$ are $0$, that the set of points of $\PG(8,\K)$ corresponding with the set of points of $\cG(V)$ on the union of the lines  $V^*[0,0,\mathfrak{R}]$ and $V^*[0,kM,\mathfrak{R}+\ell M]$ span a subspace of dimension at least $5$. But since these lines meet in at least $2$ points, this dimension is exactly $5$ and the corresponding $3$-spaces meet in a line $T$. Now, the intersection points of these two lines can be generically written as $V^*(m\mathfrak{R},M',0)$, with $m\in\K$ and where $MM'=0$, $M'\neq 0$ (we overlook at most one point, and that happens precisely when $M$ and $M$ are linearly independent; then we have the additional point $V^*(M,M',0)$). Writing the coordinates of such generic point in $\PG(8,\K)$, we see that $m$ appears linearly, so that we miss at most one point of $T$. This shows Property (V2).
\end{proof}




To finish the proof of the Main Result, it now suffices to show the following uniqueness result.

\begin{prop}
Every (abstract) ovoid, tube and hypo contained in a Hermitian, 
Hjelmslevian and Segrean Veronesean set, respectively, is contained in an elliptic, cylindric and hyperbolic space, respectively.
\end{prop}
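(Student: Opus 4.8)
The plan is to reduce, in each of the three cases, to the concrete model supplied by the preceding classification and then to verify that the ``lines'' of that model are exactly the distinguished quadrics. The Hjelmslevian case is then immediate: Proposition~\ref{propH1} already states that all tubes arise from quadratic cones, so every abstract tube $X(\xi)$ is a quadratic cone with its vertex removed, i.e.\ $\xi$ is a cylindric space. (In the exceptional case $N=6$ of Theorem~\ref{theo2}, where $X\cup Y$ is a cone with vertex $t$ over a quadric Veronese variety of $\PG(2,\K)$, a tube is obtained by coning a conic of that variety from $t$ and is again a quadratic cone with $t$ deleted.)

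For the Segrean case, Theorem~\ref{theo1} identifies $X$, up to projectivity, with a Segre variety $\cS_{1,2}$, $\cS_{1,3}$ or $\cS_{2,2}$. Let $Q\subseteq X$ be an abstract hypo and pick $x\in Q$; the two generators $g,g'$ of $Q$ through $x$ are singular lines of $X$. Using the description of the singular lines and singular planes of a Segre variety obtained in the proof of Theorem~\ref{theo1}, together with the elementary observation that two distinct lines of a projective plane always meet, one checks that $g$ and $g'$ cannot lie in a common singular plane: if they did, running along $g$ and invoking this observation for the generators of the two reguli of $Q$ would force a contradiction with that description. Hence $g$ and $g'$ lie in two distinct singular planes, and by Lemma~\ref{lemma1}(1) they are contained in a unique hypo of $X$, necessarily $Q$. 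Reading off the coordinates of the Segre variety, this hypo is the image of the product of the two lines through the relevant points of the two factors, hence one of the distinguished $3$-spaces --- a hyperbolic space.

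For the Hermitian case I would invoke \cite{Sch-Mal:11}: a Hermitian Veronesean set (taken there with $\cC$ the class of all ovoids of $\PG(3,\K)$) is projectively equivalent to a Hermitian Veronese variety $\mathcal{H}$ over a quadratic Galois extension $\L$ of $\K$, with the family $\Xi$ carried onto the set of $3$-spaces spanned by the images of the lines of $\PG(1,\L)$; each of these meets $\mathcal{H}$ in the norm quadric of $\L/\K$, which is an elliptic quadric. Therefore every abstract ovoid is an elliptic quadric contained in an elliptic space. (Alternatively, once $\cG(X)\cong\cG(\L)$ is known, one may transport the group action of Lemma~\ref{groupaction} to $X$ and use the transitivity of Corollary~\ref{flagtransitive} to reduce the claim to a single standard abstract ovoid, which is an immediate computation from the Veronese correspondence of Subsection~\ref{planes}.) The hard part is precisely this Hermitian case: a priori an abstract ovoid could be a non-quadratic ovoid of $\PG(3,\K)$, and ruling that out requires the full strength of the classification in \cite{Sch-Mal:11} --- equivalently, that the family $\Xi$ is already determined by the point set $X$ together with the projectivity furnished by $\cG(X)\cong\cG(\L)$. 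The Segrean case is routine once the singular-line geometry of the Segre variety is in hand, and the Hjelmslevian case is immediate from Proposition~\ref{propH1}.
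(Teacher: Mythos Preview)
Your argument misreads the statement in all three cases. The proposition is not claiming that each $X(\xi)$, $\xi\in\Xi$, is a quadric of the right kind (that is already known); it is claiming that \emph{any} ovoid, tube or hypo which happens to be contained in the point set $X$---with no assumption that it equals some $X(\xi)$---must in fact span a member of $\Xi$. Equivalently, the family $\Xi$ is determined by $X$ alone; this is exactly what is needed for the projective uniqueness in the Main Result.

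Concretely: in the Hjelmslevian case, Proposition~\ref{propH1} tells you that every $X(\xi)$ is a quadratic cone, but says nothing about an abstract tube $Q\subseteq X$ whose span is not a priori in $\Xi$; your sentence ``every abstract tube $X(\xi)$'' already assumes $Q=X(\xi)$ for some $\xi\in\Xi$. In the Segrean case, Lemma~\ref{lemma1}(1) yields a unique hypo \emph{among the members of $\Xi$} through the two generators $g,g'$ of $Q$ at $x$; uniqueness there is not among all hyperbolic quadrics, and two intersecting lines do not determine a hyperbolic quadric, so ``necessarily $Q$'' is unjustified. In the Hermitian case you end up assuming that $\Xi$ is determined by $X$ (``the family $\Xi$ is already determined by the point set $X$''), which is precisely the assertion to be proved.

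The paper's proof is short and uniform for $|\K|>2$: assume $Q$ is not contained in any member of $\Xi$, take $x,y\in Q$ with $\langle x,y\rangle$ a secant, let $\xi$ be the unique member of $\Xi$ through $x,y$, and pick an oval $O$ of $Q$ through $x,y$. If $O\not\subseteq\xi$, two further points $x',y'\in O$ determine some $\xi'\in\Xi$ with $\xi'\neq\xi$; then $\xi\cap\xi'$ contains points of the secant $\langle x,y\rangle$ outside $X$ (respectively outside $X\cup Y$), contradicting (S2), Lemma~\ref{lemmaH1}, or the Hermitian analogue. Hence $O\subseteq\xi$, and a second oval through $x,y$ forces $Q\subseteq\xi$. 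The field of order $2$ is handled separately by elementary counting.
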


\begin{proof}
If the groundfield has order at least $3$, then there is a uniform approach. Let $X$ be a Hermitian, Hjelmslevian or Segrean 
Veronesean set with $\Xi$ the set of corresponding elliptic, cylindric and hyperbolic spaces, respectively. Let $Q$ be an ovoid, tube or hypo contained in $X$, and not contained in a member 
of $\Xi$. Let $x,y\in X$ be two points such that the projective line spanned by $x$ and $y$ is not entirely contained in $Q$ (and then $x$ and $y$ are the only two points of $X$ on that line). Such 
points exist since otherwise one easily sees that the $3$-space generated by $Q$ is entirely contained in $X$, a contradiction. 
Let $H$ be the unique elliptic quadric, tube or hypo, respectively, contained in a member $\xi$ of $\Xi$ and 
containing both $x$ and $y$. Choose an oval $O$ in $Q$ through $x$ and $y$. If $O$ does not belong to $\xi$, then we can select two 
points $x',y'$ in $O$ distinct from $x,y$, and a member $\xi'$ of $\Xi$ containing $x',y'$. Since $\xi'\neq\xi$, the axioms imply 
$\xi'\cap\xi\subseteq X$, a contradiction (for the Hjelmslevian case, it really follows from Lemma~\ref{lemmaH1}). 

Hence $O\subseteq \xi$ and so $O\subseteq H$. But we do the same for a second oval through $x,y$ and obtain $Q\subseteq \xi$. 

Now we consider the cases of Hermitian and Segrean Veronesean sets over the field $\mathbb{F}_2$. In the Segrean case, 
it is easily seen that a hypothetical hypo $Q$ not contained in a member of $\Xi$ shares two generators with a hypo $H$ contained 
in a member of $\Xi$. Now taking two appropriate points of $Q$ outside $H$ and considering a hyperbolic space through them leads 
to a contradiction, as above. In the Hermitian case, the same contradiction arises when a hypothetical ovoid $Q$ in $X$ not 
contained in a member of $\Xi$ intersects an elliptic space in an ovoid; hence we may assume that it intersects some elliptic 
space $\xi$ in just two points $x,y$. But the projection from $\xi$ of $X\setminus\xi$ is injective and contained in an affine space 
of dimension $4$, see \cite{Sch-Mal:11}, and the three points of $Q$ distinct from $x,y$ would project on a line; a contradiction.

For the Hjelmslevian Veronesean set $X$ over $\mathbb{F}_2$, we note that it is easily seen that $4$ points in general position can never lie in a common plane. Suppose we have $4$ points of $X$ in a common plane, then three of them must lie on singular lines through a common vertex $t$. One now easily checks that the space generated by all points on a singular line through $t$ (a 12-set in the terminology of the last part of Section~\ref{sec:HVS}) does not contain more points than those 12 points; hence the fourth point must also be in that set. It now follows easily that a given tube must be contained in a cylindric space.

The proposition, and hence our Main Result, is completely proved.
\end{proof}


\section{Further motivation}

In this section we put our results in a broader perspective, at the same time motivating why we look at these generalizations of Hermitian Veronese varieties, and explaining why we stated the Axioms~(V1), (V2), (V3) and (V3*) in such generality.

Consider an alternative composition division algebra $\mathbb{A}$, and suppose $\mathbb{A}$ is either $4$- or $8$-dimensional over 
its center (hence we have a quaternion or octonion algebra). Then there is a unique projective plane $\mathbb{P}_2(\mathbb{A})$ 
associated with $\mathbb{A}$, and this projective plane corresponds to a real form of an algebraic group of absolute type 
either $\mathsf{A}_5$ or $\mathsf{E}_6$. The (standard) Veronese varieties associated to these planes arise from a standard 
representation of the spherical buildings of type $\mathsf{A}_5$ and $\mathsf{E}_6$, respectively, in a 15- and 27-dimensional 
vector space, respectively, by suitably restricting coordinates. These Veronese varieties can also be constructed directly as certain 
Hermitian matrices with entries in $\mathbb{A}$. Now, if we take for $\mathbb{A}$ the split version of the algebra, then we obtain 
in this way  the aforementioned representations of the respective spherical buildings.

So, we may conclude that there is a uniform construction of all Veronese varieties corresponding to composition division 
algebras, and certain representations of spherical buildings. Our ultimate aim is to characterize all these structures in a uniform 
way by the axioms (V1), (V2) and (V3*). Moreover, by allowing not only split and non-split algebras, but also certain other 
quadratic variants $V$, for which the associated geometry $\cG(V)$ as defined in Section 1 has the properties that any two points lie 
on at least one common line and any pair of lines has at least one point in common, we hope to include some less standard varieties, 
such as the Hjelmslev planes of the present paper, which are still connected to Tits-buildings. Indeed, the Hjelmslev planes of level 
2 are precisely the structures emerging from the set of vertices at distance 2 from a given vertex in an affine building of type 
$\widetilde{\mathsf{A}}_2$, see \cite{Han-Mal:89,Mal:87}. 

The geometries referred to in the previous paragraph correspond to the second row of the Freudenthal-Tits magic square (over any field). In 
particular, our characterization should ultimately include a characterization of the standard module for groups of type 
$\mathsf{E}_6$ over any field.  It would be rather remarkable that this intricate geometric structure allows to be caught by a rather 
simple and short list of axioms.

By intersecting the models of the $\mathcal{C}$-Veronesean sets corresponding to the second row of the Freudenthal-Tits magic 
square with an appropriate hyperplane (codimension 1 space), we obtain models for the geometries and groups in the first row of 
this square. A second aim of ours is to find adequate axioms for these representations.
These axioms should have the same spirit as those considered in the present paper. This would yield common characterizations of 
certain representations of buildings of types $\mathsf{A}_2$ (the analogues of those considered in \cite{Tha-Mal:00} for arbitrary 
fields), $\mathsf{C}_3$ (the line Grassmannian representation) and $\mathsf{F}_4$ (the ordinary 25-dimensional module). 

Of course, further perspectives include the third and the final row of the Freudenthal-Tits magic square, where, amongst others, non-embeddable polar spaces and buildings of types $\mathsf{E}_7$ and $\mathsf{E}_8$ appear. The former, although already apparent in Tits' original approach, were formally proved to be in the Magic Square by M\"uhlherr in \cite{Mue:90}. This indicates that a geometric approach, as started here is worthwhile to pursue.  

Finally we note that \cite{Chaput,Lan:03} contain related approaches to the magic square, describing the algebraic geometry and representation theory associated to the Freudenthal-Tits magic square (hence restricted to certain fields, for instance not over finite fields).


\textbf{Address of the authors}\\
Jeroen Schillewaert\\
D\'epartement de Math\'ematique,
Universit\'e Libre de Bruxelles\\
U.L.B., CP 216,
Bd du Triomphe,
B-1050 Bruxelles,
BELGIQUE\\
\texttt{jschille@ulb.ac.be}\\ {}\\
Hendrik Van Maldeghem\\
Department of Mathematics,
Ghent University,\\
Krijgslaan 281, S22,
B-9000 Ghent,
BELGIUM\\
\texttt{hvm@cage.ugent.be}


\begin{thebibliography}{10}






\bibitem{Chaput} P.-E. Chaput, Geometry over composition algebras: Projective geometry {\em J. Algebra} \textbf{2} (2006), 340--362.

\bibitem{Coo-Tha-Mal:04} B.~Cooperstein, J.~A.~Thas \& H. Van Maldeghem, Hermitian
Veroneseans over finite fields, \emph{Forum Math.}~\textbf{16}
(2004), 365--381.



\bibitem{Freudenthal} 
{H.~Freudenthal}, Lie groups in the foundations of geometry, \emph{Adv. Math.} \textbf{1} (1964), 145--190

\bibitem{Han-Mal:89} G.~Hanssens \& H.~Van Maldeghem, On Projective Hjelmslev Planes of
Level $n$, {\em Glasgow Math.\ J.}\ {\bf 31} (1989), 257--261.

\bibitem{Hir-Tha:91} J.~W.~P.~Hirschfeld \& J.~A.Thas, \emph{General Galois Geometries}, Oxford Mathematical Monographs, Oxford Science Publications, The Clarendon Press, Oxford University Press, New York, 1991.

\bibitem{Lan:03} J.~M.~Landsberg and L.~Manivel, The projective geometry of Freudenthal's magic square, {\em J. Algebra} {\bf 239} (2001), 477--512.

\bibitem{Maz-Mel:84} F.~Mazzocca \& N.~Melone, Caps and Veronese varieties in projective Galois spaces, \emph{Discrete Math.} \textbf{48} (1984), 243--252.

\bibitem{Mue:90} B. M\"uhlherr, A geometric approach to non-embeddable polar spaces of rank 3, \emph{Bull. Soc. Math. Belg. Ser.} A 42 (1990), 463--514. 




\bibitem{Sch-Mal:11} 
J. Schillewaert \& H. Van Maldeghem, Hermitian Veronesean caps, {\em Springer Proceedings in Mathematics}, {\bf 12} (2012), 175--191.

\bibitem{Sch-Mal:**} J. Schillewaert \& H. Van Maldeghem, Quadric Veronesean caps, Appears in {\em Bull. Belgian
Math. Soc. | Simon Stevin}. 



\bibitem{SV} T.A. Springer \& F. Veldkamp, On Hjelmslev-Moufang planes.
{\em Math. Z.} {\bf 107} (1968) 249--263. 


\bibitem{Tha-Mal:00} J.~A.~Thas \& H. Van Maldeghem, Classification of embeddings
of the flag geometries of projective planes in finite projective
spaces, Part 2, {\em J. Combin. Theory Ser}. A {\bf 90} (2000),
241--256.




\bibitem{Tha-Mal:04} J.~A.~Thas \& H. Van Maldeghem, Classification of finite
Veronesean caps, \emph{European J.~Combin.}~\textbf{25} (2004),
275--285.

\bibitem{Tha-Mal:04b} J.~A.~Thas \& H. Van Maldeghem, Characterizations of the
finite quadric Veroneseans $\mathcal{V}_n^{2^n}$, \emph{Quart. J.
Math.}~\textbf{55} (2004), 99--113.

\bibitem{Tha-Mal:05} J.~A.~Thas \& H. Van Maldeghem, Some characterizations of
finite Hermitian Veroneseans, \emph{Des. Codes
Cryptogr.}~\textbf{34} (2005), 283--293.



\bibitem{Tha-Mal:11} J.~A.~Thas \& H. Van Maldeghem, Generalized Veronesean embeddings of finite projective spaces, \emph{Combinatorica}, to appear.

\bibitem{Tits1} 
{J.~Tits}, Groupes semi-simples complexes et g\'eom\'etrie projective, \emph{S\'eminaire Bourbaki} \textbf{7} (1954/1955)

\bibitem{Tits2} 
J.~Tits, Alg\`ebres alternatives, alg\`ebres de Jordan et alg\`ebres de Lie exceptionnelles, \emph{Indag. Math.} \textbf{28} (1966) 223--237. 


\bibitem{Mal:87} H.~Van Maldeghem, Valuations on PTR's Induced by Triangle Buildings,
{\em Geom.\ Dedicata} {\bf 26} (1988), 29--84.




\bibitem{Zak} F.~L.~Zak \emph{Tangents and secants of algebraic varieties}, Translations of Mathematical Monographs, {\bf 127}
    American Mathematical Society, Providence, RI, USA, 1993.
      


\bibitem{Zan:96} C. Zanella, Universal properties of the Corrado Segre embedding, \emph{Bull. Belg. Math. Soc. Simon Stevin} \textbf{3} (1996), 65--79.



\end{thebibliography}
\end{document}